\documentclass[12pt]{amsart}
\usepackage{amsthm,amssymb}
\usepackage{epsfig}
\usepackage[arrow]{xy}
\usepackage{float}
\restylefloat{figure}

\usepackage{amssymb,latexsym,cite,epsf,graphics}
\usepackage[dvips,centering,includehead,width=15.1cm,height=22.7cm]{geometry}
\usepackage{graphicx,color}

\definecolor{darkred}{rgb}{1,0,0}
\newcommand{\darkred}[1]{\color{darkred}{#1}\color{black}}

\newtheorem{theorem}{Theorem}[section]

\newtheorem{lemma}[theorem]{Lemma}

\newtheorem{proposition}[theorem]{Proposition}
\newtheorem{prop}[theorem]{Proposition}
\newtheorem{corollary}[theorem]{Corollary}

\newtheorem{conjecture}[theorem]{Conjecture}

\theoremstyle{remark}

\theoremstyle{definition}

\newtheorem{remark}[theorem]{Remark}
\newtheorem{example}[theorem]{Example}
\newtheorem{definition}[theorem]{Definition}

\def\Acal{\mathcal{A}} 
\def\Bcal{\mathcal{B}} 
\def\Ccal{\mathcal{C}} 
\def\Fcal{\mathcal{F}} 
\def\CC{\mathbb{C}} 
 
\def\cc{\mathbf{c}}
\def\xx{\mathbf{x}}
\def\zz{\mathbf{z}}
\def\ee{\mathbf{e}} 
\def\vol{\operatorname{vol}}
\def\SL{\operatorname{SL}}
\def\QF{\operatorname{QF}}

\newcommand{\usection}[1]{\section*{\textbf{#1}}}


\title{
Tensor diagrams and cluster algebras
}

\setcounter{tocdepth}{1}

\numberwithin{equation}{section}

\begin{document}

\author{Sergey Fomin}
\address{\hspace{-.3in} Department of Mathematics, University of Michigan,
Ann Arbor, MI 48109, USA}
\email{fomin@umich.edu}

\author{Pavlo Pylyavskyy}
\address{\hspace{-.3in} Department of Mathematics, University of Minnesota,
Minneapolis, MN 55414, USA}
\email{ppylyavs@umn.edu}

\date{May 7, 2015
}

\thanks{Partially supported by NSF grants DMS-1101152 (S.~F.)
and DMS-1068169 (P.~P.).
}

\dedicatory{To the memory of Andrei Zelevinsky}

\subjclass{
Primary
13F60, 
Secondary
05E99, 
13A50, 
15A72. 
}

\keywords{Cluster algebra, invariant theory, web basis, tensor diagram}

\begin{abstract}
The rings of $\SL(V)$ invariants of configurations of vectors
and linear forms in a finite-dimensional complex vector space $V$ 
were explicitly described by Hermann Weyl in the 1930s. 
We show that when $V$ is $3$-dimensional, each of these rings carries
a natural cluster algebra structure (typically, many of them) whose cluster
variables include Weyl's generators. 
We describe and explore these cluster structures using the
combinatorial machinery of tensor diagrams.
A~key role is played by the web bases introduced by G.~Kuperberg. 
\end{abstract}

\ \vspace{-.1in}

\maketitle

\vspace{-.3in}

\tableofcontents

\usection{Introduction}

Homogeneous coordinate rings of Grassmannians 
are among the most important examples of 
cluster algebras. 
Cluster structures in these rings \cite{gsv1, scott}
play a prominent role in applications of
cluster theory arising in connection with
integrable systems, 
algebraic Lie theory,
Poisson geometry,
Teichm\"uller theory,
total positivity, 
and beyond; 
see, e.g.,  \cite{gls-lie, gsv-book, grabowski-launois,
keller-triangulated, kodama-williams} and references therein. 
Within cluster algebra theory proper, Grassmannians provide the most concrete
and accessible examples of naturally defined cluster algebras 
of infinite mutation type. 

Despite their importance, cluster structures on 
Grassmannians are not well understood at all,
apart from a few special cases.  
Just a tiny subset of their cluster variables have been explicitly
described;
we do not know which quivers appear in their seeds;
we do not understand the structure of their underlying cluster
complexes; and so on.

Let $\operatorname{Gr}_{k,N}$ denote the Grassmann manifold 
of $k$-subspaces in an $N$-dimensional complex vector space. 
The corresponding cluster algebra 
 has finite type (i.e., has finitely
many seeds) if and only if $(k-2)(N-k-2)\le 3$. 
All of the problems mentioned above are open for any Grassmannian of
infinite cluster type, so in particular for 
$k=3$, $N\ge 9$. 
(The case $k=2$ has been well understood since the early days of
cluster algebras, see~\cite[Section~12.2]{ca2}.) 

We advocate the point of view that many aspects of cluster 
structures on Grassmannians are best understood within a broader
range of examples coming from classical invariant theory. 
Recall that the homogeneous coordinate ring
of~$\operatorname{Gr}_{k,N}$ 
(with respect to a Pl\"ucker embedding) 
is isomorphic to the ring of $\SL(V)$ invariants of 
$N$-tuples of vectors in a 
$k$-dimensional complex vector space~$V$. 
More general rings of $\SL(V)$ invariants of 
collections of vectors and linear forms have been thoroughly studied
by classical invariant theory. 
We conjecture that \emph{every such ring carries a natural cluster algebra
structure}. 
In fact, there are typically many such structures, 
depending on a choice of a cyclic ordering of the tuples of vectors and
covectors. 

In this paper, we prove this claim in the special case when $V$ is
$3$-dimensional. 
Our main result (Theorem~\ref{th:main}) describes a family of cluster
structures in the rings of $\SL_3$-invariants of collections of
vectors and covectors in~$V\cong\CC^3$.
The cluster structure (and moreover the mutation type of the
corresponding quiver) depends on the choice of a \emph{cyclic
  signature}, a binary word describing the order in which vectors
and covectors are arranged around a circle. 
An accurate formulation of this result requires a description of (some
of) the seeds defining the cluster structure in question;
this in turn relies on the development of fairly technical 
combinatorial vocabulary, not to mention the requisite background on
cluster algebras, tensor calculus, and basic invariant theory. 
This makes it impractical to include the precise statement of our main
result in this introduction. 

What makes the case $k=3$ special, and amenable to combinatorial
approaches that cannot be straightforwardly generalized to higher
dimensions? 
{}From our perspective, 
the distinguishing characteristic of the $3$-dimensional case 
is the existence of a beautiful
\emph{web basis} discovered by Greg Kuperberg~\cite{kuperberg}. 
Our investigations were largely motivated by the desire to understand 
the cluster-theoretic significance of Kuperberg's basis.
The main new feature of our approach is an emphasis on the
multiplicative properties of the web basis,
which along with its compatibility with tensor contraction 
play a central role in the study of
cluster algebra structures in these rings of invariants. 

While all of our results hold for arbitrary rings of
$\SL_3$-invariants, with an (almost) arbitrary cyclic ordering of
vectors and covectors, 
many of our theorems are new already in the case of Grassmannians 
$\operatorname{Gr}_{3,N}$.
The list of such results includes: 
\begin{itemize}
\item a large family of new ``non-Pl\"ucker'' cluster variables;
\item
compatibility of cluster structures across different values of~$N$; 
\item
examples of ``imaginary'' elements in the web basis;
\item
examples of negative
  structure constants; 
\item
a description of clusters associated with arbitrary
  triangulations of an $N$-gon.
\end{itemize}
The latter description extends a construction given by
  J.~Scott~\cite{scott}, and
  suggests an extension to 
 other Riemann surfaces, which we plan to pursue in a separate~paper. 

We describe conjectural ways in which each cluster structure in a
classical ring of $\SL_3$-invariants is
intrinsically determined by 
the corresponding web basis. 
We believe that this type of relationship extends to many other
settings in which 
multiplicative properties of some distinguished additive
basis in a given commutative ring 
dictate a ``canonical'' choice of a cluster structure
in the ring. 
We also believe that, conversely, each cluster algebra
of geometric type has an
additive basis with certain remarkable properties; see 
Conjecture~\ref{conj:A-and-B}. 

Perhaps most significantly, we formulate a conjectural combinatorial
description (see Conjecture~\ref{conj:cluster-variables-are-trees})
of \emph{all} cluster variables in each of our cluster algebras.
To the best of our knowledge, 
this is the first class of cluster algebras of infinite
mutation type for which such a description has been proposed. 

\smallskip
\centerline{------------}
\smallskip

We next outline the general plan of the paper, 
and review the contents of each section. 

Sections~\ref{sec:tensors}--\ref{sec:webs} cover the requisite
background, some of which may be familiar to the reader.
Section~\ref{sec:tensors} is a reminder 
on the basic notions of tensor calculus. 
Section~\ref{sec:rings-of-invariants} introduces our main object of
study, the ring $R_{a,b}(V)$ of $\SL(V)$ invariants of collections of
$b$~vectors and $a$~covectors 
(i.e., linear forms) in a complex vector space~$V$. 
Fundamentals of cluster algebras are reviewed in 
Section~\ref{sec:cluster-algebras}.
We limit ourselves to the case of cluster algebras defined by
quivers, 
as this is all the generality we need.
At the end of the section, we state sufficient conditions 
that ensure that a particular seed, i.e., a quiver whose vertices are
labeled by elements of a given ring~$R$, 
defines a cluster algebra structure in~$R$;
see Proposition~\ref{prop:cluster-criterion}
and Corollary~\ref{cor:cluster-criterion}.

Starting with Section~\ref{sec:tensor-diagrams},
we assume that the space~$V$ is $3$-dimensional. 
Section~\ref{sec:tensor-diagrams} is a primer on \emph{tensor
  diagrams}, 
a particular kind of combinatorial gadgets 
that can be used to define $\SL(V)$ invariants. 
This diagrammatic calculus, 
popular among physicists, has the advantage of reducing cumbersome
calculations with tensors and invariants 
to repeated applications of certain (purely
combinatorial) local transformation rules called skein relations. 

Section~\ref{sec:webs} presents 
the crown jewel of the theory of tensor diagrams: Kuperberg's
construction of the web basis in $R_{a,b}(V)$. 
This basis, which depends on
a choice of a cyclic signature, consists of \emph{web invariants},
the $\SL(V)$ invariants defined 
by planar tensor diagrams without short internal cycles. 

Sections~\ref{sec:special-invariants} and~\ref{sec:special-seeds}
introduce the key combinatorial construction of the paper:
a family of ``special'' seeds designed to define a cluster structure 
in~$R_{a,b}(V)$. 
In Section~\ref{sec:special-invariants}, we describe the ``special''
invariants that appear in those seeds, and state their basic
properties. 
We then explain in Section~\ref{sec:special-seeds} how each triangulation
of a convex $(a+b)$-gon gives rise to an extended cluster consisting
of $\dim(R_{a,b}(V))=3(a+b)-8$ special invariants,
and to a quiver whose vertices are labeled by them. 
An impatient reader unwilling to labor through the rather intricate
technicalities of this construction may decide to skip the details,
and go directly to Section~\ref{sec:main-theorem}. 

Sections~\ref{sec:main-theorem}--\ref{sec:zoo} present the main
results and conjectures of the paper.
(The proofs are deferred until later.)
The main theorem (Theorem~\ref{th:main}) asserts that each of the
special seeds described in Section~\ref{sec:special-seeds} defines a cluster
algebra structure in the ring of invariants~$R_{a,b}(V)$.
This cluster structure is independent of the choice of
such a seed; in other words, all special seeds are mutation
equivalent. The cluster structure does however depend on the choice of
a signature. In fact, this choice affects the (cluster) type of the resulting
cluster algebra, and whether it is of finite or infinite type 
(resp., finite or infinite mutation type). 
This can be seen in Figure~\ref{fig:boundary-signatures-5678} that lists 
the cluster types of~$R_{a,b}(V)$ for all signatures with
$a+b\le 8$. 

Our main construction has nice functoriality properties:
it is preserved by the duality between vectors and covectors,
and respected by
the natural embeddings of smaller invariant rings into larger ones, 
induced by the forgetful maps (dropping a (co)vector 
from a collection) or the maps defined by taking a
cross product of two consecutive (co)vectors. 
As a corollary, we establish that if a tensor
diagram is a planar tree, then the corresponding web invariant is a
cluster or coefficient variable. 


In Section~\ref{sec:main-conjectures}, 
we discuss many conjectural connections between our main construction
and Kuperberg's web basis.
In particular, we expect this basis to contain all cluster monomials.
Furthermore, 
we expect two cluster variables to be compatible if and only if their
product is a web invariant. 
We formulate criteria that should distinguish cluster
  variables/monomials among more general web invariants. 

While the web basis has many wonderful properties, 
it \emph{may} have negative structure constants
(cf.\ Conjecture~\ref{conj:strong-positivity} 
and Proposition~\ref{prop:negative-structure-const}). 
This is perhaps not so surprising in light of the discovery, made by
M.~Khovanov and G.~Kuperberg~\cite{khovanov-kuperberg},
that the web basis is generally different from the \emph{dual canonical
basis}, i.e., the basis dual to G.~Lusztig's canonical
basis~\cite{lusztig-quantum-groups}. 
Putting things into a cluster-theoretic context allows us to
``explain'' the Khovanov-Kuperberg smallest counterexample:
it corresponds to the square of the simplest web invariant which is
not a cluster variable. 
As established by B.~Leclerc \cite{leclerc-reims} and
P.~Lampe~\cite{lampe}, for the cluster type at hand the appropriate
element of the dual canonical basis is given by the quadratic
Chebyshev polynomial of the second kind. Using the Chebyshev
polynomial of the first kind recovers the corresponding element of the
``atomic'' basis of P.~Sherman and
A.~Zelevinsky~\cite{sherman-zelevinsky}, while the square lies in the
\emph{dual semicanonical basis}, see~\cite{gls-semicanonical,
lusztig-semicanonical}. It is
then natural to ask: Does the web basis always coincide with the
appropriate dual semicanonical (or ``generic'') basis, in the sense of
\cite{gls-semicanonical, gls-jams}?

Section~\ref{sec:arborization} begins by our favorite conjecture of
the paper  
(Conjecture~\ref{conj:cluster-variables-are-trees}) that 
describes, in simple combinatorial terms, the entire set of
 cluster monomials
in~$R_{a,b}(V)$. 
According to this conjecture, a cluster monomial is an $\SL(V)$ 
invariant that possesses two alternative presentations by a single
tensor diagram:
first, by a (planar, non-elliptic) web;
second, by a (possibly non-planar) forest. 
If, in addition, this invariant does not factor 
(so that this forest is actually a tree),
then the invariant is a cluster or coefficient variable. 
We then present an explicit \emph{arborization} algorithm that
conjecturally detects whether a given web defines a cluster monomial
(resp., a cluster or coefficient variable) by applying a sequence of
skein relations that either transform the web into a forest (resp., tree),
or else determine that this is impossible. 

Section~\ref{sec:zoo} presents a gallery of fairly complicated
examples of webs illustrating various phenomena that we
discovered. These include: non-arborizable webs, both 
``real'' and ``imaginary'' (in the sense of
B.~Leclerc~\cite{leclerc});
``fake'' exchange relations involving web invariants;
and the aforementioned negative structure constants. 

Sections~\ref{sec:special-proofs}--\ref{sec:other-proofs} contain the
proofs of the results stated in
Sections~\ref{sec:special-invariants}--\ref{sec:main-theorem} 
and~\ref{sec:arborization}. 
Specifically, the properties of special invariants and special seeds
formulated in
Sections~\ref{sec:special-invariants}--\ref{sec:special-seeds} are
proved in
Sections~\ref{sec:special-proofs}--\ref{sec:properties-of-special-seeds}. 
The construction of the quiver associated with a special seed defined
by an arbitrary triangulation is outlined in 
Section~\ref{sec:building-a-quiver}. 
The main Theorem~\ref{th:main} is proved in
Section~\ref{sec:proof-main}. 
Other results stated in Sections~\ref{sec:main-theorem}
and~\ref{sec:arborization} are proved in
Section~\ref{sec:other-proofs}. 

The 
last two sections
play the role of appendices. 
Section~\ref{sec:T-fan} describes, in precise combinatorial terms, 
the construction of a special seed associated with a particular choice
of a triangulation. This is in principle sufficient to identify a
cluster structure in the ring~$R_{a,b}(V)$ once the main theorem has
been proved. 
Section~\ref{sec:examples-of-seeds} shows a couple of additional
examples of special seeds, illustrating the main construction. 


The paper contains a large number of pictures,
which are best viewed in \darkred{color}. 

\medskip

The first version of this paper~\cite{fp-arxiv} was circulated in
October 2012. 
While the text subsequently underwent extensive editorial revisions of
expository nature, 
the main results of the paper, and the arguments used in their proofs,
remained essentially unchanged. 
We also refrained from describing the developments that took place in this research
subfield after the first \texttt{arXiv} posting. 

\subsection*{Acknowledgments}
We were influenced in our work 
by the research of many friends and colleagues 
on the subject of cluster algebras. 
We are grateful to 
Arkady Berenstein, 
Thomas Lam,  
Bernard Leclerc, 
Jan Schr\"oer, 
David Speyer,
and Andrei Zelevinsky
for helpful advice and stimulating discussions,
and to 
Igor Dolgachev,
Rob Lazarsfeld, 
Ivan Losev, 
Ezra Miller, 
and
Mircea Musta\c{t}\u{a}
for answering our questions on invariant theory. 
We thank Bernard Leclerc, Gregg Musiker, and the anonymous referees
whose multiple comments on the earlier versions of the paper
were critical in improving the quality of exposition. 

Our main sources of inspiration outside cluster theory
included the timeless texts by H.~Weyl \cite{weyl} 
and V.~Popov--E.~Vinberg~\cite{popov-vinberg}
on the foundations of classical invariant theory;  
the pioneering work by G.~Kuperberg~\cite{kuperberg} on the web bases; 
and the groundbreaking paper by V.~Fock and A.~Goncharov~\cite{fg-convex} 
on 
$\operatorname{PGL}_3$-Teich\-m\"uller~spaces.


Our work on this paper began and ended at the Mathematical Sciences
Research Institute in Berkeley, CA.
It started in 2008 during MSRI's 
Combinatorial Representation Theory program, 
and was completed in 2012 during the Cluster Algebras
program.
We are grateful to MSRI for an excellent work environment. 

The main results of this paper were first reported at 
the Hausdorff Institute (Bonn) 
and the Abel Symposium (Balestrand) in April and June 2011, 
respectively. 
The first author thanks the organizers of these events
for their hospitality and support. 

\smallskip
\centerline{------------}
\smallskip

This paper is dedicated to the memory of Andrei Zelevinsky,
a dear friend, collaborator, and mentor. 
His beautiful theorems, his penetrating insights, and 
his unique approach to combinatorial and representation-theoretic
problems made him one of the leaders of his generation of
researchers. 
His deep and original ideas influenced, and will continue to inspire,
a great many mathematicians, ourselves included.

\pagebreak[3]

\usection{Preliminaries}


\section{Tensors}
\label{sec:tensors}


Let $V$ be a $k$-dimensional complex vector space.
The word \emph{vector} will always refer to an element of~$V$. 
The elements of the dual space $V^*$ (the linear forms on~$V$) are 
called \emph{covectors}. 
A \emph{tensor} $T$ of \emph{type} $(a,b)$ is an element of the tensor product 
\[
\underbrace{V\otimes \cdots \otimes V}_\text{$a$ copies} \otimes 
\underbrace{V^*\otimes \cdots \otimes V^*}_\text{$b$ copies}\,, 
\]
or alternatively a multilinear map 
\begin{equation}
\label{eq:multilinear-map}
T: \underbrace{V^*\times \cdots \times V^*}_\text{$a$ copies} \times 
\underbrace{V\times \cdots \times V}_\text{$b$ copies}
\longrightarrow \CC. 
\end{equation}
In this paper, we consistently use the latter viewpoint.  
We will also need a slight notational variation 
that allows an arbitrary ordering 
of the \emph{contravariant} arguments of $T$ (vectors in~$V$) 
and its \emph{covariant} arguments (covectors in~$V^*$). 

The simplest examples of tensors are vectors (linear forms on $V^*$)
and covectors (linear forms on~$V$); 
they have types $(1,0)$ and $(0,1)$, respectively.
Tensors of type~$(1,1)$ are cryptomorphic to linear operators in
$\operatorname{End(V)}$.
Indeed, a tensor $T:V^*\times V\to\CC$ corresponds to a linear map 
$V\to (V^*)^*\cong V$ that sends $v\in V$ 
to the linear form on $V^*$ given by $u^*\mapsto T(u^*,v)$. 
The \emph{identity tensor} $I$ is the type $(1,1)$ tensor that corresponds 
to the identity operator on~$V$. 

One can accordingly define the \emph{trace} of a type $(1,1)$ tensor, 
denoted by $\operatorname{Tr}(T)$. 
For example, $\operatorname{Tr}(I)=\dim(V)=k$. 

The $k$th exterior power of $V^*$ is one-dimensional. 
Let us fix an element in $\wedge^k V^*$, 
a \emph{volume form} on~$V$. 
The corresponding \emph{volume tensor} $\vol:V^k\to\CC$ has type $(0,k)$. 
Its evaluation $\vol(v_1,\dots,v_k)$ 
is the oriented volume of the parallelotope with sides $v_1,\dots,v_k$. 
The \emph{dual volume tensor} $\vol^*$ of type $(k,0)$ is defined by the
identity
\begin{equation}
\label{eq:two-volume-forms}
\vol(v_1,\dots,v_k) \vol^*(u^*_1,\dots,u^*_k)
=\det(u^*_j(v_i))_{\substack{1\le i\le k\\ 1\le j\le k}}
\,.
\end{equation}

Let $T$ and $U$ be tensors of types $(a,b)$ and $(c,d)$, respectively.
The \emph{tensor product} $T\otimes U$ is a tensor of type $(a+c,b+d)$ 
defined by 
\begin{align*}
&(T\otimes U)(u_1^*,\dots,u_{a+c}^*; v_1,\dots,v_{b+d})\\
&=T(u_1^*,\dots,u_a^*; v_1,\dots,v_b)
\,\cdot\, U(u_{a+1}^*,\dots,u_{a+c}^*; v_{b+1},\dots,v_{b+d}). 
\end{align*}
In some instances, the arguments of $T$ and $U$ are re-shuffled in the process,
resulting in different variants of the notion of a tensor product. 

We will also need the concept of a \emph{contraction} of a tensor with respect
to a pair of arguments of opposite variance, i.e., a vector argument and a
covector argument. 
For a tensor $T$ of type $(a,b)$ given by~\eqref{eq:multilinear-map},
its contraction with respect to, say, 
the first covariant and the first contravariant arguments is the type
$(a-1,b-1)$ 
tensor $T'$ obtained by viewing $T$ as the function of those two arguments
(temporarily fixing all the rest) and evaluating the trace of the resulting 
tensor: 
\[
T'(v_2^*,\dots,v_a^*;u_2,\dots,u_b)
=\operatorname{Tr}(T(\bullet,v_2^*,\dots,v_a^*;\bullet,u_2,\dots,u_b)).
\]
One can also define a contraction of two tensors with respect to 
a contravariant argument of one of them and a covariant argument of another.
The result is obtained by first computing the tensor product
of the two tensors, then contracting as above. 

\begin{example}
\ \\
\textbf{1.}
Contracting a vector $v$ and a covector $u^*$ produces a scalar $u^*(v)$. 
More generally, an $(a+b)$-fold contraction of a type $(a,b)$ tensor~$T$
against 
an ordered collection of $a$ covectors and $b$ vectors returns the evaluation
of $T$ at this collection. 
\\
\textbf{2.}
Contracting the two arguments of a type $(1,1)$ tensor against each other
yields 
the trace of the corresponding linear operator. \\
\textbf{3.}
Given two tensors of type $(1,1)$, contracting a contravariant argument 
of one tensor against the covariant argument of the other tensor corresponds to 
composition of respective linear operators. \\
\textbf{4.}
Two-fold contraction of two such tensors (matching each contravariant argument
to the covariant argument of the other tensor) yields the trace of the product
of the corresponding linear operators. \\
\textbf{5.}
A $k$-fold contraction of the volume tensor against its dual yields the
scalar~$k!$. 
\end{example}

All of the above can be recast in the basis-dependent 
language of coordinates and matrices, once we designate  
a ``standard'' basis $(\ee_1,\dots,\ee_k)$ in~$V$ satisfying
\[
\vol(\ee_1,\dots,\ee_k)=1. 
\]
(Note that in view of~\eqref{eq:two-volume-forms}, the dual basis
$(\ee^*_1,\dots,\ee^*_k)$ in~$V^*$ has the similar property:
\[
\vol^*(\ee^*_1,\dots,\ee^*_k)=1.)
\]
After a standard basis has been chosen, 
tensors of type $(a,b)$ become $(a+b)$-dimensional arrays of complex scalars
(the \emph{components} of a tensor) 
indexed by tuples of $a$ ``row indices'' 
and $b$ ``column indices;'' 
each index takes values from 1 to~$k$. 
In particular, vectors become column vectors (with $k$ components),
covectors become row vectors,
tensors of type~$(1,1)$ become $k\times k$ matrices, and so on. 
The trace becomes the usual trace of a matrix.
The evaluation of a volume tensor at a $k$-tuple of column vectors
is given by the determinant of a square matrix formed by placing them
side-by-side
in the given order. 
The contraction of a tensor with respect to a row index and a column index is
obtained
by summing over all assignments of matching values to these two indices. 

In coordinate notation described above, the identity tensor is given by 
the identity matrix $(\delta_{ij})$, also known as 
the \emph{Kronecker symbol}.  
In \emph{Einstein notation}, commonly used in physics, the row indices are
placed as superscripts while column indices appear as subscripts; 
one then writes the Kronecker symbol as~$\delta^i_j\,$. 

The volume tensor on $V$ is given in the coordinates associated with the
standard
basis as the \emph{Levi-Civita symbol} $(\varepsilon_{i_1,\dots,i_k})$ 
defined by
\[
 \varepsilon_{i_1,\dots,i_k}
=\begin{cases}
  \text{the sign of the permutation $(i_1,\dots,i_k)$} & \text{if 
$i_1,\dots,i_k$ are all distinct;}\\
0 & \text{otherwise. }
 \end{cases}
\]
The dual volume tensor on $V^*$ is given by the same symbol
(denoted $(\varepsilon^{i_1,\dots,i_k})$ in Einstein notation). 

\section{
$\SL(V)$ invariants}
\label{sec:rings-of-invariants}

The special linear group $\SL(V)$ naturally acts 
on both~$V$ and~$V^*$;
to make the latter a left action, one sets $(gu^*)(v)=u^*(g^{-1}(v))$, 
for $v\in V$, $u^*\in V^*$, and $g\in\SL(V)$. 
The group $\SL(V)$ consequently acts on the vector space 
\begin{equation}
\label{eq:product-of-spaces}
(V^*)^a\times V^b
=\underbrace{V^*\times \cdots \times V^*}_\text{$a$ copies} \times 
\underbrace{V\times \cdots \times V}_\text{$b$ copies}\,, 
\end{equation}
and therefore on its coordinate (polynomial) ring. 

In this paper, we focus our attention 
on the ring 
\[
R_{a,b}(V)=\CC[(V^*)^a\times V^b]^{\SL(V)}
\]
of $\SL(V)$-invariant polynomials on $(V^*)^a\times V^b$. 
In coordinate notation, this ring is described as follows.
Consider two matrices filled with indeterminates:
the matrix $y\!=\!(y_{ij})$ of size $a\!\times\! k$ and
the matrix $x\!=\!(x_{ij})$ of size $k\!\times\! b$. 
The ring $R_{a,b}(V)$ consists of polynomials in these $(a\!+\!b)k$
variables which are invariant under the transformation that
simultaneously replaces $x$ by $gx$, and $y$ by $yg^{-1}$,
for any matrix $g\in\SL_k(\CC)$. 
One example of such a polynomial is a \emph{Pl\"ucker coordinate}~$P_J$
where $J$ is a $k$-element subset of columns in~$x$;
by definition, $P_J$ is the $k\times k$ minor of~$x$ obtained by
selecting the columns in~$J$. 
(In coordinate-free terms, we evaluate the volume tensor
on the $k$-element subset, labeled by~$J$, 
of the last $b$ factors in~\eqref{eq:product-of-spaces}.)
One similarly defines the ``dual'' Pl\"ucker coordinate~$P_I^*$,
for $I$ a $k$-element subset of rows,
as the corresponding maximal minor of~$y$. 
Another example of an $\SL(V)$ invariant is a bilinear
form $Q_{ij}=\sum_\ell y_{i \ell} x_{\ell j} $  
where $i$ is a row index for~$y$, and $j$ a column index for~$x$;
thus, this invariant is obtained by pairing a particular
$V^*$ factor in~\eqref{eq:product-of-spaces} with a~$V$ factor. 

The ring $R_{a,b}(V)$ is one of the archetypal objects of classical 
invariant theory; see, e.g., \cite[Chapter~2]{dolgachev}, 
\cite{kraft-procesi, li, olver}, 
\hbox{\cite[\S9]{popov-vinberg}},
\cite[Section~11]{procesi-textbook},
\cite{stur, weyl}. 
It was explicitly described by Hermann Weyl~\cite{weyl} 
in the 1930s in terms of generators and relations. 
The First Fundamental Theorem of invariant theory states that 
the ring $R_{a,b}(V)$ is generated by the following multilinear
polynomials (tensors):
\begin{itemize}
\item
the $\binom{a}{k}$ (``dual'') Pl\"ucker coordinates~$P_I^*$, 
\item
the $\binom{b}{k}$ Pl\"ucker coordinates~$P_J$, and 
\item
the $ab$ pairings $Q_{ij}$\,. 
\end{itemize}
Thus, in a sense, all $\SL(V)$ invariants are
obtained from three fundamental ones:
the volume tensors on $V$ and~$V^*$, and the identity tensor. 

Weyl also described the ideal of relations among the generators
$P_I^*$, $P_J$, and~$Q_{ij}$. 
His Second Fundamental Theorem, which we will not rely upon,
states that this ideal is generated by the quadratic 
\emph{Grassmann-Pl\"ucker relations} for the~$P_I^*$'s, 
the similar relations for the~$P_J$'s, 
and the non-homogeneous (for $k\neq 2$) relations 
$
P_I^* P_J=\det(Q_{ij})_{i\in I, j\in J}
$
(cf.~\eqref{eq:two-volume-forms}).

The First Fundamental Theorem implies that for $b \geq k$ the ring $R_{0,b}(V)$ 
is isomorphic to
the homogeneous coordinate ring $\CC[\operatorname{Gr}_{k,b}]$ of the 
Grassmann manifold $\operatorname{Gr}_{k,b}$ of $k$-dimensional subspaces
in~$\CC^b$ with respect to its Pl\"ucker
embedding (see, e.g., \cite[Corollary~2.3]{dolgachev}). 
It is not hard to show that $R_{1,b}(V)$ is isomorphic to the
  homogeneous coordinate ring of the two-step partial flag manifold 
\[
\{
(V_1,V_k) : V_1\in\operatorname{Gr}_{1,b}, V_k\in\operatorname{Gr}_{k,b}
\}. 
\]
We do not know a similar interpretation of $R_{a,b}(V)$ for
  general values of $a$ and~$b$. 

In our discussions of the spaces $R_{a,b}(V)$,
it will be important to distinguish between their incarnations that
involve different orderings of the contravariant and covariant
arguments. 
To this end, we will use the notion of a \emph{signature}, 
a word in the alphabet $\{\bullet,\circ\}$ that reflects the
order of the $a+b$ arguments of an invariant~$f$. 
Specifically, each of the $a$ factors $V^*$ will be represented by the
symbol~$\circ$, and each of the $b$ factors $V$ by the
symbol~$\bullet$. 
Thus for example an $SL(V)$-invariant polynomial 
\[
f: V^*\times V\times V\times V^* \to \CC
\]
is an invariant of signature $\sigma=[\circ\bullet\bullet\,\circ]$.
We denote by $R_\sigma(V)$ the ring of $\SL(V)$ invariants of
signature~$\sigma$. 
If $\sigma$ consists of $a$ copies of~$\circ$ and $b$ copies of~$\bullet$, 
we say that it has \emph{type} $(a,b)$. 
Thus for $\sigma$ of type~$(a,b)$, we have
$R_\sigma(V)\cong R_{a,b}(V)$. 

The natural action of the $(a+b)$-dimensional torus defines 
a multi-grading of the ring $R_\sigma(V)\cong R_{a,b}(V)$. 
If an invariant~$f\in R_\sigma(V)$ is multi-homogeneous of degrees 
$d_1,\dots,d_{a+b}$ in its $a+b$ arguments,
we then call the tuple
\begin{equation}
\label{eq:multideg}
\operatorname{multideg}(f)=
(d_1,\dots,d_{a+b})
\end{equation}
the \emph{multidegree} of~$f$. 
For example, 
the Pl\"ucker coordinate $P_{\{1,\dots,k\}}\in R_{0,b}(V)$ 
has multidegree $(1,\dots,1,0,\dots,0)$ (the first $k$ entries are~1's). 

There are known combinatorial formulas 
for the dimensions of multigraded components of the ring~$R_{a,b}(V)$
in terms of the iterated 
Littlewood-Richardson Rule. 
In some cases, more explicit formulas can be given. 
For example, the multilinear component of $R_{0,b}(V)$
(i.e., the space of $\SL_k$-invariant $b$-covariant 
tensors in~$\CC^k$) is only nontrivial when $k$ divides~$b$;
if it does, then the dimension of this space is equal to the number of standard Young
tableaux of rectangular shape $k\times \frac{b}{k}$. 
As such, this dimension is given by the hooklength formula. 

Being a subring of a polynomial ring, $R_{a,b}(V)$ is a domain. 
By~(the modern version of) Hilbert's theorem, 
it is finitely generated; as mentioned above, 
H.~Weyl~\cite{weyl} described the generators explicitly.  
Finally,  the ring $R_{a,b}(V)$ is factorial: 

\begin{lemma}[see \cite{popov-vinberg}, Theorem~3.17]
\label{lem:properties-of-RabV}
The algebra of invariants $R_{a,b}(V)$ is a finitely
generated unique factorization domain. 
\end{lemma}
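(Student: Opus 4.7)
The plan is to establish the three assertions—integrality, finite generation, and factoriality—separately, invoking standard results from invariant theory for reductive groups. Integrality is free: $R_{a,b}(V) \subset \CC[(V^*)^a\times V^b]$ is a subring of a polynomial ring and hence a domain. Finite generation is immediate from Weyl's First Fundamental Theorem recalled just above the statement, which furnishes the explicit generating set $\{P_I^*,P_J,Q_{ij}\}$; alternatively one may cite Hilbert's finiteness theorem, since $\SL(V)$ is linearly reductive in characteristic zero and acts rationally on the finite-dimensional vector space in~\eqref{eq:product-of-spaces}.

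The substantive content is factoriality. Here I would apply the following general principle, going back to Popov: if $G$ is a connected linear algebraic group with trivial character group $X(G)=\operatorname{Hom}(G,\CC^*)=1$ acting rationally on an affine variety $X$ whose coordinate ring $\CC[X]$ is a UFD, then $\CC[X]^G$ is also a UFD. To check the hypotheses for $G=\SL(V)$: the group is connected and perfect (equal to its commutator subgroup), so any algebraic homomorphism $\SL(V)\to\CC^*$ factors through the trivial group, giving $X(\SL(V))=1$; meanwhile $\CC[(V^*)^a\times V^b]$ is a polynomial ring, hence factorial.

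The heart of Popov's principle, which I would reproduce briefly, runs as follows. Let $f\in R_{a,b}(V)$ be nonzero and non-unit. Factor $f=f_1\cdots f_n$ in the ambient UFD with the $f_i$ irreducible. For each $g\in\SL(V)$, applying $g$ to this factorization and using uniqueness of factorization in $\CC[(V^*)^a\times V^b]$ yields a permutation of the $f_i$ up to scalar multiples. Since $\SL(V)$ is connected and these permutations depend algebraically on $g$, the permutation must be trivial, so $g\cdot f_i = \chi_i(g)\,f_i$ for a rational character $\chi_i\colon \SL(V)\to\CC^*$. Triviality of $X(\SL(V))$ forces $\chi_i=1$, hence each $f_i\in R_{a,b}(V)$. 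The same bookkeeping applied to two factorizations in $R_{a,b}(V)$ yields uniqueness, completing the proof.

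The main obstacle the argument has to overcome is that invariant rings of arbitrary reductive groups need not be factorial—typically, rings of $\operatorname{GL}$-invariants fail to be UFDs, because the determinant provides a nontrivial character that lets irreducible factors of an invariant be merely semi-invariants. The key special feature of $\SL(V)$ exploited above is precisely the absence of such characters, which is what makes Popov's lemma applicable and delivers the conclusion.
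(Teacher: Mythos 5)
The paper does not prove this lemma---it cites Popov--Vinberg, Theorem~3.17, which is exactly the statement (for a connected group without nontrivial characters acting on a factorial affine variety, the invariant ring is factorial) that your argument reconstructs. Your proof is correct and takes the same intended route: finite generation from the First Fundamental Theorem (or Hilbert, via linear reductivity of $\SL(V)$ in characteristic zero), integrality because $R_{a,b}(V)$ sits inside a polynomial ring, and factoriality via the standard semi-invariant/character argument, with connectedness of $\SL(V)$ forcing the permutation of irreducible factors to be trivial and perfectness of $\SL(V)$ killing the resulting characters.
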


\pagebreak[3]

\section{Cluster algebras
}
\label{sec:cluster-algebras}

This section offers a quick 
introduction to cluster
algebras, limited in scope to our immediate needs.
Further details 
can be found
in~\cite{ca1, ca2, ca4}. 
The only non-standard material in this section is 
Proposition~\ref{prop:cluster-criterion} 
(and its Corollary~\ref{cor:cluster-criterion})
which provides a test for verifying that
a given set of algebraic/combinatorial data inside a commutative ring 
makes the latter into a cluster algebra. 

Cluster algebras are a class of commutative rings endowed with a 
combinatorial structure of a particular kind.
The concept can be defined in varying degrees of \linebreak[3]
generality. In this paper, 
we will only need the notion of a cluster algebra which 
(i)~is of \emph{geometric type}, (ii)~has $\CC$ as its field of scalars,
and (iii)~has \emph{skew-symmetric exchange matrices}. 
To simplify terminology, we call such gadgets ``cluster algebras''
without further qualifications. 

The combinatorial data defining a cluster algebra are encoded in a
\emph{quiver}~$Q$, a finite oriented loopless graph
with no oriented 2-cycles. 
Some vertices of $Q$ are designated as
\emph{mutable}; 
the remaining ones are called \emph{frozen}. 

\begin{definition} 

Let $z$ be a mutable vertex in a quiver~$Q$.
The \emph{quiver mutation} $\mu_z$ transforms $Q$ into the new
quiver~$Q'=\mu_z(Q)$ via a sequence of three steps.
At the first step, for each pair of directed edges $x\to z\to y$
passing through~$z$, introduce a new edge $x\to y$ (unless both
$x$ and~$y$ are frozen, in which case do nothing). 
At the second step, reverse the direction of all edges incident
to~$z$. 
At the third step, repeatedly remove oriented 2-cycles
until unable to do~so. See Figure~\ref{fig:quiver-mutation}. 
\end{definition}

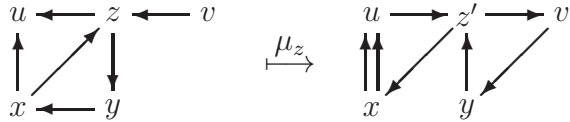
\begin{figure}[ht]
\begin{center}
\setlength{\unitlength}{1.8pt} 
\begin{picture}(40,15)(0,0) 

\put( 0,0){\makebox(0,0){$x$}}
\put(20,0){\makebox(0,0){$y$}}

\put(0,20){\makebox(0,0){$u$}}
\put(20,20){\makebox(0,0){$z$}}
\put(40,20){\makebox(0,0){$v$}}

\thicklines 

\put(16,0){\vector(-1,0){12}}
\put(16,20){\vector(-1,0){12}}
\put(36,20){\vector(-1,0){12}}

\put(20,16){\vector(0,-1){12}}
\put(0,4){\vector(0,1){12}}

\put(3,3){\vector(1,1){14}}

\end{picture}
\begin{picture}(30,15)(0,0) 
\put(15,12){\makebox(0,0){$\stackrel{\displaystyle\mu_z}{\longmapsto}$}}
\end{picture}
\begin{picture}(40,15)(0,0) 

\put( 0,0){\makebox(0,0){$x$}}
\put(20,0){\makebox(0,0){$y$}}

\put(0,20){\makebox(0,0){$u$}}
\put(20,20){\makebox(0,0){$z'$}}
\put(40,20){\makebox(0,0){$v$}}

\thicklines

\put(4,20){\vector(1,0){12}}
\put(24,20){\vector(1,0){12}}

\put(-1.3,4){\vector(0,1){12}}
\put(1.3,4){\vector(0,1){12}}
\put(20,4){\vector(0,1){12}}

\put(17,17){\vector(-1,-1){14}}
\put(37,17){\vector(-1,-1){14}}

\end{picture}
\end{center}
\caption{A quiver mutation. Vertices $u$ and $v$ are frozen.}
\label{fig:quiver-mutation}
\end{figure}


Quiver mutations can be iterated \emph{ad infinitum}, 
using an arbitrary sequence of mutable vertices of an evolving
quiver. 
This combinatorial dynamics drives the algebraic
dynamics of \emph{seed mutations} that we describe next. 

\begin{definition} 
\label{def:seeds}

Let~$\Fcal$ be a field containing~$\CC$. 
A \emph{seed} in~$\Fcal$ is a pair $(Q,\zz)$ 
consisting of a quiver~$Q$ as above together with 
a collection~$\zz$, called an \emph{extended cluster}, 
consisting of algebraically independent (over~$\CC$) 
elements of~$\Fcal$, one for each vertex of~$Q$. 
The elements of~$\zz$ associated with the mutable
vertices are called \emph{cluster variables}; they form a
\emph{cluster}. 
The elements associated with the frozen vertices are called
\emph{frozen variables}, or \emph{coefficient variables}.

A~\emph{seed mutation}~$\mu_z$ at a mutable vertex associated with 
a cluster variable~$z$
transforms $(Q,\zz)$ into the seed $(Q',\zz')=\mu_z(Q,\zz)$ defined as
follows. 
The new quiver is $Q'=\mu_z(Q).$ 
The new extended cluster is 
$\zz'\!=\!\zz\cup\{z'\}\setminus\{z\}$ 
where the new cluster variable~$z'$ replacing $z$ is determined by the 
\emph{exchange relation}
\begin{equation}
\label{eq:exchange-relation}
z\,z'=
\prod_{z\leftarrow y} y
+\prod_{z\rightarrow y
} y\,.
\end{equation}
(The two products are over the edges directed at and from~$z$, respectively.) 
\end{definition}

To illustrate, the exchange relation associated with the quiver mutation
$\mu_z$ shown in Figure~\ref{fig:quiver-mutation} is $zz'=vx+uy$, 
while applying $\mu_x$ to the quiver on the right would invoke the
exchange relation~$xx'=z'+u^2$. 

We note that the mutated seed $(Q',\zz')$ contains the same
coefficient variables as the original seed $(Q,\zz)$. 

It is easy to check that one can recover $(Q,\zz)$
from $(Q',\zz')$ by performing a seed mutation at~$z'$. 

Definition~\ref{def:seeds} deviates from the usual
convention of using the field of rational functions~$\CC(\zz)$
as the ambient field~$\Fcal$.
This distinction is inconsequential since, as we shall see,
all the action is taking place inside~$\CC(\zz)$. 

\begin{definition}[\emph{Cluster algebra}]
\label{def:cluster-algebra}
Two seeds that can be obtained from each other by a sequence of
mutations are called \emph{mutation equivalent}. 
The \emph{cluster algebra $\Acal(Q,\zz)$} associated to a seed
$(Q,\zz)$ is defined as the subring of $\Fcal$
generated by 
all elements of all extended clusters of the seeds mutation equivalent
to~$(Q,\zz)$. 
%
\end{definition}

Thus, to construct a cluster algebra, one begins with an arbitrary 
\emph{initial seed} $(Q,\zz)$ in~$\Fcal$, repeatedly applies
seed mutations in all possible directions, and takes the $\CC$-subalgebra
generated by all elements of~$\Fcal$ appearing in all seeds produced by this
recursive process. 

Note that the above construction of $\Acal(Q,\zz)$ does not depend, up
to a natural isomorphism, on the choice of the extended cluster~$\zz$:
everything, save for the embedding of the resulting cluster
algebra into~$\Fcal$, is determined by the initial quiver~$Q$,
and indeed by its mutation equivalence class. 

The \emph{rank} of a cluster algebra $\Acal(Q,\zz)$ 
is the number of cluster variables in each of its seeds, 
or equivalently the number of mutable vertices in each of its
quivers. 

When a quiver $Q$ undergoes a mutation, 
its mutable part (that is, the induced subquiver on the set of mutable
vertices) does so, too. 
Thus the mutable parts of the quivers at different seeds of a given
cluster algebra~$\Acal$ are all mutation equivalent. 
This mutation equivalence class determines the (cluster) \emph{type}
of~$\Acal$. 

The same ring can be endowed with cluster structures of different
type; see, e.g., \cite[Example~12.10]{ca2}.
Many more examples of this kind appear later in this paper. 

A cluster algebra is said to be of \emph{finite type} if it has
finitely many distinct seeds. 
One of the basic structural results of cluster theory is the 
  \emph{finite type classification}: 

\begin{theorem}
[\textrm{\!\!\cite{ca2}}]
\label{th:finite-type}
A cluster algebra is of finite type if and only if it
has a quiver whose mutable part is an orientation of a disjoint union
of Dynkin diagrams. 
\end{theorem}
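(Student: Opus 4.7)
The plan is to treat the two implications separately, after a preliminary reduction that whether $\Acal(Q,\zz)$ is of finite type depends only on the mutation class of the \emph{mutable} part of the quiver. Indeed, coefficient vertices only contribute monomial factors to the right-hand side of the exchange relation \eqref{eq:exchange-relation}, so two seeds of $\Acal$ are distinct precisely when their mutable quivers together with the restricted clusters are distinct. Thus we may as well work with a quiver all of whose vertices are mutable, and ``finite type'' becomes a condition on the number of distinct cluster variables produced by iterated mutation.

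For the ``if'' direction, assume that some quiver in the mutation class is an orientation of a disjoint union of simply-laced Dynkin diagrams of types $A$, $D$, $E$. I would construct a bijection between the cluster variables of~$\Acal$ and the \emph{almost positive roots} of the corresponding root system $\Phi$. Using the Laurent phenomenon, each non-initial cluster variable $x$ can be written uniquely as $P(x_1,\dots,x_n)/x_1^{d_1}\cdots x_n^{d_n}$ with $P$ not divisible by any~$x_i$, and the key assertion is that the exponent vector $(d_1,\dots,d_n)$ is a positive root of~$\Phi$. Combined with the negative simple roots assigned to the initial cluster variables, this yields the desired bijection with $\Phi_{\ge -1}$. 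A compatibility relation on almost positive roots, defined combinatorially via rotation by a Coxeter-like operator, then identifies clusters with maximal simplices of the cluster complex; since $\Phi$ is finite, so is the complex, and $\Acal$ has finite type.

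For the ``only if'' direction, I would argue by contrapositive. To each quiver~$R$ in the mutation class, associate its \emph{Cartan counterpart}~$A(R)$, the symmetric integer matrix with $2$'s on the diagonal and off-diagonal entries $-|r_{ij}|$, where $r_{ij}$ denotes the number of edges from $i$ to $j$ in~$R$. A case analysis of a single mutation at a vertex of degree at most~$2$ shows that if $A(R)$ is a positive definite Cartan matrix (equivalently, a Cartan matrix of finite type), then $R$ itself is already an orientation of the corresponding Dynkin diagram. Consequently, if no quiver in the mutation class is Dynkin, then no $A(R)$ is positive definite. The remaining task is to show this forces the mutation class to be infinite: outside the Dynkin case, one must exhibit a mutation sequence that produces quivers whose Cartan counterparts have arbitrarily large off-diagonal entries, or otherwise realize infinitely many pairwise non-isomorphic quivers within the mutation class.

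The hard part will be this last step in the ``only if'' direction, namely classifying the so-called $2$-finite mutation classes of skew-symmetric integer matrices. The delicate regime is the affine case, in which $A(R)$ remains positive semi-definite under every mutation but the mutation class still has to be shown to be infinite; controlling how the null-space of $A(R)$ evolves under mutation, and producing explicit unbounded sequences of mutations for each non-Dynkin root datum, is the technical core of~\cite{ca2} and does not admit a substantially shorter treatment.
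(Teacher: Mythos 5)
This theorem is stated in the paper with the attribution ``[ca2]'' and no proof is given; it is an external citation to Fomin--Zelevinsky, \emph{Cluster algebras~II}, so there is no in-house argument for you to match. Evaluated on its own terms, your outline does follow the general architecture of the original proof: the coefficient-free reduction, the denominator-vector bijection with almost positive roots (together with the cluster-complex picture) for the ``if'' direction, and the Cartan-counterpart / $2$-finiteness machinery for the ``only if'' direction, with the affine regime rightly singled out as the delicate part. One point is muddled, though: the claim that ``a case analysis of a single mutation at a vertex of degree at most~$2$ shows that if $A(R)$ is a positive definite Cartan matrix then $R$ is already an orientation of the corresponding Dynkin diagram'' is not a mutation argument at all --- it is immediate from the definitions, since $A(R)$ positive definite with skew-symmetrizable off-diagonal entries in $\{0,-1,-2,\dots\}$ already forces the underlying graph of $R$ to be a disjoint union of simply-laced Dynkin diagrams, hence $R$ is an orientation of one. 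The genuinely nontrivial content on that side is rather that (a) every $2$-finite mutation class contains a quiver with positive definite Cartan counterpart, and (b) a class that is not $2$-finite contains a seed exhibiting a ``wild'' rank-$2$ subalgebra and hence infinitely many cluster variables; your final paragraph gestures at (b) but not (a). You are also right that the ``if'' direction hides substantial work (Zamolodchikov periodicity / the verification that denominator vectors are positive roots), which your sketch names but does not supply. As a roadmap to [ca2] the proposal is serviceable; as a self-contained proof it is, as you acknowledge, far from complete.
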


Thus the terminology is consistent: 
the property of being of finite type depends only on the
cluster type of a cluster algebra. 

If a cluster algebra~$\Acal$ (say of rank~$n$) 
has a quiver whose mutable part is an orientation of a 
Dynkin diagram~$X_n$, then we say that $\Acal$ is ``of type~$X_n$.'' 
This convention applies as well to the extended Dynkin diagrams of 
affine and elliptic (or extended affine) types; 
see, e.g., \cite[Section~12]{cats1} for further details. 
It is worth noting that all orientations of any such diagram $X_n$ 
are mutation equivalent whenever $X_n$ is a tree (thus in particular
in finite type). 
Furthermore, orientations of non-isomorphic 
diagrams are mutation inequivalent. 

Another basic property of cluster algebras is the 
\emph{Laurent Phenomenon}: 

\begin{theorem}
[\textrm{\!\!\cite{ca1, ca2}}]
\label{th:laurent}
Any element of a cluster algebra $\Acal(Q,\zz)$ is expressed in terms of the
extended cluster~$\zz$ as a Laurent polynomial. 
Furthermore, none of the coefficient variables appears in the
denominator of this Laurent expression (reduced to lowest terms). 
\end{theorem}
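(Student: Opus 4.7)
The plan is to fix the initial seed $(Q_0,\zz_0)$ and prove by induction on the length of a mutation sequence issuing from it that every cluster variable in any mutation-equivalent seed lies in $\CC[\zz_0^{\pm 1}]$, with no coefficient variable appearing in the denominator when the Laurent expression is reduced to lowest terms. The base case is immediate, since elements of $\zz_0$ are trivially Laurent monomials in $\zz_0$. For the first mutation $\mu_z$ applied to $(Q_0,\zz_0)$, the exchange relation $zz'=M_{+}+M_{-}$ of~\eqref{eq:exchange-relation} gives $z'=(M_{+}+M_{-})/z$, which is visibly a Laurent polynomial in~$\zz_0$ whose only possible denominator factor is the cluster variable~$z$.

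The substantive issue arises at greater mutation distances. Suppose inductively that every element of a current extended cluster $\zz=\mu_{i_r}\!\cdots\!\mu_{i_1}(\zz_0)$ is a Laurent polynomial in~$\zz_0$. Applying one more mutation produces a new cluster variable $z'$ satisfying $zz'=M_{+}+M_{-}$, where $M_{\pm}$ are monomials in $\zz\setminus\{z\}$ together with coefficient variables. The right-hand side is automatically a Laurent polynomial in~$\zz_0$, but dividing by~$z$ need not preserve the Laurent property; we need $z$, viewed inside $\CC[\zz_0^{\pm 1}]$, to divide $M_{+}+M_{-}$. This divisibility is the main obstacle, and it cannot be deduced from a single exchange relation in isolation.

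The resolution, following \cite{ca1,ca2}, is to organize the induction along a carefully shaped sequence of mutations (a \emph{caterpillar}) along which the required divisibility follows from a coprimality assertion: at each stage, the cluster variable $z$ being mutated is coprime, in the unique factorization domain $\CC[\zz_0^{\pm 1}]$, to the appropriate partner cluster variable produced a couple of steps earlier. Together with the identity $zz'=M_{+}+M_{-}$, this coprimality forces $z'\in\CC[\zz_0^{\pm 1}]$. Because propagating coprimality also requires an inductive argument, one actually proves a strengthened simultaneous statement that tracks both the Laurent expressions and the relevant mutual coprimalities across the caterpillar; verifying the coprimality step, which rests on the explicit shape of exchange binomials and a careful analysis of common factors, is the delicate part of the argument.

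For the final clause, the key observation is that coefficient variables are never mutated: they never appear as the left-hand side of any exchange relation, but only as monomial factors inside $M_{+}$ or $M_{-}$ on the right-hand side. Inductively, the only denominators introduced in a Laurent expression for a cluster variable come from the mutated variable $z$ on the left-hand side of an exchange relation invoked along the mutation sequence. Consequently, after reducing to lowest terms, coefficient variables can appear only in numerators, which is the desired statement.
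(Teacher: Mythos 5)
Your sketch of the Laurentness clause is accurate: it is the caterpillar-lemma argument of Fomin--Zelevinsky, which the paper cites as \cite[Theorem~3.1]{ca1}. The simultaneous induction on Laurentness and mutual coprimality along a caterpillar-shaped mutation tree is exactly the substantive content of that proof, and you have identified it correctly. The paper itself does not reprove this statement; it cites it.

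Your argument for the second clause, however, has a genuine gap. You observe that exchange relations only ever divide by mutable cluster variables~$z$, never by coefficient variables, and conclude that coefficient variables therefore never enter denominators. But $z$ is itself a Laurent polynomial in~$\zz_0$ whose numerator, written in lowest terms, is a genuine polynomial rather than a monomial; dividing by~$z$ introduces that polynomial into the denominator, and you must then rule out that this polynomial has a coefficient variable as a factor. Nothing in your reasoning excludes a situation in which $z$ equals a coefficient variable times a Laurent polynomial, in which case dividing by~$z$ would put that coefficient variable into the denominator of~$z'$. The paper is explicit that the second clause is not a formal consequence of the first: it was established in \cite[Proposition~11.2]{ca2} only modulo a technical condition \cite[(11.3)]{ca2}, and removing that condition requires the $F$-polynomial machinery of \cite[Theorem~3.7]{ca4} together with the constant-term-one property of $F$-polynomials \cite[Conjecture~5.4]{ca4}, proved in the skew-symmetric setting in \cite[Theorem~1.7]{dwz}. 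Your observation that coefficient variables never sit on the left side of an exchange is a correct heuristic, but it does not substitute for this structural input.
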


Since $\Acal(Q,\zz)$ is generated by cluster variables from the seeds
mutation equivalent to $(Q,\zz)$, 
Theorem~\ref{th:laurent} can be restated as saying that each of those
cluster variables is given by a Laurent expression of the
aforementioned kind. 

The Laurentness part of Theorem~\ref{th:laurent} is a special case
of \cite[Theorem~3.1]{ca1}. 
The last statement of Theorem~\ref{th:laurent} 
was established in \cite[Proposition~11.2]{ca2},
modulo a technical condition \cite[(11.3)]{ca2}
which is in fact satisfied in all applications appearing in this
paper.
This condition can be removed using 
\cite[Theorem~3.7]{ca4} in combination with 
\cite[Conjecture~5.4]{ca4}; the latter was proved (in the generality
considered in the current paper) in \cite[Theorem~1.7]{dwz}. 

Many important rings arising in Lie theory 
possess a natural structure of a cluster algebra. 
The list includes homogeneous coordinate rings of 
partial flag manifolds, Schubert varieties,
and double Bruhat cells in semisimple Lie groups;  
see, e.g., 
\cite{ca3, cdm, gls-survey, keller-bourbaki,leclerc-icm}.

Of particular importance to us is the example of 
a Grassmannian~$\operatorname{Gr}_{k,b}$.
As mentioned earlier, its homogeneous
coordinate ring (with respect to the Pl\"ucker embedding) is canonically
isomorphic to the ring $R_{0,b}(\CC^k)$ 
of $\SL_k$-invariants of configurations of $b$ vectors in a
$k$-dimensional vector space. 
This ring has a natural cluster algebra structure,
first defined in J.~Scott's Ph.D.\ thesis~\cite{scott} 
(cf.\ also~\cite{gsv1, gsv-book}).
It is important to note that this cluster structure 
on $R_{0,b}(V)$ depends on the choice of 
a cyclic ordering of the $b$ vectors;
choosing a different ordering results in a different (albeit
isomorphic) cluster structure. 

\medskip

Any cluster algebra, being a subring of a field, is an integral
domain (and under our conventions, a \hbox{$\CC$-algebra}). 
Conversely, given such a domain~$R$, one may be interested in
identifying $R$ as a cluster algebra. 
As an ambient field~$\Fcal$, 
we can always use the quotient field~$\QF(R)$. 
The challenge is to find a seed $(Q,\zz)$ in $\QF(R)$ such
that $\Acal(Q,\zz)=R$. 
We next present a set of conditions that are sufficient to ensure
that a particular choice of a seed solves this
problem. 


Recall that an integral domain $R$ is called \emph{normal} if it is
integrally closed in~$\QF(R)$. 

\begin{proposition}
\label{prop:cluster-criterion}
Let $R$ be a finitely generated $\CC$-algebra and a normal domain.
Let~$(Q,\zz)$ be a seed in $\QF(R)$ satisfying the following
conditions:
\begin{itemize}
\item
all elements of $\zz$ belong to~$R$; 
\item
the cluster variables in $\zz$ are pairwise coprime (in~$R$); 
\item
for each cluster variable $z\in\zz$, the seed mutation~$\mu_z$
replaces $z$ with an element~$z'$ 
(cf.~\eqref{eq:exchange-relation})
that lies in $R$ and is coprime to~$z$. 
\end{itemize}
(Here we call two elements of $R$ coprime if the locus of their common zeros 
has codimension~$\ge 2$ in $\operatorname{Spec}(R)$.)
Then $R\supset\Acal(Q,\zz)$.

If, in addition, $R$ has a set of generators each of which 
appears in the seeds mutation equivalent to $(Q,\zz)$, then 
$R=\Acal(Q,\zz)$. 
\end{proposition}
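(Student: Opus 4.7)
The plan is to first establish $\Acal(Q,\zz)\subset R$, after which the second assertion is immediate: if every generator of~$R$ belongs to some seed mutation-equivalent to $(Q,\zz)$, then every generator lies in $\Acal(Q,\zz)$, giving the reverse inclusion. For the main inclusion, note that the coefficient variables are common to all mutation-equivalent seeds and lie in~$R$ by hypothesis, so it suffices to show that every cluster variable~$x$ occurring in some seed mutation-equivalent to $(Q,\zz)$ lies in~$R$. Here I would invoke normality: since $R$ is a Noetherian normal domain, $R=\bigcap_\mathfrak{p} R_\mathfrak{p}$ with the intersection taken over all height-one primes~$\mathfrak{p}$ of~$R$, each $R_\mathfrak{p}$ being a discrete valuation ring with valuation~$v_\mathfrak{p}$. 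Thus it is enough to verify $v_\mathfrak{p}(x)\ge 0$ for each such~$\mathfrak{p}$.

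Fix~$x$ and~$\mathfrak{p}$, and write $\zz=\{z_1,\dots,z_n,f_1,\dots,f_m\}$ with the~$z_i$ the cluster variables and the~$f_j$ the coefficient variables. By the Laurent Phenomenon (Theorem~\ref{th:laurent}), $x$ equals a Laurent polynomial over~$\CC$ in the~$z_i$ and~$f_j$ in which no~$f_j$ occurs in the denominator; its numerator is therefore an element of $\CC[z_1,\dots,z_n,f_1,\dots,f_m]\subset R$. If $\mathfrak{p}$ contains none of $z_1,\dots,z_n$, then each $z_i$ is a unit in~$R_\mathfrak{p}$ and this expression exhibits~$x$ as an element of~$R_\mathfrak{p}$. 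Otherwise, the pairwise coprimality of the cluster variables in~$\zz$ forces $\mathfrak{p}$ to contain exactly one of the~$z_i$, say~$z_1$. I would then pass to the mutated seed $(Q',\zz')=\mu_{z_1}(Q,\zz)$, whose extended cluster is $\{z_1',z_2,\dots,z_n,f_1,\dots,f_m\}$. By hypothesis $z_1'\in R$ and is coprime to~$z_1$, so $z_1'\notin\mathfrak{p}$; the variables $z_2,\dots,z_n$ were already excluded from~$\mathfrak{p}$. Applying the Laurent Phenomenon inside $\Acal(Q',\zz')=\Acal(Q,\zz)$, the resulting expression of~$x$ as a Laurent polynomial in~$\zz'$ has a denominator that is a unit in~$R_\mathfrak{p}$, whence $x\in R_\mathfrak{p}$.

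The main obstacle in this plan is the coprimality-to-prime-avoidance step, but this is precisely where the codimension formulation of coprimality enters via Krull's Hauptidealsatz: if a height-one prime $\mathfrak{p}$ contained both~$z_i$ and~$z_j$ for some $i\neq j$, then the irreducible subvariety $V(\mathfrak{p})\subset\operatorname{Spec}(R)$ would lie inside $V(z_i)\cap V(z_j)$ as a codimension-one component, contradicting the hypothesis that the common zero locus has codimension at least two. Everything else is a careful but routine combination of Theorem~\ref{th:laurent}, the three hypotheses on~$(Q,\zz)$, and the standard fact that a Noetherian normal domain equals the intersection of its localizations at height-one primes.
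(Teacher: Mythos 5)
Your proof is correct and takes essentially the same approach as the paper: both reduce to showing the ``distant'' cluster variable $\bar z$ is regular in codimension one and then invoke normality to conclude $\bar z\in R$, using the coprimality hypotheses to ensure that at each height-one prime (equivalently, each point outside a codimension-$\ge 2$ set) either the original seed or one mutation-adjacent seed has all cluster variables invertible, so that the Laurent Phenomenon applies. The only difference is packaging---you phrase normality as $R=\bigcap_{\mathfrak p}R_{\mathfrak p}$ over height-one primes, while the paper phrases it as the algebraic Hartogs principle on $\operatorname{Spec}(R)$---but these are equivalent statements and the underlying argument is identical.
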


Proposition~\ref{prop:cluster-criterion} (and 
its proof given below) extends some of the ideas used in \linebreak[3]
\cite[Proof of Theorem~2.10]{ca3},
and subsequently in \cite[Proof of Proposition~7]{scott}. 

The argument given below actually establishes a stronger statement:
under the conditions of Proposition~\ref{prop:cluster-criterion},
the ring~$R$ contains the \emph{upper cluster algebra}
$\overline\Acal(Q,\zz)$ (see~\cite{ca3}),
or more precisely the subalgebra of  $\QF(R)$ consisting of the elements
which, when expressed in terms of any extended cluster,
are Laurent polynomials in the cluster variables and 
ordinary polynomials in the coefficient variables. 
In particular, the conditions of
Proposition~\ref{prop:cluster-criterion} imply that in this case,
the upper cluster algebra coincides with the ordinary cluster algebra
(localized at coefficient variables). 

\begin{proof}
The last implication is clear since its premise 
ensures that $R\subset\Acal(Q,\zz)$. 

To prove that $R\supset\Acal(Q,\zz)$,
we need to show that each cluster
variable $\bar z$ from any seed $(\bar Q,\bar \zz)$ 
mutation equivalent to $(Q,\zz)$ 
belongs to~$R$. 
This is done using some basic algebraic geometry.
Our assumptions on the ring $R$ mean that 
it can be identified with the coordinate ring of an
(irreducible) normal affine complex 
algebraic variety $X\!=\!\operatorname{Spec}(R)$. 
Then $\QF(R)\!=\!\CC(X)$, the field of rational functions on~$X$. 
Our goal is to show that each ``distant'' cluster variable~$\bar z$ as
above is not just a rational function in $\CC(X)$ but a regular
function in~$\CC[X]\!=\!R$. 
The key property that we need is the algebraic version of
Hartogs' continuation principle for normal varieties 
(see, e.g., \cite[Chapter~2, 7.1]{danilov}) which asserts that a
function on~$X$ that is regular outside a closed algebraic subset
of codimension~$\ge 2$ is in fact regular everywhere on~$X$. 

Let $Y\subset X$ be the locus where at least two of the cluster
variables in $\zz$ vanish, or else one of them, say~$z$, vanishes
together with the element~$z'$ defined by~\eqref{eq:exchange-relation}. 
By the coprimeness conditions imposed on the seed~$(Q,\zz)$, 
the codimension of $Y$ in~$X$ is~$2$. \linebreak[3]
The complement $X\setminus Y$ consists of the points $x\in X$ such
that
\begin{itemize}
\item
at most one of the cluster variables in $\zz$ vanishes at~$x$, and
\item
for each pair $(z,z')$ as above, either $z$ or $z'$ does not vanish
at~$x$. 
\end{itemize}
This means that there is a seed $(Q',\zz')$ 
(either the original seed $(Q,\zz)$ or one of the adjacent seeds 
$\mu_z(Q,\zz)$) none of whose cluster variables vanishes at~$x$;  
moreover $\zz'\subset\CC[X]$. 
Then the Laurent Phenomenon (Theorem~\ref{th:laurent}) implies that 
our distant cluster variable $\bar z$ is regular at~$x$. 
By the algebraic Hartogs' principle mentioned above, 
it follows that $\bar z$ is regular on~$X$, and
we are done. 
\end{proof}

\begin{corollary}
\label{cor:cluster-criterion}
Let $R$ be a finitely generated unique factorization domain over~$\CC$.
Let~$(Q,\zz)$ be a seed in the quotient field of $R$ 
such that 
all elements of $\zz$ and all elements of clusters adjacent to~$\zz$ 
are irreducible elements of~$R$. 
Then $R\supset\Acal(Q,\zz)$. 

If, in addition, the union of extended clusters in $\Acal(Q,\zz)$ 
contains a generating set for~$R$, then $R=\Acal(Q,\zz)$. 
\end{corollary}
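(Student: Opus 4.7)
The plan is to deduce the corollary from Proposition~\ref{prop:cluster-criterion}, checking each of its hypotheses in turn. First, the fact that $R$ is a UFD immediately gives the normality assumption: any UFD is integrally closed in its quotient field, by the standard argument that writes an alleged integral relation in lowest terms and uses unique factorization. Since $R$ is finitely generated over~$\CC$ by assumption, the background hypotheses on~$R$ in Proposition~\ref{prop:cluster-criterion} are met. Moreover, irreducibility of each element of $\zz$ (and of each adjacent cluster) forces those elements to lie in~$R$, so the first bulleted hypothesis of the proposition also holds. It remains only to verify the two coprimeness conditions: pairwise coprimeness within~$\zz$, and coprimeness of each $z \in \zz$ with its mutation~$z'$.

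For this I would use the characterization of coprimeness in a Noetherian UFD. Two elements $p, q \in R$ are coprime in the sense required by Proposition~\ref{prop:cluster-criterion} (i.e., $V(p) \cap V(q) \subset \operatorname{Spec}(R)$ has codimension $\ge 2$) as soon as $\gcd(p,q)$ is a unit: Krull's Hauptidealsatz then says that the ideal $(p,q)$ has height at least~$2$. Applied to two irreducible elements, this reduces coprimeness to non-associateness. Thus the whole problem boils down to showing that any two distinct elements of~$\zz$ are non-associate in~$R$, and that each cluster variable $z \in \zz$ is non-associate to the element~$z'$ delivered by mutation.

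The non-associateness should follow from the algebraic independence built into the definition of a seed. For $z_i \neq z_j$ in~$\zz$, an associate relation $z_i = u z_j$ with $u \in R^\times$ would exhibit $u = z_i/z_j$ as an element of $R$, and thus of its quotient field; since $\{z_i, z_j\}$ is algebraically independent over~$\CC$, this ratio is transcendental, contradicting typical rigidity of units in finitely generated $\CC$-algebras (in the cases of interest in this paper, where $R$ is a ring of invariants sitting inside a polynomial ring, $R^\times = \CC^\times$, and the contradiction is immediate from $\CC$-linear independence). The analogous argument handles $(z, z')$: the exchange relation $z z' = M_+ + M_-$ exhibits $z'$ as $(M_+ + M_-)/z \in \CC(\zz \setminus \{z\} \cup \{z'\})$ and the fact that $\zz \cup \{z'\} \setminus \{z\}$ is again an algebraically independent set prevents $z'$ from being a unit multiple of~$z$.

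Once coprimeness is in hand, Proposition~\ref{prop:cluster-criterion} applies and yields $R \supset \Acal(Q,\zz)$. The second assertion of the corollary is immediate from the second assertion of that proposition, since a generating set for~$R$ contained in the union of extended clusters gives $R \subset \Acal(Q,\zz)$, hence equality. I expect the main obstacle to be the non-associateness step, where one has to rule out subtle unit ambiguities; in the generality stated (an arbitrary finitely generated UFD over~$\CC$), a careful argument using the algebraic independence of~$\zz$ inside $\QF(R)$ is needed, but in every case relevant to the paper the units of~$R$ are scalars and the check is routine.
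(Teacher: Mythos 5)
Your proposal reproduces the paper's argument almost step for step: invoke Proposition~\ref{prop:cluster-criterion}, observe that a UFD is normal, reduce the codimension-$\ge 2$ coprimeness condition to the statement that the relevant pairs of irreducible elements are non-associate (the paper phrases this via primality of irreducibles and distinctness of the corresponding hypersurfaces; your Hauptidealsatz/gcd phrasing is an equivalent packaging), and finally derive non-associateness from the algebraic independence of~$\zz$, resp.\ of~$\mu_z(\zz)$. The second assertion then follows exactly as you say.

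The one point where you go beyond the paper is worth noting: you correctly flag that algebraic independence of $z_i$ and $z_j$ only rules out $z_i = c\,z_j$ with $c\in\CC^\times$, not $z_i = u\,z_j$ with $u$ an arbitrary unit of~$R$, and that this distinction matters in a general finitely generated UFD over~$\CC$ (e.g.\ $\CC[x,x^{-1},y]$ has the associate, algebraically independent, irreducible pair $y,\ xy$). The paper's own proof silently elides this, writing only ``cannot differ by a scalar factor.'' In every application in the paper $R=R_\sigma(V)$ is a positively graded domain with $R_0=\CC$, so $R^\times=\CC^\times$ and the elision is harmless, but as stated the corollary's proof implicitly uses this extra hypothesis. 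You are right to be uneasy about it, and right that it is not an obstacle in the intended scope; it would cost nothing to add the hypothesis $R^\times=\CC^\times$ (or the graded hypothesis) if one wanted the corollary to be literally self-contained.
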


\begin{proof}
Let us check the conditions of Proposition~\ref{prop:cluster-criterion}. 
Any UFD is a normal domain. 
To verify the coprimality conditions,
we first note that two elements of~$\zz$
cannot differ by a scalar factor since they are algebraically independent. 
Similarly, the elements of an exchange pair $(z,z')$ cannot differ
by a scalar factor, or else the exchange relation would give 
an algebraic dependence in~$\zz$. 
Now, an irreducible element in a UFD generates a prime ideal,
so the vanishing locus in $\operatorname{Spec}(R)$ for each of our 
irreducible elements is an (irreducible) subvariety. 
These subvarieties are pairwise distinct (cf.\ above),
so their pairwise intersections have codimensions~$\ge 2$. 
\end{proof}

In view of Lemma~\ref{lem:properties-of-RabV}, 
in order to identify a cluster algebra structure in 
the ring of invariants~$R_{a,b}(V)$, all we need to do is 
find a seed $(Q,\zz)$ satisfying the conditions
in Corollary~\ref{cor:cluster-criterion}. 
This is a lot easier said than done.



\section{Tensor diagrams}
\label{sec:tensor-diagrams}

{}From now on, we assume that $k=3$, so that $V\cong\CC^3$. 

Our description of cluster structures in the classical rings of invariants
$R_{a,b}(V)$ will be based on the calculus
of tensor diagrams~\cite{blinn, cvitanovic, 
richter-gebert-lebmeir,
  stedman},
particularly in its version of the ``$A_2$~spider''
developed by G.~Kuperberg~\cite{kuperberg}. 
This calculus, in turn, is based on two simple observations.
The first one was already mentioned in connection with the First Fundamental
Theorem:
\begin{itemize}
\item[(i)]
the identity tensor and 
the volume tensors on $V$ and $V^*$ are $\SL(V)$-invariant.
\end{itemize}
The second observation is that 
\begin{itemize}
\item[(ii)]
the operations of tensor product 
and contraction preserve $\SL(V)$ invariance. 
\end{itemize}
By combining the ingredients derived from (i) using the operations 
in~(ii), 
one can build a surprisingly rich supply of $\SL(V)$-invariant tensors--and 
from them, a large class of polynomial invariants. 
We next explain how, without claiming any originality: 
the machinery described below is a 
variation of diagrammatic calculus whose various versions were (re)discovered
multiple times, under different guises and names, see in particular \cite{blinn,
cvitanovic, kuperberg} and especially \cite[Section~8]{abdesselam-chipalkatti}. 

Tensor diagrams are built using three
types of building blocks shown in Figure~\ref{fig:webs21}. 
These blocks correspond, left to right, 
to the three basic $\SL(V)$-invariant tensors:
the volume tensor, the dual volume tensor, and the identity tensor.
The endpoints of a block correspond to the tensor's arguments:
a sink to a vector, a source to a covector. 
In order for either of the volume tensors to be well defined,
a cyclic ordering of the three arguments must be specified.

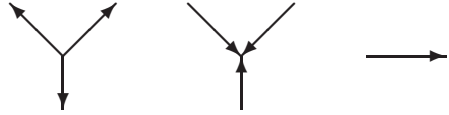
\begin{figure}[ht]
    \begin{center}
\setlength{\unitlength}{2pt}
\begin{picture}(20,20)(0,0)
\thicklines
\put(10,10){\vector(1,1){10}}
\put(10,10){\vector(-1,1){10}}
\put(10,10){\vector(0,-1){10}}
\end{picture}
\qquad
\begin{picture}(20,20)(0,0)
\thicklines
\put(0,20){\vector(1,-1){10}}
\put(20,20){\vector(-1,-1){10}}
\put(10,0){\vector(0,1){10}}
\end{picture}
\qquad
\begin{picture}(15,20)(0,0)
\thicklines
\put(0,10){\vector(1,0){15}}
\end{picture}
\qquad
    \end{center} 
    \caption{Basic building blocks for tensor diagrams.
The arrows are to be thought of as flexible
strings which can be bent and/or stretched.  
}
    \label{fig:webs21}
\end{figure}

We then combine these building blocks by
plugging arrowheads into arrowtails. 
Interpreting this operation as contraction of the corresponding tensors,
we obtain a well defined notion of a tensor associated with the resulting
combinatorial gadget. 

Note that plugging one end of a two-pronged block
(corresponding to the identity tensor) into an end of opposite valence 
in some tensor diagram does not change the tensor that the latter represents. 
Consequently, for any diagram assembled using the above procedure from
the three 
basic types of blocks, the associated tensor does not depend on 
the locations (or number) of plug-in joints along unbranched stretches.  
Erasing those joints produces a directed graph with trivalent and
univalent vertices in which every trivalent vertex is a source or a sink. 
Any such graph, together with the additional data specifying cyclic
ordering of edges incident to trivalent vertices, represents a well defined
tensor.

\pagebreak[3]

We are going to draw all our diagrams in an oriented disk, 
obeying the following conventions. 
We will usually erase the arrows, introducing instead a
bi-coloring of the vertices:
the sinks will be colored black, and the sources white. 
We place the boundary vertices on the boundary of the disk, 
in the order mirroring the one used to define the direct product
of the $a+b$ spaces $V$ and~$V^*$ whose $\SL(V)$ invariants
we study. 
We place the interior vertices inside the disk,
and draw the edges as simple curves 
that are allowed to intersect transversally. 
Last but not least,
we make sure that the cyclic ordering specified at each interior
vertex of the diagram 
matches the clockwise ordering of the three curves 
meeting at the corresponding point in the disk. 

The last step in the construction of a general tensor diagram involves 
gluing together some of the univalent vertices of a directed graph as above. 
We interpret this step as substituting the same (co)vector into
the arguments of the associated tensor which correspond to the glued
endpoints. 
At this point, the $\SL(V)$ invariant  represented by the graph ceases to be
multilinear. 
This last operation is sometimes called (partial) \emph{restitution}, and is
inverse to \emph{polarization}. 

An example illustrating different stages of the construction outlined above is
shown in Figure \ref{fig:webs22}. 

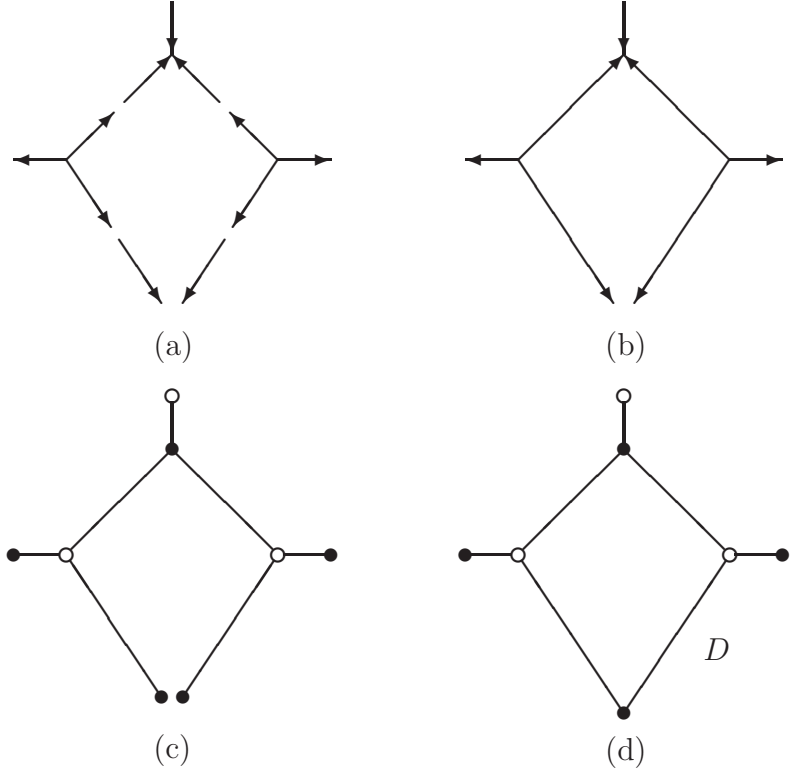
\begin{figure}[ht]
    \begin{center}
\setlength{\unitlength}{2pt}
\begin{picture}(60,67)(0,-7)
\thicklines
\put(10,30){\vector(-1,0){10}}
\put(10,30){\vector(1,1){9}}
\put(10,30){\vector(2,-3){8.5}}
\put(20,15){\vector(2,-3){8}}

\put(50,30){\vector(1,0){10}}
\put(50,30){\vector(-1,1){9}}
\put(50,30){\vector(-2,-3){8.5}}
\put(40,15){\vector(-2,-3){8}}

\put(21,41){\vector(1,1){9}}
\put(39,41){\vector(-1,1){9}}
\put(30,60){\vector(0,-1){10}}

\put(26.5,-7){(a)}

\end{picture}
\qquad\qquad
\begin{picture}(60,67)(0,-7)
\thicklines
\put(10,30){\vector(-1,0){10}}
\put(10,30){\vector(1,1){20}}
\put(10,30){\vector(2,-3){18}}

\put(50,30){\vector(1,0){10}}
\put(50,30){\vector(-1,1){20}}
\put(50,30){\vector(-2,-3){18}}

\put(30,60){\vector(0,-1){10}}

\put(26.5,-7){(b)}

\end{picture}
\\[.2in]
\begin{picture}(60,67)(0,-7)
\thicklines
\put(0,30){\line(1,0){8.5}}
\put(30,50){\line(-1,-1){19}}
\put(30,50){\line(1,-1){19}}
\put(28,3){\line(-2,3){17.3}}

\put(60,30){\line(-1,0){9}}
\put(32,3){\line(2,3){17.3}}

\put(30,59){\line(0,-1){9}}

\put(0,30){\circle*{2.5}}
\put(60,30){\circle*{2.5}}
\put(30,50){\circle*{2.5}}
\put(28,3){\circle*{2.5}}
\put(32,3){\circle*{2.5}}

\put(10,30){\circle{2.5}}
\put(50,30){\circle{2.5}}
\put(30,60){\circle{2.5}}

\put(26.5,-8.5){(c)}

\end{picture}
\qquad\qquad
\begin{picture}(60,67)(0,-7)
\thicklines
\put(0,30){\line(1,0){8.5}}
\put(30,50){\line(-1,-1){19}}
\put(30,50){\line(1,-1){19}}
\put(30,0){\line(-2,3){19.3}}

\put(60,30){\line(-1,0){9}}
\put(30,0){\line(2,3){19.3}}

\put(30,59){\line(0,-1){9}}

\put(0,30){\circle*{2.5}}
\put(60,30){\circle*{2.5}}
\put(30,50){\circle*{2.5}}
\put(30,0){\circle*{2.5}}

\put(10,30){\circle{2.5}}
\put(50,30){\circle{2.5}}
\put(30,60){\circle{2.5}}

\put(45,10){$D$}

\put(26.5,-8.8){(d)}

\end{picture}
    \end{center} 
    \caption{Assembling a tensor diagram. 
    In this example, we use five building blocks. They correspond to two copies of the
    volume tensor, one copy of the dual volume tensor, and two copies
    of the identity tensor. The resulting tensor diagram $D$ of type
    $(1,3)$ represents an
    invariant~of multidegree $(1,2,1,1)$ in~$R_{1,3}(V)$. The disk
    containing $D$ is not~shown.}
    \label{fig:webs22}
\end{figure}

\pagebreak[3]

\begin{definition}
A \emph{tensor diagram}
is a finite bipartite graph $D$ with a fixed {\it {proper}} coloring of its vertices
into two colors, black and white, and with a fixed partition of its vertex set
into the set~$\operatorname{bd}(D)$ of \emph{boundary} vertices and 
the set~$\operatorname{int}(D)$ of \emph{internal} vertices,
satisfying the following conditions: 
\begin{itemize}
\item
each internal vertex is trivalent;
\item
for each internal vertex, a cyclic
order on the edges incident to it is fixed.
\end{itemize}
The last condition ensures that each of the six permutations of the
three edges incident to an internal vertex has a well-defined sign. 

If $\operatorname{bd}(D)$ consists of 
$a$ white vertices and $b$ black ones, 
then we say that $D$ is of \emph{type} $(a,b)$. 
\end{definition}

We regard each edge of a tensor diagram as being oriented
so that it points away from its white endpoint and towards the black
one. 
While drawing tensor diagrams, this orientation is routinely omitted. 

A tensor diagram~$D$ of type $(a,b)$
defines an $\SL(V)$ invariant~$[D]\in R_{a,b}(V)$ \linebreak[3]
obtained by
repeated contraction of elementary $\SL(V)$-invariant tensors 
(followed by bundling up some of the arguments), as explained above. 
To compensate for the informality of that explanation, we next provide a
precise definition of the invariant $[D]$ in the language of coordinates. 

Let us identify $R_{a,b}(V)$ with the ring of 
$\operatorname{SL}_3$ invariants of collections of $a$ covectors 
\[
y(v)=\begin{bmatrix}\,y_1(v)&y_2(v)&y_3(v)\,\end{bmatrix}
\]
and $b$ vectors 
\[
 x(v)=\begin{bmatrix}x_1(v)\\ x_2(v)\\ x_3(v)\end{bmatrix}
\]
labeled by a fixed collection of $a$ white and $b$ black vertices on the
boundary of a disk. 
The invariant $[D]$ associated with a tensor diagram~$D$ with those
boundary vertices is then given by
\begin{equation}
\label{eq:[D]-in-coordinates}
[D]=\sum_\ell 
\biggl(\,\prod_{v\in\operatorname{int}(D)}\operatorname{sign}(\ell(v))\biggr)
\biggl(\,\prod_{\substack{v\in\operatorname{bd}(D)\\
\text{$v$ black}}}x(v)^{\ell(v)}\biggr)
\biggl(\,\prod_{\substack{v\in\operatorname{bd}(D)\\
\text{$v$ white}}}y(v)^{\ell(v)}\biggr),
\end{equation}
where 
\begin{itemize}
\item
$\ell$ runs over all proper labelings of the edges in~$D$ by the
numbers~$1,2,3$; \linebreak[3]
to wit, we require that for each internal vertex~$v$,
the labels associated with the three edges incident to $v$ are all distinct; 
\item
$\operatorname{sign}(\ell(v))$ denotes the sign of the (cyclic) permutation  of
those three labels determined by the cyclic ordering of the edges incident
to~$v$; 
\item
$x(v)^{\ell(v)}$ denotes the monomial $\prod_e x_{\ell(e)}(v)$,
product over all edges $e$ incident to~$v$, and similarly for $y(v)^{\ell(v)}$. 
\end{itemize}


Example~\ref{example:tripod-and-stick} below illustrates
formula~\eqref{eq:[D]-in-coordinates}. 
Many more examples of tensor diagrams are scattered throughout the paper. 

\pagebreak[3]

\begin{example}
\label{example:tripod-and-stick}
Consider the tensor diagram $D$ shown 
in Figure~\ref{fig:webs22}(d). 
We label the black boundary vertices 1, 2, and~3, in clockwise order,
and label the white vertex~4. 
We denote the corresponding three column vectors and one row covector
by 
\[
\begin{bmatrix}x_{11}\\ x_{21}\\ x_{31}\end{bmatrix},{\ \ }
\begin{bmatrix}x_{12}\\ x_{22}\\ x_{32}\end{bmatrix},{\ \ }
\begin{bmatrix}x_{13}\\ x_{23}\\ x_{33}\end{bmatrix},{\ \ }
\text{and}\ \ \begin{bmatrix}\,y_1&y_2&y_3\,\end{bmatrix},
\]
respectively. 
There are 24 proper labelings of the edges of~$D$. 
They can be broken into four six-tuples as shown in
Figure~\ref{fig:edge-labelings}. 
The corresponding monomial weights are
\[
\varepsilon\,x_{\gamma 1}x_{\alpha 2}^2x_{\beta 3} y_\alpha\,,\ \ 
\varepsilon\,x_{\gamma 1}x_{\alpha 2}x_{\beta 2}x_{\beta 3} y_\beta\,,\ \ 
\varepsilon\,x_{\gamma 1}x_{\alpha 2}x_{\gamma 2}x_{\beta 3} y_\gamma\,,\ \ 
-\varepsilon\,x_{\beta 1}x_{\alpha 2}x_{\gamma 2}x_{\beta 3} y_\beta\,,
\]
where $\varepsilon$ denotes the sign of the permutation 
$(\alpha,\beta,\gamma)$ in the symmetric group~$\mathcal{S}_3$. 
The weights of the labelings in the fourth group cancel each other out
under the sign-reversing involution that switches $\alpha$
and~$\gamma$. Consequently, \eqref{eq:[D]-in-coordinates} becomes
\begin{align}
\nonumber
[D]&=\sum_{(\alpha,\beta,\gamma)\in\mathcal{S}_3}
\operatorname{sgn}(\alpha,\beta,\gamma)
x_{\gamma 1}x_{\alpha 2}x_{\beta 3} 
\,(x_{\alpha 2} y_\alpha+x_{\beta 2} y_\beta+x_{\gamma 2} y_\gamma)\\
\label{eq:det*dot-product}
&=\det(x_{ij})_{i,j\in(1,2,3)}  
(x_{12} y_1+x_{22} y_2+x_{32} y_3), 
\end{align}
coinciding with the invariant given by the tensor diagram shown
in Figure~\ref{fig:tripod-and-a-stick}. 
\end{example}

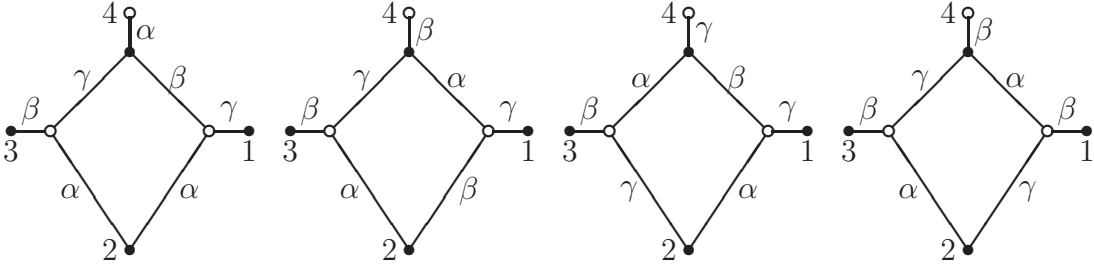
\begin{figure}[ht]
    \begin{center}
\setlength{\unitlength}{1.5pt}

\begin{picture}(60,60)(0,0)
\thicklines
\put(0,30){\line(1,0){8.5}}
\put(30,50){\line(-1,-1){19}}
\put(30,50){\line(1,-1){19}}
\put(30,0){\line(-2,3){19.3}}

\put(60,30){\line(-1,0){9}}
\put(30,0){\line(2,3){19.3}}

\put(30,59){\line(0,-1){9}}

\put(0,30){\circle*{2.5}}
\put(60,30){\circle*{2.5}}
\put(30,50){\circle*{2.5}}
\put(30,0){\circle*{2.5}}

\put(10,30){\circle{2.5}}
\put(50,30){\circle{2.5}}
\put(30,60){\circle{2.5}}

\put(0,25){\makebox(0,0){$3$}}
\put(60,25){\makebox(0,0){$1$}}
\put(25,0){\makebox(0,0){$2$}}
\put(25,60){\makebox(0,0){$4$}}

\put(15,15){\makebox(0,0){$\alpha$}}
\put(45,15){\makebox(0,0){$\alpha$}}
\put(5,35){\makebox(0,0){$\beta$}}
\put(55,35){\makebox(0,0){$\gamma$}}
\put(18,43){\makebox(0,0){$\gamma$}}
\put(42,43){\makebox(0,0){$\beta$}}
\put(34,55){\makebox(0,0){$\alpha$}}

\end{picture}
\quad
\begin{picture}(60,60)(0,0)
\thicklines
\put(0,30){\line(1,0){8.5}}
\put(30,50){\line(-1,-1){19}}
\put(30,50){\line(1,-1){19}}
\put(30,0){\line(-2,3){19.3}}

\put(60,30){\line(-1,0){9}}
\put(30,0){\line(2,3){19.3}}

\put(30,59){\line(0,-1){9}}

\put(0,30){\circle*{2.5}}
\put(60,30){\circle*{2.5}}
\put(30,50){\circle*{2.5}}
\put(30,0){\circle*{2.5}}

\put(10,30){\circle{2.5}}
\put(50,30){\circle{2.5}}
\put(30,60){\circle{2.5}}

\put(0,25){\makebox(0,0){$3$}}
\put(60,25){\makebox(0,0){$1$}}
\put(25,0){\makebox(0,0){$2$}}
\put(25,60){\makebox(0,0){$4$}}

\put(15,15){\makebox(0,0){$\alpha$}}
\put(45,15){\makebox(0,0){$\beta$}}
\put(5,35){\makebox(0,0){$\beta$}}
\put(55,35){\makebox(0,0){$\gamma$}}
\put(18,43){\makebox(0,0){$\gamma$}}
\put(42,43){\makebox(0,0){$\alpha$}}
\put(34,55){\makebox(0,0){$\beta$}}

\end{picture}
\quad
\begin{picture}(60,60)(0,0)
\thicklines
\put(0,30){\line(1,0){8.5}}
\put(30,50){\line(-1,-1){19}}
\put(30,50){\line(1,-1){19}}
\put(30,0){\line(-2,3){19.3}}

\put(60,30){\line(-1,0){9}}
\put(30,0){\line(2,3){19.3}}

\put(30,59){\line(0,-1){9}}

\put(0,30){\circle*{2.5}}
\put(60,30){\circle*{2.5}}
\put(30,50){\circle*{2.5}}
\put(30,0){\circle*{2.5}}

\put(10,30){\circle{2.5}}
\put(50,30){\circle{2.5}}
\put(30,60){\circle{2.5}}

\put(0,25){\makebox(0,0){$3$}}
\put(60,25){\makebox(0,0){$1$}}
\put(25,0){\makebox(0,0){$2$}}
\put(25,60){\makebox(0,0){$4$}}

\put(15,15){\makebox(0,0){$\gamma$}}
\put(45,15){\makebox(0,0){$\alpha$}}
\put(5,35){\makebox(0,0){$\beta$}}
\put(55,35){\makebox(0,0){$\gamma$}}
\put(18,43){\makebox(0,0){$\alpha$}}
\put(42,43){\makebox(0,0){$\beta$}}
\put(34,55){\makebox(0,0){$\gamma$}}

\end{picture}
\quad
\begin{picture}(60,60)(0,0)
\thicklines
\put(0,30){\line(1,0){8.5}}
\put(30,50){\line(-1,-1){19}}
\put(30,50){\line(1,-1){19}}
\put(30,0){\line(-2,3){19.3}}

\put(60,30){\line(-1,0){9}}
\put(30,0){\line(2,3){19.3}}

\put(30,59){\line(0,-1){9}}

\put(0,30){\circle*{2.5}}
\put(60,30){\circle*{2.5}}
\put(30,50){\circle*{2.5}}
\put(30,0){\circle*{2.5}}

\put(10,30){\circle{2.5}}
\put(50,30){\circle{2.5}}
\put(30,60){\circle{2.5}}

\put(0,25){\makebox(0,0){$3$}}
\put(60,25){\makebox(0,0){$1$}}
\put(25,0){\makebox(0,0){$2$}}
\put(25,60){\makebox(0,0){$4$}}

\put(15,15){\makebox(0,0){$\alpha$}}
\put(45,15){\makebox(0,0){$\gamma$}}
\put(5,35){\makebox(0,0){$\beta$}}
\put(55,35){\makebox(0,0){$\beta$}}
\put(18,43){\makebox(0,0){$\gamma$}}
\put(42,43){\makebox(0,0){$\alpha$}}
\put(34,55){\makebox(0,0){$\beta$}}

\end{picture}
    \end{center} 
    \caption{The proper labelings of the edges of a tensor diagram. 
Here $(\alpha,\beta,\gamma)$ runs over all permutations of $1,2,3$.}
    \label{fig:edge-labelings}
\end{figure}

\vspace{-.1in}

\begin{figure}[ht]
    \begin{center}
\setlength{\unitlength}{1.5pt}
\begin{picture}(60,60)(0,0)
\thicklines
\put(0,30){\line(1,0){18.5}}
\put(30,0){\line(-1,3){9.5}}

\put(60,30){\line(-1,0){39}}

\put(30,59){\line(0,-1){59}}

\put(0,30){\circle*{2.5}}
\put(60,30){\circle*{2.5}}
\put(30,0){\circle*{2.5}}

\put(20,30){\circle{2.5}}
\put(30,60){\circle{2.5}}

\put(0,25){\makebox(0,0){$3$}}
\put(60,25){\makebox(0,0){$1$}}
\put(25,0){\makebox(0,0){$2$}}
\put(25,58){\makebox(0,0){$4$}}

\end{picture}
    \end{center} 
    \caption{A tensor diagram defining the same invariant as the one in
    Figure~\ref{fig:edge-labelings}.}
    \label{fig:tripod-and-a-stick}
\end{figure}
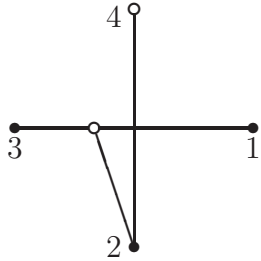

The $\SL(V)$ invariant $[D]$ defined by a tensor
diagram~$D$ is homo\-geneous 
in each of its arguments~$x(v)$, for $v$ a boundary vertex.
The degree of $[D]$ with respect to~$x(v)$ is equal to the
degree~$d_v$ of~$v$ (in the graph theory sense). 
The \emph{multidegree} of~$D$ is, by definition, 
the multidegree of~$[D]$ (cf.\ \eqref{eq:multideg}):
\begin{equation}
\label{eq:td-multideg}
\operatorname{multideg}(D)=
\operatorname{multideg}([D])=
(d_v)_{v\in\operatorname{bd}(D)}. 
\end{equation}

\pagebreak[3]

\begin{remark}
An $\SL_3$ invariant $[D]$ associated with a tensor diagram~$D$
without multiple edges can be
alternatively defined by a determinantal formula, as explained below.
Although we do not rely on such formulas in this paper,
they may be useful in some calculations. 

As before, suppose that $D$ has type $(a,b)$.
Assume that no two boundary vertices are directly connected by an
edge. 
(Otherwise, factor out the corresponding scalar product, and
remove the edge.)  
Assume that $D$ has $c+d$ interior vertices, 
$c$ of them white and $d$ black. 
Clearly $a+3c=b+3d$, the number of edges in~$D$.
We now build a square $(a+3c)\times(b+3d)$ matrix $M(D)$ as follows:
\begin{itemize}
\item
the rows of $M(D)$ come from the white vertices
in~$\operatorname{int}(D)$ (each such vertex contributes three rows) 
as well as from the edges incident to the white vertices
in~$\operatorname{bd}(D)$ (each such edge contributes one row);
\item
similarly, the columns of $M(D)$ come from the black interior vertices (each
contributes three) and from the edges incident to the black boundary
vertices (each contributes one). 
\end{itemize}
The matrix $M(D)$ is then built out of rectangular blocks $M(i,j)$
of sizes $3\times 3$, $3\times 1$, $1\times 3$, and $1\times 1$, 
defined as follows:
\begin{itemize}
\item
if $i$ is a white interior vertex and $j$ is a black interior vertex incident to $i$,
then $M(i,j)$ is a $3\times 3$ identity matrix;
\item
if $i$ is a white interior vertex and $j$ is an edge incident to $i$ and a
black boundary vertex~$v$, 
then $M(i,j)$ is the column vector~$x(v)$; 
\item
if $j$ is a black interior vertex and $i$ is an edge incident to $j$ and a
white boundary vertex~$v$, 
then $M(i,j)$ is the row (co)vector~$y(v)$; 
\item
otherwise, 
$M(i,j)$ consists entirely of zeroes. 
\end{itemize}
It is straightforward to check that $\det(M(D))=\pm [D]$. 
(The sign depends on the ordering of the rows and columns in~$M(D)$.) 
It is also easy to see directly that $\det(M(D))$ is an $SL_3$
invariant. 

We illustrate this construction for the tensor diagram shown in
Figure~\ref{fig:webs22}(d),
using the notation introduced in
Example~\ref{example:tripod-and-stick}. 
The recipe described above gives
\[
M(D)=
\left[
\begin{array}{ccc|c|c|c|c}
1 & 0 & 0 & x_{11} & x_{12} & 0 & 0 \\
0 & 1 & 0 & x_{21} & x_{22} & 0 & 0 \\
0 & 0 & 1 & x_{31} & x_{32} & 0 & 0 \\
\hline
1 & 0 & 0 &   0    &   0    & x_{12} & x_{13}\\
0 & 1 & 0 &   0    &   0    & x_{22} & x_{23}\\
0 & 0 & 1 &   0    &   0    & x_{32} & x_{33}\\
\hline
y_1 & y_2 & y_3 & 0 & 0 & 0 & 0
\end{array}
\right].
\]
It is then easy to see that 
\[
-\det(M(D))=
\det
\begin{bmatrix}
x_{11} & x_{12} & x_{13}\\
x_{21} & x_{22} & x_{23}\\
x_{31} & x_{32} & x_{33}
\end{bmatrix}
(x_{12} y_1+x_{22} y_2+x_{32} y_3), 
\]
matching \eqref{eq:det*dot-product}.
\end{remark}

\pagebreak[3]

The calculus of tensor diagrams includes natural counterparts for both the
additive and the multi\-plicative structures in the ring of invariants. 
For the former, allow formal linear combinations of
tensor diagrams, and extend the definition of $[D]$ by linearity.  
For the latter, use 
superposition of 
diagrams: if 
$D$ 
is a union of subdiagrams $D_1,D_2,\dots$ 
connected only at boundary vertices, then
$[D]=[D_1][D_2]\cdots$; cf.\ Figure~\ref{fig:tripod-and-a-stick}.

The Weyl generators of the ring $R_{a,b}(V)$ are encoded by the simplest
possible tensor diagrams: two types of \emph{tripods} (corresponding
to two kinds of Pl\"ucker coordinates) and simple edges.
To rephrase, one takes each of the three types of building
blocks shown in Figure~\ref{fig:webs21} and attaches it directly to the
boundary vertices. 
The First Fundamental Theorem implies that any $\SL(V)$ invariant can
be represented (non-uniquely) as a linear combination of invariants
associated with tensor diagrams obtained by superposition of tripods
and edges. 

An important class of relations among invariants associated with
tensor diagrams on an oriented plane is obtained from \emph{local
  transformation rules}.  
Such a rule trans\-forms a small fragment $F$ of
a tensor diagram $D$ into a linear combination $\sum_i c_i F_i$ 
of other such pieces
(corresponding to tensors of the same type) while keeping the rest of
the diagram intact. 
The intrinsic definition of the invariants~$[D]$ implies 
that if $[F]\!=\!\sum_i c_i [F_i]$  (i.e., the tensor does not change
locally), then $[D]=\sum_i c_i [D_i]$, where $D_i$ denotes the tensor
diagram obtained from $D$ by replacing $F$ by~$F_i$. 

Figure~\ref{fig:skein} shows several fundamental relations of this
kind. 
Checking their validity is straightforward. 

Several basic relations involving boundary vertices are shown in
Figure~\ref{fig:local-boundary}. 

\begin{figure}[ht]
    \begin{tabular}{cl}
\setlength{\unitlength}{2pt}
\begin{picture}(16,20)(0,0)
\put(8,10){\makebox(0,0){(a)}}
\end{picture}
&
\setlength{\unitlength}{2pt}
\begin{picture}(30,20)(0,0)
\thicklines
\put(0,20){\line(3,-2){30}}
\put(0,20){\vector(3,-2){25}}

\put(0,0){\line(3,2){30}}
\put(0,0){\vector(3,2){25}}
\end{picture}
\begin{picture}(16,20)(0,0)
\put(8,10){\makebox(0,0){$=$}}
\end{picture}
\begin{picture}(30,20)(0,0)
\thicklines
\put(0,0){\line(1,1){9.2}}
\put(0,20){\line(1,-1){9.2}}
\put(10,10){\circle*{2.5}}

\put(11,10){\line(1,0){8}}
\put(20,10){\circle{2.5}}

\put(30,20){\line(-1,-1){9.2}}
\put(30,0){\line(-1,1){9.2}}
\end{picture}
\begin{picture}(16,20)(0,0)
\put(8,10){\makebox(0,0){$+$}}
\end{picture}
\begin{picture}(30,20)(0,0)
\thicklines
\put(0,0){\line(1,0){30}}
\put(0,0){\vector(1,0){20}}
\put(0,20){\line(1,0){30}}
\put(0,20){\vector(1,0){20}}
\end{picture}
\\[.3in]
\setlength{\unitlength}{2pt}
\begin{picture}(16,30)(0,0)
\put(8,15){\makebox(0,0){(b)}}
\end{picture}
&
\setlength{\unitlength}{2pt}
\begin{picture}(30,30)(0,0)
\thicklines
\put(0,0){\line(1,1){9}}
\put(30,30){\line(-1,-1){9}}
\put(0,30){\line(1,-1){9}}
\put(30,0){\line(-1,1){9}}
\put(10,10){\circle{2.5}}
\put(20,20){\circle{2.5}}
\put(10,20){\circle*{2.5}}
\put(20,10){\circle*{2.5}}
\put(11,10){\line(1,0){8}}
\put(11,20){\line(1,0){7.8}}
\put(10,11.2){\line(0,1){8}}
\put(20,11){\line(0,1){8}}
\end{picture}
\begin{picture}(16,30)(0,0)
\put(8,15){\makebox(0,0){$=$}}
\end{picture}
\begin{picture}(30,30)(0,0)
\thicklines
\put(0,0){\line(1,0){30}}
\put(15,0){\vector(-1,0){5}}
\put(0,30){\line(1,0){30}}
\put(15,30){\vector(1,0){5}}
\end{picture}
\begin{picture}(16,30)(0,0)
\put(8,15){\makebox(0,0){$+$}}
\end{picture}
\begin{picture}(30,30)(0,0)
\thicklines
\put(0,0){\line(0,1){30}}
\put(0,15){\vector(0,-1){5}}
\put(30,0){\line(0,1){30}}
\put(30,15){\vector(0,1){5}}
\end{picture}
\\[.3in]
\setlength{\unitlength}{2pt}
\begin{picture}(16,10)(0,0)
\put(8,5){\makebox(0,0){(c)}}
\end{picture}
&
\setlength{\unitlength}{2pt}
\begin{picture}(30,10)(0,0)
\thicklines
\put(0,5){\line(1,0){9}}
\put(30,5){\line(-1,0){9}}
\qbezier(11,6)(15,10)(19,6)
\qbezier(11,4)(15,0)(19,4)
\put(10,5){\circle*{2.5}}
\put(20,5){\circle{2.5}}
\end{picture}
\begin{picture}(16,10)(0,0)
\put(8,5){\makebox(0,0){$=$}}
\end{picture}
\begin{picture}(14,10)(0,0)
\put(6,5){\makebox(0,0){$(-2)$}}
\end{picture}
\begin{picture}(30,10)(0,0)
\thicklines
\put(0,5){\line(1,0){30}}
\put(15,5){\vector(1,0){5}}
\end{picture}
\\[.2in]
\setlength{\unitlength}{2pt}
\begin{picture}(16,20)(0,0)
\put(8,10){\makebox(0,0){(d)}}
\end{picture}
&
\setlength{\unitlength}{2pt}
\begin{picture}(30,20)(0,0)
\thicklines
\put(15,10){\circle{20}}
\end{picture}
\begin{picture}(16,20)(0,0)
\put(8,10){\makebox(0,0){$=$}}
\end{picture}
\begin{picture}(4,20)(0,0)
\put(1.5,10.5){\makebox(0,0){$3$}}
\end{picture}    
\end{tabular} 
    \caption{Skein relations for tensor diagrams. All
    edges are oriented towards their black endpoints. The cycle in
    relation~(d) can be oriented either way.
Kuperberg~\cite{kuperberg} gives $q$-versions of relations (a), (c) and~(d).}
    \label{fig:skein}
\end{figure}
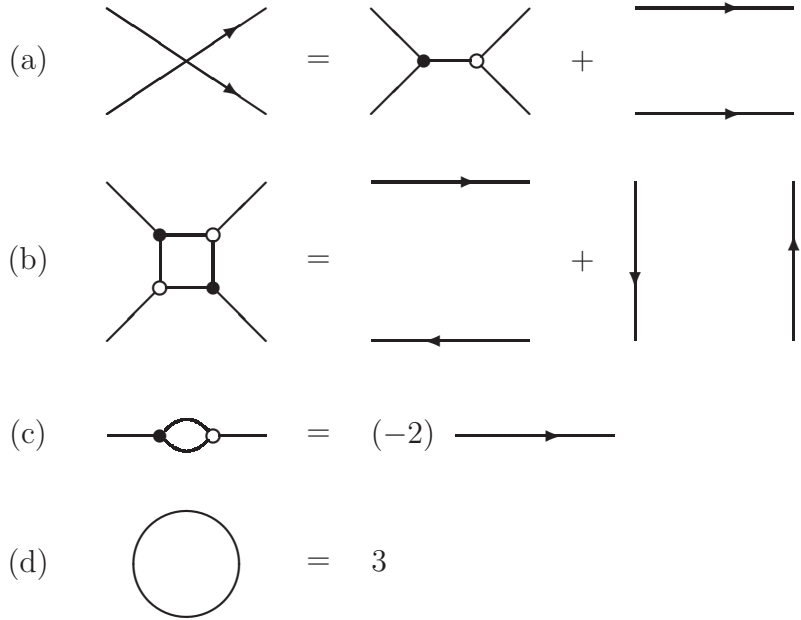

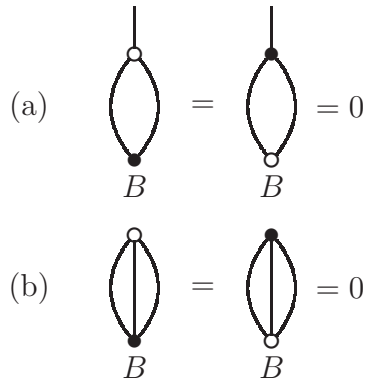
\begin{figure}[ht]
    \begin{tabular}{cl}
\setlength{\unitlength}{2pt}
\begin{picture}(16,25)(0,-5)
\put(8,10){\makebox(0,0){(a)}}
\end{picture}
&
\setlength{\unitlength}{2pt}
\begin{picture}(10,35)(0,-5)
\thicklines
\put(5,-5){\makebox(0,0){$B$}}
\put(5,0){\circle*{2.5}}
\put(5,20){\circle{2.5}}
\put(5,21){\line(0,1){8}}
\qbezier(4,1)(-3,10)(4,19)
\qbezier(6,1)(13,10)(6,19)
\end{picture}
\begin{picture}(12,25)(0,-5)
\put(6,10){\makebox(0,0){$=$}}
\end{picture}
\begin{picture}(10,25)(0,-5)
\thicklines
\put(5,-5){\makebox(0,0){$B$}}
\put(5,0){\circle{2.5}}
\put(5,20){\circle*{2.5}}
\put(5,21){\line(0,1){8}}
\qbezier(4,1)(-3,10)(4,19)
\qbezier(6,1)(13,10)(6,19)
\end{picture}
\begin{picture}(16,25)(0,-5)
\put(6,10){\makebox(0,0){$=0$}}
\end{picture}
\\[.2in]
\setlength{\unitlength}{2pt}
\begin{picture}(16,25)(0,-5)
\put(8,10){\makebox(0,0){(b)}}
\end{picture}
&
\setlength{\unitlength}{2pt}
\begin{picture}(10,25)(0,-5)
\thicklines
\put(5,-5){\makebox(0,0){$B$}}
\put(5,0){\circle*{2.5}}
\put(5,20){\circle{2.5}}
\put(5,1){\line(0,1){18}}
\qbezier(4,1)(-3,10)(4,19)
\qbezier(6,1)(13,10)(6,19)
\end{picture}
\begin{picture}(12,25)(0,-5)
\put(6,10){\makebox(0,0){$=$}}
\end{picture}
\begin{picture}(10,25)(0,-5)
\thicklines
\put(5,-5){\makebox(0,0){$B$}}
\put(5,0){\circle{2.5}}
\put(5,20){\circle*{2.5}}
\put(5,1){\line(0,1){18}}
\qbezier(4,1)(-3,10)(4,19)
\qbezier(6,1)(13,10)(6,19)
\end{picture}
\begin{picture}(16,25)(0,-5)
\put(6,10){\makebox(0,0){$=0$}}
\end{picture}
\end{tabular}
    \caption{Local relations involving a vertex~$B$ lying on the boundary.}
    \label{fig:local-boundary}
\end{figure}

Figure~\ref{fig:yang-baxter} shows a few additional relations which
are immediate from the definition of the invariants
associated with tensor diagrams. 
These relations can also be deduced from those in Figure~\ref{fig:skein}.

\begin{figure}[ht]
    \begin{tabular}{cl}
\setlength{\unitlength}{2pt}
\begin{picture}(16,20)(0,0)
\put(8,10){\makebox(0,0){(a)}}
\end{picture}
&
\setlength{\unitlength}{2pt}
\begin{picture}(30,20)(0,0)
\thicklines
\put(0,20){\line(3,-2){30}}
\put(0,0){\line(3,2){30}}
\qbezier(0,10)(15,0)(30,10)
\end{picture}
\begin{picture}(16,20)(0,0)
\put(8,10){\makebox(0,0){$=$}}
\end{picture}
\begin{picture}(30,20)(0,0)
\thicklines
\put(0,20){\line(3,-2){30}}
\put(0,0){\line(3,2){30}}
\qbezier(0,10)(15,20)(30,10)
\end{picture}
\\[.2in]
\setlength{\unitlength}{2pt}
\begin{picture}(16,16)(0,0)
\put(8,8){\makebox(0,0){(b)}}
\end{picture}
&
\setlength{\unitlength}{2pt}
\begin{picture}(30,16)(0,0)
\thicklines
\qbezier(0,0)(20,5)(20,10)
\qbezier(30,0)(10,5)(10,10)
\qbezier(10,10)(10,15)(15,15)
\qbezier(20,10)(20,15)(15,15)
\end{picture}
\begin{picture}(16,16)(0,0)
\put(8,8){\makebox(0,0){$=$}}
\end{picture}
\begin{picture}(30,16)(0,0)
\thicklines
\put(0,0){\line(1,0){30}}
\end{picture}
\\[.3in]
\setlength{\unitlength}{2pt}
\begin{picture}(16,10)(0,0)
\put(8,5){\makebox(0,0){(c)}}
\end{picture}
&
\setlength{\unitlength}{2pt}
\begin{picture}(30,10)(0,0)
\thicklines
\qbezier(0,0)(15,15)(30,0)
\qbezier(0,10)(15,-5)(30,10)
\end{picture}
\begin{picture}(16,10)(0,0)
\put(8,5){\makebox(0,0){$=$}}
\end{picture}
\begin{picture}(30,10)(0,0)
\thicklines
\put(0,0){\line(1,0){30}}
\put(0,10){\line(1,0){30}}
\end{picture}
\\[.2in]
\setlength{\unitlength}{2pt}
\begin{picture}(16,20)(0,0)
\put(8,10){\makebox(0,0){(d)}}
\end{picture}
&
\setlength{\unitlength}{2pt}
\begin{picture}(30,20)(0,0)
\thicklines
\put(0,10){\line(1,0){8.8}}
\put(10,10){\circle{2.5}}
\qbezier(11,11)(20,18)(30,0)
\qbezier(11,9)(20,2)(30,20)
\end{picture}
\begin{picture}(16,20)(0,0)
\put(8,10){\makebox(0,0){$=$}}
\end{picture}
\begin{picture}(12,20)(0,0)
\put(5,10){\makebox(0,0){$(-1)$}}
\end{picture}
\begin{picture}(30,20)(0,0)
\thicklines
\put(0,10){\line(1,0){8.8}}
\put(10,10){\circle{2.5}}
\put(30,20){\line(-2,-1){18.7}}
\put(30,0){\line(-2,1){18.7}}
\end{picture}
\\[.2in]
\setlength{\unitlength}{2pt}
\begin{picture}(16,20)(0,0)
\put(8,10){\makebox(0,0){(e)}}
\end{picture}
&
\setlength{\unitlength}{2pt}
\begin{picture}(30,20)(0,0)
\thicklines
\put(0,10){\line(1,0){8.8}}
\put(10,10){\circle*{2.5}}
\qbezier(11,11)(20,18)(30,0)
\qbezier(11,9)(20,2)(30,20)
\end{picture}
\begin{picture}(16,20)(0,0)
\put(8,10){\makebox(0,0){$=$}}
\end{picture}
\begin{picture}(12,20)(0,0)
\put(5,10){\makebox(0,0){$(-1)$}}
\end{picture}
\begin{picture}(30,20)(0,0)
\thicklines
\put(0,10){\line(1,0){8.8}}
\put(10,10){\circle*{2.5}}
\put(30,20){\line(-2,-1){18.7}}
\put(30,0){\line(-2,1){18.7}}
\end{picture}

\end{tabular} 

    \caption{Yang-Baxter-type relations for tensor diagrams.
In relations (a)--(c), edge
    orientations on the left-hand sides can be arbitrary;
the orientations on each right-hand side should match those on the~left.}
    \label{fig:yang-baxter}
\end{figure}
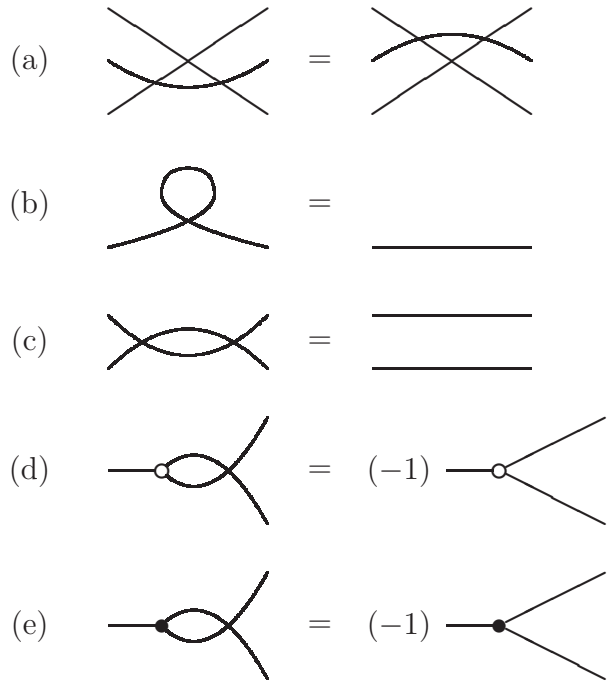

It is possible to develop a diagrammatic calculus based on the local
relations listed above without making any mention of tensors or the
special linear group~$\SL_3$; see in particular \cite{kuperberg} and
references therein. 
Such diagrammatic calculus, and its $q$-analogues, can in particular
be used to construct invariants of knots and links. 
We do not discuss these connections in this paper. 

\newpage

\section{Webs}
\label{sec:webs}

Informally speaking, webs are \emph{planar} tensor diagrams. 
The systematic  study of webs was pioneered by
G.~Kuperberg~\cite{kuperberg} whose 
foundational results are reviewed below,
in  slightly different terminology. 

\begin{definition}[\emph{Webs}]
A (planar) \emph{web} is a tensor diagram $D$ embedded in an oriented
disk as described above in Section~\ref{sec:tensor-diagrams},  
so that its edges do not cross or touch each other, except at
endpoints. 
Each web is considered up to an isotopy of the disk that fixes its
boundary---so it is in essence a combinatorial object.  

Recall that the boundary (respectively internal)
vertices of $D$ must lie on the boundary (respectively in the
interior) of the disk, and the three edges meeting at each internal
vertex are viewed as cyclically ordered clockwise. 

A web is called \emph{non-elliptic} if
it has no multiple edges, and no 4-cycles 
whose all four vertices are internal.

An invariant $[D]$ associated with a non-elliptic web~$D$ is called a
\emph{web invariant}. 
\end{definition}

A web $D$ of type $(a,b)$ has $a$ white boundary vertices and $b$
black ones. 
The cyclic pattern of colors of the boundary vertices 
is encoded by the (cyclic) \emph{signature} of~$D$
(cf.~a similar notion introduced in
Section~\ref{sec:rings-of-invariants}), 
a cyclically ordered binary string, 
or more precisely a word in the alphabet $\{\bullet,\circ\}$
considered up to a cyclic permutation. 
For example, the webs in
Figures~\ref{fig:edge-labelings}--\ref{fig:tripod-and-a-stick} have
signature 
\[
[\bullet\bullet\bullet\,\circ]
=[\bullet\bullet\circ\,\bullet]
=[\bullet\circ\bullet\,\bullet]
=[\circ\bullet\bullet\,\bullet]
\]

The following highly nontrivial result is the cornerstone of the
theory of webs. 

\begin{theorem}[{\rm G.~Kuperberg \cite{kuperberg}}]
\label{th:web-basis} 
Web invariants with a fixed signature $\sigma$ of type $(a,b)$ 
form a linear basis in the ring of invariants $R_\sigma(V)\cong R_{a,b}(V)$. 
\end{theorem}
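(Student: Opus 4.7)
The plan is to prove the two halves of the basis statement separately: spanning first, then linear independence.

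For spanning, I would start from the First Fundamental Theorem for $R_{a,b}(V)$, which says that $R_\sigma(V)$ is generated by the three families of Weyl invariants, namely the Plücker coordinates $P_J$, their duals $P_I^*$, and the pairings $Q_{ij}$. Each of these generators is visibly $[D]$ for an elementary tensor diagram (tripod, dual tripod, or edge), and by the multiplicativity $[D_1 \sqcup D_2]=[D_1][D_2]$ for diagrams joined only at the boundary, every monomial in the Weyl generators is represented by a (generally non-planar, and possibly ``elliptic'') tensor diagram with the prescribed signature $\sigma$. Thus every element of $R_\sigma(V)$ is a linear combination of invariants $[D]$ of such tensor diagrams. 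I would then use the local skein relations of Figures \ref{fig:skein} and \ref{fig:local-boundary} as a rewriting system: relation (a) resolves any crossing into a sum of two non-crossing pieces, relation (b) removes any internal $4$-cycle, relation (c) removes any internal bigon, and relation (d) removes any closed loop. By a standard termination argument (using, for example, a well-founded measure such as the pair (number of crossings, number of faces of length $\le 4$) under lexicographic order), every tensor diagram $D$ reduces under these moves to a linear combination of non-elliptic webs with the same signature. Since each local rewriting preserves $[D]$, we conclude that web invariants $\{[W]\}$ span $R_\sigma(V)$.

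For linear independence, my plan is to match the number of non-elliptic webs against the known dimension of the corresponding multi-homogeneous component of $R_{a,b}(V)$. Concretely, for each multidegree $(d_1,\dots,d_{a+b})$ compatible with $\sigma$, the dimension of the $(d_1,\dots,d_{a+b})$-component of $R_\sigma(V)$ is computable via iterated Littlewood–Richardson as an invariant space inside a tensor product of $\mathrm{SL}_3$-representations $\mathrm{Sym}^{d_i}(V)$ or $\mathrm{Sym}^{d_i}(V^*)$. On the web side, I would exhibit a bijection between non-elliptic webs of signature $\sigma$ and fixed vertex-degrees $(d_i)_{v_i \in \mathrm{bd}(D)}$ and the combinatorial objects that enumerate this invariant dimension—most naturally, certain pairs of semistandard Young tableaux of rectangular shape $3\times N$, or Kuperberg's ``growth'' tableaux, built by a confluent growth algorithm that processes boundary vertices one at a time and records the trivalent branchings. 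The growth algorithm must be reversible on non-elliptic webs and must realize every legal tableau, giving the desired bijection. Combined with the spanning statement, the matching cardinalities force the spanning set to be a basis.

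The main obstacle will be the linear-independence half, and specifically the construction and verification of the growth-rule bijection. The spanning argument is almost automatic once one sets up the rewriting system, but to conclude independence from ``webs span and are counted by the dimension'' one must show that no two distinct non-elliptic webs can have their growth tableaux coincide, and that every tableau produced by the growth process corresponds to an actual non-elliptic web (i.e., the algorithm never creates a $2$- or $4$-cycle that would have to be immediately simplified away). This is a delicate combinatorial argument about the confluence and irreversibility of the growth rules on the non-elliptic side, and it is where Kuperberg's insight is essential. A secondary technical obstacle is checking, on the invariant-theoretic side, that the multigraded Hilbert series of $R_\sigma(V)$ genuinely agrees with the tableau enumeration; this I would reduce to a branching computation using the Cauchy-type identities for $\mathrm{GL}_3$ characters and Weyl's reciprocity between $\mathrm{Sym}^d(V)$ and $\mathrm{Sym}^d(V^*)$, so that the whole identity of power series follows from Schur–Weyl duality applied to the $(a+b)$-fold tensor product.
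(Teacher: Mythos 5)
The paper does not prove this theorem: it is cited to Kuperberg and used as a black box, and the only in-paper discussion is Remark~\ref{rem:confluence}, which gives exactly your spanning argument (reduce any tensor diagram to a linear combination of non-elliptic webs by applying the local relations of Figures~\ref{fig:skein}--\ref{fig:local-boundary} left to right) and explicitly observes that the real content of Theorem~\ref{th:web-basis} is linear independence, equivalently the confluence of that rewriting system. So there is no proof in the paper to compare against; your outline has the right shape and correctly locates the difficulty.

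That said, your proposal leaves the hard half entirely as a black box. For independence you invoke ``a bijection via Kuperberg's growth rules'' plus a Littlewood--Richardson dimension count, but you never define the growth rule, never prove that its output is independent of the order in which boundary vertices are processed (itself a nontrivial confluence statement), never prove injectivity on non-elliptic webs, and never prove surjectivity onto the combinatorial objects counting $\dim R_\sigma(V)$ in each multidegree. Those four verifications \emph{are} Kuperberg's theorem; the Hilbert-series identity is the easy bookkeeping, and Kuperberg in fact deduces it from the combinatorics rather than the other way around. As written, your text is a faithful road map of the strategy, not a proof. A smaller caution on the spanning half: the measure (crossings, faces of length $\le 4$) is not obviously well founded, since relation~(a) of Figure~\ref{fig:skein} produces a term with two new internal vertices, which can change the short-face census nonmonotonically. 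A cleaner lexicographic weight is (number of crossings, number of internal vertices, number of edges): relation~(a) strictly decreases the first entry, while (b)--(d) fix it and strictly decrease the remaining ones.
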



Being a special kind of tensor diagrams, 
webs have \emph{multidegrees}, 
cf.~\eqref{eq:td-multideg};
the invariants they define are multi-homogeneous. 
Kuperberg's theorem can be restated as saying that 
web invariants with a fixed signature~$\sigma$ 
and a fixed multidegree make up a linear basis in the corresponding
multi-homogeneous component of~$R_\sigma(V)$. 

\begin{example}
\label{example:R24}
For the ring of invariants $R_{4,2}(V)$,  
choose the signature 
\[
\sigma=[\bullet\circ\bullet\circ\circ\,\circ]
\]
and  the multidegree $(1,1,1,1,2,1)$. 
There are $5$ non-elliptic webs of such signature and multidegree,
shown in Figure~\ref{fig:webs23} at the top.
Thus the corresponding multi-homogeneous component 
of $R_\sigma(V)\cong R_{4,2}(V)$ is $5$-dimensional. 
\end{example}

\begin{figure}[ht]
\scalebox{0.95}{\input{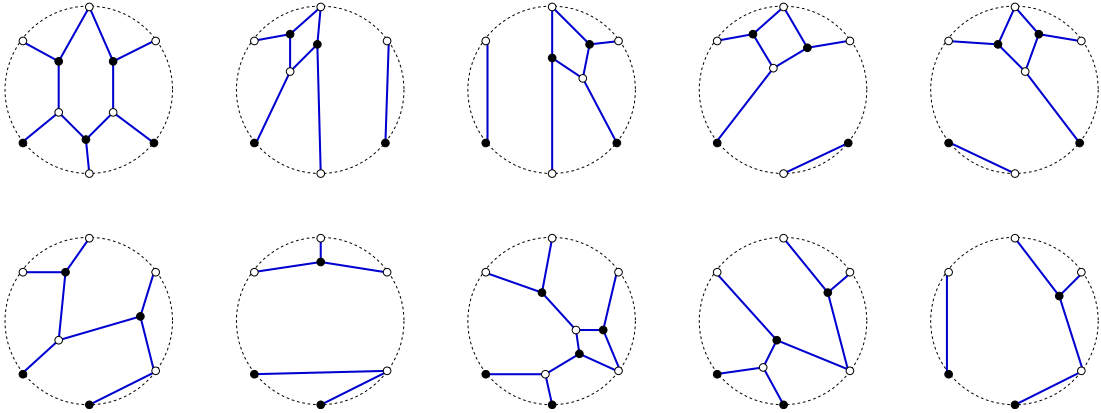}}
    \caption{Two web bases in a multi-homogeneous component
    of $R_{4,2}(V)$.}
    \label{fig:webs23}
\end{figure}

\begin{remark}
Theorem~\ref{th:web-basis} implies that the number of
non-elliptic webs of given multidegree 
does not depend on the
choice of a signature of given type~$(a,b)$. 
To illustrate, continue with Example~\ref{example:R24}. 
Taking instead the signature
$[\bullet\bullet\circ\circ\circ\,\circ]$
of the same type $(4,2)$, and choosing the multidegree
$(1,1,1,1,1,2)$---so that, as before, the invariants in 
question are multilinear in all arguments except for a single
covector---we get the $5$ webs (same number!) 
shown in Figure~\ref{fig:webs23} at the bottom.
\end{remark}

\begin{remark}
\label{rem:confluence}
Any linear combination of tensor diagrams 
can be transformed 
 into a linear combination of non-elliptic webs by repeated
 application of local relations shown in
 Figures~\hbox{\ref{fig:skein}--\ref{fig:local-boundary}}. 
(Just apply them left-to-right.) 
It follows that $R_\sigma(V)$ is spanned by the web invariants. 
The content of Theorem~\ref{th:web-basis} is that web invariants
are linearly independent, so the reduction process described above is
 \emph{confluent}.

Theorem~\ref{th:web-basis} implies that
two linear combinations of tensor
diagrams define the same invariant if and only if they can be
transformed into each other using the relations
in Figures~\ref{fig:skein}--\ref{fig:local-boundary}. 
\end{remark}

\begin{remark}
\label{rem:khovanov-kuperberg}
It is tempting to hypothesize, as M.~Khovanov and
G.~Kuperberg originally did~\cite{khovanov-kuperberg}, 
that the web basis in a space of multilinear invariants (of some fixed
signature) 
coincides with the corresponding instance of G.~Lusztig's \emph{dual canonical
  basis} (see \cite{khovanov-kuperberg} for definitions and
references).
Their investigation however established that web bases for $\SL_3$ 
are generally \emph{not} dual canonical. 
The first discrepancy occurs in degree~12, for the signature
$\bullet\bullet\circ\circ\bullet\bullet\circ\circ\bullet\bullet\circ\,\circ$. 
See Section~\ref{sec:thickening-non-cluster} 
for a cluster-theoretic interpretation of this counterexample.  
\end{remark}

A \emph{closed} tensor diagram (without boundary vertices)
represents a tensor of type $(0,0)$, \emph{i.e.}\  a scalar. 
This scalar has the following direct description. 

\begin{proposition}
\label{prop:closed-webs}
Let $D$ be a closed web with $m$ white and $m$ black vertices. 
Then $[D]$ is equal to $(-1)^m$ times the number of proper colorings
of the edges of~$D$ into three colors. (That is, we require the colors
of the three edges incident to each vertex of~$D$ to be distinct.) 
\end{proposition}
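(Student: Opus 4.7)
The plan is to unwrap the coordinate formula \eqref{eq:[D]-in-coordinates}. Since $D$ is closed, both boundary products in \eqref{eq:[D]-in-coordinates} are empty, and the expression collapses to
$$
[D] \;=\; \sum_\ell \prod_{v \in \operatorname{int}(D)} \operatorname{sign}(\ell(v)),
$$
where $\ell$ runs over the proper 3-colorings of the edges of~$D$. Each summand is $\pm 1$, so the proposition is equivalent to the combinatorial statement that every proper 3-edge-coloring $\ell$ satisfies
$$
s(\ell) \;:=\; \prod_{v \in \operatorname{int}(D)} \operatorname{sign}(\ell(v)) \;=\; (-1)^m.
$$
Since both $s(\ell)$ and $(-1)^m$ are multiplicative over connected components of~$D$ (and $m$ is additive), I may assume $D$ is connected.

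For connected~$D$, I would first establish that $s(\ell)$ is independent of~$\ell$ via a \emph{Kempe swap} argument: interchanging two colors $i,j$ along a bichromatic $\{i,j\}$-cycle $C\subset D$ transposes two of the three cyclically ordered labels at every vertex of~$C$, so that $s(\ell)$ is multiplied by $(-1)^{|V(C)|}$. Because $D$ is bipartite, every cycle has even length and $s(\ell)$ is preserved. Combined with the classical fact that any two proper 3-edge-colorings of a connected bipartite cubic multigraph are linked by a finite sequence of Kempe swaps, this forces $s(\ell)$ to depend only on~$D$.

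It then remains to evaluate this invariant on one convenient coloring. My preferred route is induction on~$m$ using the Schouten-type skein relation of Figure~\ref{fig:skein}(a). Pick an internal edge~$e$ of~$D$ joining a black vertex to a white vertex; the relation expresses $[D]$ as a signed combination $[D_1]-[D_2]$ of evaluations on closed webs~$D_1,D_2$ with $m-1$ vertices of each color, once the loops and bubbles produced by the rewiring are cleaned up using relations~(c) and~(d) of Figure~\ref{fig:skein}. By the induction hypothesis $[D_i]=(-1)^{m-1}N_i$, where $N_i$ is the number of proper 3-colorings of~$D_i$; a local inclusion--exclusion that classifies the colorings of~$D$ by the color assigned to~$e$ identifies $N_2-N_1$ with the number of proper 3-colorings of~$D$, closing the induction. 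The base case $m=0$ is a disjoint union of $k$ simple loops, evaluating to $3^k$ via relation~(d), which matches the $3^k$ trivial colorings.

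The main obstacle I anticipate is the sign bookkeeping in the inductive step: carrying out the local inclusion--exclusion relating the coloring counts of $D$, $D_1$, and~$D_2$ requires careful tracking of how the skein rewiring permutes the cyclic orders at the two half-vertices of~$e$ and how this interacts with the planar embedding, as well as verifying that the scalar factors $-2$ and $3$ produced by the cleanup relations~(c) and~(d) cancel correctly against the parity count on the coloring side. The Kempe-invariance from the previous step is what ultimately allows one to choose the most convenient coloring at each stage of the recursion, so that only one representative of each Kempe class needs to be checked directly.
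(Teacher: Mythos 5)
Your reduction to showing $s(\ell)=\prod_v\operatorname{sign}(\ell(v))=(-1)^m$ for every proper 3-edge-coloring $\ell$ is correct, as is the observation that $s$ is invariant under a single Kempe swap (a bichromatic cycle in a bipartite graph has even length). However, the claim that all proper 3-edge-colorings of a connected bipartite cubic multigraph form a single Kempe class is \emph{not} a classical fact---it is false. In $K_{3,3}$ the union of any two color classes is a single Hamiltonian $6$-cycle, so every Kempe swap is simply a global transposition of two colors; the 12 proper 3-edge-colorings of $K_{3,3}$ therefore split into two Kempe classes of size~6. You never invoke planarity in this step, and even for planar bipartite cubic graphs Kempe-connectedness of 3-edge-colorings is not something one can cite off the shelf. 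So Kempe invariance does not, by itself, force $s(\ell)$ to be constant.

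The inductive step has a separate defect. Applying relation~(a) of Figure~\ref{fig:skein} ``in reverse'' to an interior edge $e$ rewrites the local picture around $e$ (its two trivalent endpoints together with the four attached half-edges) as a crossing minus two parallel strands. The parallel resolution is again a planar closed web, but the crossing resolution joins \emph{opposite} half-edges, so its two strands necessarily cross: it is a closed tensor diagram but not a web, and relations~(c)--(d) remove bubbles and circles, not crossings, so the induction hypothesis cannot be applied to that term and the recursion does not close. To stay inside planar closed webs one should instead use Euler's formula: a closed planar bipartite cubic graph with $2m>0$ vertices has $m+2$ faces of average degree $6m/(m+2)<6$, all even, hence some face has degree $2$ or $4$; resolve it with relation~(c) or~(b). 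Alternatively, $s(\ell)=(-1)^m$ can be proved directly from planarity: with $C_1,\dots,C_k$ the $\{1,2\}$-bichromatic cycles of lengths $2\ell_1,\dots,2\ell_k$, checkerboard-color the faces of $\bigcup_i C_i$ and orient each $C_i$ with the black face on its left; a short local computation gives $\prod_{v\in C_i}\operatorname{sign}(\ell(v))=(-1)^{\ell_i+r_i}$, where $r_i$ counts the color-$3$ edges at vertices of $C_i$ that point to the right of $C_i$, and since each color-$3$ edge lies in a single face of $\bigcup_i C_i$ it contributes to $\sum_i r_i$ with the same parity at both endpoints, so $\sum_i r_i$ is even and $s(\ell)=(-1)^{\sum\ell_i}=(-1)^m$.
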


The simplest illustration of Proposition~\ref{prop:closed-webs} 
is provided by formula~(d) in Figure~\ref{fig:skein} (with $m=0$).
A more interesting example: let $D$ be the $1$-skeleton of the
(three-dimensional) cube; then $[D]=24$. 

The proof of Proposition~\ref{prop:closed-webs} is omitted; we do not
rely on it elsewhere in the paper. 

\pagebreak[3]

\section{Special invariants}
\label{sec:special-invariants}

Fix a (cyclic) signature $\sigma$ of type $(a,b)$ with $a+b\ge 5$. 
We assume that $\sigma$ is \emph{non-alternating}, i.e.,
it has two adjacent vertices of the same color. 
In this section, we construct a family of ``special'' elements in the
ring of invariants~$R_\sigma(V)\cong R_{a,b}(V)$.
These special invariants will play a key role in our main construction. 

The proofs of the propositions stated in
Sections~\ref{sec:special-invariants}--\ref{sec:special-seeds} are given
in Section~\ref{sec:special-proofs}. 

\medskip

As before, we work in a disk with $a+b$ marked points (vertices) 
on the boundary, $a$~of them white and $b$~of them black,
arranged in accordance with the signature~$\sigma$. 
We label these vertices $1,\dots,a+b$, going clockwise,
and work with them modulo~$a+b$. 

For each boundary vertex~$p$,  
we next define two trees $\Lambda_p$ and $\Lambda^p$ embedded into our
disk. These trees will then serve as building blocks for 
certain tensor diagrams. 

If the vertex $p$ is black, then $\Lambda_p$ has one vertex,
namely~$p$, and no edges.
If $p$ is white, then place a new black vertex 
(which we call the \emph{proxy} of~$p$) inside the disk
and connect it to~$p$. 
We then examine $p\!+\!1$. If $p\!+\!1$ is white,
then we connect it to the proxy vertex, and stop; 
see Figure~\ref{fig:caterpillars}(a). 
If $p$ is white and $p+1$ is black, then we look at $p\!+\!2$. 
If $p\!+\!2$ is black, then put a white vertex inside the disk,
connect it to both $p\!+\!1$ and $p\!+\!2$, and to the proxy vertex;
see Figure~\ref{fig:caterpillars}(b). 
If $p$ is white, $p\!+\!1$ is black, and $p\!+\!2$ is white, then
examine~$p\!+\!3$, and so on, cf.\ Figure~\ref{fig:caterpillars}(c).
In general, we proceed clockwise from~$p$ until we find two
consecutive vertices of the same color (here we need the condition
that $\sigma$ is non-alternating), then build a caterpillar-like
bi-colored tree whose all interior vertices have degree~$3$ except for
the proxy vertex which has degree~$2$. 


\begin{figure}[ht]
\scalebox{1}{\input{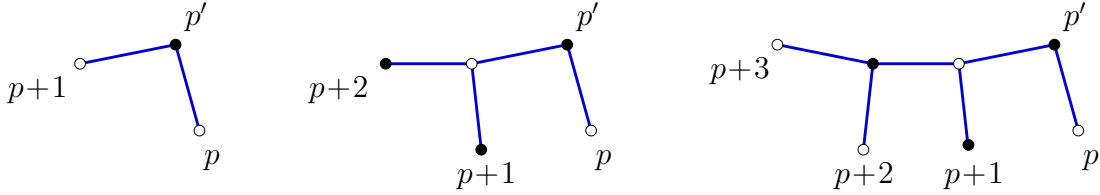}}
    \caption{Trees $\Lambda_p$. The proxy vertex is denoted by~$p'$.}
    \label{fig:caterpillars}
\end{figure}

The graph $\Lambda^p$ is defined in the same way, with the colors
swapped. The proxy vertex is defined analogously (if $p$ is white).

At the next stage, we stitch together several 
caterpillar trees 
to build tensor diagrams. 
In anticipation of this stage, we draw each of these trees without
self-intersections, and sufficiently close to the boundary of the disk,
so as to avoid ambiguities later on 
related to the choice of cyclic ordering at
each vertex of the tensor diagram. 


\begin{definition}[\emph{Special invariants}]
\label{def:special-inv}
Let $p$ and $q$ be boundary vertices, $p\ne q$.
The special invariant $J_p^q$ is defined by the tensor
diagram obtained by connecting the trees $\Lambda_p$ and~$\Lambda^q$
by a single edge.
One of its endpoints is $p$ if the latter is black, or else
take the proxy of~$p$.
The other endpoint is $q$ if the latter is white, or else take the
proxy of~$q$. 
Make sure the connector edge approaches every proxy vertex from
the same side where the disk's center lies. 
See Figure~\ref{fig:webs61} for a couple of examples.

Now, let $p,q,r$ be distinct boundary vertices, ordered clockwise.
The special invariants $J_{pqr}$ and $J^{pqr}$ are defined by tensor
diagrams constructed in a similar
  fashion to those used for~$J_p^q$. 
For $J_{pqr}$, place a white vertex in the middle, and
draw edges from it to each of $\Lambda_p, \Lambda_q, \Lambda_r$.
As the other endpoints of these three edges,
use the vertices $p,q,r$ whenever they are black, or else take
their respective proxies. 
For $J^{pqr}$, reverse the roles of the colors. 
See Figure~\ref{fig:webs61}. 

Finally, let $p,q,r,s$ be four boundary vertices, ordered clockwise.
The invariant $J_{pq}^{rs}$ is defined by the tensor diagram
obtained as follows. 
Place a white vertex~$W$ and a black vertex~$B$ near the center of the
disk. Connect them by an edge. 
Connect $W$ to $\Lambda_p$ and~$\Lambda_q$ using $p$ and $q$ if these two are
black, or else using their proxies as needed. 
Similarly, connect $B$ to the appropriate white vertices in $\Lambda^r$ and~$\Lambda^s$. 
Make sure that the five edges incident to $B$ and $W$ do not cross each other. 
\end{definition}

\ \vspace{-.3in}

\begin{figure}[ht]
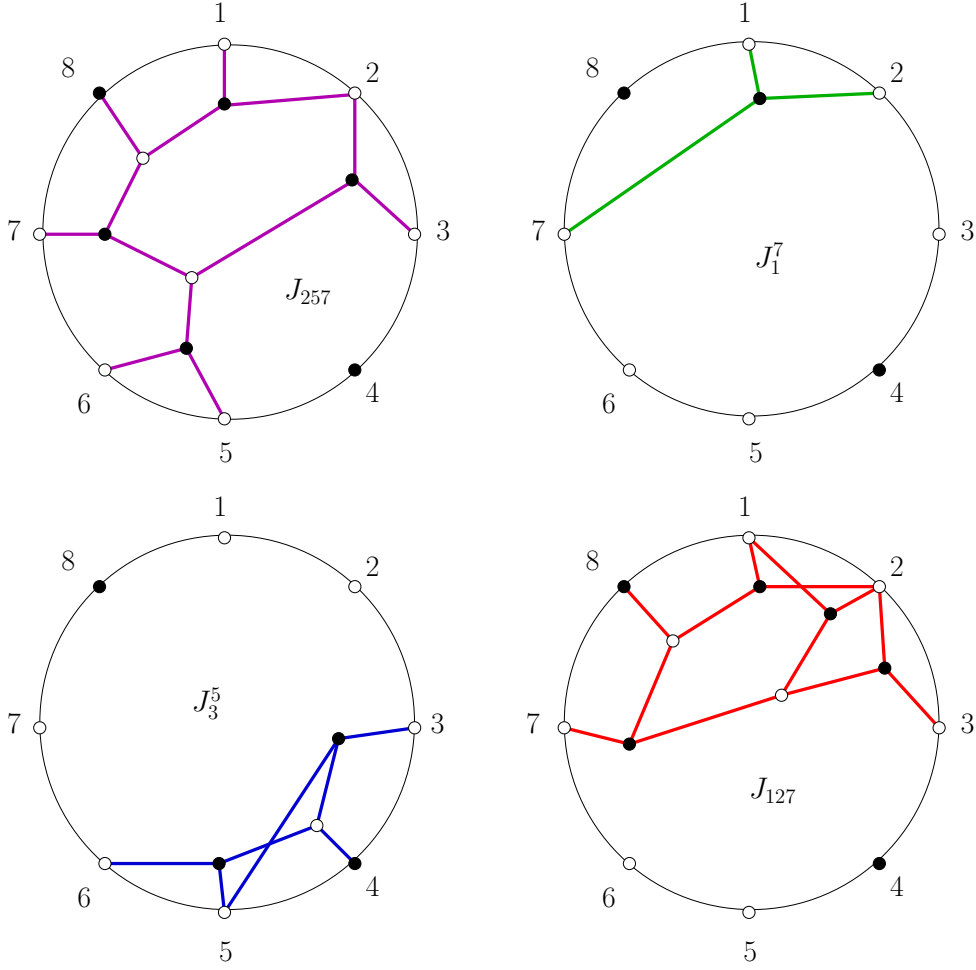

\begin{center}
\scalebox{0.65}{\input{webs61a.pstex_t}}\qquad\quad
\scalebox{0.65}{\input{webs61b.pstex_t}}
\\[.2in]
\scalebox{0.65}{\input{webs61c.pstex_t}}\qquad\quad
\scalebox{0.65}{\input{webs61d.pstex_t}}
\end{center}
    \caption{Examples of special invariants for 
$\sigma=[\circ \circ \circ \bullet \circ
    \circ \circ \, \bullet]$.\quad
Note~that $J_{257}=J_{25}^{78}$, $J_1^7=J^{127}$, $J_3^5=J^{345}$.}
    \label{fig:webs61}
\end{figure}

Some special invariants are identically equal to zero. 
The vanishing invariants of the form $J^q_p$ are identified 
in the following proposition, whose
straightforward proof we omit.  

\begin{proposition}
\label{prop:special=0}
If the vertices $p$ and~$q$ are not adjacent, 
then $J_p^q$ is a nontrivial invariant. 
If $p$ and~$q$ are adjacent, 
then exactly one of the two invariants
$J_p^q$ and $J_q^p$ vanishes.
Specifically, if $p$ is white, then $J_p^{p+1}=0$;
if $p$ is black, then $J_{p+1}^p=0$. 
\end{proposition}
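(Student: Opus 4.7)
The argument splits into vanishing for the ``wrong'' adjacency and nonvanishing for all other configurations.

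\emph{Vanishing.} Consider $J_p^{p+1}$ with $p$ white (the case of $p$ black and $J_{p+1}^p$ is symmetric, with colors and tensor types exchanged throughout). The caterpillar trees $\Lambda_p$ and $\Lambda^{p+1}$ extend clockwise from $p$ and $p+1$ respectively, and their stopping rules force them to terminate at the same pair of consecutive same-color boundary vertices; in particular, they share the boundary vertices $p+1,\dots,p+k$. The key structural claim, proved by induction on $k$, is that after removing the proxy of $p$ from the combined tensor diagram of $J_p^{p+1}$, the two resulting branch sub-diagrams---one being the portion of $\Lambda_p$ past the proxy, the other being the connector edge attached to $\Lambda^{p+1}$---are isomorphic as rooted tensor diagrams, each with a white root adjacent to the missing proxy. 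Via the free-edge-to-vector interpretation, both sub-diagrams then evaluate to the same vector $v$: an iterated cross product of the shared boundary (co)vectors $v_{p+1}, y(p+2), v_{p+3}, \dots$. At the trivalent black vertex carrying the antisymmetric volume form at the proxy of $p$, two of the three incident edges bring in this same vector $v$ while the third brings in $y(p)$; the evaluation of the volume form with two coinciding vector inputs vanishes, so $[J_p^{p+1}] = 0$.

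The base case $k=1$ (i.e., $p+1$ also white) is immediate: $\Lambda_p$ already contains the edge between the proxy of $p$ and $p+1$, and the connector duplicates it, producing a bigon at the boundary vertex $p+1$ and forcing vanishing via Figure~\ref{fig:local-boundary}(a). The inductive step matches each recursive addition of a pair of interior vertices in $\Lambda_p$ with the corresponding color-swapped addition in $\Lambda^{p+1}$, preserving the isomorphism and thus the equality of the partner vectors.

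\emph{Nonvanishing.} For $p,q$ non-adjacent, or adjacent with the ``correct'' orientation (so that exactly one of $\Lambda_p, \Lambda^q$ is trivial), the combined diagram lacks the parallel structure above. In the adjacent correct-orientation subcase the diagram reduces to a planar tree whose invariant is manifestly a nonzero iterated cross product---for instance, a dual Pl\"ucker coordinate when the nontrivial tree is a single tripod attached to the other endpoint. In the non-adjacent subcase the invariant factors as a pairing between two independent partner tensors (one built from $\Lambda_p$, the other from $\Lambda^q$), each generically nonzero; the pairing is then a nontrivial polynomial which can be verified to be nonzero by direct evaluation at generic $(v,y)$. The principal obstacle is the inductive structural identification between the two branch sub-diagrams hanging off the proxy of $p$ in the vanishing case: this requires a careful side-by-side case analysis of the caterpillar rule for $\Lambda_p$ and the color-swapped rule for $\Lambda^{p+1}$, checking that at each recursive step both constructions add matching layers of interior vertices of complementary colors.
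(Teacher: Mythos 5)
The paper does not actually include a proof of Proposition~\ref{prop:special=0}: of the propositions from Sections~\ref{sec:special-invariants}--\ref{sec:special-seeds}, Section~\ref{sec:special-proofs} proves \ref{prop:specweb}, \ref{prop:special-inv-factoring}, \ref{prop:coeff-special}, \ref{prop:3-term-skein-special}, \ref{pr:thin-triangles}, and \ref{pr:thin-quads}, and this one is left as routine. So there is no official argument to compare against, and your proposal is genuinely filling a gap.

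Your vanishing half is correct in outline and is essentially the right idea. When $p$ is white, the proxy $p'$ is a black trivalent vertex (a Levi--Civita symbol) with one edge to $p$; your claim is that the other two branches hanging off $p'$ are isomorphic rooted trees reaching the same multiset of boundary leaves, so the two corresponding arguments to the antisymmetric symbol coincide and the invariant vanishes. This is precisely the ``sibling'' phenomenon the paper later formalizes in Section~\ref{sec:arborization} and exploits in Lemma~\ref{lem:arborizing-step}. The base case (a bigon at a boundary vertex, killed by Figure~\ref{fig:local-boundary}(a)) is also right, and the inductive matching of layers in $\Lambda_p$ and the color-swapped $\Lambda^{p+1}$ does check out case by case: both constructions stop at the first pair of consecutive like-colored boundary vertices, and each recursive step adds matching interior vertices of complementary color. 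The case $p$ black follows by the global color swap, which exchanges $\Lambda_p \leftrightarrow \Lambda^p$ and hence $J_p^q \leftrightarrow J_q^p$.

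The nonvanishing half is not yet a proof. ``The pairing is then a nontrivial polynomial which can be verified to be nonzero by direct evaluation at generic $(v,y)$'' restates the claim rather than arguing it, and ``factors as a pairing between two independent partner tensors'' is not literally accurate when the two caterpillars $\Lambda_p$ and $\Lambda^q$ overlap (which they typically do); in particular, it does not by itself rule out an identical cancellation of the kind you found in the wrong-adjacency case. The robust route, and the one consistent with the proof the paper does give for Proposition~\ref{prop:specweb}, is to planarize the tensor diagram for $J_p^q$ by repeatedly applying the basic step of Figure~\ref{fig:basic-step}. In every configuration not covered by your vanishing argument, each application produces two terms of which exactly one carries a bigon at a boundary vertex and dies by Figure~\ref{fig:local-boundary}; the process therefore terminates in a \emph{single} non-elliptic web with at least one edge. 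By Kuperberg's Theorem~\ref{th:web-basis}, such a web invariant is a nonzero basis element, giving nonvanishing without any appeal to genericity. (You do need to verify, as part of the case analysis, that the vanishing term at each planarization step really is the one with the boundary bigon; this is where the distinction between $J_p^{p+1}$ and $J_{p+1}^p$, i.e.\ the ``exactly one vanishes'' dichotomy, actually gets decided.)
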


For example, for the signature in Figure~\ref{fig:webs61} 
we have $J_1^2=J_2^3=J_3^4=J_5^4=J_5^6=J_6^7=J_7^8=J_1^8=0$. 

Some nonzero special invariants are equal to each other.
We do not catalogue all such instances (although we could). 
Cf.\ the caption to Figure~\ref{fig:webs61}. 

\begin{proposition}
\label{prop:specweb}
 Each nonzero special invariant is a web invariant.
\end{proposition}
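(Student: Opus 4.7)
The plan is to observe that each defining tensor diagram in Definition~\ref{def:special-inv} is, by construction, a \emph{planar tree} embedded in the disk without edge crossings; any such diagram is automatically non-elliptic and so defines a web invariant. The reduction is essentially structural, with no skein manipulations required.

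First I would verify that the underlying graph is always a tree. The caterpillars $\Lambda_p$ and $\Lambda^p$ are trees by construction (they are built as successive attachments of new vertices joined by a single edge). Then:
$J_p^q$ consists of $\Lambda_p$ and $\Lambda^q$ joined by a single connector edge, so it is a tree; $J_{pqr}$ and $J^{pqr}$ are each obtained from three disjoint caterpillar trees by gluing them at a single new central vertex via one edge apiece, again a tree; and $J_{pq}^{rs}$ is obtained from four disjoint caterpillar trees by adding two new interior vertices $W$ and $B$ together with five edges ($W$--$B$ plus four spokes). A quick Euler-characteristic count (four trees contribute one fewer edge than vertex, and the new piece adds $2$ vertices and $5$ edges, but joins four components into one) shows the result is connected and satisfies $|E|=|V|-1$, hence is a tree. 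In every case disjointness of the underlying caterpillars must also be checked; because the indices $p,q,r,s$ are distinct and each $\Lambda_p$ occupies only the arc from $p$ up to the first monochromatic pair clockwise, this reduces to a small case analysis that I would organize by how far apart the indices sit on the boundary circle.

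Second, I would confirm the remaining axioms of a web. Bipartiteness is immediate: each $\Lambda_p$ is bicolored as specified, and every connector edge is attached so as to join a black endpoint to a white one (using proxies when the boundary vertex has the wrong color). Every interior vertex of a caterpillar is trivalent by construction except for the proxies, which have degree~$2$; in every assembled diagram, each proxy acquires exactly one extra incident edge (either the single connector for $J_p^q$, or the spoke to the central vertex for $J_{pqr}, J^{pqr}, J_{pq}^{rs}$), bringing it to degree~$3$. The clockwise cyclic order at each interior vertex is unambiguous: for the trivalent vertices inside a caterpillar it is inherited from the planar embedding near the boundary, and the instruction that ``the connector edge approaches every proxy vertex from the same side where the disk's center lies'' fixes the cyclic order at proxies. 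Planarity of the assembled diagram is built into the construction (caterpillars are drawn close to the boundary, and the few connector edges can clearly be routed in the interior of the disk without crossings).

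Third, the conclusion is immediate: a tree has no cycles at all, so in particular no multiple edges and no unoriented $4$-cycles whose vertices are all internal. Hence the assembled diagram is a non-elliptic web, and its associated invariant is, by definition, a web invariant. The only real bookkeeping — and the main potential obstacle — is the disjointness check for the caterpillars mentioned above; once that is settled, the rest of the proof is a direct appeal to the tree structure and to the definitions of Sections~\ref{sec:tensor-diagrams} and~\ref{sec:webs}.
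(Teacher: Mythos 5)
Your proposal would give a much cleaner proof than the paper's, but it founders on a point you flag and then wave away: the disjointness of the caterpillars.  The trees $\Lambda_p$ and $\Lambda^q$ are built on the same set of boundary vertices, and a caterpillar does not stop at the next index appearing in $J_\bullet^\bullet$; it keeps marching clockwise until it meets a monochromatic pair.  When the indices are close together, or when one caterpillar's tail is long enough to wrap past another index appearing in the invariant, the two caterpillars share boundary vertices (or would have to cross).  In that case the assembled diagram is neither planar in the disk nor even a tree as an abstract graph, so it is not a non-elliptic web as drawn.

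This is not a corner case: the paper points it out explicitly just after stating the proposition.  For $\sigma=[\circ\circ\circ\bullet\circ\circ\circ\,\bullet]$ the diagrams for $J_3^5$ and $J_{127}$ shown in Figure~\ref{fig:webs61} are not non-elliptic webs — the caterpillar $\Lambda_3$ reaches past vertex $5$, and $\Lambda_1$ reaches past $2$ and $7$.  That is precisely why the paper's argument (the proof of Proposition~\ref{prop:specweb}) proceeds by applying skein relations — in particular the basic planarizing step of Figure~\ref{fig:basic-step} — to rewrite the defining diagram as a single non-elliptic web, as in Figure~\ref{fig:webs62}; and the resulting web can even be disconnected, reflecting the factorizations $J_3^5=J_5^3J_4^5$ and $J_{127}=J_8^2J_2^1J_1^7$.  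Your argument does correctly handle the subfamily of special invariants whose caterpillars are genuinely disjoint (those diagrams are planar trees and hence automatically non-elliptic webs), but the proposition covers all nonzero special invariants, and the overlapping cases cannot be dispatched ``with no skein manipulations required.''
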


To illustrate, refer to Figure~\ref{fig:webs61}. 
While the two tensor diagrams in the top row 
 are non-elliptic webs,
the ones in the bottom row are not. 
They can however be transformed using skein relations into the
non-elliptic webs shown in Figure~\ref{fig:webs62}. 


\begin{figure}[ht]
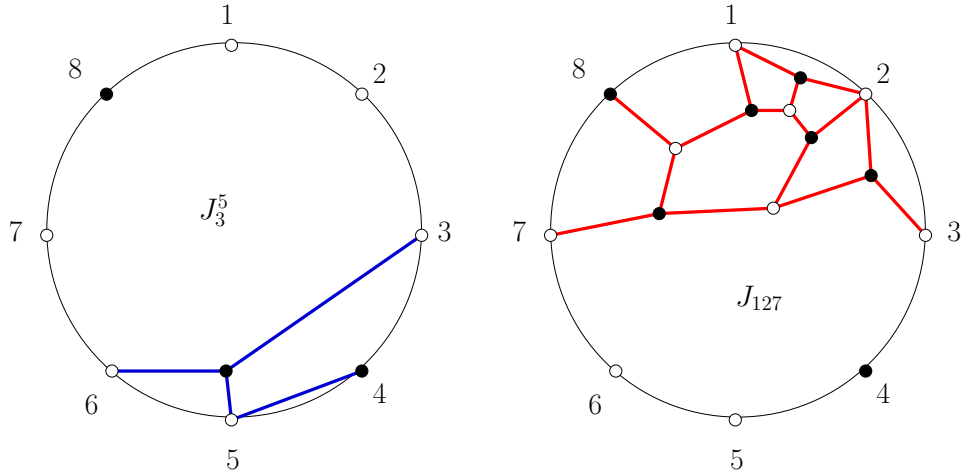

\begin{center}
\scalebox{0.65}{\input{webs62a.pstex_t}}
\qquad
\scalebox{0.65}{\input{webs62b.pstex_t}}
\end{center}
    \caption{
$J_3^5$ and $J_{127}$ are web invariants.}
    \label{fig:webs62}
\end{figure}

The following proposition is straightforwardly verified. 

\begin{proposition}
\label{prop:special-reducible}
Special invariants 
satisfy the following factorization identities: 
\begin{enumerate}
\item
\label{enum:fact-step-1a}
if $p$ is white and $p\!+\!1$ is black, then
$J_p^{p+2}\! =\! J_{p+2}^{p} J_{p+1}^{p+2}$;
\item
\label{enum:fact-step-2}
if $p$ is white, $p+1$ is black, and $p+2$ is white, then
$J_{p}^{p+3} = J_{p+1}^{p+3} J_{p+2}^{p}$;
\item
\label{enum:fact-step-2x}
if $p$ is white, then
$J_{p,p+1,q} \!=\! J_{p+1}^{p} J^{p+1}_{q}$;
\item
\label{enum:fact-step-1b}
if $p$ is white and $p\!+\!1$ is black, then
$J_{p,p+2,q} \!=\! J_{p+2}^{p} J^{p+1}_{q}$;
\item
\label{enum:fact-step-3x}
if $p$ is white, then
$J_{p,p+1}^{q,r} \!=\! J_{p+1}^{p} J^{p+1,q,r}$;
\item
\label{enum:fact-step-3}
if $p$ is white, then
$J_{rp}^{p+1,q} = J_{r}^{p+1} J_{p}^{q}$;
\item
\label{enum:fact-step-4}
if $p$ is white and $p+1$ is black, then
$J_{p,p+2}^{qr} = J_{p+2}^{p} J^{p+1,q,r}$;
\item
\label{enum:fact-step-5}
if the colors are reversed (i.e., $p$ is black, etc.), 
then switch the colors and
swap subscripts with superscripts in rules
\eqref{enum:fact-step-1a}--\eqref{enum:fact-step-4} above.
\end{enumerate}
\end{proposition}

We next show that ``almost all'' special invariants are
\emph{irreducible} elements of~$R_\sigma(V)$. 
Incidentally, \cite[Theorem~3.17]{popov-vinberg} 
asserts that $f\in R_\sigma(V)$ is irreducible 
(as an element of $R_\sigma(V)$)
if and only if $f$ is an irreducible polynomial. 
In other words, if an $\SL(V)$-invariant polynomial $f$ 
factors nontrivially in the polynomial ring 
$\CC[(V^*)^a\times V^b]$, then the factors must be $\SL(V)$-invariant. 
We will not rely on this result. 

\begin{proposition}
\label{prop:special-are-irreducible}
A nonzero special invariant is irreducible unless it fits one of the
descriptions listed in Proposition~\ref{prop:special-reducible}. 
\end{proposition}

In our running example of signature 
\hbox{$\sigma=[\circ \circ \circ \bullet \circ \circ \circ \, \bullet]$}, 
the invariants $J_{257}$ and $J_1^7$ are irreducible whereas 
$J_3^5$ and $J_{127}$ are not: 
$J_3^5=J_5^3 J_4^5$ 
and $J_{127}=J_8^2 J_2^1 J_1^7$.

\begin{figure}[ht]
    \begin{center}
\scalebox{0.7}{\input{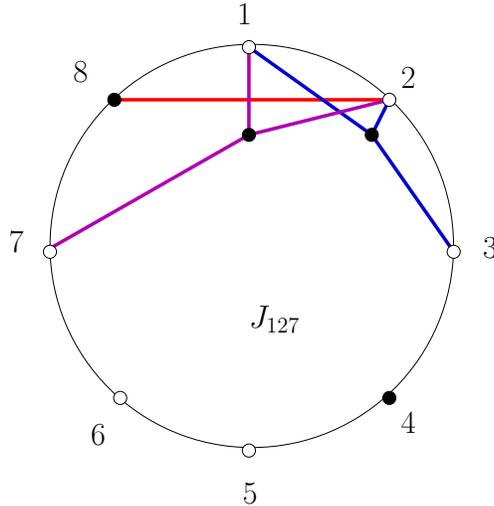}}
    \end{center}
\vspace{-.2in}
    \caption{Factorization of $J_{127}$ into irreducible special invariants}
    \label{fig:webs63}
\end{figure}


\begin{prop}
\label{prop:special-inv-factoring}
 Each nonzero special invariant is represented uniquely as a product of
 irreducible special invariants. 
\end{prop}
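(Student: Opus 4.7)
The plan is to establish existence and uniqueness of the factorization separately, leveraging the UFD structure on $R_{a,b}(V)$ from Lemma~\ref{lem:properties-of-RabV}. Existence is straightforward by induction on the total degree of the special invariant~$J$. If $J$ is already indecomposable, the assertion is trivial; otherwise $J = J_1 J_2$ for some nonzero special invariants~$J_1, J_2$, each of strictly smaller total degree, and the inductive hypothesis supplies indecomposable factorizations for each, which concatenate to yield one for~$J$. The recursion terminates because total degree is a positive integer that strictly decreases at each step.

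Uniqueness is the substantive part. My plan is to show that every indecomposable special invariant is in fact irreducible in~$R_{a,b}(V)$. Once this is established, uniqueness follows immediately from the UFD property: any two factorizations of~$J$ into indecomposables coincide as multisets, up to units (scalars) and reordering. To prove irreducibility of an indecomposable~$J$, represent $J = [D]$ as a non-elliptic web via Proposition~\ref{prop:specweb}, and suppose for contradiction that $J = fg$ with $f, g \in R_{a,b}(V)$ non-units. Expand~$f$ and~$g$ in Kuperberg's web basis (Theorem~\ref{th:web-basis}) and reduce $fg$ back to the basis via skein relations (Remark~\ref{rem:confluence}); by linear independence of the web basis, the reduction must collapse to the single web $[D]$. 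The combinatorial rigidity of the caterpillar tree construction (Definition~\ref{def:special-inv}) from which~$D$ is assembled should then force any such factorization to correspond to splitting~$D$ along a cut-edge into two sub-diagrams, each itself representing a product of special invariants, contradicting the assumed indecomposability of~$J$.

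The main obstacle is the structural analysis required to rule out exotic factorizations. An alternative route worth pursuing, should the direct irreducibility argument prove recalcitrant, is to identify explicitly the connected components of the non-elliptic web~$D$ associated with any special invariant~$J$, observe that~$[D]$ equals the product of the invariants of its components (since superposition of tensor diagrams corresponds to multiplication of invariants), and verify by a case-by-case analysis over the four types $J_p^q, J_{pqr}, J^{pqr}, J_{pq}^{rs}$ that each connected component of~$D$ is itself the web of an indecomposable special invariant. Uniqueness would then follow from the uniqueness of the decomposition of a non-elliptic web into connected components, combined with Kuperberg's basis theorem. Either approach ultimately reduces to a careful combinatorial study of how the caterpillar trees $\Lambda_p$ and $\Lambda^p$ fit together inside~$D$, which is where the principal technical effort lies.
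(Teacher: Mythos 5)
Your approach is genuinely different from the paper's, and both of your proposed routes leave substantive gaps. The paper proves this proposition directly: it writes down five explicit algebraic rewriting rules (e.g.\ if $p$ is white and $p+1$ is black, then $J_p^{p+2}=J_{p+2}^p J_{p+1}^{p+2}$, and so on for the other types), applies them repeatedly to reach a canonical factorization, and then asserts --- after a case-by-case check --- that the result is independent of the order of application and that no other multiset of special invariants multiplies to a given special invariant. No appeal to irreducibility or to the UFD structure is made at this stage.

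Your Route~1 (show indecomposable implies irreducible, then invoke the UFD) is appealing, but the irreducibility of indecomposable special invariants is in fact a harder result that the paper establishes only in Section~\ref{sec:proof-main} (claim~\eqref{item:irreducible} in the proof of the main theorem), via Lemma~\ref{lem:antis}, Lemma~\ref{lem:plantreeirr}, and a nontrivial induction that uses the explicit cut-tail structure of the trees $\Lambda_p$ --- and which, tellingly, invokes ``some kind of explicit description of which special invariants are indecomposable,'' i.e.\ precisely the factorization rules proved in the present proposition. Your sketch for irreducibility (``expand in the web basis \dots the combinatorial rigidity \dots should then force \dots'') asserts the conclusion rather than proving it: you need to rule out factorizations $J=fg$ where $f,g$ are arbitrary elements of the ring, not just special invariants, and that requires the antisymmetry/fork argument of Lemma~\ref{lem:antis}, not just Kuperberg's linear-independence theorem. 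Your Route~2 (connected components of the web) correctly gives existence, but the uniqueness step is also not supplied: if $J=K_1\cdots K_m$ for indecomposable special invariants $K_j$ with webs $E_j$, nothing forces the $E_j$ to sit disjointly in the disk --- indeed coefficient invariants (Proposition~\ref{prop:coeff-special}) multiply with every web invariant to give a web invariant, so ``product is a single web'' does not imply ``webs are non-crossing.'' You would still have to argue that in such a product the $\{E_j\}$ are forced to be the connected components of the web of~$J$, which is essentially the combinatorial case analysis the paper relegates to ``tedious but straightforward.''
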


Such a factorization can be found by iterating the formulas
\eqref{enum:fact-step-1a}--\eqref{enum:fact-step-5}; 
see Section~\ref{sec:special-proofs}. 

Irreducible special invariants will eventually play the role of
(a subset of) generators for the cluster structure in~$R_\sigma(V)$.

We call two special invariants \emph{compatible} if their product is a
single web invariant. This terminology will prove consistent with the
notion of compatibility of cluster variables;
cf.\ Conjecture~\ref{conj:cluster-compatibility}. 
The reader is welcome to check (this may take some time) that the
four special invariants in Figure~\ref{fig:webs61} are pairwise
compatible. 

\begin{prop}
\label{prop:coeff-special}
Let $\sigma$ be a signature of type $(a,b)$, with $a+b\ge 5$, $a\le b$, 
and $\sigma\neq[\bullet\circ\bullet\circ\bullet]$. 
Then there are $a+b$ special
 invariants com\-patible with all special~invariants. 
These $a+b$ invariants are 
the (irreducible) nonzero invariants of the form $J_p^{p \pm 1}$.
Moreover the product of any of them and any web invariant is
 a web invariant. 
\end{prop}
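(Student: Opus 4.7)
The plan has three parts. First, by Proposition~\ref{prop:special=0}, for each of the $a+b$ cyclically adjacent pairs $\{p,p+1\}$ on the boundary, exactly one of the two invariants $J_p^{p+1}$ and $J_{p+1}^p$ vanishes. This immediately yields exactly $a+b$ nonzero invariants of the form $J_p^{p\pm 1}$, which I would show are indecomposable. The web representing such an invariant, obtained by reducing the concatenation of the caterpillar trees $\Lambda_p$ and $\Lambda^{p\pm 1}$ via the boundary relations of Figure~\ref{fig:local-boundary} (and the Yang--Baxter identities of Figure~\ref{fig:yang-baxter}), is a connected tree attached along a short arc of the boundary centered on the edge $\{p,p\pm 1\}$. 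Inspection shows that this web admits no cut vertex or edge separating it into two nontrivial special-invariant pieces, so by the unique factorization of Proposition~\ref{prop:special-inv-factoring} it is indecomposable.

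The core claim is the ``moreover'' statement: for every web invariant $[W]$, the product $J_p^{p\pm 1}\cdot [W]$ is itself a single web invariant. Let $F$ denote the explicit non-elliptic web representing $J_p^{p\pm 1}$. The key geometric observation is that $F$ is supported near the single boundary edge $\{p,p\pm 1\}$. Hence in the superposition of $F$ and $W$, the only features that could destroy non-ellipticity---a bigon, or an internal $4$-cycle whose four vertices are all internal---must arise from interactions of $F$ with edges of $W$ incident to $p$ or~$p\pm 1$. I would carry out a case analysis based on the colors of $p$, $p\pm 1$, and their immediate neighbors, using the skein relations of Figure~\ref{fig:skein} to resolve the resulting local defects. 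In each case, one verifies that the reduction terminates at a single non-elliptic web, with no sign cancellations, rather than a genuine linear combination. Compatibility of each $J_p^{p\pm 1}$ with every special invariant then follows immediately via Proposition~\ref{prop:specweb}.

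To prove that no other special invariant is compatible with all special invariants---so the count is exactly $a+b$---I would exhibit, for each nonzero special invariant $J$ not of the form $J_p^{p\pm 1}$, a partner special invariant $J'$ such that the product $J\cdot J'$ reduces to a sum of at least two distinct non-elliptic webs. Concretely, for a ``long'' invariant like $J_p^q$ with $p,q$ non-adjacent, an obstructing partner can be chosen among the invariants whose supporting tree crosses that of~$J$ and forces a resolvable but branching superposition. The exclusion $\sigma\neq[\bullet\circ\bullet\circ\bullet]$ is needed precisely to guarantee that enough room exists around the circle for such a partner to be built. The main obstacle is the case analysis in the second paragraph: there are several qualitatively different local shapes for $F$ depending on the signature pattern near $p$ and $p\pm 1$, and non-ellipticity of the superposition with an arbitrary $W$ must be verified in each of them.
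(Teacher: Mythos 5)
Your overall plan matches the paper's: prove compatibility of the $J_p^{p\pm 1}$ invariants with every web invariant by local skein reductions, and disqualify all other nonzero special invariants by exhibiting a crossing partner. Two of your intermediate claims, however, are incorrect and would derail the argument if followed as written.

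First, the assertion that the web $F$ representing $J_p^{p\pm 1}$ is ``supported near the single boundary edge $\{p,p\pm 1\}$'' is false in general. When the color pattern near $p,\,p\pm 1$ alternates for a while, $\Lambda^{p\pm 1}$ (or $\Lambda_p$) is a long caterpillar tree with legs reaching out to many consecutive boundary vertices, as in Figure~\ref{fig:caterpillars}. The paper distinguishes these as ``long'' coefficient invariants (Figure~\ref{fig:webs54}), and their superposition with an arbitrary web $W$ interacts with edges of $W$ at \emph{every} boundary vertex touched by the caterpillar, not just $p$ and $p\pm 1$. Consequently a one-shot case analysis near $\{p,p\pm 1\}$ cannot be made to work. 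The correct mechanism, which the paper encapsulates in Figures~\ref{fig:basic-step} and~\ref{fig:webs55}, is to iterate the basic planarizing move around \emph{each} vertex visited by the caterpillar, propagating the reduction along the whole tail; at each step the single non-elliptic outcome is preserved. Your write-up should replace the localization claim with this inductive sweep along the caterpillar.

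Second, your explanation for the exclusion of $\sigma=[\bullet\circ\bullet\circ\bullet]$ --- ``to guarantee that enough room exists around the circle for such a partner to be built'' --- is not the actual reason. In that signature, one of the five nonzero invariants of the form $J_p^{p\pm1}$, namely $J_5^1$, is \emph{decomposable}: $J_5^1 = J_3^5\,J_4^2$ (Remark~\ref{rem:xoxox}). As a result the set of indecomposable invariants compatible with everything has cardinality $6$, not $a+b=5$, so the count in the statement fails. The exclusion has nothing to do with the availability of obstructing partners; it is exactly a failure of indecomposability, which your proof strategy would need to acknowledge (you state without argument that the nonzero $J_p^{p\pm1}$ are indecomposable --- this is precisely where that step can break, and where the hypotheses $a\le b$ and $\sigma\neq[\bullet\circ\bullet\circ\bullet]$ enter).
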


Anticipating their future role, we call these special invariants
\emph{coefficient invariants}.
We denote the $(a\!+\!b)$-element set of these invariants by~$\cc_\sigma$. 
See Figure~\ref{fig:webs64}. 

\pagebreak[3]

\begin{remark}
\label{rem:xoxox}
For $\sigma=[\bullet\circ\bullet\circ\bullet]$, the special invariant
$J_5^1$ factors: $J_5^1=J_3^5 J_4^2$. \linebreak[3]
As a result, there are $6$ coefficient invariants in this case, 
namely $J_1^2, J_3^2, J_3^4, J_5^4, J_3^5, J_4^2$. 
\end{remark}

\pagebreak[3]

\begin{figure}[ht]
\begin{center}
\vskip-.1in 
\scalebox{0.65}{\input{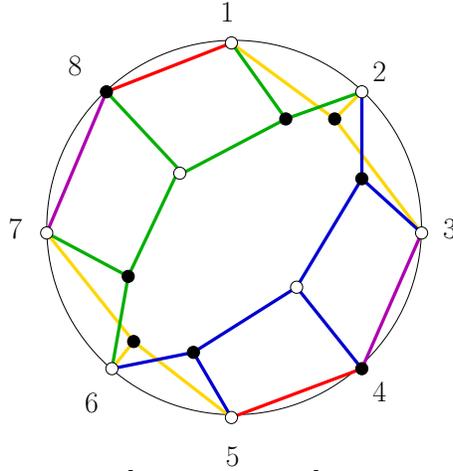}}
\end{center}
\vspace{-.2in}

    \caption{For the signature 
$\sigma=[\circ \circ \circ \bullet \circ \circ \circ \,\bullet]$, 
the set of coefficient invariants is
$\cc_\sigma=\{J_2^1, J_3^2, J_4^3, J_4^5, J_6^5, J_7^6, J_8^7,
J_8^1\}$.
}
    \label{fig:webs64}
\end{figure} 

In order to write exchange relations for the cluster
algebra~$R_\sigma(V)$, we will need certain 
\emph{$3$-term skein relations} for special invariants.


\begin{proposition}
\label{prop:3-term-skein-special}
Let $p,q,r$ (respectively $p,q,r,s$) be distinct boundary vertices,
listed in clockwise order.
Then 
\begin{align}
\label{eq:3-term-1}
J_{pqr} J^{pqr} &= J_r^p J_q^r J_p^q + J_r^q J_p^r J_q^p\,; \\
\label{eq:3-term-2}
J_p^r J_{qrs} &= J_q^r J_{prs} + J_s^r J_{pqr}\,;\\
\label{eq:3-term-3}
J_p^r J_s^q &= J_s^r J_p^q + J_{sp}^{qr}\,;\\
\label{eq:3-term-4}
J_p^r J_{rs}^{pq} &=J_p^q J_r^p J_s^r + J_{prs} J^{pqr}\,;\\
\label{eq:3-term-5}
J_r^p J_{sp}^{qr} &= J_r^q J_s^p J_p^r + J_{prs} J^{pqr}\,.
\end{align} 
\end{proposition}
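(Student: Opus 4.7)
My plan is to reduce all five identities to two classical $\SL_3$ tensor identities by exploiting the diagrammatic interpretation of the special invariants. View each tree $\Lambda_p$ of Definition~\ref{def:special-inv} as a tensor diagram with a single unattached endpoint (the site of the future connector edge): it then computes an ``effective vector'' $V_p\in V$ depending multilinearly on the boundary data; similarly $\Lambda^p$ computes an ``effective covector'' $U^p\in V^*$. With this viewpoint one has $J_p^q = \langle U^q, V_p\rangle$, $J_{pqr} = \vol(V_p, V_q, V_r)$, $J^{pqr} = \vol^*(U^p, U^q, U^r)$, and a direct computation applying the contracted Levi-Civita identity $\varepsilon_{\alpha\beta\gamma}\varepsilon^{\gamma\delta\epsilon} = \delta^\delta_\alpha \delta^\epsilon_\beta - \delta^\epsilon_\alpha \delta^\delta_\beta$ at the central $W$--$B$ edge gives
\[
J_{pq}^{rs} = J_p^r J_q^s - J_p^s J_q^r.
\]

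A key auxiliary fact I would establish first is the vanishing $\langle U^p, V_p\rangle = 0$. If $p$ is black (so $V_p = x(p)$), then plugging $V_p$ into the external slot of $\Lambda^p$ produces a digon between the boundary vertex $p$ and its white proxy; the local contribution at the proxy contains the factor $\varepsilon_{ijk}\,x(p)^i\,x(p)^j$, which vanishes by antisymmetry. Diagrammatically, this is nothing but the boundary relation in Figure~\ref{fig:local-boundary}(a); the case $p$ white is symmetric.

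With these preparations, the identities reduce to short manipulations based on:
\begin{itemize}
\item[(A)] Weyl's identity $\vol(V_1,V_2,V_3)\vol^*(U^1,U^2,U^3) = \det(\langle U^j, V_i\rangle)_{i,j=1,2,3}$;
\item[(B)] the expansion $J_{pq}^{rs} = J_p^r J_q^s - J_p^s J_q^r$ above.
\end{itemize}
For~\eqref{eq:3-term-1}, apply~(A) to the triple $(p,q,r)$: the three diagonal entries of the $3{\times}3$ matrix vanish, so the determinant collapses to exactly the two-term RHS. For~\eqref{eq:3-term-2}, invoke the 4-vector Pl\"ucker relation in $\CC^3$ (itself a consequence of~(A)) paired against $U^r$; the term $\langle U^r, V_r\rangle$ drops out, leaving the three displayed terms. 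Identity~\eqref{eq:3-term-3} is a rewriting of~(B). For~\eqref{eq:3-term-4} and~\eqref{eq:3-term-5}, I would expand $J_{rs}^{pq}$ (resp.~$J_{sp}^{qr}$) using~(B), and expand the mixed product $J_{prs}\,J^{pqr}$ using~(A); the latter yields a $3{\times}3$ determinant with two zero diagonal entries ($J_p^p=J_r^r=0$), hence three nonzero cofactor terms, which combine with the (B)-expansion to produce the claimed identity.

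The main obstacle I anticipate is the sign bookkeeping for~\eqref{eq:3-term-4} and~\eqref{eq:3-term-5}: the cofactor expansion from~(A) yields terms of mixed sign, two of which must cancel against the~(B)-expansion of $J_{rs}^{pq}$ (resp.~$J_{sp}^{qr}$), while a third survives as the coefficient-like term on the RHS. Aside from this careful but routine sign-tracking, everything is purely algebraic: since~(A) and~(B) are multilinear in their arguments, they apply verbatim to the effective vectors and covectors built from the trees $\Lambda_p, \Lambda^p$.
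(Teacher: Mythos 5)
Your proposal is correct, and it is a genuinely different presentation from the paper's. The paper's proof is a two-sentence sketch referring the reader to ``repeated application of skein relations'' organized around the basic planarizing move of Figure~\ref{fig:basic-step}; it is meant to be carried out by drawing the (non-planar) tensor diagrams for the special invariants and simplifying locally. You instead package each tree $\Lambda_p$, $\Lambda^p$ into an ``effective vector'' $V_p$, resp.\ ``effective covector'' $U^p$, isolate the auxiliary lemma $\langle U^p,V_p\rangle=0$ (a clean instance of Figure~\ref{fig:local-boundary}(a) at the proxy digon), and then reduce all five identities to Weyl's product formula~\eqref{eq:two-volume-forms} and the contracted Levi-Civita identity. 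I checked the resulting determinant expansions: the zero diagonal entries $J_p^p=J_q^q=J_r^r=0$ make the $3\times 3$ determinant for~\eqref{eq:3-term-1} collapse to the two derangement terms $J_p^qJ_q^rJ_r^p+J_p^rJ_r^qJ_q^p$; \eqref{eq:3-term-3} is exactly (B); and the cofactor expansions in~\eqref{eq:3-term-4}--\eqref{eq:3-term-5} cancel as you predict. Your approach has a real advantage over the paper's: it explains \emph{why} these identities hold uniformly for every signature (all dependence on $\sigma$ is absorbed into the black-boxed $V_p$, $U^p$), and it replaces case-by-case sign-chasing in pictures by one determinant computation.

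Two small points to tighten. First, your formula (B) $J_{pq}^{rs}=J_p^rJ_q^s-J_p^sJ_q^r$ does require the non-crossing planar embedding in Definition~\ref{def:special-inv} to pin down the cyclic orders at $W$ and $B$; as written you apply $\varepsilon_{\alpha\beta\gamma}\varepsilon^{\gamma\delta\epsilon}=\delta^\delta_\alpha\delta^\epsilon_\beta-\delta^\epsilon_\alpha\delta^\delta_\beta$ without confirming those orders, but the sign you obtain is the one consistent with the stated~\eqref{eq:3-term-3}, so it is correct. Second, the four-vector relation
\[
\vol(V_q,V_r,V_s)V_p-\vol(V_p,V_r,V_s)V_q+\vol(V_p,V_q,V_s)V_r-\vol(V_p,V_q,V_r)V_s=0
\]
is not really a consequence of Weyl's identity (A); it is Cramer's rule, or equivalently the Laplace expansion of the singular $4\times4$ matrix obtained by bordering $(V_p\,|\,V_q\,|\,V_r\,|\,V_s)^{T}$ with the column of values $\langle U^r,\cdot\rangle$. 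Either phrasing is elementary and gives~\eqref{eq:3-term-2} after using $J_r^r=0$, so this is a misattribution rather than a gap, but it is worth stating precisely.
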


It is important to note that identities 
\eqref{eq:3-term-1}--\eqref{eq:3-term-5}
hold irrespective of the
choice~of the signature~$\sigma$. 
For some of those choices, these identities are illustrated in
Figure~\ref{fig:webs56}. 

\begin{figure}[ht]
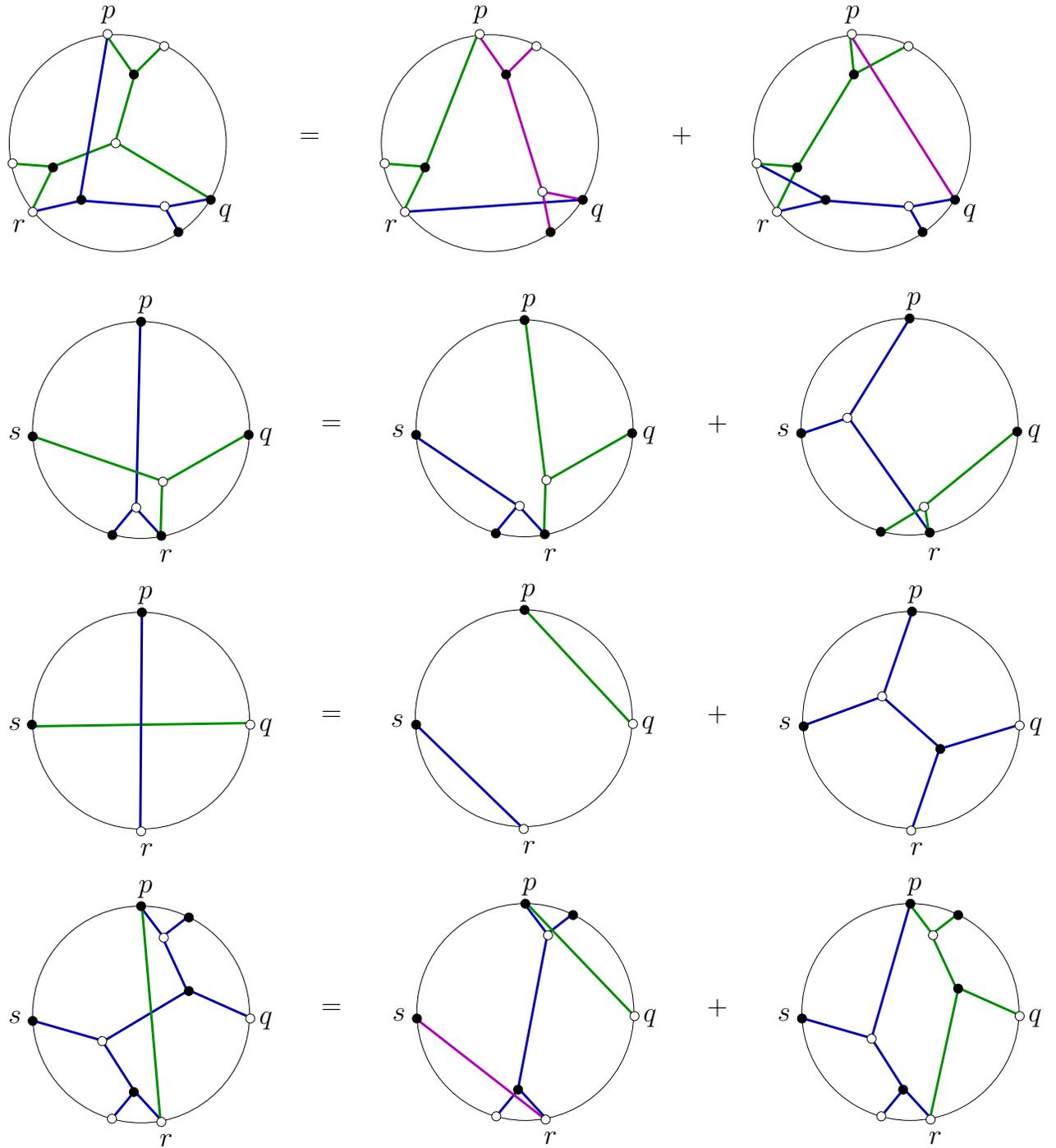

\smallskip

\scalebox{0.75}{\input{webs67a.pstex_t}}

\bigskip\bigskip

\scalebox{0.75}{\input{webs67b.pstex_t}}

\bigskip

\scalebox{0.75}{\input{webs67d.pstex_t}}

\bigskip

\scalebox{0.75}{\input{webs67c.pstex_t}}
\caption{$3$-term skein relations \eqref{eq:3-term-1}--\eqref{eq:3-term-4}.}
    \label{fig:webs56}
\end{figure}

\begin{remark}
\label{rem:distill}
In each of identities \eqref{eq:3-term-1}--\eqref{eq:3-term-5}, some special
invariants might be equal to~$0$, while others might factor further.
After everything is expressed in terms of irreducible special invariants, 
we either get a tautological formula $A=A$, 
or else a genuine \hbox{$3$-term} relation. 
We call the latter relation the \emph{distilled form} of the original
one. 
While the distilled relation is always a skein relation of some sort,
it does not have to be an instance of 
\eqref{eq:3-term-1}--\eqref{eq:3-term-5}; 
an example is given in~\eqref{eq:j17*j258}. 
\end{remark}

\begin{example}
\label{example:distill}
Let $\sigma =[\circ \circ \circ \bullet \circ
 \circ \circ \, \bullet]$ as before.
One instance of equation \eqref{eq:3-term-1} is
$J_{127}J^{127}=J_7^1J_2^7J_1^2+J_7^2J_1^7J_2^1$. 
This does not yield a nontrivial identity: 
after substituting $J_1^2=0$, 
$J_{127}=J_8^2 J_2^1 J_1^7$, $J^{127}=J_1^7$, and
 $J_7^2=J_8^2J_1^7$, everything cancels~out. 

A more interesting example is 
$J_7^2 J_{258} = J_8^2 J_{257} + J_5^2 J_{278}$
(cf.~\eqref{eq:3-term-2}). 
Substituting the factorizations $J_7^2=J_8^2J_1^7$ 
and $J_{278}=J_8^7 J_8^2 J_2^1$ and simplifying, we get  
\begin{equation}
\label{eq:j17*j258}
J_1^7 J_{258} =
 J_8^7J_5^2J_2^1+J_{257}\,. 
\end{equation}
\end{example}

\pagebreak[3]

\section{Seeds associated to triangulations}
\label{sec:special-seeds}

In this section, we construct a family of distinguished seeds in
(the quotient field~of) the ring~$R_\sigma(V)\cong
R_{a,b}(V)$. 
These seeds will be used in Section~\ref{sec:main-theorem}
to define a cluster algebra structure in~$R_\sigma(V)$. 


\begin{definition}
\label{def:z(T)}
Let $T$ be a triangulation of our $(a+b)$-gon by its diagonals. 
Let $K(T)$ be the collection of special invariants
built as follows: 
\begin{itemize}
 \item for each diagonal or side $pq$ in $T$ , include $J_p^q$ and $J_q^p$;
 \item for each triangle $pqr$ in $T$ (here $p,q,r$ are ordered clockwise), 
include $J_{pqr}$.
\end{itemize}
The \emph{extended cluster} $\zz(T)$ 
consists of all irreducible special invariants which appear in
factorizations of nonzero elements of $K(T)$ into irreducibles. \linebreak[3]
The \emph{cluster} $\xx(T)=\zz(T)\setminus\cc_\sigma$ 
consists of all non-coefficient invariants in~$\zz(T)$. 
\end{definition}

\begin{theorem}
\label{th:z(T)}
For any triangulation~$T$ as above, the following holds: 
\begin{itemize}
\item
the extended cluster $\zz(T)$ contains the entire set $\cc_\sigma$ of
coefficient invariants; 
\item
$\zz(T)$ consists of $3(a+b)-8$ invariants;
\item
all special invariants in $\zz(T)$ are pairwise compatible. 
\end{itemize}
\end{theorem}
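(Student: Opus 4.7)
The plan is to establish the three claims in turn. Claim~(i) is essentially immediate: every side $(p,p\!+\!1)$ of the $(a\!+\!b)$-gon is an edge of~$T$, so both $J_p^{p+1}$ and $J_{p+1}^p$ belong to~$K(T)$. By Proposition~\ref{prop:special=0}, exactly one of these is nonzero; by Proposition~\ref{prop:coeff-special}, that nonzero one is either itself an (indecomposable) coefficient invariant or, in the exceptional case of Remark~\ref{rem:xoxox}, decomposes into the two remaining coefficient invariants. Ranging over all $a\!+\!b$ sides thus exhibits every element of~$\cc_\sigma$ as an indecomposable factor of a nonzero element of $K(T)$, so $\cc_\sigma\subseteq\zz(T)$.

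For Claim~(iii), I would exhibit a single non-elliptic web $D_T$ in the disk that simultaneously realizes the entire family $K(T)$ as the superposition of its sub-diagrams. Construction: draw $T$ in the disk; inside each triangle $pqr$ of~$T$, place the central vertices used by the tensor diagrams for $J_{pqr}$ and for the three sides/diagonals of that triangle; run the caterpillar trees $\Lambda_p,\Lambda^p$ in a thin collar of the boundary. Planarity of~$T$ prevents the contents of distinct triangles from crossing, and a local inspection near the boundary, using the caterpillar construction together with the non-alternating hypothesis on~$\sigma$, rules out multiple edges and internal $4$-cycles. Non-ellipticity then passes to every sub-web: for any two indecomposable invariants $J_1,J_2\in\zz(T)$, the superposition of their unique non-elliptic web representatives sits inside~$D_T$ and is still non-elliptic, which is precisely compatibility.

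For Claim~(ii), I would proceed by induction on $n=a\!+\!b$ via ear removal. Given $n\ge 6$, pick an ear triangle $(p\!-\!1,p,p\!+\!1)$ of~$T$ and let $T'$ be the triangulation of the $(n\!-\!1)$-gon obtained by deleting vertex~$p$. The goal is to show $|\zz(T)|=|\zz(T')|+3$: the ear contributes two sides, one diagonal, and one triangle, and each of the corresponding nonzero invariants factors along the caterpillar structure of the $\Lambda_q,\Lambda^q$ into pieces indexed by the color-change arcs of~$\sigma$; an explicit bookkeeping then shows that exactly three new indecomposables appear after subtracting those already present in~$T'$. A small base case $n=5$, verified signature-by-signature, seeds the induction. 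If ear removal threatens to break the non-alternating property, one falls back to an induction via diagonal flips, which preserves~$\sigma$ and, as will be shown in Section~\ref{sec:proof-main} anyway, shuffles $\zz(T)$ by a single element at a time.

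The main obstacle throughout is the explicit description of how $J_p^q$ and $J_{pqr}$ factor into indecomposables: this combinatorial data underpins both the construction of $D_T$ for Claim~(iii) (where it is needed to exclude internal $4$-cycles) and the ear-removal count for Claim~(ii). Working it out requires a case analysis on the patterns of $\bullet/\circ$ along the relevant boundary arcs, tracing where each caterpillar tree terminates and where the tensor diagram naturally splits at two-valent bottleneck vertices; once pinned down, all three claims follow by combining Propositions~\ref{prop:special=0}--\ref{prop:3-term-skein-special} with the planar combinatorics of~$T$.
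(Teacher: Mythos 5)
Your treatment of Claim~(i) matches the paper's. The other two claims are where the approach diverges and where there are real gaps.

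For Claim~(iii), you propose a single non-elliptic web~$D_T$ realizing the entire collection, placing the caterpillar trees ``in a thin collar of the boundary'' and arguing that planarity of~$T$ prevents crossings. This overlooks the central difficulty: the caterpillar tails of \emph{different} special invariants in $K(T)$ overlap in that collar and \emph{do} cross. Indeed, a tail $\Lambda_p$ can extend arbitrarily far clockwise before the signature supplies two adjacent vertices of the same color, and two tails based at nearby vertices necessarily run on top of each other. This is precisely why the paper does not construct a single global web; instead, in the compatibility step it checks regions separately and then observes that the overlapping tails ``do not create any obstructions to compatibility'' only after repeatedly applying the planarizing skein move (the paper's Figure on the basic planarizing step). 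In other words, the superposition of the individual non-elliptic webs is \emph{not} a subgraph of one big planar diagram; it must be re-reduced. Your $D_T$ does not exist as stated.

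For Claim~(ii), your ear-removal induction is a genuinely different strategy from the paper's, which decomposes the polygon into six types of fundamental regions (after deleting length-$2$ diagonals), tabulates each region's contribution to $\zz(T)$, and sums. Your induction has a subtlety you do not confront: deleting the ear vertex~$p$ changes the signature $\sigma$, and since the caterpillar tree $\Lambda_q$ for a distant vertex~$q$ is determined by scanning clockwise until the first repeated color, a single deletion can alter $\Lambda_q$ (and hence the factorization of $J_q^r$, etc.) far from the ear. Thus the claim that exactly three new indecomposables appear is not a local computation at the ear; it interacts with the entire collar. Your fallback to flip-based induction also needs care, since Proposition~\ref{pr:x(T)=x(T')} shows some flips do not change $\xx(T)$ at all, so a flip does not generically ``shuffle $\zz(T)$ by a single element.'' The paper's region-by-region count sidesteps both issues by being a direct, global bookkeeping rather than an inductive one, at the cost of a larger (but finite and local) case analysis.
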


\begin{example}
\label{example:K(T)}
Let $\sigma=[\circ \circ \circ \bullet \circ \circ \circ \,\bullet]$.
Then
\[
\cc_\sigma=\{J_2^1, J_3^2, J_4^3, J_4^5, J_6^5, J_7^6, J_8^7,
J_8^1\}
\]
(see Figure~\ref{fig:webs64}). 
For~the triangulation~$T$ shown in Figure~\ref{fig:webs60} on the
left, we have
\[
K(T)= \cc_\sigma \cup 
\{J_7^1, J_1^7, J_2^7, J_7^2, J_2^5, J_5^2, J_7^5, J_5^7, J_5^3,
J_3^5, J_{127}, J_{178}, J_{257}, J_{235}, J_{567}\}.
\]
Now, $J_{178}=J_{345}=0$ and $J_5^7=J_6^5$, 
while several other elements of $K(T)$
factor:
\[
J_7^1=J_8^1 J_1^7, {\ \ }
J_7^2=J_8^2J_1^7, {\ \ }
J_3^5=J_4^5J_5^3, {\ \ }
J_{127}=J_8^2 J_2^1 J_1^7, {\ \ }
J_{235}=J_5^3J_3^2, {\ \ }
J_{567}=J_6^5J_7^6. 
\]
We conclude that
$\zz(T)= \cc_\sigma\cup \xx(T)$ where  
\begin{equation}
\label{eq:x(T)-example}
\xx(T)=\{J_8^2, J_2^7, J_1^7, J_2^5, J_5^2, J_7^5, J_5^3, J_{257}\}.
\end{equation}

\vspace{-.05in}

\begin{figure}[ht]
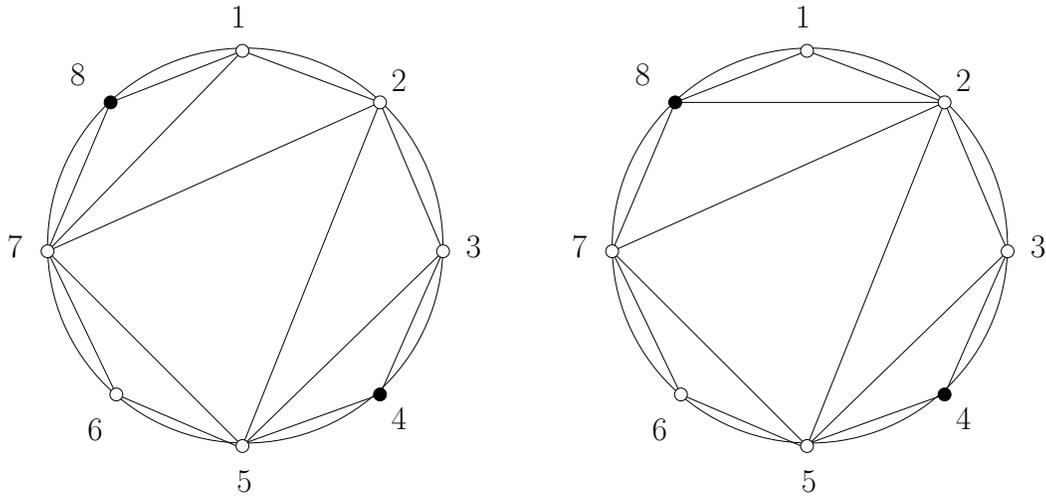

\begin{center}
\scalebox{0.6}{\input{webs60.pstex_t}}
\qquad\quad
\scalebox{0.6}{\input{webs60a.pstex_t}}
\end{center}
    \caption{Triangulations $T$ and $T'$ of an octagon,
$\sigma=[\circ \circ \circ \bullet \circ
    \circ \circ \, \bullet]$. 
}
    \label{fig:webs60}
\end{figure} 

Figure~\ref{fig:webs66} shows the webs corresponding to the elements
of~$\xx(T)$. 
\end{example}

\begin{figure}[ht]
\scalebox{0.65}{\input{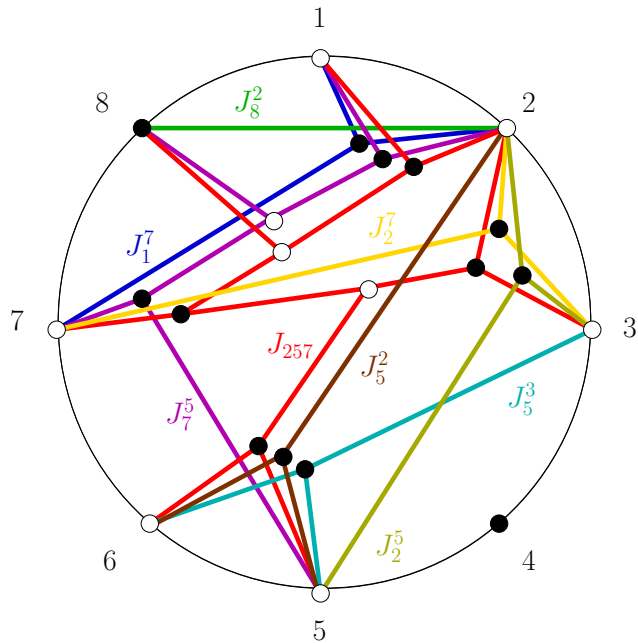}}
    \caption{Cluster associated with the triangulations shown
    in Figure~\ref{fig:webs60}.}
    \label{fig:webs66}
\end{figure}

Surprisingly, different triangulations may define the same 
cluster. 

\begin{proposition}
\label{pr:x(T)=x(T')}
Let $T$ and $T'$ be triangulations which contain the diagonal \hbox{$q(q+3)$}
and coincide outside the quadrilateral $q(q+1)(q+2)(q+3)$. 
If the colors of the vertices $(q,q+1,q+2)$ alternate, 
then $\xx(T)=\xx(T')$.
\end{proposition}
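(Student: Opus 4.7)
The plan is to identify the symmetric difference $K(T)\triangle K(T')$, show that the alternating color hypothesis forces several of its members to vanish or to factor in a controlled way via the skein relations of Proposition~\ref{prop:3-term-skein-special}, and then appeal to the uniqueness of factorization into indecomposable special invariants (Proposition~\ref{prop:special-inv-factoring}) to match the non-coefficient indecomposable pieces on the two sides.

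Set $v_i := q+i$ for $i = 0, 1, 2, 3$. Since $T$ and $T'$ coincide outside the quadrilateral $Q$ with vertices $v_0, v_1, v_2, v_3$ and both contain the diagonal $v_0 v_3$, the sets $K(T)$ and $K(T')$ share all their special invariants except those coming from the competing interior diagonal and the two triangles it creates:
\begin{align*}
K(T)\setminus K(T') &= \{\,J_{v_0}^{v_2},\; J_{v_2}^{v_0},\; J_{v_0v_1v_2},\; J_{v_0v_2v_3}\,\},\\
K(T')\setminus K(T) &= \{\,J_{v_1}^{v_3},\; J_{v_3}^{v_1},\; J_{v_0v_1v_3},\; J_{v_1v_2v_3}\,\}.
\end{align*}
By the duality exchanging $\circ$ and $\bullet$, I may assume $v_0$ is white, $v_1$ is black, and $v_2$ is white. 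Proposition~\ref{prop:special=0} then yields the vanishings $J_{v_0}^{v_1} = 0$ and $J_{v_2}^{v_1} = 0$, which will drive the reduction.

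Next I would apply each of the five skein relations \eqref{eq:3-term-1}--\eqref{eq:3-term-5} with $(p,q,r,s)$ specialized to $(v_0, v_1, v_2, v_3)$. In every case, one or two summands carry a factor of $J_{v_0}^{v_1}$ or $J_{v_2}^{v_1}$ and thus vanish, collapsing the relation to a short identity that ties together invariants of $K(T)\setminus K(T')$ with invariants of $K(T')\setminus K(T)$ and invariants shared by both sets (i.e., coming from the diagonal $v_0 v_3$ and the sides of $Q$). For example, \eqref{eq:3-term-3} collapses to $J_{v_0}^{v_2}\,J_{v_3}^{v_1} = J_{v_3 v_0}^{v_1 v_2}$, and \eqref{eq:3-term-1} to $J_{v_0v_1v_2}\,J^{v_0v_1v_2} = 0$; the analogous simplifications of \eqref{eq:3-term-2}, \eqref{eq:3-term-4}, and \eqref{eq:3-term-5} deal with the remaining triangle invariants $J_{v_0v_2v_3}$ versus $J_{v_1v_2v_3}$. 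Combined, these identities express each element of $K(T)\setminus K(T')$ as a product built from elements of $K(T')\setminus K(T)$ and shared invariants, and symmetrically. Unique factorization into indecomposables then forces the non-coefficient indecomposable factors on either side to agree, giving $\xx(T)=\xx(T')$; the color-dual case ($v_0$ black, $v_1$ white, $v_2$ black) is handled by the same argument with $\Lambda$ and $\Lambda^{\bullet}$ interchanged.

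The main technical obstacle is bookkeeping. The color of $v_3$, and more broadly the extent of the caterpillars $\Lambda_{v_i}$ and $\Lambda^{v_i}$ that depends on the rest of~$\sigma$, governs which of the adjacent-vertex invariants $J_u^{u\pm 1}$ appearing in the simplified identities lie in the coefficient set $\cc_\sigma$ and which decompose further into coefficients (cf.\ Remark~\ref{rem:xoxox}); it also controls whether some members of $K(T)\triangle K(T')$ are themselves zero (as with $J^{v_0v_1v_2}$ above). Verifying uniformly across all of these subcases that no non-coefficient indecomposable factor is gained or lost in passing between $T$ and $T'$ is where the bulk of the work resides.
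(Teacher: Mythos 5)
Your overall strategy---compare $K(T)$ and $K(T')$, exploit the vanishings $J_{v_0}^{v_1}=J_{v_2}^{v_1}=0$ forced by the alternating colors via Proposition~\ref{prop:special=0}, and appeal to unique factorization into indecomposables (Proposition~\ref{prop:special-inv-factoring})---matches the direct case analysis behind the paper's one-line proof (which refers the reader to the worked example in Figure~\ref{fig:webs15}). Two points, however, need attention.

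First, a slip: with $v_0$ white, $v_1$ black, $v_2$ white, it is $J_{v_0v_1v_2}$ (the element of $K(T)$), not $J^{v_0v_1v_2}$, that vanishes. Relation~\eqref{eq:3-term-1} alone only gives $J_{v_0v_1v_2}J^{v_0v_1v_2}=0$; since Proposition~\ref{pr:thin-triangles} applied with the exposed side $v_1v_2$ and $v_1$ black yields $J^{v_0v_1v_2}=J_{v_2}^{v_0}J_{v_1}^{v_2}\neq 0$, the vanishing factor must be $J_{v_0v_1v_2}$.

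Second, and more substantively, the skein relations are a roundabout tool here, since they give identities among products rather than the factorizations into indecomposables that actually define $\xx(T)$. The factorization rules \eqref{enum:fact-step-1}--\eqref{enum:fact-step-5} in the proof of Proposition~\ref{prop:special-inv-factoring} are what you want, and they immediately surface the observation your sketch leaves implicit, which is the real crux: the \emph{shared} invariant $J_{v_0}^{v_3}\in K(T)\cap K(T')$ factors, by rule~\eqref{enum:fact-step-2}, as
\[
J_{v_0}^{v_3}=J_{v_1}^{v_3}\,J_{v_2}^{v_0}\,.
\]
Thus the non-coefficient indecomposable factors of \emph{both} $J_{v_1}^{v_3}\in K(T')\setminus K(T)$ and $J_{v_2}^{v_0}\in K(T)\setminus K(T')$ already lie in $\xx(T)\cap\xx(T')$. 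It then remains only to check that the remaining members of $K(T)\triangle K(T')$ contribute nothing further; e.g.\ $J_{v_0}^{v_2}=J_{v_2}^{v_0}J_{v_1}^{v_2}$ and $J_{v_0v_2v_3}=J_{v_2}^{v_0}J_{v_3}^{v_1}$ by rule~\eqref{enum:fact-step-1}, $J_{v_3}^{v_1}=J_{v_1}^{v_3}J_{v_3}^{v_2}$ by rule~\eqref{enum:fact-step-1} reversed, and $J_{v_1v_2v_3}=J_{v_3}^{v_1}$, $J_{v_0v_1v_3}=J_{v_1}^{v_0}J_{v_3}^{v_1}$ by short computations combining these with~\eqref{eq:3-term-2} (the extra factors $J_{v_1}^{v_2}$, $J_{v_3}^{v_2}$, $J_{v_1}^{v_0}$ are coefficient invariants). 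The cleaner way to phrase the conclusion is not that each side factors through the other, but that both symmetric-difference sets absorb entirely into coefficients together with the indecomposable factors of the single shared invariant $J_{v_0}^{v_3}$, which forces $\xx(T)=\xx(T')$.
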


\begin{example}
\label{example:x(T)=x(T')}
The two triangulations in Figure~\ref{fig:webs60} yield
the same cluster. (Apply Proposition~\ref{pr:x(T)=x(T')} with $q\!=\!7$.)
To check this directly, refer to Example~\ref{example:K(T)}, 
write
\[
K(T')=\{J_8^2,J_2^8,J_{128},J_{278}\}\cup
K(T)\setminus\{J_7^1,J_1^7,J_{127},J_{178}\},
\]
and use the factorizations $J_2^8=J_8^2J_2^1$, $J_{128}=J_8^2 J_2^1$, 
and $J_{278}=J_8^7 J_8^2 J_2^1$. 
\end{example}

To complete our construction, 
we need to describe the quivers~$Q(T)$ which, 
together with the extended clusters~$\zz(T)$,
will form the seeds defining 
a cluster algebra structure in the ring~$R_\sigma$.
To put it differently, we need to write the corresponding 
exchange relations~\eqref{eq:exchange-relation}. 
These are obtained from the
$3$-term skein relations \eqref{eq:3-term-1}--\eqref{eq:3-term-5}
using a couple of additional observations. 

Let $T$ be a triangulation as in Definition~\ref{def:z(T)}. 
If $pqr$ and $prs$ are triangles in~$T$, then by construction 
the first factor on the left-hand side of each relation
\hbox{\eqref{eq:3-term-1}--\eqref{eq:3-term-5}}
is an element of~$K(T)$. 
In order for such a relation (or rather its distilled form, see
Remark~\ref{rem:distill}) to be a good candidate for an exchange
relation out of the extended cluster~$\zz(T)$,
we would like the terms on the right-hand side to factor into the
elements of~$\zz(T)$. 
For \eqref{eq:3-term-1}--\eqref{eq:3-term-2}, 
this is true by design;
for \eqref{eq:3-term-3}--\eqref{eq:3-term-5},
the terms $J_{sp}^{qr}$ and $J^{pqr}$
will only have the requisite property in some special cases. 

\pagebreak[3]

We say that a side $pq$ of a triangle $pqr$ 
is \emph{exposed} if it lies on the
boundary of the $(a+b)$-gon; 
in other words, $pq$ is exposed if $q=p+1$.

\begin{proposition}
\label{pr:thin-triangles}
Suppose a triangle~$pqr$ has an exposed side.
Then 
\[
J^{pqr}\in \{J_p^r, J_q^p, J_r^q, 
J_p^q J_q^r, J_q^r J_r^p, J_r^p J_p^q \}
\]
(which element of this set $J^{pqr}$ is equal to depends on the signature).
\end{proposition}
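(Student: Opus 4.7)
The plan is to verify this by unwrapping Definition~\ref{def:special-inv} and performing a case analysis on the local color pattern at the exposed side. By the cyclic symmetry of $J^{pqr}$ (which simultaneously permutes the six allowed products in the target set), I may assume without loss of generality that the exposed side is $pq$, so that $q = p+1$. Recall that $J^{pqr}$ is represented by a tensor diagram with a central black vertex $B$ joined by three edges to white \emph{anchor} vertices---one in each of the trees $\Lambda^p, \Lambda^q, \Lambda^r$---where the anchor is the boundary vertex itself if it is white, and its white proxy if it is black.

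The model calculation is the case when both $p$ and $q$ are white. Then $\Lambda^p=\{p\}$ and $\Lambda^q=\{q\}$ are trivial, so $B$ is connected directly to the adjacent boundary vertices $p$, $q$, and to the anchor of $\Lambda^r$. On the other hand, because $p$ is white and $p+1 = q$ is white, $\Lambda_p$ is by construction the tripod $\{p, p', q\}$ with a single black proxy $p'$ attached to both boundary leaves. The tensor diagram for $J_p^r$, obtained by adjoining an edge from $p'$ to the anchor of $\Lambda^r$, therefore coincides with that of $J^{pqr}$ (relabel $p'$ as $B$), yielding $J^{pqr} = J_p^r$. When $p$ is white and $q$ is black (or symmetrically), a similar matching argument works: the white proxy appearing at the beginning of $\Lambda^q$ plays exactly the role of an internal white vertex in the caterpillar $\Lambda_p$ of $J_p^r$, and one again obtains $J^{pqr} = J_p^r$ (or one of its cyclic variants $J_q^p, J_r^q$). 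The precise identification proceeds case-by-case on the colors of $q+1, q+2, \dots$, and is handled cleanly by an induction on the length of $\Lambda^q$, since both sides of the equation extend by the same caterpillar segment.

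The essentially different cases are those in which $p$ is black, so that $\Lambda^p$ itself contributes a white proxy $p'$ attached to~$p$. Here the fragment of the diagram containing $B$, $p'$, the anchor of $\Lambda^q$, and the boundary vertex $q$ (or a nearby interior vertex) forms a small alternating-color four-cycle. Applying the square skein relation of Figure~\ref{fig:skein}(b) expands $J^{pqr}$ as a sum of two terms, one of which matches the claimed product from the list (typically $J_p^q J_q^r$ or one of its cyclic variants), while the other vanishes because it contains as a factor an adjacent-pair invariant of the form $J_x^{x+1}$ with $x$ white or $J_{x+1}^x$ with $x$ black, which is zero by Proposition~\ref{prop:special=0}.

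The main obstacle is the bookkeeping in the case where both endpoints of the exposed side are black: one must resolve the four-cycle via the square skein relation and verify that exactly one of the two resulting summands survives, with the other eliminated by a vanishing adjacency. A secondary technicality is that the trees $\Lambda^p, \Lambda^q, \Lambda^r$ can extend arbitrarily far clockwise before terminating, so to avoid an explosion of cases I would set up the matching argument as an induction on the combined length of these caterpillars, with the base case being the fully trivial configuration and each inductive step peeling off one ``rung'' simultaneously from both sides of the identity.
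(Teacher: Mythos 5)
Your overall strategy matches the paper's: normalize the exposed side via the cyclic symmetry of $J^{pqr}$, split on the local colors, and verify directly. The cases where the first endpoint $p$ of the exposed side is white are handled correctly---the black proxy at the start of $\Lambda_p$ coincides with the central black vertex of $J^{pqr}$, and the two trees match edge for edge, giving a single special invariant.

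The case where $p$ is black, however, has two concrete errors. First, the short cycle you identify always passes through the boundary vertex $q$ shared by $\Lambda^p$ and $\Lambda^q$, and that vertex has degree~$2$; the square skein relation of Figure~\ref{fig:skein}(b) requires four \emph{internal trivalent} vertices and so does not apply. The correct move is to resolve the interior edge joining the central black vertex to the anchor of $\Lambda^p$ using the crossing/fork relation of Figure~\ref{fig:skein}(a) (equivalently, the basic planarizing step of Figure~\ref{fig:basic-step}); this is what produces the desired two-term expansion. Second, the unwanted term does not literally contain an adjacent-pair invariant $J_x^{x+1}$ or $J_{x+1}^x$ as a factor. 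What it contains is a white internal vertex with a \emph{double} edge to the boundary vertex $q$---a Pl\"ucker determinant with a repeated column---which vanishes by antisymmetry of the dual volume form. This is the same antisymmetry responsible for $J_{q+1}^q=0$ in Proposition~\ref{prop:special=0}, but it is a different tensor diagram, so the factorization you assert is not correct as stated. Once these two points are repaired the argument is sound, and it usefully supplies details that the paper leaves as ``one then checks.''
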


\begin{proposition}
\label{pr:thin-quads}
Let $(p,p+1,p+2,s)$ be distinct vertices of the $(a+b)$-gon. 
\begin{align}
\label{eq:3-term-3wb}
&\text{If $p$ is white and $p+1$ is black, 
then $J_{p+2}^p J_{p+1}^s = J_{p+1}^p J_{p+2}^s + J_p^s$.}\\
\label{eq:3-term-3bw}
&\text{If $p$ is black and $p+1$ is white, 
then $J_p^{p+2} J_s^{p+1} = J_p^{p+1} J_s^{p+2} + J_s^p$.}
\end{align}
\end{proposition}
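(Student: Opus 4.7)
The plan is to derive each part by applying skein relation~(a) of Figure~\ref{fig:skein} to a crossing of connector edges in the superposition diagram for the left-hand side, and then identifying the resulting ``H''-shape with the tensor diagram for $J_p^s$ (respectively $J_s^p$) via a direct combinatorial isomorphism.

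For part~(a), view $J_{p+2}^p\,J_{p+1}^s$ as the superposition of the defining tensor diagrams of its two factors. Because $p$ is white and $p+1$ is black, we have $\Lambda^p=\{p\}$ and $\Lambda_{p+1}=\{p+1\}$, so the two connector edges in the superposition are $p-(\Lambda_{p+2})_{\text{ext}}$ and $(p+1)-(\Lambda^s)_{\text{ext}}$. Relative to the clockwise cyclic order $p,\,p+1,\,(\Lambda_{p+2})_{\text{ext}},\,(\Lambda^s)_{\text{ext}}$ of the four endpoints (colors W, B, B, W), these two edges form exactly the left-hand side of skein~(a). Applying the relation in the form ``crossing $=$ H $+$ parallel'' yields
\[
J_{p+2}^p\,J_{p+1}^s \;=\; [H]\;+\;J_{p+1}^p\,J_{p+2}^s,
\]
since the parallel resolution factors as the single-edge $p-(p+1)$ invariant, namely $J_{p+1}^p$ (the one of the adjacent pair which is nonzero by Proposition~\ref{prop:special=0}, since $p$ is white), times the bridge $(\Lambda^s)_{\text{ext}}-(\Lambda_{p+2})_{\text{ext}}$, namely $J_{p+2}^s$.

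It remains to show $[H]=J_p^s$. In $[H]$ the inserted central vertices are a black $B$ (adjacent to $p$, to $(\Lambda^s)_{\text{ext}}$, and to a new white vertex $W$) and $W$ (adjacent to $p+1$, to $(\Lambda_{p+2})_{\text{ext}}$, and to $B$). On the other hand, the tensor diagram of $J_p^s$ consists of $\Lambda_p$ joined to $\Lambda^s$ by an edge from the proxy $p'$ of $p$ (black, since $p$ is white) to $(\Lambda^s)_{\text{ext}}$; since $p+1$ is black, the caterpillar $\Lambda_p$ extends past $p+1$, and its next internal (necessarily white) vertex $X_1$ is adjacent to $p'$, to $p+1$, and to further caterpillar structure. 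The stopping rule for $\Lambda_p$ built clockwise from $p$ is first triggered at exactly the same boundary vertex as the stopping rule for $\Lambda_{p+2}$ built clockwise from $p+2$ (the pairs $(p,p+1)$ and $(p+1,p+2)$ cannot trigger it in the $\Lambda_p$ walk, and the two walks agree from $p+2$ onwards), so the portion of $\Lambda_p$ beyond $X_1$ is isomorphic as a colored tree to $\Lambda_{p+2}$ (including its proxy). The identifications $B\leftrightarrow p'$ and $W\leftrightarrow X_1$ therefore extend to an isomorphism of the two tensor diagrams, giving $[H]=J_p^s$ and completing~(a). Part~(b) follows by applying the same argument to $J_p^{p+2}\,J_s^{p+1}$ with the roles of white and black interchanged; equivalently, one may invoke the duality $V\leftrightarrow V^*$, which swaps the colors of all vertices and exchanges $J_\alpha^\beta$ with $J_\beta^\alpha$.

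The main obstacle is the final diagram isomorphism: one needs to verify that the caterpillar tail of $\Lambda_p$ starting from $X_1$ has the same colored-tree shape as $\Lambda_{p+2}$ together with its proxy. This requires an inductive unwinding of the ``two consecutive vertices of the same color'' stopping rule from both vantage points $p$ and $p+2$, but modulo this bookkeeping the argument is essentially a one-step skein manipulation.
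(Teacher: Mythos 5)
Your proof is correct and uses essentially the same approach as the paper: the paper invokes relation~\eqref{eq:3-term-3} together with the observation $J_{p+1,p+2}^{sp}=J_p^s$ (resp.\ $J_{sp}^{p+1,p+2}=J_s^p$), whereas you re-derive this instance of \eqref{eq:3-term-3} by one application of the crossing skein relation and then identify the resulting H-shape with $J_p^s$. Since $J_{pq}^{rs}$ is by definition the invariant of exactly such an H-shaped diagram, your two steps are the paper's two steps, with the caterpillar bookkeeping making explicit the identification that the paper asserts in one line.
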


\begin{definition}[\emph{Exchange relations for a cluster associated
      with a triangulation}]
\label{def:exch-rel-T}
Continuing in the setting of Definition~\ref{def:z(T)},
write the following identities:
\begin{itemize}
\item
for each triangle $pqr$ of the triangulation~$T$, 
write formula~\eqref{eq:3-term-1};
\item
for each diagonal $pr$ in~$T$ separating triangles $pqr$ and $prs$:
\begin{itemize}
\item 
write formula~\eqref{eq:3-term-2};
\item 
if one of the sides of $pqr$ is exposed, 
write~\eqref{eq:3-term-4}--\eqref{eq:3-term-5};
\item 
if two sides of $pqr$ are exposed, 
write the appropriate instance of~\eqref{eq:3-term-3wb}
or~\eqref{eq:3-term-3bw}, if applicable. 
\end{itemize}
\end{itemize}
It is easy to check, with the help of
Proposition~\ref{pr:thin-triangles}, that 
each of the two monomials on the right-hand side of each of resulting
relations will either vanish (in which case the relation is of no use
to us) or else factor into nontrivial irreducibles. 
In the latter case, we distill the relation
(see Remark~\ref{rem:distill}) to obtain a $3$-term relation in
irreducible special invariants all of which, with the exception of
the second factor on the left, will belong~to~$\zz(T)$. 

To obtain the final list of exchange relations,
we should also inspect all instances where we can apply
Proposition~\ref{pr:x(T)=x(T')} to get another triangulation~$T'$ with
the same cluster~$\xx(T')=\xx(T)$,
then check whether applying the above recipe to~$T'$ yields any
additional $3$-term relations. 
\end{definition}

A few simple observations help reduce the amount of work
required to write the $3$-term relations of
Definition~\ref{def:exch-rel-T} for a particular choice of~$T$
and~$\sigma$.  
For example, if a triangle $pqr$ has an exposed side, 
then one of the terms on the right-hand
side of~\eqref{eq:3-term-1} vanishes, 
by virtue of Proposition~\ref{prop:special=0}.
Thus, we only need to write relations~\eqref{eq:3-term-1}
for the triangles whose all sides are diagonals. 

\begin{proposition}
\label{prop:special-seeds-exchanges}
For any triangulation $T$ of the $(a+b)$-gon, the procedure described
in Definition~\ref{def:exch-rel-T} yields 
as many relations as there are elements in the cluster~$\xx(T)$. 
More precisely, it yields one relation of the form 
\[
x x'=M_1+M_2
\]
for each $x\in\xx(T)$; here $M_1,M_2$ are 
monomials in the elements of~$\zz(T)$. 
\end{proposition}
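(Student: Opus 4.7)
The plan is to produce a bijection between elements of the cluster $\xx(T)$ and the relations output by the procedure of Definition~\ref{def:exch-rel-T}, and to verify that each matched relation has the correct form $xx'=M_1+M_2$ with $M_1, M_2$ monomial in $\zz(T)$. First I would assign to each $x\in\xx(T)$ a canonical \emph{owner cell} in $T$: by Definition~\ref{def:z(T)}, $x$ arises as an indecomposable factor of some $J_{pqr}$ (with $pqr$ a triangle of $T$) or of some $J_p^q$ (with $pq$ an interior diagonal of $T$), and by Proposition~\ref{prop:coeff-special} the sides of $T$ contribute only coefficient invariants. Unique factorization (Proposition~\ref{prop:special-inv-factoring}) lets me split $\xx(T)$ into ``triangle variables'' and ``oriented-diagonal variables,'' one element per indecomposable factor.

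For triangle owners I would assign the distilled form of relation~\eqref{eq:3-term-1}: after Proposition~\ref{prop:special=0} annihilates the terms forced to vanish by exposed sides, and Proposition~\ref{pr:thin-triangles} pins down how $J^{pqr}$ factors, distillation produces an identity $xx'=M_1+M_2$ in which $x$ is the chosen indecomposable factor of $J_{pqr}$ and $x'$ is the matching factor of $J^{pqr}$. For oriented-diagonal owners I would assign relation~\eqref{eq:3-term-2}: since $J_{qrs}$ refers to the triangle appearing only after flipping $pr$ to $qs$, its indecomposable decomposition introduces exactly the new variable $x'$, and the two orientations $J_p^r, J_r^p$ of the same diagonal give two distinct instances of~\eqref{eq:3-term-2}. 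When the adjacent triangles have one exposed side, the supplements~\eqref{eq:3-term-4}--\eqref{eq:3-term-5} take over; in the maximally degenerate case of two exposed sides I would invoke the thin-quad identities~\eqref{eq:3-term-3wb}--\eqref{eq:3-term-3bw} from Proposition~\ref{pr:thin-quads}; and for alternating-color patches Proposition~\ref{pr:x(T)=x(T')} furnishes an alternate triangulation $T'$ with $\xx(T')=\xx(T)$ whose own procedure supplies the otherwise-missing exchange.

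To finish I would verify the bijection by counting. Theorem~\ref{th:z(T)} yields $|\xx(T)|=3(a+b)-8-|\cc_\sigma|=2(a+b)-8$, while the raw count of primary relations from triangles and diagonals is $(a+b-2)+(a+b-3)=2(a+b)-5$, a three-relation surplus. This surplus is absorbed precisely by the corner-triangle instances of~\eqref{eq:3-term-1} whose right-hand monomials both vanish by Proposition~\ref{prop:special=0}, together with the instances that collapse into tautologies upon distillation (as in Example~\ref{example:distill}). The main obstacle is the combinatorial bookkeeping of this collapse: demonstrating that every $x\in\xx(T)$ nevertheless receives exactly one usable exchange relation, that the substitutes drawn from~\eqref{eq:3-term-4}--\eqref{eq:3-term-5}, from the thin-quad relations, and from the Proposition~\ref{pr:x(T)=x(T')} detour fill precisely the gaps left by the trivialized primary relations, and that in every case the resulting $M_1, M_2$ factor over $\zz(T)$ rather than introducing some stray indecomposable. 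This reduces to a finite case analysis indexed by the exposed-side counts of each triangle and the local signature pattern around each diagonal, governed throughout by the skein identities of Proposition~\ref{prop:3-term-skein-special}.
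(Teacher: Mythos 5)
Your proposal takes a genuinely different organizational route from the paper's. The paper does not attempt a global ownership-bijection plus counting argument; instead it reuses the decomposition of $P_\sigma$ into fundamental regions of types $A,B,C,D,E,F$ (Lemma~\ref{lem:decomp}, introduced in the proof of Theorem~\ref{th:z(T)}), notes the region-by-region contributions $(0,2,1,2,3,3)$ to $\xx(T)$, and then verifies locally that the variables from each region are exchanged using only invariants from that region and a bounded neighborhood of it. This makes the case analysis \emph{finite and local by construction}: once you tile $P_\sigma$ by fundamental regions, only finitely many local patterns can occur, and each can be checked by hand. Your ownership scheme does not obviously yield such a bounded local check, since a single diagonal or triangle in $T$ may interact with arbitrarily long ``tails.''

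There is also a concrete error in your counting. You claim the raw relation count from Definition~\ref{def:exch-rel-T} is $(a+b-2)+(a+b-3)=2(a+b)-5$, giving a surplus of $3$ over $|\xx(T)|=2(a+b)-8$. But Definition~\ref{def:exch-rel-T} writes~\eqref{eq:3-term-2} for \emph{each orientation} of each interior diagonal --- as your own preceding sentence acknowledges, and as the paper's octagon Example confirms by saying ``Among $10$ instances of~\eqref{eq:3-term-2}$\ldots$'' for a triangulation with $5$ diagonals. So the correct raw count is $(a+b-2)+2(a+b-3)=3(a+b)-8$, and the surplus is $a+b$, not $3$. Your surplus-absorption argument is thus aimed at the wrong number, and the tidy ``three corner triangles'' explanation does not balance the books. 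Separately, you assert without justification that the ``owner cell'' assignment is well defined and injective; in fact the same indecomposable $x\in\xx(T)$ can arise as a factor of several elements of $K(T)$ (a $J_p^q$ on a short diagonal and a $J_{pqr}$ in an adjacent triangle, say), so some tie-breaking rule is required and must be shown compatible with the distillation. Both gaps would need to be repaired before the proposal could serve as a proof.
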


\pagebreak[3]

We are now ready to define the quiver associated
  with a given triangulation of the $(a+b)$-gon. 

\begin{definition}
\label{def:Q(T)}
The \emph{quiver $Q(T)$ associated
  with a triangulation~$T$}  is constructed so that the exchange relations
\eqref{eq:exchange-relation} match the $3$-term relations obtained via
  the process described in Definition~\ref{def:exch-rel-T}. 
This requirement defines $Q(T)$ up to simultaneous reversal of
  direction of all edges incident to any subset of connected components
  of the mutable part of the quiver. 
Typically, there will be a~single connected component,
so that $Q(T)$ is defined up to a global change of orientation. 
It is not hard to see that the choice of a particular incarnation of
$Q(T)$ is inconsequential, as the resulting cluster structure will 
not depend on this choice. 
To simplify presentation, we henceforth use the notation $Q(T)$
  without mentioning its ambiguity. 
\end{definition}

\begin{example}
We continue with the triangulation~$T$ 
discussed in Example~\ref{example:K(T)};
cf.\ Figure~\ref{fig:webs60} on the left. 
The only relevant instance of formula~\eqref{eq:3-term-1} is 
\[
J_{257} J^{257}
=J_7^2 J_5^7 J_2^5 + J_7^5 J_2^7 J_5^2 
\qquad \text{(for $p=2,\ q=5,\ r=7$).} 
\]
After factoring $J_7^2$ into irreducibles 
and substituting $J_5^7=J_6^5$ 
(so as to use consistent notation for the coefficient invariants), 
we obtain
\begin{equation}
\label{eq:j257*u257}
J_{257} J^{257}
= J_8^2 J_1^7 J_6^5 J_2^5 + J_7^5 J_2^7 J_5^2\,.
\end{equation}
Among 10 instances of~\eqref{eq:3-term-2}, 
the nontrivial identities are (in distilled form):
\begin{align}
\label{eq:j27*j58}
J_2^7 J_5^8 &= J_6^5 J_8^2 J_2^1+J_{257} 
\qquad \text{(for $p=2,\ q=5,\ r=7,\ s=1$),}\\[.1in] 
J_2^5 J_7^4 &= J_{257} J_4^5 + J_3^2 J_7^5 
\qquad \text{(for $p=2,\ q=3,\ r=5,\ s=7$),}\\[.1in] 
\label{eq:j52*j73}
J_5^2 J_7^3 &= J_8^2 J_1^7 J_5^3 + J_{257} 
\qquad \text{(for $p=5,\ q=7,\ r=2,\ s=3$),}\\[.1in] 
\label{eq:j75*j26}
J_7^5 J_2^6 &= J_2^5 J_7^6 + J_{257}\quad
\qquad \text{(for $p=7,\ q=2,\ r=5,\ s=6$).}
\end{align} 
For each of \eqref{eq:3-term-4} and \eqref{eq:3-term-5},
there are 7~instances to check. 
Equation \eqref{eq:3-term-4} produces one nontrivial relation: 
\begin{align}
J_8^2 J_{25}^{71} &= J_8^1 J_2^7 J_5^2 + J_{257}
\qquad \text{(for $p=7,\ q=1,\ r=2,\ s=5$),}
\end{align} 
Equation \eqref{eq:3-term-5} yields three relations---but all of them
replicate those obtained earlier: 
\begin{itemize}
\item
for $p=7,\ q=1,\ r=2,\ s=5$, we get~\eqref{eq:j27*j58};
\item
for $p=2,\ q=3,\ r=5,\ s=7$, we get~\eqref{eq:j52*j73};
\item
for $p=5,\ q=6,\ r=7,\ s=2$, we get~\eqref{eq:j75*j26}. 
\end{itemize} 
The rule \eqref{eq:3-term-3wb} supplies two equations
one of which results in a new nontrivial identity: 
\begin{equation}
\label{eq:j53*j42}
J_5^3 J_4^2 = J_4^3 J_5^2 + J_3^2 
\qquad \text{(for $p=3,\ s=2$);} \hspace{.75in}
\end{equation} 
and there are no instances of~\eqref{eq:3-term-3bw}.

We have obtained 7~exchange relations
\eqref{eq:j257*u257}--\eqref{eq:j53*j42},
one for each element of $\xx(T)$ (cf.~\eqref{eq:x(T)-example}) except
for~$J_1^7$. 
To get the missing relation, we need to replace $T$ by the
triangulation~$T'$ shown in Figure~\ref{fig:webs60} on the right. 
(Cf.\ Example~\ref{example:x(T)=x(T')}). 
The only new relation is obtained by applying~\eqref{eq:3-term-2}
with $p=7, q=8, r=2, s=5$.
As explained in Example~\ref{example:distill}, this results in 
the equation~\eqref{eq:j17*j258}: 
\begin{equation}
\label{eq:j17*j258-again}
J_1^7 J_{258} =
 J_8^7J_5^2J_2^1+J_{257}\,. 
\end{equation}

The corresponding quiver~$Q(T)$ has 16 vertices:
8~frozen, labeled by the coefficient
invariants (see Figure~\ref{fig:webs64}), 
and 8~mutable, labeled by the elements of the cluster~$\xx(T)$
(see~\eqref{eq:x(T)-example}).
One by one, we draw the edges incident to each mutable vertex
using the corresponding exchange relation 
\eqref{eq:j257*u257}--\eqref{eq:j17*j258-again}, 
so as to match the formula~\eqref{eq:exchange-relation}.
This occasionally requires swapping the two terms on the right-hand
side. 

\pagebreak[3]

The resulting quiver $Q(T)$ is shown in
Figure~\ref{fig:webs65}. 
It has cluster type~$E_8$. 
\end{example}

\begin{figure}[ht]
\scalebox{0.65}{\input{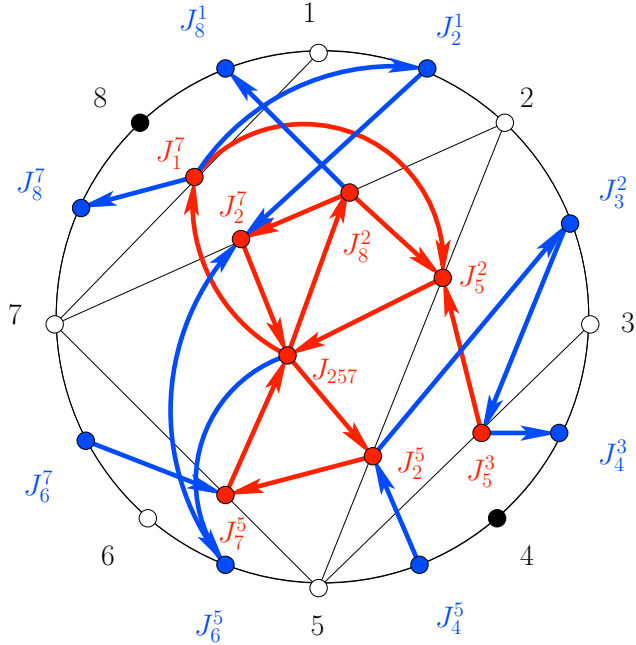}}
    \caption{The quiver associated with the triangulations shown
    in Figure~\ref{fig:webs60}.}
    \label{fig:webs65}
\end{figure}

A general recipe for building the quiver $Q(T)$ associated with an
arbitrary triangulation~$T$ of the polygon~$P_\sigma$ is described in
Section~\ref{sec:building-a-quiver}. 

Additional examples of seeds $(Q(T),\zz(T))$
associated with particular triangulations~$T$ and signatures~$\sigma$
can be found in Sections~\ref{sec:T-fan} and~\ref{sec:examples-of-seeds}.



\usection{Results and conjectures}

\section{Main results}
\label{sec:main-theorem}

After all the preparation of the preceding sections, the statement of
our main theorem comes as no surprise:
for any choice of a non-alternating cyclic signature,
the construction described in Section~\ref{sec:special-seeds}
makes the ring of
invariants~$R_\sigma(V)$ into a cluster algebra. 
Here is a precise statement: 


\begin{theorem}
\label{th:main}
Let $\sigma$ be a non-alternating cyclic signature of type $(a,b)$, 
let $P_\sigma$ be a convex $(a+b)$-gon with vertices colored
according to~$\sigma$, and 
let $T$ be a triangulation of~$P_\sigma$ by its diagonals.
Then the extended cluster~$\zz(T)$ and the quiver~$Q(T)$ 
described in Definitions~\ref{def:z(T)}
and~\ref{def:Q(T)} 
form a seed in the quotient field of
the ring of invariants~$R_\sigma(V)$. 
The associated cluster algebra is~$R_\sigma(V)$: 
\[
R_\sigma(V)=\Acal(Q(T),\zz(T)).
\]
This cluster structure on $R_\sigma(V)$
does not depend on the choice of triangulation~$T$.
\end{theorem}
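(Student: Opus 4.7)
The plan is to apply Corollary~\ref{cor:cluster-criterion} with $R=R_\sigma(V)$, which is a finitely generated UFD over~$\CC$ by Lemma~\ref{lem:properties-of-RabV}. It then suffices to check four things about the seed $(Q(T),\zz(T))$: that $\zz(T)$ is algebraically independent in $\QF(R)$; that every element of $\zz(T)$ is irreducible in $R$; that every element of every cluster adjacent to $\zz(T)$ is irreducible in $R$; and that the union of all extended clusters in the mutation class contains a generating set of $R$.

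For algebraic independence and irreducibility of $\zz(T)$, the count $|\zz(T)|=3(a+b)-8$ from Theorem~\ref{th:z(T)} matches $\dim R_\sigma(V)$, so algebraic independence reduces to showing all elements are nonzero members of $R$, which is built into Definition~\ref{def:z(T)}. The elements of $\zz(T)$ are, by construction, indecomposable special invariants; since each is a nonzero web invariant (Proposition~\ref{prop:specweb}) and since special invariants factor uniquely into indecomposable special invariants (Proposition~\ref{prop:special-inv-factoring}), these factorizations must coincide with the UFD factorizations in $R$, so indecomposable special invariants are irreducible in $R$.

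For the adjacent clusters, the exchange relation at any $x\in\xx(T)$ is one of the distilled $3$-term relations produced by Definition~\ref{def:exch-rel-T} and catalogued by Proposition~\ref{prop:special-seeds-exchanges}. The plan is to identify the mutated variable $x'=(M_1+M_2)/x$ with an indecomposable special invariant belonging to $\zz(T')$ for a triangulation $T'$ obtained from $T$ by an appropriate diagonal flip (or short composite of flips, using Proposition~\ref{pr:x(T)=x(T')}). Once $x'$ is recognized as such, the same reasoning as in the previous paragraph gives irreducibility. This identification simultaneously proves the independence-of-$T$ assertion: diagonal flips correspond to mutations, and since any two triangulations of a convex polygon are connected by a sequence of flips, all the seeds $(Q(T),\zz(T))$ are mutation equivalent. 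Finally, for generation, Weyl's First Fundamental Theorem says $R$ is generated by the pairings $Q_{ij}$, the Pl\"ucker coordinates $P_J$, and their duals $P_I^*$; each $Q_{ij}$ is a special invariant of the form $J_p^q$, and each Pl\"ucker coordinate is a $J_{pqr}$ or $J^{pqr}$ (possibly after distillation into indecomposables). By sweeping through enough triangulations, every such generator appears as a cluster or coefficient variable somewhere in the mutation class.

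The main obstacle is the identification of $x'$ in step three: matching each distilled exchange relation attached to $(Q(T),\zz(T))$ to the corresponding $3$-term skein identity \eqref{eq:3-term-1}--\eqref{eq:3-term-5} arising from the flipped triangulation, and verifying through case analysis — driven by Propositions~\ref{prop:special=0}, \ref{pr:thin-triangles}, and~\ref{pr:thin-quads} and the special-invariant equalities noted after Figure~\ref{fig:webs61} — that the flip always yields a valid indecomposable special invariant. This is essentially a substantial combinatorial bookkeeping task involving the proxy-tree construction of Definition~\ref{def:special-inv} and the position of the flipped diagonal relative to the color pattern in the signature~$\sigma$; the algebraic content is the already-established skein calculus, but the combinatorics of exposed sides, alternating corners, and degenerate triangles must all be handled uniformly.
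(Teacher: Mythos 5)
Your high-level plan matches the paper's: apply Corollary~\ref{cor:cluster-criterion} to the finitely generated UFD $R_\sigma(V)$, establish mutation equivalence across the seeds $(Q(T),\zz(T))$ by reducing to diagonal flips, and use the First Fundamental Theorem for generation. But two of the four verifications are handled by false implications. For algebraic independence of $\zz(T)$, you claim that since $|\zz(T)|=3(a+b)-8$ matches $\dim R_\sigma(V)$, independence ``reduces to showing all elements are nonzero members of~$R$.'' This is simply false: $\{x,x^2\}$ is a pair of nonzero elements in the $2$-dimensional domain $\CC[x,y]$ that is algebraically dependent, so matching cardinality to Krull dimension tells you nothing by itself. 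The correct argument (as in the paper) uses the mutation equivalence \emph{first}: the tuples $\zz(T)$ are all birationally related to one another by mutations, and their union contains all Weyl generators and hence generates $\QF(R_\sigma(V))$; therefore each $\zz(T)$ already generates the fraction field, and only then does comparing its cardinality to the transcendence degree give algebraic independence.

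For irreducibility of the elements of $\zz(T)$, you claim it follows from Proposition~\ref{prop:special-inv-factoring} together with the UFD property of~$R_\sigma(V)$. It does not. ``Indecomposable'' means indecomposable as a product of \emph{special} invariants; nothing a priori prevents an indecomposable special invariant $J$ from factoring as $J=ab$ in~$R_\sigma(V)$ with $a,b$ irreducible but not themselves special invariants. Uniqueness of factorization into indecomposable special invariants is entirely silent about whether a finer factorization into irreducibles exists. The paper devotes substantial work to closing exactly this gap: Lemma~\ref{lem:antis} (an antisymmetry lemma forcing a fork structure), Lemma~\ref{lem:plantreeirr} (irreducibility of invariants given by planar tree diagrams, proved by induction using multilinearity and the fork), and a further induction on the size of the tree diagram that cuts off tails via the cross-product substitutions~\eqref{eq:X'-sub}--\eqref{eq:X''-sub}. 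Without these arguments, the irreducibility hypothesis in Corollary~\ref{cor:cluster-criterion} is unverified and the proposal does not go through.
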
 

\begin{corollary}
Any element of $R_\sigma(V)$, and in particular any web invariant, can
be expressed as a Laurent polynomial in the elements of any extended
cluster~$\zz(T)$. 
\end{corollary}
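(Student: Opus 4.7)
The plan is to deduce the corollary directly from Theorem~\ref{th:main} together with the Laurent Phenomenon (Theorem~\ref{th:laurent}). First, Theorem~\ref{th:main} identifies $R_\sigma(V)$ with the cluster algebra $\Acal(Q(T),\zz(T))$ for any triangulation $T$ of $P_\sigma$. In particular, every element $f\in R_\sigma(V)$ lies in this cluster algebra, and $(Q(T),\zz(T))$ is a seed of it whose extended cluster is $\zz(T)$.

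Next, I would invoke Theorem~\ref{th:laurent}: for any seed $(Q,\zz)$ of a cluster algebra $\Acal(Q,\zz)$, every element of $\Acal(Q,\zz)$ is a Laurent polynomial in the elements of $\zz$. Applied to $(Q(T),\zz(T))$, this yields at once that each $f\in R_\sigma(V)$ expands as a Laurent polynomial in the invariants comprising $\zz(T)$. One may additionally record, using the second part of Theorem~\ref{th:laurent}, that the coefficient invariants (the elements of $\cc_\sigma\subset\zz(T)$, cf.\ Theorem~\ref{th:z(T)}) appear only with nonnegative exponents in the reduced Laurent expression.

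The specialization to web invariants is automatic: by Kuperberg's Theorem~\ref{th:web-basis}, the web invariants of signature $\sigma$ form a $\CC$-linear basis of $R_\sigma(V)$, so in particular each web invariant is an element of $R_\sigma(V)$ and the preceding conclusion applies verbatim. There is no real obstacle here; the corollary is essentially a repackaging of Theorem~\ref{th:main} through a classical structural theorem about cluster algebras. The entire substance of the statement is concentrated in Theorem~\ref{th:main}, whose proof is deferred to Section~\ref{sec:proof-main}.
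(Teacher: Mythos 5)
Your proof is correct and is precisely the intended argument: the corollary follows immediately from Theorem~\ref{th:main} (identifying $R_\sigma(V)$ with $\Acal(Q(T),\zz(T))$) combined with the Laurent Phenomenon (Theorem~\ref{th:laurent}), and the paper treats this as evident enough to state without a separate proof. Your additional remark about coefficient invariants appearing only with nonnegative exponents is a valid and relevant refinement also supplied by Theorem~\ref{th:laurent}.
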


For the record, we state below some basic properties of the cluster
algebra $R_\sigma(V)$ that are immediate from its construction 
(as justified by Theorem~\ref{th:main}). 

\begin{proposition}
The cluster algebra $R_\sigma(V)$ has the following properties:
\begin{enumerate}
\item
The set of 
coefficient variables and cluster variables 
includes all irreducible special invariants, and
in particular all Weyl generators. 
\item
The coefficient variables are the
coefficient invariants (cf.\ Proposition~\ref{prop:coeff-special}). 
\item
Each extended cluster consists of $3(a+b)-8$ elements; 
\item
The rank (the cardinality of each cluster)
is $2(a+b)-8$, 
except for the  cases $\sigma=[\bullet\circ\bullet\circ\bullet]$
and $\sigma=[\circ\bullet\circ\bullet\circ]$
when it is equal to~1
(cf.\ Remark~\ref{rem:xoxox}). 
\end{enumerate}
\end{proposition}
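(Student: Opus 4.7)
The plan is to verify the four assertions by direct bookkeeping on the construction of $(Q(T),\zz(T))$, combined with Theorem~\ref{th:main} (which guarantees that varying $T$ we remain inside the same cluster algebra) and the cardinality statement of Theorem~\ref{th:z(T)}.

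Parts (3) and (4) are essentially a counting exercise. By Theorem~\ref{th:z(T)}, the extended cluster associated to any triangulation has $3(a+b)-8$ elements, and since Theorem~\ref{th:main} identifies these as extended clusters of $R_\sigma(V)$ and mutation preserves cardinality of extended clusters, (3) follows. For (4), the rank equals $|\xx(T)|=|\zz(T)|-|\cc_\sigma|$. In the generic case, Proposition~\ref{prop:coeff-special} provides exactly $a+b$ coefficient invariants, yielding $2(a+b)-8$ mutable variables. In the two exceptional cases where $\sigma$ is totally alternating of length $5$, Remark~\ref{rem:xoxox} supplies $|\cc_\sigma|=6$, and since $3\cdot 5-8=7$, the rank is $1$.

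For (2), the construction of $Q(T)$ in Definition~\ref{def:Q(T)} assigns the status of frozen vertex to precisely the elements of $\cc_\sigma$: every $\zz(T)$ contains $\cc_\sigma$ (by Theorem~\ref{th:z(T)}), and no exchange relation of Definition~\ref{def:exch-rel-T} mutates these elements (since all relations are obtained by ``distilling'' identities \eqref{eq:3-term-1}--\eqref{eq:3-term-3bw} whose left-hand sides are indexed by diagonals or interior triangles). Thus $\cc_\sigma$ is exactly the set of labels on frozen vertices, and by the definition of a cluster algebra these are its coefficient variables.

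Part (1) is the main content and proceeds by exhibiting each indecomposable special invariant as a member of some extended cluster of the cluster algebra. For each $J_p^q$ (resp.\ $J_{pqr}$, $J^{pqr}$) that is nonzero and indecomposable, choose a triangulation $T$ containing $pq$ as a side or diagonal (resp.\ containing the triangle $pqr$); by Definition~\ref{def:z(T)} the invariant in question is then a factor of an element of $K(T)$, hence belongs to $\zz(T)$. For the indecomposable invariants of type $J_{pq}^{rs}$, one argues that they occur as mutations of cluster variables of type $J_{pqr}$ via the exchange relations distilled from \eqref{eq:3-term-4}--\eqref{eq:3-term-5}. Specializing this to the Weyl generators is straightforward: each Pl\"ucker coordinate $P_{ijk}$ equals $J_{ijk}$ for three black boundary vertices (the trees $\Lambda_\bullet$ are trivial), each dual Pl\"ucker coordinate $P^*_{ijk}$ equals $J^{ijk}$ for three white ones, and each pairing $Q_{ij}$ equals $J_p^q$ for adjacent-colored boundary vertices connected by a single edge; in each case the relevant invariant sits in $\zz(T)$ for a suitable triangulation. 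The main obstacle is the last step for $J_{pq}^{rs}$-type indecomposables, where one must verify case-by-case that the mutation recipe of Definition~\ref{def:exch-rel-T} actually produces these invariants as cluster variables rather than only as monomial factors on the right-hand side of an exchange relation; this requires checking, for each such $J_{pq}^{rs}$, the existence of a seed in which it is produced by a single mutation.
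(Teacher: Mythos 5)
Your overall strategy matches the paper's implicit one: the paper simply declares these four properties ``immediate from the construction (as justified by Theorem~\ref{th:main}),'' so the bookkeeping you carry out in (2)--(4), invoking Theorem~\ref{th:z(T)}, Proposition~\ref{prop:coeff-special}, and Remark~\ref{rem:xoxox}, is exactly what is intended.

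There is, however, a concrete slip in your treatment of (1). You claim that an indecomposable $J^{pqr}$ ``is a factor of an element of $K(T)$'' once $pqr$ is a triangle of $T$. But Definition~\ref{def:z(T)} puts only $J_{pqr}$ into $K(T)$ for each triangle $pqr$ --- \emph{not} $J^{pqr}$. When $p,q,r$ are, say, all white, $J^{pqr}$ is the genuine dual-tripod Weyl generator $P^*_{pqr}$, while $J_{pqr}$ is an entirely different (and typically much larger) invariant built from the caterpillar trees $\Lambda_p,\Lambda_q,\Lambda_r$; the former is not a factor of the latter. The fix is the one you already use for $J_{pq}^{rs}$: the exchange relation~\eqref{eq:3-term-1}, $J_{pqr}J^{pqr}=J_r^p J_q^r J_p^q + J_r^q J_p^r J_q^p$, is written into Definition~\ref{def:exch-rel-T} for every interior triangle, so $J^{pqr}$ is obtained from $\zz(T)$ by a single mutation at the vertex carrying $J_{pqr}$. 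Alternatively one can invoke Theorem~\ref{th:swap-colors}, which says that running the whole construction with $J^{pqr}$ in place of $J_{pqr}$ yields the same cluster structure, so $J^{pqr}$ lies in the color-swapped extended cluster $\zz'(T)$. Either way the gap closes, but as written your argument for this family is not correct. Your acknowledgment that $J_{pq}^{rs}$-type indecomposables require a genuine case check is accurate and consistent with the level of rigor the paper itself accepts.

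One minor imprecision: the Weyl pairing $Q_{ij}=J_p^q$ requires $p$ black and $q$ white, but $p$ and $q$ need not be adjacent on the boundary; your phrase ``adjacent-colored boundary vertices'' is at best ambiguous and at worst suggests a constraint that is not there.
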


\begin{example}
Let $a=2$, $b=3$, $\sigma=[\bullet \bullet \bullet \circ \circ]$. 
In this case, the cluster algebra $R_\sigma(V)$ has type~$A_2$. 
It has $5$~coefficient variables, namely: 
$J_{123},\ J_{23}^{45},\ J_3^4,\ J_{12}^{45},\ J_1^5$. 
(Some of these have alternative presentations, for example $J_{23}^{45}=J_{234}=J^{245}=J_2^3$.) 
The $5$~cluster variables in $R_\sigma(V)$ are 
$J_1^4, \ J_2^4, \ J_2^5, \ J_3^5, \ J_{13}^{45}=J_{134}=J_1^3$. 
They form $5$~clusters:
\[
\{J_1^4, J_2^4\}, \{J_2^4, J_2^5\}, \{J_2^5, J_3^5\}, \{J_3^5, J_{13}^{45}\}, 
\{J_{13}^{45}, J_1^4\}. 
\]
The exchange relations are:
\begin{align*}
J_1^4 J_3^5 &= J_1^5 J_3^4 + J_{13}^{45}, \\
J_2^4 J_{13}^{45} &= J_{12}^{45} J_3^4 + J_{23}^{45} J_1^4, \\
J_2^5 J_1^4 &= J_{12}^{45} + J_1^5 J_2^4, \\
J_3^5 J_2^4 &= J_{23}^{45} + J_3^4 J_2^5, \\
J_{13}^{45} J_2^5 &= J_{23}^{45} J_1^5 + J_{12}^{45} J_3^5.
\end{align*}
Notice that the coefficient variable $J_{123}$ does not appear in these relations.
Accordingly, the frozen vertex corresponding to $J_{123}$ is isolated in every
quiver of~$R_\sigma(V)$. 
One example of such a quiver is shown in Figure~\ref{fig:xxxoo}. 
\end{example}

\begin{figure}[ht]
\begin{center}
\setlength{\unitlength}{2pt} 
\begin{picture}(80,22)(0,0) 

\put( 0,20){\makebox(0,0){$J_1^5$}}
\put(20,0){\makebox(0,0){$J_{12}^{45}$}}
\put(20,20){\makebox(0,0){$J_2^5$}}
\put(40,0){\makebox(0,0){$J_{23}^{45}$}}
\put(40,20){\makebox(0,0){$J_2^4$}}
\put(60,20){\makebox(0,0){$J_3^4$}}
\put(80,20){\makebox(0,0){$J_{123}$}}

\thicklines 

\put(16,20){\vector(-1,0){12}}
\put(56,20){\vector(-1,0){12}}
\put(24,20){\vector(1,0){12}}

\put(20,4){\vector(0,1){12}}
\put(40,16){\vector(0,-1){12}}

\end{picture}
\end{center}
\caption{The quiver associated 
with the cluster $\{J_2^4, J_2^5\}$ 
in the cluster algebra $R_\sigma(V)$ 
with $\sigma=[\bullet \bullet \bullet \circ \circ]$.}
\label{fig:xxxoo}
\end{figure}
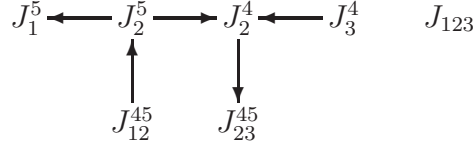

\pagebreak[3]

While the cluster structure on $R_\sigma(V)\cong R_{a,b}(V)$ defined
in Theorem~\ref{th:main} is
independent of the choice of triangulation, 
it does critically depend on the signature~$\sigma$. 
In fact, fixing the parameters $a$ and~$b$ does not determine 
the cluster type of~$R_\sigma(V)$, nor
whether $R_\sigma(V)$ is of finite or infinite type. 
See Figure~\ref{fig:boundary-signatures-5678}. 

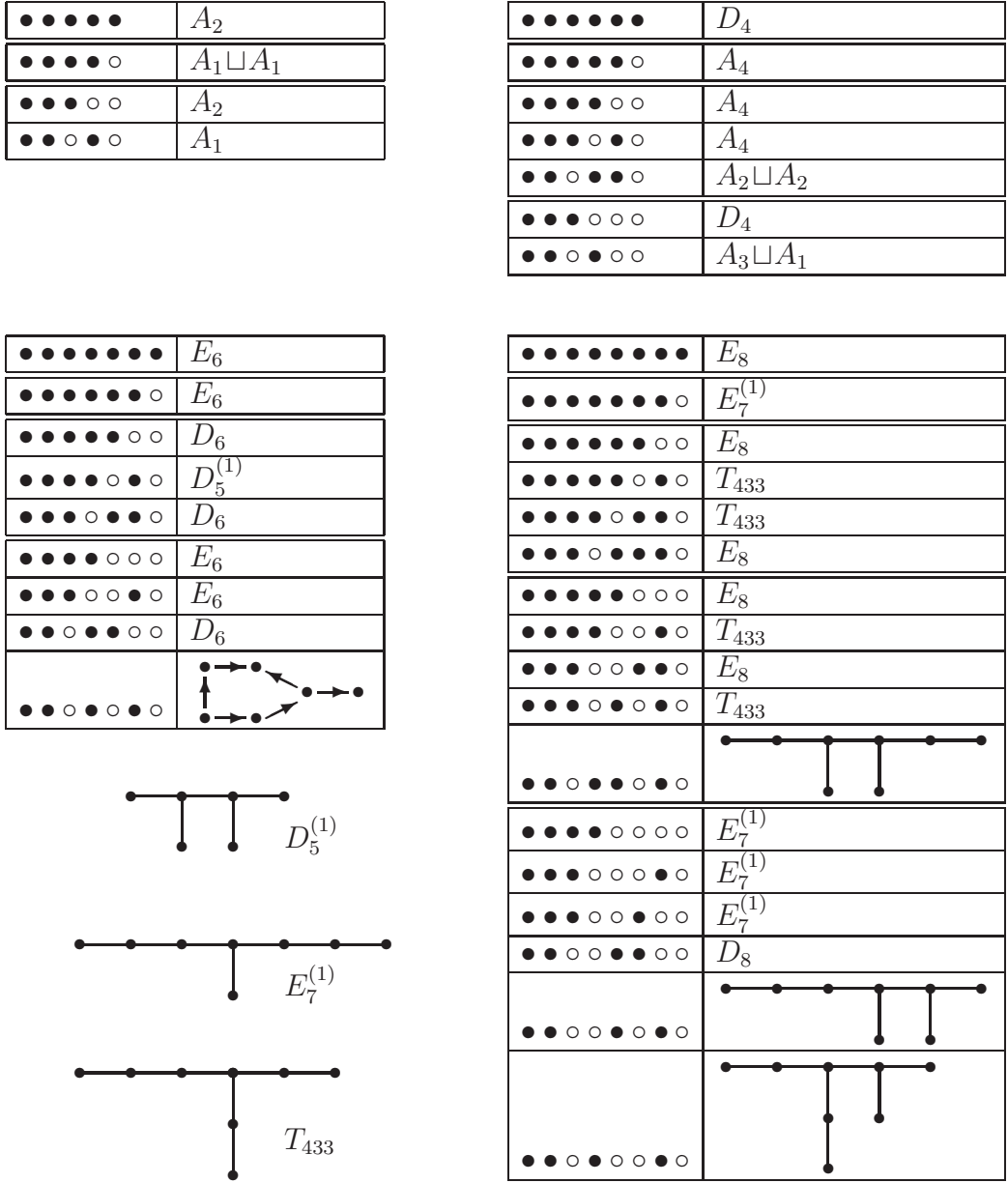
\begin{figure}[ht]
\vspace{-.05in}

\begin{center}
\begin{tabular}[t]{lll}
\begin{tabular}[t]{|p{2cm}|p{2.5cm}|}
\hline
$\bullet\bullet\bullet\bullet\bullet$ & $A_2$ \\
\hline
\hline
$\bullet\bullet\bullet\bullet\circ$ & $A_1\!\sqcup\!A_1$ \\
\hline
\hline
$\bullet\bullet\bullet\circ\circ$ & $A_2$ \\
\hline
$\bullet\bullet\circ\bullet\circ$ & $A_1$ \\
\hline
\end{tabular}
&&
\begin{tabular}[t]{|p{2.33cm}|p{3.8cm}|}
\hline
$\bullet\bullet\bullet\bullet\bullet\,\bullet$ & $D_4$ \\
\hline
\hline
$\bullet\bullet\bullet\bullet\bullet\,\circ$ & $A_4$\\
\hline
\hline
$\bullet\bullet\bullet\bullet\circ\,\circ$ & $A_4$ \\ 
\hline
$\bullet\bullet\bullet\circ\bullet\,\circ$& $A_4$ \\
\hline
$\bullet\bullet\circ\bullet\bullet\,\circ$& $A_2\!\sqcup\!A_2$\\
\hline
\hline
$\bullet\bullet\bullet\circ\circ\,\circ$ & $D_4$ \\ 
\hline
$\bullet\bullet\circ\bullet\circ\,\circ$& $A_3 \!\sqcup\! A_1$\\
\hline
\end{tabular}
\\
\\
\hskip-.07in\begin{tabular}[t]{l}
\begin{tabular}[t]{|p{2cm}|p{2.5cm}|}
\hline
$\bullet\bullet\bullet\bullet\bullet\bullet\bullet$ & $E_6$ \\
\hline
\hline
$\bullet\bullet\bullet\bullet\bullet\bullet\circ$ & $E_6$ \\ 
\hline
\hline
$\bullet\bullet\bullet\bullet\bullet\circ\circ$ & $D_6$ \\
\hline
$\bullet\bullet\bullet\bullet\circ\bullet\circ$ & $D_5^{(1)}$ \\
\hline
$\bullet\bullet\bullet\circ\bullet\bullet\circ$ & $D_6$ \\
\hline
\hline
$\bullet\bullet\bullet\bullet\circ\circ\circ$ & $E_6$ \\ 
\hline
$\bullet\bullet\bullet\circ\circ\bullet\circ$ & $E_6$ \\
\hline
$\bullet\bullet\circ\bullet\bullet\circ\circ$ & $D_6$ \\ 
\hline
$\bullet\bullet\circ\bullet\circ\bullet\circ$& 
\setlength{\unitlength}{2pt} 
\begin{picture}(33,13)(-3,0) 
\thicklines 
\put(0,0){\circle*{2}}
\put(10,0){\circle*{2}}
\put(0,10){\circle*{2}}
\put(10,10){\circle*{2}}
\put(20,5){\circle*{2}}
\put(30,5){\circle*{2}}
\put(2,0){\vector(1,0){6}}
\put(2,10){\vector(1,0){6}}
\put(0,2){\vector(0,1){6}}
\put(12,0){\vector(2,1){6}}
\put(18,6){\vector(-2,1){6}}
\put(22,5){\vector(1,0){6}}
\end{picture}
\\
\hline
\end{tabular}
\\
\begin{tabular}{c}
\setlength{\unitlength}{2pt} 
\begin{picture}(53,23)(-10,0) 
\thicklines 
\multiput(10,10)(10,0){4}{\circle*{2}}
\put(20,0){\circle*{2}}
\put(30,0){\circle*{2}}
\put(10,10){\line(1,0){30}}
\put(20,0){\line(0,1){10}}
\put(30,0){\line(0,1){10}}
\put(40,0){$D_5^{(1)}$}
\end{picture}
\\
\setlength{\unitlength}{2pt} 
\begin{picture}(53,27)(-10,0) 
\thicklines 
\multiput(0,10)(10,0){7}{\circle*{2}}
\put(30,0){\circle*{2}}
\put(0,10){\line(1,0){60}}
\put(30,0){\line(0,1){10}}
\put(40,0){$E_7^{(1)}$}
\end{picture}
\\
\setlength{\unitlength}{2pt} 
\begin{picture}(53,33)(-10,0) 
\thicklines 
\multiput(0,20)(10,0){6}{\circle*{2}}
\put(30,0){\circle*{2}}
\put(30,10){\circle*{2}}
\put(30,20){\circle*{2}}
\put(0,20){\line(1,0){50}}
\put(30,0){\line(0,1){20}}
\put(40,5){$T_{433}$}
\end{picture}
\end{tabular}
\end{tabular}
&\qquad\quad&
\begin{tabular}[t]{|p{2.33cm}|p{3.8cm}|}
\hline
$\bullet\bullet\bullet\bullet\bullet\bullet\bullet\,\bullet$ & $E_8$ \\
\hline
\hline
$\bullet\bullet\bullet\bullet\bullet\bullet\bullet\,\circ$ & $E_7^{(1)}$ \\
\hline
\hline
$\bullet\bullet\bullet\bullet\bullet\bullet\circ\,\circ$ & $E_8$ \\ 
\hline
$\bullet\bullet\bullet\bullet\bullet\circ\bullet\,\circ$ & $T_{433}$\\
\hline
$\bullet\bullet\bullet\bullet\circ\bullet\bullet\,\circ$ & $T_{433}$ \\
\hline
$\bullet\bullet\bullet\circ\bullet\bullet\bullet\,\circ$ & $E_8$ \\ 
\hline
\hline
$\bullet\bullet\bullet\bullet\bullet\circ\circ\,\circ$ & $E_8$ \\ 
\hline
$\bullet\bullet\bullet\bullet\circ\circ\bullet\,\circ$ & $T_{433}$ \\
\hline
$\bullet\bullet\bullet\circ\circ\bullet\bullet\,\circ$ & $E_8$\\
\hline
$\bullet\bullet\bullet\circ\bullet\circ\bullet\,\circ$ & $T_{433}$ \\
\hline
$\bullet\bullet\circ\bullet\bullet\circ\bullet\,\circ$ & 
\setlength{\unitlength}{2pt} 
\begin{picture}(53,13)(-2,0) 
\thicklines 
\multiput(0,10)(10,0){6}{\circle*{2}}
\put(20,0){\circle*{2}}
\put(30,0){\circle*{2}}
\put(0,10){\line(1,0){50}}
\put(20,0){\line(0,1){10}}
\put(30,0){\line(0,1){10}}
\end{picture} 
\\ 
\hline
\hline
$\bullet\bullet\bullet\bullet\circ\circ\circ\,\circ$ & $E_7^{(1)}$\\
\hline
$\bullet\bullet\bullet\circ\circ\circ\bullet\,\circ$ & $E_7^{(1)}$\\ 
\hline
$\bullet\bullet\bullet\circ\circ\bullet\circ\,\circ$ & $E_7^{(1)}$\\ 
\hline
$\bullet\bullet\circ\circ\bullet\bullet\circ\,\circ$ & $D_8$\\
\hline
$\bullet\bullet\circ\circ\bullet\circ\bullet\,\circ$ & 
\setlength{\unitlength}{2pt} 
\begin{picture}(53,13)(-2,0) 
\thicklines 
\multiput(0,10)(10,0){6}{\circle*{2}}
\put(40,0){\circle*{2}}
\put(30,0){\circle*{2}}
\put(0,10){\line(1,0){50}}
\put(40,0){\line(0,1){10}}
\put(30,0){\line(0,1){10}}
\end{picture} 
\\
\hline
$\bullet\bullet\circ\bullet\circ\circ\bullet\,\circ$ & 
\setlength{\unitlength}{2pt} 
\begin{picture}(53,23)(-2,0) 
\thicklines 
\multiput(0,20)(10,0){5}{\circle*{2}}
\put(20,0){\circle*{2}}
\put(20,10){\circle*{2}}
\put(30,10){\circle*{2}}
\put(0,20){\line(1,0){40}}
\put(20,0){\line(0,1){20}}
\put(30,10){\line(0,1){10}}
\end{picture}
\\ 
\hline
\end{tabular}
\end{tabular}
\end{center}
    \caption{Cluster types of $R_\sigma(V)$ 
for non-alternating signatures $\sigma$ of type $(a,b)$ with 
$a\!+\!b\in\{5,6,7,8\}$.
These cluster types are
invariant under dihedral symmetries and/or global
change of coloring; 
we include one representative from each equivalence class.
Take an arbitrary orientation of each tree to get the corresponding quiver.}
    \label{fig:boundary-signatures-5678}
\end{figure}

For each signature~$\sigma$,
there are choices of a triangulation~$T$ for which the construction of the
quiver $Q(T)$ simplifies considerably.  
One such choice is presented in Section~\ref{sec:T-fan}, 
providing an explicit rule for determining
the (extended) cluster type of~$R_\sigma(V)$ for any signature~$\sigma$.




We next discuss the important special case $a=0$ (ring of invariants of $b$
vectors in \hbox{$3$-space}),
or equivalently the case of monochromatic signature
\[
\sigma=[\bullet\bullet\bullet\cdots\bullet\bullet\,\bullet]
\]
of type~$(0,b)$.
As mentioned in Section~\ref{sec:cluster-algebras}, 
cluster structures in the rings~$R_{0,b}(V)$,
and indeed in homogeneous coordinate rings of arbitrary Grassmannians
${\mathrm{Gr}}_{k,b}$, 
were described by J.~Scott~\cite{scott} and extensively studied thereafter
(cf., e.g.,~\cite{gsv-book}). 

\pagebreak[3]

\begin{theorem}
\label{th:we=scott}
The cluster algebra structure in the ring of invariants $R_{0,b}(V)$
(or equivalently in the homogeneous coordinate ring of the
Grassmannian ${\mathrm{Gr}}_{3,b}$)
described in Theorem~\ref{th:main} 
coincides with the one given by J.~Scott~{\rm\cite{scott}}. 
\end{theorem}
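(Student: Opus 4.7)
The plan is to identify, for one well-chosen triangulation~$T$ of the $b$-gon, the seed $(Q(T),\zz(T))$ produced by the construction underlying Theorem~\ref{th:main} with an initial seed in Scott's cluster structure on $\CC[\operatorname{Gr}_{3,b}]\cong R_{0,b}(V)$. Since both cluster algebras sit inside the quotient field of $R_{0,b}(V)$ and, by Theorem~\ref{th:main} on our side and by \cite{scott} on Scott's, both equal all of $R_{0,b}(V)$, matching a single seed will propagate via mutation to an identification of the entire cluster structure.

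First I would specialize the constructions of Sections~\ref{sec:special-invariants}--\ref{sec:special-seeds} to the monochromatic signature $\sigma=[\bullet\bullet\cdots\bullet]$. Here each tree $\Lambda_p$ reduces to the single black boundary vertex~$p$, while each $\Lambda^q$ is a cherry $q\!-\!q'\!-\!(q+1)$ consisting of a single white proxy $q'$ joined to the consecutive black vertices $q$ and $q+1$. Inspecting Definition~\ref{def:special-inv} then yields the identifications
\begin{equation*}
J_{pqr}=P_{p,q,r},\qquad J_p^q=P_{p,q,q+1},
\end{equation*}
so every special invariant is a Pl\"ucker coordinate; in particular the coefficient invariants are the cyclically consecutive Pl\"uckers $P_{p,p+1,p+2}$, matching Scott's frozen variables for $\operatorname{Gr}_{3,b}$.

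Next I would take $T$ to be the fan triangulation based at vertex~$1$, with diagonals $1i$ for $3\le i\le b-1$. Enumerating $\zz(T)$ via Definition~\ref{def:z(T)} and collapsing duplicate special invariants yields
\begin{equation*}
\zz(T)=\{P_{1,2,i}:3\le i\le b-1\}\cup\{P_{1,i,i+1}:2\le i\le b-1\}\cup\{P_{i,i+1,i+2}:i\in\ZZ/b\ZZ\},
\end{equation*}
a collection of exactly $3b-8$ Pl\"ucker coordinates which is, after relabeling, the standard ``staircase'' initial extended cluster associated with this fan triangulation in Scott's presentation of the Grassmannian cluster algebra. To complete the identification, one writes out each distilled $3$-term relation from the list \eqref{eq:3-term-1}--\eqref{eq:3-term-5}, \eqref{eq:3-term-3wb}--\eqref{eq:3-term-3bw} in Pl\"ucker notation and recognises it as a three-term Pl\"ucker identity governing one of Scott's exchanges, with matching monomials on the two sides.

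The hard part will be precisely this last verification: matching two combinatorial packagings of the same quiver. Scott encodes his quivers via alternating strand diagrams (or, for the fan triangulation, via an explicit staircase rule), whereas our quiver arises from skein-based exchanges at each mutable vertex; checking arrow by arrow that the incidences agree, especially the arrows joining cluster vertices to the frozen vertices, is a careful bookkeeping exercise. Once it is carried out, the two cluster structures share a common seed and so, each being the whole ring $R_{0,b}(V)$, they must coincide.
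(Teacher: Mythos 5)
Your overall strategy---specialize to the monochromatic signature, identify the special invariants with Pl\"ucker coordinates ($J_p^q=P_{p,q,q+1}$, $J_{pqr}=P_{p,q,r}$), and match a single seed $(Q(T),\zz(T))$ with one of Scott's seeds---is the same as the paper's. However, you part ways from the paper on a key point: you take $T$ to be the \emph{fan} triangulation at vertex~$1$, whereas the paper chooses the \emph{zig-zag} triangulation (diagonals of the form $(i,b-i)$ and $(i,b+1-i)$). This matters, because Scott's construction in~\cite{scott} is carried out for precisely this zig-zag triangulation; the ``staircase/rectangle'' initial extended cluster in his paper is the one coming from the zig-zag, not the fan. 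The fan seed you compute is a legitimate seed of the cluster structure from Theorem~\ref{th:main}, and its elements are indeed Pl\"uckers, but identifying it with ``the standard staircase initial extended cluster\ldots in Scott's presentation'' is not quite right as stated---Scott's presentation does not supply a seed for every triangulation, only for the zig-zag.

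Concretely, this creates a genuine gap: to finish from where you are, you would need to show either (a) that the fan seed \emph{is} a seed in Scott's cluster structure (e.g., by exhibiting it as a maximal weakly separated collection of Pl\"ucker labels together with a matching plabic/strand diagram, facts which are nontrivial and were not all available to Scott in 2006), or (b) an explicit mutation path within $R_{0,b}(V)$ connecting the fan seed to Scott's zig-zag seed. The paper avoids this by simply choosing the zig-zag triangulation, for which the identification with Scott's seed is a direct reading-off rather than a further argument. The paper's proof also supplements this with a case-by-case check (Figures~\ref{fig:webs49b}--\ref{fig:webs48b}) that flips between triangulations with exposed sides are realized by mutation sequences in the monochromatic case, filling in non-generic cases that were left implicit in Section~\ref{sec:proof-main}; you do not address these. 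Your bookkeeping of $\zz(T)$ for the fan is essentially correct (the sets you list overlap in the coefficient invariants and one entry $P_{1,2,3}$ is miscounted, but the union does have the right cardinality $3b-8$), so if you switch to the zig-zag triangulation the rest of your plan goes through as in the paper.
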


In the Grassmannian case $a=0$, our construction of seeds
$(Q(T),\zz(T))$
simplifies considerably,
see Figure~\ref{fig:webs47}. 
This construction 
generalizes the one given by Scott~\cite{scott} for a
particular kind of triangulation~$T$. 

\pagebreak[3]

\begin{figure}[ht]
    \begin{center}
\scalebox{1.1}{ \input{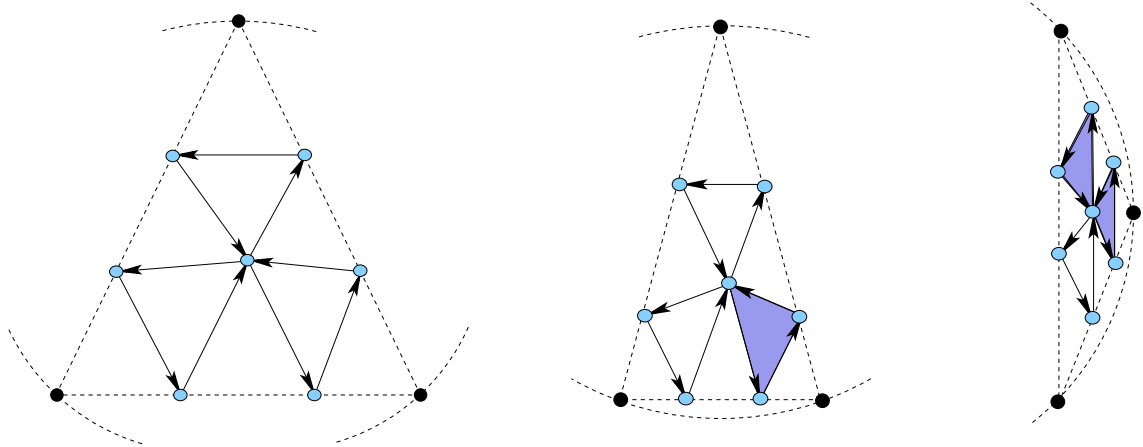}}
    \end{center} 
\caption{Given a triangulation~$T$ of  a convex $b$-gon, 
the seed $(Q(T),\zz(T))$ in the ring of invariants $R_{0,b}(V)$
is constructed as follows.
Place one vertex of~$Q(T)$ inside each triangle $pqr$ of~$T$; 
it corresponds to the Pl\"ucker invariant~$P_{pqr}$. 
Place two vertices of~$Q(T)$ on each diagonal $pq$ of~$T$ 
and on each side $pq$ of the polygon;
the vertex closer to~$p$ (resp., to~$q$)
corresponds to $P_{p,p+1,q}$ 
(resp., to~$P_{p,q,q+1}$). 
For every triangle in~$T$ with no exposed sides, 
connect the seven associated vertices as shown on the left. 
If one or two sides are exposed (second and third pictures), 
identify the vertices of each solid triangular region,
and remove the arrows bounding it. 
Finally, freeze the variables~$P_{p,p+1,p+2}$.}
    \label{fig:webs47}
\end{figure}

\begin{example}
\label{example:G3-678}
For $b\le 8$, the cluster algebras
$R_{0,b}(V)\cong\CC[{\mathrm{Gr}}_{3,b}]$ are of finite type,
so we can list all their generators explicitly.
Unsurprisingly, we end up reproducing Scott's original description
of these generators~\cite{scott} in the language of webs. 
In fact, Scott gave a geometric interpretation of these generators
(for $b\le 8$) which can be seen to match our construction. 



The cluster types of the rings $\CC[{\mathrm{Gr}}_{3,b}]$, $b\le 8$,
are well known to be as follows: 
$A_2$ for $b=5$, 
$D_4$ for $b=6$,
$E_6$~for $b=7$, and
$E_8$ for $b=8$ (see, e.g., \cite{cdm};
cf.\ Figure~\ref{fig:boundary-signatures-5678}). 
The set of generators of each cluster algebra $R_{0,b}(V)$ 
includes the $\binom{b}{3}$ Pl\"ucker invariants $P_{pqr}=J_{pqr}$ given by
tripod~webs. 
Among them there are $b$ coefficient variables of the form $P_{p,p+1,p+2}$. 

For $b\ge 6$, there are also non-Pl\"ucker cluster variables. 
The simplest among them are the ``hexapod'' invariants
given by webs of the kind shown in Figure~\ref{fig:webs50} on the~left. 
There are $2\binom{b}{6}$ of these hexapods in $R_{0,b}(V)$. 
For $b=6$ and $b=7$, the tripods and hexapods comprise the entire list
of generators. 
For $b=8$, there are 24 additional cluster variables,
namely 8 (resp.,~16) web invariants of the kind shown in the second (resp.,
third) picture in Figure~\ref{fig:webs50}. 

\end{example}

\begin{figure}[h!]
    \begin{center}
\scalebox{1.25}{ \input{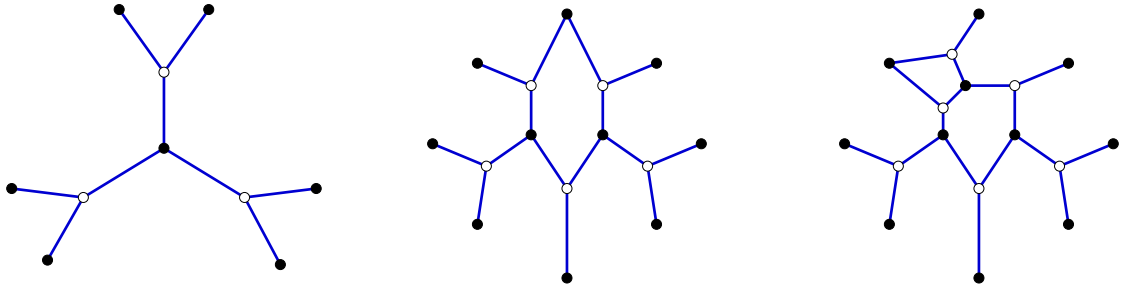}}
    \end{center} 
    \caption{Non-Pl\"ucker cluster variables in
$R_{0,b}(V)$, for $b\in\{6,7,8\}$. 
To get all of them, include rotations and, in the case of 
the~rightmost web, the mirror images.}
    \label{fig:webs50}
\end{figure}

\medskip

Cluster algebra structures in rings of invariants $R_\sigma(V)$ behave
nicely under two types of embeddings of such rings into one another. 
To state these results, we will need the following natural
definition. 

\begin{definition} 
Let $\Acal$ be a cluster algebra.
Take a seed $(Q,\zz)$ of~$\Acal$. 
Freeze some subset of cluster variables in~$\zz$ 
(thus, some subset of mutable vertices in~$Q$). 
If the resulting quiver has vertices that are not connected by an
edge to a mutable vertex, remove some of them (and remove the
corresponding elements from~$\zz$) to get a new seed $(Q',\zz')$. 
The cluster algebra $\Acal'=\Acal(Q',\zz')$ obtained in this way is called a
\emph{cluster subalgebra} of~$\Acal$. 
\end{definition}

\begin{theorem}
\label{th:drop-vertex}
Let $\sigma$ and $\sigma'$ be two non-alternating signatures 
such that $\sigma'$ is obtained from $\sigma$ by removing a single
symbol. 
Then the image of $R_{\sigma'}(V)$ under the obvious 
tautological embedding $R_{\sigma'}(V)\to R_\sigma(V)$
is a cluster subalgebra of~$R_\sigma(V)$. 
\end{theorem}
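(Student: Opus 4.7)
The plan is to exhibit a specific seed of the cluster algebra $R_\sigma(V)$ (guaranteed by Theorem~\ref{th:main}) such that freezing and removing certain vertices produces, under the tautological identification, a seed of $R_{\sigma'}(V)$. Let $v$ denote the boundary vertex deleted in passing from $\sigma$ to $\sigma'$, with clockwise neighbors $p$ and $q$ in $P_\sigma$. I would choose any triangulation $T'$ of $P_{\sigma'}$ and extend it to a triangulation $T = T' \cup \{pvq\}$ of $P_\sigma$ with an ``ear'' at~$v$. By Theorem~\ref{th:main}, the seeds $(Q(T), \zz(T))$ and $(Q(T'), \zz(T'))$ determine the cluster structures on $R_\sigma(V)$ and $R_{\sigma'}(V)$, respectively.

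The next step is to identify which elements of $\zz(T)$ lie in the tautologically embedded image of $R_{\sigma'}(V)$, i.e.\ are independent of the argument at $v$. The invariants attached purely to the ear---namely $J_{pvq}$ (or $J^{pvq}$), the new-diagonal variable $J_p^q$ or $J_q^p$, and the relevant side invariants $J_p^v, J_v^p, J_v^q, J_q^v$---genuinely depend on the $v$-argument and will not survive restriction. For the remaining special invariants in $\zz(T)$, I would match them with their counterparts in $\zz(T')$: when the caterpillar trees $\Lambda_r, \Lambda^r$ underlying these invariants avoid~$v$, the corresponding tensor diagrams are literally identical in $\sigma$ and $\sigma'$; when such a tree crosses~$v$, I would apply the skein relations of Figure~\ref{fig:skein} (together with Propositions~\ref{prop:special-inv-factoring} and~\ref{prop:3-term-skein-special}) to factor the $\sigma$-invariant as a $v$-supported piece times its $\sigma'$-counterpart.

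Having established this correspondence, I would freeze every mutable variable in $\zz(T)$ that genuinely involves the $v$-argument. After this freezing, the coefficient invariants $J_p^v, J_v^p, J_v^q, J_q^v$ become adjacent only to frozen vertices in the quiver, so the cluster-subalgebra construction permits removing them from the seed. A check, exchange relation by exchange relation via Definition~\ref{def:exch-rel-T} and Proposition~\ref{prop:special-seeds-exchanges}, that the restricted quiver and extended cluster coincide with $(Q(T'), \zz(T'))$ then completes the identification, and Theorem~\ref{th:main} applied to~$\sigma'$ gives the equality of cluster algebras.

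The main obstacle will be the tree-matching in the second step. The caterpillar construction of $\Lambda_r^\sigma$ walks clockwise from $r$ until it hits two consecutive same-color vertices, and the presence or absence of $v$ in this walk can change the tree drastically—so the relevant ``$v$-piece times $\sigma'$-invariant'' factorization may fail for the original $T$. It may be necessary first to mutate to an alternative seed of $R_\sigma(V)$ in which every special invariant pertinent to the subalgebra is already supported away from $v$, and only afterwards apply the freezing-and-removal recipe; verifying such a reduction is where most of the technical work will lie.
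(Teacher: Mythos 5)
Your proposal matches the paper's approach: start from a triangulation of $P_\sigma$ containing an ear at the deleted vertex $v$, perform a short sequence of mutations so that the special invariants in the would-be $R_{\sigma'}$-part no longer depend on the argument at $v$, then freeze the remaining $v$-dependent variables and delete the coefficient vertices that become disconnected. The obstacle you flag---that the caterpillar trees $\Lambda_r$ walk clockwise past $v$ and therefore change when $v$ is deleted, so the naive restriction of $\zz(T)$ does not land on $\zz(T')$---is exactly the phenomenon the paper handles, and the mutations you suspect are necessary are in fact performed before the freezing-and-removal step; the paper's proof amounts to a case analysis (by the colors of $v$ and its two neighbors) illustrated by one worked example with three such preparatory mutations. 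So the route is essentially the same, with your write-up being a correct but slightly more tentative description of where the work lies.
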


Theorem~\ref{th:drop-vertex} seems to be new even in the case of
Grassmannians (cf.\ Theorem~\ref{th:we=scott}). 
For example, it implies that any web invariant of the form shown in 
Figure~\ref{fig:webs50} is a cluster variable in $R_{0,b}(V)$ for any
$b\ge 8$, or
indeed in any cluster algebra $R_{a,b}(V)$ with $b\ge 6$,
respectively $b\ge 8$. 

\medskip

Recall that $V$ is a three-dimensional vector space endowed with a
volume form. 
The \emph{cross product} of two vectors $u,v\in V$ is the covector
$u\times v$ given by $(u\times v)(w)=\vol(u,v,w)$, for $w\in V$. 
One similarly defines a cross product of two covectors in~$V^*$,
which is a vector. 

\pagebreak[3]

\begin{theorem}
\label{th:fork}
Let $\sigma$ and $\sigma'$ be two non-alternating signatures 
such that $\sigma'$ is obtained from $\sigma$ by 
replacing two consecutive entries of the same color by a single entry
of the opposite color. 
Interpreting this operation algebraically as a cross product, 
consider the corresponding embedding
$R_{\sigma'}(V)\to R_\sigma(V)$. 
Then the image of $R_{\sigma'}(V)$ under this embedding 
is a cluster subalgebra of~$R_\sigma(V)$. 
\end{theorem}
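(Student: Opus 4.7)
My plan is to lift the seed $(Q(T'), \zz(T'))$ for $R_{\sigma'}(V)$ through the embedding $\iota: R_{\sigma'}(V) \hookrightarrow R_\sigma(V)$ and exhibit it as sitting inside a well-chosen seed $(Q(T), \zz(T))$ of $R_\sigma(V)$. First, by the vector/covector duality, I reduce to the case where the two adjacent vertices of $\sigma$ being merged are both black; call them $p$ and $p+1$. Setting $y_{p'} := v_p \times v_{p+1}$ (a covector, so $p'$ is white in $\sigma'$) realizes $\iota$ algebraically. Diagrammatically, $\iota$ acts by what I will call \emph{Y-replacement}: in a tensor diagram $D'$ with boundary on $P_{\sigma'}$, delete the white boundary vertex $p'$ together with its unique emanating edge, and insert a new internal white vertex joined to $p$, to $p+1$, and to the far end of the deleted edge; this yields a tensor diagram $D$ on $P_\sigma$ with $[D] = \iota([D'])$.

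Since Theorem~\ref{th:main} guarantees independence of the choice of triangulation, I may pick $T$ to contain the ``ear'' triangle $(p-1, p, p+1)$, i.e., to use $(p-1, p+1)$ as a diagonal. Deleting this ear and merging $p, p+1$ into $p'$ converts the diagonal into the polygon side $(p-1, p')$ of $P_{\sigma'}$, so the resulting $T'$ is a genuine triangulation of $P_{\sigma'}$. I then compare $\iota(\zz(T'))$ with $\zz(T)$ by applying Y-replacement to every tensor diagram defining an indecomposable special invariant of $\zz(T')$. A case analysis, branching on the colors of $p-1$ and $p+2$ (which control the shapes of the caterpillar trees $\Lambda_{p'}, \Lambda^{p'}$ and their $\sigma$-counterparts at $p, p+1$), should show that each image is again an indecomposable special invariant of $\sigma$ belonging to $\zz(T)$. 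I expect that $\zz(T) \setminus \iota(\zz(T'))$ consists precisely of the special invariants associated with the ear: the coefficient $J_p^{p+1}$, one of the two coefficients for the side $(p-1, p)$, and (when they are indecomposable) the triangle-based invariants $J_{p-1,p,p+1}$, $J^{p-1,p,p+1}$, $J_{p-1}^{p+1}$, and $J_{p+1}^{p-1}$.

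To conclude, I plan to exhibit $R_{\sigma'}(V)$ as a cluster subalgebra of $R_\sigma(V)$ by freezing in $Q(T)$ every mutable vertex labeled by an element of $\zz(T) \setminus \iota(\zz(T'))$, and then removing those frozen vertices that become disconnected from the mutable part. Using the description of $Q(T)$ given in Section~\ref{sec:building-a-quiver} together with the exchange relations generated by Definition~\ref{def:exch-rel-T}, I will verify that the resulting quiver coincides with $Q(T')$ under the bijection $\iota(\zz(T')) \leftrightarrow \zz(T')$. Invoking Theorem~\ref{th:main} for both $\sigma$ and $\sigma'$ then identifies $\iota(R_{\sigma'}(V))$ with $\Acal(Q(T'), \zz(T'))$ inside $R_\sigma(V) = \Acal(Q(T), \zz(T))$, as desired. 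The main obstacle is the bookkeeping in the previous paragraph: when $\sigma$ is close to alternating, the caterpillar trees can extend far around the boundary, so tracking how Y-replacement transforms them, and performing the skein simplifications needed to recover non-elliptic webs, demands a delicate but case-finite analysis. Once this correspondence of special invariants is in place, I expect the matching of exchange relations to follow mechanically, since Y-replacement is a purely local operation and the skein relations \eqref{eq:3-term-1}--\eqref{eq:3-term-5} used in Definition~\ref{def:exch-rel-T} are themselves local.
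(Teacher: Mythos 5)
Your top-level architecture (lift $\zz(T')$ through the fork embedding, then freeze and delete to realize $R_{\sigma'}(V)$ as a cluster subalgebra) is the right one, and the Y-replacement picture for the embedding is correct. But there is a genuine gap in the step where you expect $\iota(\zz(T'))$ to sit directly inside $\zz(T)$ for a well-chosen triangulation $T$ with an ear at $(p-1,p,p+1)$. This fails in general, and the reason is precisely the non-locality you half-acknowledge near the end: a caterpillar tree $\Lambda_q$ or $\Lambda^q$ at a vertex $q$ far from $p,p+1$ scans clockwise along the boundary until it hits two consecutive vertices of the same color, and the change $\circ \mapsto \bullet\bullet$ alters that termination point. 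So the special invariants of $\sigma'$ at such a $q$, pushed through Y-replacement, generally do \emph{not} coincide with the special invariants of $\sigma$ at $q$; they have a longer (or shorter) ``tail,'' and no choice of $T$ repairs this. The paper's proof (Figure~\ref{fig:webs19}) makes this explicit: the affected region runs counterclockwise from the merged vertex all the way to the next place where the colors repeat, and the identification $\iota(\zz(T')) \subset \zz(\tilde T)$ is achieved only after performing a short sequence of cluster mutations ($J_6^4 \to \tilde J_5^3$, $J_3^6 \to \tilde J_6^2$, $J_6^2 \to \tilde J_1^2$ in that example), and \emph{then} freezing two variables and discarding three. Your plan omits this mutation step, which is what actually does the work of converting ``old'' caterpillars into ``new'' ones.

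Concretely, what is missing is a lemma of the form: for suitably chosen $T$ and $T'$, there is an explicit sequence of mutations in $(Q(T),\zz(T))$ taking the old special invariants along the affected boundary arc to the pullbacks $\tilde J$ of the new ones, after which the freeze-and-delete step produces a seed that matches $(Q(T'),\zz(T'))$ under $\iota$. Without that lemma the verification you defer to ``a delicate but case-finite analysis'' will fail whenever the caterpillar of some distant vertex passes through $p,p+1$, which happens whenever $\sigma$ has an alternating stretch ending at $p$. Once the mutation sequence is built in, the rest of your plan (the appeal to Theorem~\ref{th:main} for both signatures and the quiver comparison via Section~\ref{sec:building-a-quiver}) is essentially the paper's proof.
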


Theorems~\ref{th:drop-vertex} and~\ref{th:fork} can be restated as
saying that if $\sigma'$ is obtained from~$\sigma$ 
by removing an entry, or by replacing $\bullet\,\bullet\!\to\!\circ$
or $\circ\,\circ\!\to\!\bullet$, or more generally by a sequence of such
steps, then $R_{\sigma'}(V)$ can be viewed as a cluster subalgebra
of~$R_\sigma(V)$.
In particular, in each of those cases the cluster type of~$R_{\sigma'}(V)$
can be obtained by taking an induced subgraph inside 
some quiver for~$R_{\sigma}(V)$. 

One consequence of this hierarchy is that each cluster algebra
$R_{\sigma}(V)$ can be interpreted as a cluster subalgebra of some
Grassmann cluster algebra $R_{0,b}(V)$. 

At the level of tensor diagrams, the embeddings in
Theorems~\ref{th:drop-vertex}--\ref{th:fork} correspond to 
simple transformations near the boundary of our disk. 
In the case of Theorem~\ref{th:drop-vertex}, 
passing from $\sigma'$ to~$\sigma$ corresponds to adding an isolated
vertex.
For the cross product embedding of Theorem~\ref{th:fork},
we replace one boundary vertex by a pair of vertices 
of the opposite color, simultaneously 
adding \emph{forks} to all edges of a tensor diagram which use this
vertex, see 
Figure \ref{fig:webs17}. 

\begin{figure}[ht]
    \begin{center}
    \input{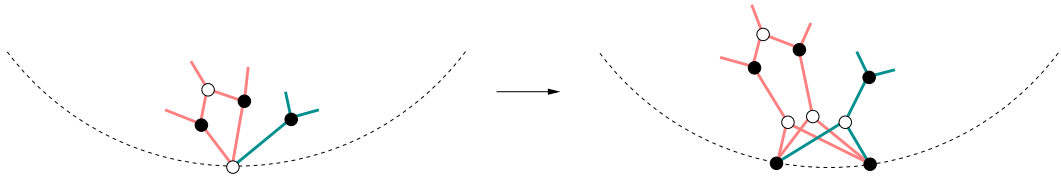} 
    \end{center} 
\vspace{-.1in}
    \caption{Adding a fork. 
}
    \label{fig:webs17}
\end{figure}


Theorems~\ref{th:drop-vertex} and~\ref{th:fork} can be 
used to obtain the following corollary. 

\begin{corollary}
\label{cor:planar-tree}
If a tensor diagram $D$ is a planar tree, 
then the corresponding web invariant $[D]$ is a cluster or coefficient
variable in~$R_{\sigma}(V)$.  
\end{corollary}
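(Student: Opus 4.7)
The plan is to induct on the number of internal vertices of $D$, using Theorems~\ref{th:drop-vertex} and~\ref{th:fork} to strip away leaves of the tree until reaching a base case corresponding to a special invariant. First, if some boundary vertex of $\sigma$ does not appear in $D$, we pass via Theorem~\ref{th:drop-vertex} to the cluster subalgebra $R_{\sigma'}(V)\subset R_\sigma(V)$ where $\sigma'$ omits that vertex, reducing the problem to $R_{\sigma'}(V)$. Thus we may assume every boundary vertex of $\sigma$ is a vertex of $D$. In the base cases, $D$ has zero or one internal vertex: either $D$ is a single edge between two boundary vertices so $[D]=J_p^q$, or $D$ is a tripod so $[D]\in\{J_{pqr},J^{pqr}\}$. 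In either case $[D]$ is an (indecomposable) special invariant appearing in the set $\zz(T)$ for any triangulation $T$ of $P_\sigma$ that contains the corresponding side or triangle, hence a cluster or coefficient variable by Theorem~\ref{th:main}.

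For the inductive step, assume $D$ has at least two internal vertices. The key combinatorial observation is that any such $D$ contains a \emph{cherry}: two leaves $p,p+1$ that are cyclically consecutive on the boundary and share a common internal neighbor~$v$. Indeed, a tree with $i\ge 1$ trivalent internal vertices has at least $i+2$ leaves, so by pigeonhole some internal vertex is adjacent to at least two of them; planarity forces those two leaves to be cyclically consecutive on the boundary (otherwise an intervening leaf would have to connect to the rest of $D$ by an edge crossing a cherry edge), and the bipartite structure forces them to share a single color, opposite to that of~$v$. This cherry $(p,p+1,v)$ is precisely the fork of Figure~\ref{fig:webs17}: the pair of same-color arguments at $p,p+1$ together with $v$ implements the cross product that produces an argument of the opposite color.

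We now collapse the cherry, merging the three vertices $p,p+1,v$ into a single boundary vertex of color opposite to~$p$, with the former third edge of $v$ reattached there. The result is a smaller planar tree $D'$ on the signature $\sigma'$ obtained from $\sigma$ by replacing the two consecutive same-color entries at $p,p+1$ by a single entry of opposite color. Theorem~\ref{th:fork} provides exactly this setup: the cross product embedding realizes $R_{\sigma'}(V)$ as a cluster subalgebra of $R_\sigma(V)$, and by construction $[D]=[D']$ under this embedding. The inductive hypothesis gives that $[D']$ is a cluster or coefficient variable of $R_{\sigma'}(V)$, and since cluster (resp.\ coefficient) variables of a cluster subalgebra remain cluster (resp.\ coefficient) variables of the ambient algebra, this shows $[D]$ is a cluster or coefficient variable of $R_\sigma(V)$, completing the induction.

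The main obstacle is ensuring the induction stays within the non-alternating regime required by Theorem~\ref{th:fork}. A single cherry collapse can in principle produce an alternating $\sigma'$, though only under the restrictive local pattern in which $\sigma$ has four consecutive same-color vertices flanking the chosen cherry and the rest of $\sigma$ is already alternating. To handle this, one shows that whenever $D$ has a choice of cherries one of them can be collapsed without destroying non-alternation, and the remaining rigid configurations---together with the small base cases falling outside the scope of Theorem~\ref{th:main}, notably $\sigma=[\bullet\circ\bullet\circ\bullet]$ as in Remark~\ref{rem:xoxox}---can be disposed of by direct inspection.
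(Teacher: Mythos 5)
Your argument is correct and is essentially a fleshed-out version of the paper's proof. The paper's proof of this corollary consists of a single sentence ("Any planar tree can be grown by repeated application of the operations of adding a fork and dropping a vertex. Make sure to never pass through an alternating signature; this can indeed be done. We omit the details."), whereas you run the same idea as a downward induction (peeling the tree to a base case rather than growing it), making the key combinatorial step explicit --- the existence of a cherry, which you establish via the leaf-count $\ell = i+2$ plus a planarity argument --- and then invoking Theorems~\ref{th:drop-vertex} and~\ref{th:fork} exactly as the paper does. Your final paragraph on avoiding alternating signatures is at the same level of rigor as the paper's handwave; in fact for $i\ge 2$ there are at least two distinct cherries (each necessarily located at a cyclically adjacent same-color pair), and a careful count shows the obstruction you describe is even rarer than you suggest, but since the paper itself omits these details your treatment is an acceptable match. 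One small slip: a coefficient variable of a cluster subalgebra need not be a coefficient variable of the ambient algebra (it could have been a frozen cluster variable), so your parenthetical "(resp.\ coefficient)" parallelism is off; the weaker and correct statement --- a cluster-or-coefficient variable of the subalgebra is a cluster-or-coefficient variable of the ambient algebra --- is all the induction needs.
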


An observant reader may have noticed that our definition of the cluster structure in 
the ring $R_\sigma(V)$ does not treat the black and white colors in exactly the same way. 
The only place where the symmetry is broken is Definition~\ref{def:z(T)}:
the set $K(T)$ used to define the extended cluster $\zz(T)$ includes 
the invariants $J_{pqr}$---but not~$J^{pqr}$. 

\begin{theorem}
\label{th:swap-colors}
Interchanging the two colors in the definition of the extended clusters $\zz(T)$ 
does not change the resulting cluster structure in the ring~$R_\sigma(V)$.
\end{theorem}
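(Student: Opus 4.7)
The plan is to show that the color-swapped extended cluster, obtained by replacing each $J_{pqr}$ with $J^{pqr}$ in Definition~\ref{def:z(T)}, is mutation-equivalent to $\zz(T)$ inside the cluster algebra $\Acal(Q(T),\zz(T))=R_\sigma(V)$. Since Theorem~\ref{th:main} identifies the cluster structure on $R_\sigma(V)$ with the entire class of mutation-equivalent seeds, this will imply that the two prescriptions define the same cluster structure.

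The key observation is that the three-term identity \eqref{eq:3-term-1},
\[
J_{pqr}\,J^{pqr}=J_r^p J_q^r J_p^q+J_r^q J_p^r J_q^p,
\]
is exactly the exchange relation written in Definition~\ref{def:exch-rel-T} for each triangle $pqr$ of~$T$. Thus, whenever $J_{pqr}$ enters $\xx(T)$ as an indecomposable cluster variable, the mutation $\mu_{J_{pqr}}$ in the seed $(Q(T),\zz(T))$ replaces $J_{pqr}$ by $J^{pqr}$ (after distilling and factoring into indecomposables). First I would verify this generic behavior for triangles whose three sides are all diagonals of~$T$: in that case \eqref{eq:3-term-1} is already in distilled form, both $J_{pqr}$ and $J^{pqr}$ are nonzero indecomposable non-coefficient invariants, and the mutation carries out the color flip cleanly.

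Next I would handle triangles with one or two exposed sides, invoking Proposition~\ref{pr:thin-triangles} and Proposition~\ref{prop:special-inv-factoring} to describe the indecomposable factorizations of $J_{pqr}$ and $J^{pqr}$. In each such case, the non-coefficient factors of~$J^{pqr}$ are obtained from those of~$J_{pqr}$ by the corresponding mutation, or, when both decompose entirely into coefficient invariants, there is nothing to mutate and the cluster is unaffected. I would then show that the ``triangle variables'' coming from distinct triangles of~$T$ are pairwise non-adjacent in the mutable part of~$Q(T)$: the exchange relation at~$J_{pqr}$ only involves the side invariants attached to its three sides, not the central invariants of any other triangle, so by construction of~$Q(T)$ (Definition~\ref{def:Q(T)}) no edge can run between two such vertices. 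Hence the mutations at all triangle variables commute, and their simultaneous execution transforms $(Q(T),\zz(T))$ into the seed defined by the color-swapped construction, establishing mutation-equivalence.

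The main obstacle will be the careful casework required for triangles with exposed sides, where $J_{pqr}$ or $J^{pqr}$ may vanish or factor further: the distilled form of~\eqref{eq:3-term-1} depends sensitively on the signature~$\sigma$, and one must verify case by case that the indecomposable non-coefficient factors on the two sides of the color swap are correctly linked by the exchange relation of~$Q(T)$. It will also be important to check that Proposition~\ref{pr:x(T)=x(T')} and the ambiguity noted at the end of Definition~\ref{def:Q(T)} do not interfere with the non-adjacency argument; both of these are local phenomena that do not couple distinct triangles, so they should not disrupt the commuting structure of the mutation sequence.
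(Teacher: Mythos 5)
Your proof takes essentially the same approach as the paper: identify the only difference between the two extended-cluster constructions as the choice of $J_{pqr}$ versus $J^{pqr}$ for each triangle, use identity \eqref{eq:3-term-1} as the bridging exchange relation, and verify by casework (the paper organizes this by fundamental region, you by triangle, but the content is the same). Your added observation that the vertices for distinct triangle invariants are pairwise non-adjacent in the mutable part of $Q(T)$ — since the exchange relation at $J_{pqr}$ involves only side invariants of its own triangle — and hence the required mutations commute, is a genuine clarification that the paper leaves implicit. One small caveat: for a triangle whose three sides are all diagonals of~$T$, \eqref{eq:3-term-1} need not already be in distilled form; if one of those diagonals has length~$2$, the corresponding side invariant $J_p^q$ can factor nontrivially (cf.\ Proposition~\ref{prop:special-inv-factoring}, e.g.\ $J_p^{p+2}=J_{p+2}^p J_{p+1}^{p+2}$ when $p$ is white and $p+1$ black), so distilling is still required in general. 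This does not undermine the argument, since you already plan for casework, but the clean ``generic'' regime is narrower than your proposal states.
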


Besides swapping the colors, one can also consider the ``mirror image'' 
of our main construction.
We expect this version to yield the same result. 

\begin{conjecture}
Reversal of direction (more specifically, replacing the clockwise direction 
in the definition of special invariants by the counterclockwise one)
does not affect the cluster structure in~$R_\sigma(V)$.
\end{conjecture}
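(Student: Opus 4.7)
The plan is twofold: first, show that the counterclockwise construction produces the same cluster algebra $R_\sigma(V)$ by symmetry; second, show that the clockwise and counterclockwise cluster structures on this common ring agree by exhibiting mutation equivalence between appropriate seeds.

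For the first point, observe that every result in Sections~\ref{sec:special-invariants}--\ref{sec:special-seeds} used in the proof of Theorem~\ref{th:main}---the definitions of caterpillar trees and special invariants, the nonvanishing and web-ness criteria (Propositions~\ref{prop:special=0}--\ref{prop:specweb}), the compatibility and indecomposability statements, the $3$-term skein relations~\eqref{eq:3-term-1}--\eqref{eq:3-term-5}, and Theorem~\ref{th:z(T)}---is stated in a form invariant under reflection of the disk reversing its orientation. Formally defining counterclockwise analogs $\tilde{\Lambda}_p$, $\tilde{J}_p^q$, $\tilde{\zz}(T)$, $\tilde{Q}(T)$ and running the proof of Theorem~\ref{th:main} verbatim therefore yields $R_\sigma(V) = \Acal(\tilde{Q}(T), \tilde{\zz}(T))$ for every triangulation $T$.

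For the second point, the crucial observation is that the coefficient invariants in Proposition~\ref{prop:coeff-special} all have the form $J_p^{p\pm 1}$, for which both caterpillars involved degenerate (no interior path is followed), so $\cc_\sigma = \tilde{\cc}_\sigma$. Since both cluster algebras equal $R_\sigma(V)$ with identical frozen variables, the two structures coincide if and only if a single clockwise seed is mutation-equivalent to a single counterclockwise seed. I would attempt this by induction on the number of color-change transitions in $\sigma$: the monochromatic case is immediate, since both constructions literally coincide (the caterpillars collapse to vertices), while the inductive step would use Theorem~\ref{th:fork} to realize $R_\sigma(V)$ as a cluster subalgebra of some $R_{\sigma'}(V)$ with fewer color changes, then invoke the inductive hypothesis together with the explicit subalgebra description.

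The principal obstacle is the inductive step: one must verify that the cluster subalgebra embedding of Theorem~\ref{th:fork} is compatible with direction reversal, i.e.\ that adding a fork at a boundary vertex commutes (up to mutations) with the operation of reversing the caterpillar-building direction; this reduces to a local check on how forks interact with the skein relations in Figures~\ref{fig:skein}--\ref{fig:yang-baxter}, but the bookkeeping appears delicate because the new boundary vertices split caterpillars non-uniformly depending on the colors of their neighbors. A viable alternative that bypasses the induction is to fix a convenient triangulation $T$ (for instance the fan of Section~\ref{sec:T-fan}) and exhibit an explicit mutation sequence carrying $(Q(T), \zz(T))$ to $(\tilde{Q}(T'), \tilde{\zz}(T'))$ for some $T'$, uniformly described in terms of the runs of like-colored vertices in $\sigma$.
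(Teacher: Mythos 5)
The statement you address is an open conjecture in the paper, with no proof provided; I can therefore only assess your proposal on its own merits. Your two-stage outline (rerun the argument for Theorem~\ref{th:main} with the counterclockwise construction to re-obtain $R_\sigma(V)$ as a cluster algebra, then establish mutation equivalence of a clockwise seed with a counterclockwise seed) is a sensible skeleton, and you are right to single out the inductive step built on Theorem~\ref{th:fork} as the real difficulty.

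However, two of the concrete supporting claims you make are false. First, the assertion that $\cc_\sigma=\tilde{\cc}_\sigma$ because ``both caterpillars involved degenerate (no interior path is followed)'' ignores the \emph{long} coefficient invariants of Figure~\ref{fig:webs54}: for $J_p^{p+1}$ with $p$ and $p+1$ of the same color, the tree $\Lambda^{p+1}$ carries a genuine clockwise caterpillar tail, and its counterclockwise analogue extends the other way. Already for $\sigma=[\bullet\bullet\bullet\bullet\circ]$ the clockwise coefficient invariant $J_1^2=P_{123}$ attached to side $12$ has multidegree $(1,1,1,0,0)$, whereas the counterclockwise coefficient invariant $\tilde{J}_2^1$ attached to the same side has multidegree $(1,1,1,1,1)$; these are different web invariants. (In this example the \emph{sets} $\cc_\sigma$ and $\tilde{\cc}_\sigma$ do appear to coincide---one checks that $\tilde{J}_2^1=J_3^4$---but only via a nontrivial re-indexing of the sides, which would have to be proved in general.) Second, your base case is also misdescribed: in the monochromatic case the trees $\Lambda^p$ do \emph{not} collapse to single vertices; each $\Lambda^p$ is a tripod foot on $p$, $p+1$, and the proxy, so $J_p^q=P_{p,q,q+1}$ while the counterclockwise $\tilde{J}_p^q=P_{p,q-1,q}$, distinct Pl\"ucker coordinates. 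Thus the two constructions do not ``literally coincide,'' even for Grassmannians; one could hope to identify each with Scott's cluster structure via Theorem~\ref{th:we=scott}, but that is an argument that still needs to be made before the induction can get off the ground. With both preliminary claims broken and the inductive step acknowledged as unresolved, the proposal does not yet constitute a proof.
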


We do not know of a natural 
construction of a cluster algebra structure in the ring~$R_\sigma(V)$ 
for the \emph{alternating} signature~$\sigma=[\bullet\circ\bullet\circ\cdots]$; 
this case seems to be genuinely exceptional. 
If $\sigma$ is alternating, 
then the structure one gets by applying the approach of
Sections~\ref{sec:special-invariants}--\ref{sec:special-seeds}
is one of a ``generalized cluster
algebra'' in which some of the exchange relations have more than two
terms on the right-hand side. 


\section{Main conjectures}
\label{sec:main-conjectures}

In this section, we discuss conjectural links between 
\begin{itemize}
\item
the cluster structure in a ring of invariants $R_\sigma(V)$ 
(cf.\ Section~\ref{sec:main-theorem}) and 
\item
Kuperberg's basis of this ring 
formed by the web invariants (cf.\ Section~\ref{sec:webs}). 
\end{itemize}
The conjectures in this section
are intentionally formulated in general terms, 
so as to suggest their possible extensions to other contexts, 
including other classical rings of invariants and 
cluster algebras associated with marked surfaces. 

\medskip

We call a web invariant~$z$ \emph{indecomposable} if it cannot be expressed 
as a product of two web invariants. 
(This is likely equivalent to irreducibility of~$z$.)
For example, any coefficient variable is an indecomposable web invariant.  

\begin{conjecture}
\label{conj:cluster-vars-are-in-web-basis}
All cluster variables 
are indecomposable web invariants. 
\end{conjecture}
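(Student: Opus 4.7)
The plan is to establish this conjecture in two parts: first, to show that every cluster variable of $R_\sigma(V)$ equals $[D]$ for some non-elliptic web~$D$; second, to derive indecomposability from the unique factorization property of~$R_\sigma(V)$ (Lemma~\ref{lem:properties-of-RabV}) together with the general cluster-theoretic fact that cluster variables are irreducible elements of the ambient ring.

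For the first step I would induct on the minimal mutation distance from a special seed $(Q(T),\zz(T))$ furnished by Theorem~\ref{th:main}. The base case is immediate from Proposition~\ref{prop:specweb}: every nonzero special invariant is a web invariant, so in particular every element of $\xx(T)$ is. For the inductive step, suppose $z$ lies in an extended cluster $\zz$ whose elements are all known to be web invariants, and that mutation produces $z'$ via the exchange relation $z\,z' = M_1+M_2$ with $M_1,M_2$ monomials in~$\zz$. Each $M_i$ is then a product of web invariants; by repeatedly applying the skein relations of Figures~\ref{fig:skein}--\ref{fig:local-boundary} to superpose the diagrams involved, we can rewrite $M_1+M_2$ as a $\CC$-linear combination of non-elliptic web invariants. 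The crucial claim is then that, after symbolic division by $z=[D_z]$, the resulting expression for $z'$ collapses to a single non-elliptic web invariant rather than a genuine linear combination.

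For the second part of the conjecture, I would invoke the coprimality argument used in the proof of Theorem~\ref{th:main} via Corollary~\ref{cor:cluster-criterion}: each cluster variable is an irreducible element of the UFD~$R_\sigma(V)$. Granting that $z=[D]$ is both irreducible in~$R_\sigma(V)$ and a web invariant, any factorization $[D]=[D_1][D_2]$ into web invariants would yield a non-trivial factorization of~$z$ in~$R_\sigma(V)$, contradicting irreducibility. Hence indecomposability follows automatically from the web-invariant property, and the two parts of the conjecture reduce to a single task.

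The hard part will be the "collapse to a single web" step in the inductive argument. A naive expansion of $(M_1+M_2)/z$ using skein relations may a priori produce a linear combination of several distinct non-elliptic webs, and one cannot fall back on positivity because the web basis is known to have negative structure constants (cf.~the discussion surrounding Remark~\ref{rem:khovanov-kuperberg} and Proposition~\ref{prop:negative-structure-const}). Instead, I would try to control the expansion using three complementary ingredients: (i)~the multidegree bookkeeping in~\eqref{eq:td-multideg}, which sharply restricts which webs can appear on the right-hand side of the exchange relation; (ii)~the arborization procedure of Section~\ref{sec:arborization}, which should recognize the resulting web combinatorially from the skein reduction; and (iii)~the Laurent Phenomenon (Theorem~\ref{th:laurent}) applied in the seed $(Q,\zz)$, which pins down the Laurent expansion of~$z'$ in~$\zz$ uniquely and, together with Kuperberg's basis theorem (Theorem~\ref{th:web-basis}), should force the web representative to be unique. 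Making any one of these three ingredients rigorously deliver the collapse--in particular matching the confluence remark of Remark~\ref{rem:confluence} against the mutation dynamics--is where the real difficulty of the conjecture lies.
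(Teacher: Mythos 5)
This statement is labeled a \emph{Conjecture} in the paper, and the paper provides no proof of it: it is explicitly presented as an open problem (the authors offer only heuristic support, e.g.\ Corollary~\ref{cor:planar-tree} for the special case of planar tree diagrams, and related conjectures such as Conjectures~\ref{conj:cluster-compatibility}--\ref{conj:cluster-monomials-are-in-web-basis}). So there is no paper proof against which to compare; the real question is whether your outline closes the gap, and it does not. You diagnose the failure point yourself: the ``collapse to a single web'' step in the mutation induction is not a technical detail but the entire mathematical content of the conjecture. What you are asked to show is precisely that $z'=(M_1+M_2)/z$, which a priori is some $R_\sigma(V)$-element with a unique but possibly many-term expansion in Kuperberg's basis (Theorem~\ref{th:web-basis}), is in fact a single basis element. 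Your three proposed ingredients do not deliver this. Multidegree bookkeeping~\eqref{eq:td-multideg} pins down the multidegree of $z'$, but the web basis typically contains many non-elliptic webs of any fixed signature and multidegree (Example~\ref{example:R24} exhibits a five-dimensional multigraded component), so this cannot single out one web. The arborization machinery of Section~\ref{sec:arborization} is itself conjectural (Conjectures~\ref{conj:lift1}--\ref{conj:lift2}) and, even granted, only tests whether an \emph{already given} web defines a cluster monomial; it does not produce a one-term web expansion from a skein computation. The Laurent Phenomenon (Theorem~\ref{th:laurent}) pins down the Laurent expansion of $z'$ in the cluster variables, but there is no known dictionary translating a Laurent expansion in a special cluster into a web-basis expansion, and positivity cannot be invoked because of the negative structure constants (Proposition~\ref{prop:negative-structure-const}). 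So the inductive step is not closed.

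Your second part is essentially sound and is, in fact, independent of the first. Once Theorem~\ref{th:main} establishes that $R_\sigma(V)$ is a cluster algebra, the factoriality result of Geiss--Leclerc--Schr\"oer~\cite{gls-factorial} (which the paper invokes in the proof of Lemma~\ref{lem:plantreeirr}) implies every cluster variable is irreducible in the UFD $R_\sigma(V)$; hence a nontrivial factorization $z=[D_1][D_2]$ into non-constant web invariants is impossible, and any cluster variable that \emph{is} a web invariant is automatically an indecomposable one. So your reduction of the conjecture to the single claim ``every cluster variable is a web invariant'' is correct. But that remaining claim is the open problem, and the proposal does not prove it.
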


Two cluster variables are called \emph{compatible} 
if they belong to the same cluster. 
Coefficient variables are compatible with each other, and with all
cluster variables. 

\begin{conjecture}
\label{conj:cluster-compatibility}
Two cluster (or coefficient) 
variables are compatible if and
only if their product is a web invariant. 
\end{conjecture}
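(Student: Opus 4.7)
The plan is to address both directions of Conjecture~\ref{conj:cluster-compatibility} by leveraging the special seeds from triangulations as a base case and propagating along mutation paths, while exploiting Kuperberg's Theorem~\ref{th:web-basis} to control the web side.

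For the forward implication (compatible $\Rightarrow$ product is a web invariant), I would first observe that Theorem~\ref{th:z(T)} already establishes the property for every special seed $(Q(T),\zz(T))$: by construction, any two indecomposable special invariants in $\zz(T)$ are compatible in the sense of Section~\ref{sec:special-invariants}, meaning their product is a single web invariant. The task reduces to showing invariance under seed mutation: if all pairwise products inside an extended cluster $\zz$ are web invariants, then so are all pairwise products inside $\mu_z(\zz)$. Fix the new cluster variable $z'$ produced by the exchange relation $z z' = M_1 + M_2$. For any other $y\in\zz'$, one would express $z' y$ via $y(M_1+M_2)/z$ and use the skein relations of Figures~\ref{fig:skein}--\ref{fig:local-boundary} together with the confluence principle (Remark~\ref{rem:confluence}) to rewrite this combination as a single non-elliptic web. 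The combinatorics of this reduction should mimic the $3$-term skein identities proved in Proposition~\ref{prop:3-term-skein-special}, suitably iterated; Corollary~\ref{cor:planar-tree} provides a useful source of base cases (tree webs giving cluster variables). Since every seed is connected to some $(Q(T),\zz(T))$ by a finite sequence of mutations (by Theorem~\ref{th:main}), induction on mutation distance then finishes this direction.

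For the reverse implication (product is a web invariant $\Rightarrow$ compatible), the natural strategy is contrapositive combined with the Laurent Phenomenon. Suppose $x,y$ are cluster variables with $x y = [D]$ a web invariant, yet $x,y$ are not compatible. Fix any cluster containing~$x$; by Theorem~\ref{th:laurent} the cluster variable~$y$ has a genuine Laurent expansion in that cluster with non-trivial denominator involving some cluster variable $u\ne x$. Multiplying by~$x$ and expanding $[D]$ as a single element of the web basis would then force a non-trivial cancellation among web invariants in the Laurent expansion, contradicting the linear independence in Theorem~\ref{th:web-basis} after passing to a common denominator. Making this precise requires a careful accounting of how web invariants expand in extended clusters, and will likely use the $d$-vector/denominator data of cluster theory to pin down the precise monomial obstructions.

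The main obstacle is the reverse direction. The forward direction is essentially a bookkeeping exercise guided by the skein calculus, modulo verifying that the mutation step preserves non-ellipticity rather than merely planarity. The reverse direction is considerably more delicate: it requires ruling out ``accidental'' coincidences where the product of two incompatible cluster variables factors cleanly as a single non-elliptic web, and one cannot rely on positivity arguments because, as emphasized in the paper (see Conjecture~\ref{conj:strong-positivity} and Proposition~\ref{prop:negative-structure-const}), web basis structure constants can be negative. I expect the reverse direction to be inseparable from Conjecture~\ref{conj:cluster-vars-are-in-web-basis} and the arborization conjecture (Conjecture~\ref{conj:cluster-variables-are-trees}): a successful attack would probably establish all three statements together, using the arborization algorithm of Section~\ref{sec:arborization} to characterize intrinsically which web invariants correspond to cluster monomials, and reading off pairwise compatibility from whether the forest presentation of $[D]$ splits into the forest presentations of~$x$ and~$y$.
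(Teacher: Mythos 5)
This statement is \emph{Conjecture}~\ref{conj:cluster-compatibility}, not a theorem: the paper explicitly presents it as open, and Sections~\ref{sec:special-proofs}--\ref{sec:other-proofs} supply proofs only for the results of Sections~\ref{sec:special-invariants}--\ref{sec:main-theorem} and~\ref{sec:arborization}, not for the conjectures of Section~\ref{sec:main-conjectures}. So there is no ``paper's own proof'' to compare against, and your proposal should be judged on whether it actually closes the gap. It does not.

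For the forward direction, the crux is the inductive step: if every pair in an extended cluster~$\zz$ multiplies to a single non-elliptic web, does the same hold after one mutation? You defer this to ``use the skein relations\ldots to rewrite this combination as a single non-elliptic web'' and suggest iterating Proposition~\ref{prop:3-term-skein-special}. But that proposition concerns \emph{special invariants only}; nothing analogous is available for cluster variables deep in the mutation graph, where the relevant web diagrams need not have the caterpillar/tail structure that makes the planarizing steps of Figure~\ref{fig:basic-step} work. Confluence (Remark~\ref{rem:confluence}) only guarantees that a tensor-diagram reduction terminates at a well-defined linear combination of webs; it gives no reason the result is a \emph{single} web. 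In fact the paper's own gallery shows that products of incompatible cluster variables can behave badly (Proposition~\ref{prop:negative-structure-const}, Section~\ref{sec:negative-structure-const}), and the counterexample of Figure~\ref{fig:webs68} shows a 3-term relation among cluster variables where \emph{none} of $zM_1,zM_2,z'M_1,z'M_2$ are web invariants; this is precisely the kind of obstruction your inductive step must rule out, and your sketch provides no mechanism for doing so. For the reverse direction, ``force a non-trivial cancellation among web invariants in the Laurent expansion'' is not an argument: multiplying a web invariant by a Laurent monomial in cluster variables does not stay inside the span of web invariants in any controlled way, so appealing to linear independence (Theorem~\ref{th:web-basis}) after ``passing to a common denominator'' is unfounded without a serious intermediate result relating web expansions to Laurent expansions. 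You correctly identify that this conjecture is entangled with Conjectures~\ref{conj:cluster-vars-are-in-web-basis} and~\ref{conj:cluster-variables-are-trees}, but this observation confirms rather than resolves the difficulty.
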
 

Note that by Proposition~\ref{prop:coeff-special}, 
the product of a coefficient 
variable and any web invariant
(in particular, by
Conjecture~\ref{conj:cluster-vars-are-in-web-basis}, any cluster
variable) is a web invariant. 

The simplest illustration of 
Conjecture~\ref{conj:cluster-compatibility} can be found in
Figure~\ref{fig:tripod-and-a-stick}: 
the invariants $J_2^4$ and $J_{123}$ shown there
 are compatible with each other,
and their product is a web invariant shown in
Figure~\ref{fig:edge-labelings}. 
To see more interesting examples, 
take any two (compatible) webs in Figure~\ref{fig:webs66}
and verify that their superposition can be converted into a single web 
by iterated skein transformations.

We conjecture that much more is true.
Define a \emph{cluster monomial} (cf.~\cite{cdm}) 
as a monomial in the elements of any given extended cluster. 
For cluster algebras defined by quivers, 
cluster monomials are known to be linearly independent~\cite{cklp}.

\begin{conjecture}
\label{conj:cluster-monomials-are-in-web-basis}
All cluster monomials are web invariants. 
\end{conjecture}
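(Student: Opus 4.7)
My plan is to approach this conjecture by induction on mutation distance from a chosen initial seed, with a combinatorial base case handled directly via the special invariants of Section~\ref{sec:special-invariants}. First I would fix a triangulation $T$ of the polygon $P_\sigma$ and take as initial seed the special seed $(Q(T),\zz(T))$ from Definitions~\ref{def:z(T)}--\ref{def:Q(T)}. For the base case, I would show that every monomial in the elements of the extended cluster $\zz(T)$ is a web invariant. Since $\zz(T)$ consists of indecomposable special invariants which are pairwise compatible (Theorem~\ref{th:z(T)}) and each is constructed from the caterpillar trees $\Lambda_p, \Lambda^p$ attached near the boundary, one would superimpose the associated tensor diagrams inside the disk and apply the skein relations of Figures~\ref{fig:skein}--\ref{fig:local-boundary} to reduce to a non-elliptic web. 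Using the freedom in drawing the caterpillars close to the boundary, one can arrange that different trees use disjoint interior regions, so planarity is automatic; the delicate point is controlling the internal face structure so that no forbidden short cycles persist.

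For the inductive step, suppose every cluster monomial in a seed $(Q,\zz)$ is a web invariant, and mutate at some cluster variable $z$ to obtain $z' = \mu_z(z)$ with exchange relation $z z' = M_1 + M_2$, where $M_1,M_2$ are monomials in $\zz \setminus \{z\}$. By the inductive hypothesis, $M_1$ and $M_2$ are both web invariants, and the first subgoal is to prove that $z'$ itself is an indecomposable web invariant (thereby also establishing Conjecture~\ref{conj:cluster-vars-are-in-web-basis} along the way). I would try to produce the web for $z'$ by a skein-theoretic ``division'' procedure: express $M_1+M_2$ diagrammatically as a superposition containing an explicit $z$-subdiagram, peel off that subdiagram, and use the confluence of the reduction process (Remark~\ref{rem:confluence}) to identify the quotient as a single non-elliptic web. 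This step is tightly connected to the arborization algorithm of Section~\ref{sec:arborization}: a web should represent a cluster monomial precisely when it admits a forest presentation, and the exchange relation should correspond to a local arborization move.

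Having established that each new cluster variable $z'$ is a web invariant, the remaining task is to show that an arbitrary cluster monomial $(z')^{a_0} \prod_{y \ne z} y^{a_y}$ in the mutated cluster is still a web invariant, not merely a sum of webs. My approach is to induct on total degree: multiply the web for $z'$ (given by the previous step) with the web for $\prod y^{a_y}$ (given by the inductive hypothesis), and reduce the superposition by skein relations, tracking internal cycles.

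The principal obstacle is precisely this last multiplicative step. Arbitrary iterated skein reductions can in principle produce $4$-cycles or multiple edges whose elimination introduces a \emph{sum} of webs rather than a single web; indeed the failure of the web basis to have nonnegative structure constants (Proposition~\ref{prop:negative-structure-const}) shows that this phenomenon genuinely occurs, so one cannot hope for the web-invariant property to be closed under all products. The whole point of the conjecture is that it \emph{does} hold for products of compatible cluster variables, and extracting this ``compatibility witnesses planarity'' principle from the combinatorics seems to require either (i)~a global structural theorem identifying the webs arising from cluster monomials as exactly the arborizable ones (Conjecture~\ref{conj:cluster-variables-are-trees}), or (ii)~a mutation-compatible normal form on webs whose preservation under skein reduction can be checked locally at each mutation. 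Option (i) appears to me the more promising avenue, as the forest/tree presentation of a cluster monomial would make the multiplicativity manifest: disjoint unions of trees correspond to products of their invariants, so if one can certify that every cluster monomial has a forest presentation, then the web-invariant property follows immediately from Kuperberg's confluence.
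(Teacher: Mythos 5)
The statement you are attempting to prove is Conjecture~\ref{conj:cluster-monomials-are-in-web-basis}, which the paper explicitly leaves \emph{open}: it appears in Section~\ref{sec:main-conjectures}, whose content the authors describe as conjectural, and no proof of it appears anywhere in the paper. So there is no ``paper's proof'' to compare against; what can be assessed is whether your strategy in fact closes the gap, and it does not --- as you yourself recognize in the last paragraph.

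Two concrete gaps deserve emphasis. First, even your base case is not as innocuous as the sketch suggests. Theorem~\ref{th:z(T)} only establishes \emph{pairwise} compatibility of the elements of $\zz(T)$, i.e., that the product of any two of them is a web invariant. Passing from pairwise compatibility to ``every monomial in $\zz(T)$ is a web invariant'' is precisely the content of Conjecture~\ref{conj:compat-web-invariants}, which is also open. The idea of ``drawing the caterpillars in disjoint regions so planarity is automatic'' handles superposition of distinct webs sitting in disjoint interior regions, but high powers of a single cluster variable require superimposing a web with itself many times, and the skein reductions needed there are exactly the thickening calculus of Definition~\ref{def:thickening}; the paper only controls these reductions for invariants that arborize to a \emph{tree} (Theorems~\ref{th:powers-arborizable} and~\ref{th:thickening}), not for general special invariants. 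Second, your inductive step requires producing a single non-elliptic web for the new cluster variable $z'$ by a ``skein-theoretic division'' of $M_1+M_2$ by $z$. Confluence (Remark~\ref{rem:confluence}) guarantees a well-defined web-basis expansion of any linear combination of tensor diagrams, but it does not guarantee that $z'=(M_1+M_2)/z$ expands as a \emph{single} web with coefficient~$1$; proving that is exactly Conjecture~\ref{conj:cluster-vars-are-in-web-basis}, again open. Your option~(i), reducing everything to Conjecture~\ref{conj:cluster-variables-are-trees}, is a coherent assessment of where the difficulty lives, but that conjecture is also unproved --- the arborization algorithm of Section~\ref{sec:arborization} is conjectured (Conjecture~\ref{conj:lift2}) but not shown to detect cluster monomials. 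In short: the reductions you propose are the same reductions the authors propose, and they remain open on both ends.
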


Conjecture~\ref{conj:cluster-monomials-are-in-web-basis} suggests the
following property of the web basis.

\begin{conjecture}
\label{conj:compat-web-invariants}
Given a finite collection of distinct web invariants,
if the product of any two of them is a web invariant, 
then so is the product of all of them. 
\end{conjecture}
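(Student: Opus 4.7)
The plan is to proceed by induction on the number $n$ of web invariants. The base case $n=2$ is the hypothesis. For the inductive step, consider the superposition diagram $D=D_1\cup\cdots\cup D_n$, a (generally non-planar) tensor diagram obtained by overlaying the $D_i$ inside the disk with their boundary vertices identified in the prescribed cyclic order. By Kuperberg's basis theorem (Theorem \ref{th:web-basis}) and the confluence of skein reduction observed in Remark \ref{rem:confluence}, the invariant $[D]=\prod_i[D_i]$ admits a well-defined expansion $[D]=\sum_\alpha c_\alpha[W_\alpha]$ in the web basis, and the task is to show that exactly one $c_\alpha$ is nonzero and equals~$1$.

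The inductive step would hinge on two reductions. First, I would argue that it suffices to treat the triple case $n=3$: once we know $[D_1][D_2][D_3]$ is a single web $[D_{123}]$, we can form $[D_{12}]:=[D_1][D_2]$ as a web invariant and try to show that it remains pairwise compatible with each $[D_k]$ for $k\ge 3$. This last assertion would itself follow from triples, applied to $\{D_1,D_2,D_k\}$, because $[D_{12}][D_k]=[D_1][D_2][D_k]$ is then a web by the $n=3$ case. Granting this, the collection $\{[D_{12}],[D_3],\ldots,[D_n]\}$ has $n-1$ elements that are pairwise compatible, and the inductive hypothesis finishes the job.

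For the triple case itself, I would choose a skein-reduction strategy on $D_1\cup D_2\cup D_3$ that processes local obstructions (internal cycles, bigons, four-cycles, crossings between strands) one pair of diagrams at a time. The pairwise hypothesis says each superposition $D_i\cup D_j$ reduces to a single non-elliptic web $D_{ij}$; concretely, no application of the H-type splittings in Figures \ref{fig:skein}(a)--(b) ever produces two surviving summands along this reduction. The hope would be to interleave such pairwise reductions in the presence of the third diagram, together with the Yang--Baxter relations of Figure \ref{fig:yang-baxter} for moving strands of $D_k$ past the reduction sites, and verify that no genuine splittings are created by the presence of the third diagram.

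The hard part will be exactly this coupling between three diagrams: an H-move that splits cleanly when only two strands are involved can, in the presence of a third, either cancel with a symmetric contribution or generate a new summand that does not cancel, and similarly an innocuous-looking local move can create a new bigon or $4$-cycle that straddles all three diagrams. Controlling this appears to require either a delicate local case analysis of how the third diagram can traverse the reduction site, or a more structural input --- for instance, combining Conjecture \ref{conj:cluster-compatibility} and the (conjectural but partially accessible) statement that pairwise-compatible cluster variables lie in a common cluster, together with the expected identification in Conjectures \ref{conj:cluster-vars-are-in-web-basis}--\ref{conj:cluster-monomials-are-in-web-basis} of indecomposable web invariants with cluster variables and of their compatible products with cluster monomials. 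A proof purely within the diagrammatic calculus, independent of cluster-algebraic input, would be the most satisfying but seems to require substantially more machinery than what is assembled so far.
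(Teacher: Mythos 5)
This statement is Conjecture~\ref{conj:compat-web-invariants} in the paper; it is \emph{not} proved there. The only thing the paper says about it beyond the bare statement is exactly the remark you rediscovered: that it suffices to treat collections of three invariants, with the general case following by induction. Your reduction to $n=3$ is correct and matches the paper's observation: assuming the triple case, $[D_{12}]:=[D_1][D_2]$ is a web invariant, and for any $k\ge 3$ the product $[D_{12}][D_k]=[D_1][D_2][D_k]$ is again a web invariant by the triple case applied to $\{D_1,D_2,D_k\}$, so the $(n-1)$-element collection $\{[D_{12}],[D_3],\ldots,[D_n]\}$ is pairwise compatible and induction applies.

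The genuine gap is the $n=3$ case itself, which you openly acknowledge you do not prove. Your outline correctly identifies why a naive interleaving of pairwise skein reductions fails to close: the confluence of Remark~\ref{rem:confluence} guarantees a well-defined web-basis expansion of $[D_1\cup D_2\cup D_3]$, but it gives no control over the number of nonzero terms, and a reduction step that splits cleanly in the two-diagram setting can genuinely bifurcate once a third diagram's strands cross the reduction site. Nothing in the paper's toolkit (the skein relations of Figures~\ref{fig:skein}--\ref{fig:yang-baxter}, Kuperberg's basis theorem, the arborization machinery) is shown to rule this out, and the alternative route you sketch via Conjectures~\ref{conj:cluster-compatibility}--\ref{conj:cluster-monomials-are-in-web-basis} is circular in the sense that those statements are themselves open and of comparable depth. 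So what you have is an accurate restatement of the known reduction plus an honest assessment of the obstruction, not a proof; since the target is an open conjecture, that is the expected outcome, but you should not present the write-up as a proof.
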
 

\pagebreak[3]

In Conjecture~\ref{conj:compat-web-invariants},
we cannot replace ``the product of all of them'' by ``any monomial in them:'' 
it is possible to construct a web invariant 
(see Section~\ref{sec:imaginary-elts})
whose square is not a web invariant.
This invariant is \emph{not} a cluster monomial, 
or else its square would provide a counterexample to 
Conjecture~\ref{conj:cluster-monomials-are-in-web-basis}.

We note that it is enough to prove Conjecture~\ref{conj:compat-web-invariants} 
for collections of three invariants. 
The general case would then follow by induction. 

Conjecture~\ref{conj:compat-web-invariants} is inspired by the 
``flag property'' of cluster complexes (conjectural, but proved in many instances, see
\cite[Conjecture 5.5, Theorem 5.6]{cats1}),
and by the analogous property of the dual canonical basis 
(cf.~\cite{hernandez}, \cite[Corollary 1.4]{leclerc-hernandez}). 

We anticipate that 
all of the above conjectures generalize broadly. 
Our most optimistic hopes are expressed in
Conjecture~\ref{conj:A-and-B} whose statements
\eqref{enum:cluster-mon-in-B}--\eqref{enum:clique}
are patterned after
Conjectures~\ref{conj:cluster-compatibility}--\ref{conj:compat-web-invariants}
and Proposition~\ref{prop:coeff-special}. 

\pagebreak[3]

\begin{conjecture}
\label{conj:A-and-B}
In any cluster algebra~$\Acal$ of geometric type,
there exists an additive basis~$\Bcal$ with the following properties:
\begin{enumerate}
\item
\label{enum:cluster-mon-in-B}
All cluster monomials in $\Acal$ lie in~$\Bcal$. 
\item
Two cluster variables are compatible if and
only if their product lies in~$\Bcal$. 
\item
The product of a coefficient variable and an element of~$\Bcal$ lies
in~$\Bcal$.
\item
\label{enum:clique}
If $B$ is a finite subset of $\Bcal$ such that the product of any two
distinct elements of~$B$ lies in~$\Bcal$,
then the product of all elements of~$B$ lies in~$\Bcal$. 
\end{enumerate}
\end{conjecture}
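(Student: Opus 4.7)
The plan is to identify, for each cluster algebra $\Acal$ of geometric type, a suitable candidate basis $\Bcal$ and then verify the four conditions in increasing order of difficulty. The most promising general candidates are the \emph{generic basis} of Geiss--Leclerc--Schr\"oer, built from generic modules over the Jacobian algebra of the associated quiver with potential, and the \emph{dual semicanonical basis}; for surface-type algebras one should try the bracelet basis of Musiker--Schiffler--Williams, and in the present setting of $R_\sigma(V)$ the web basis itself is the intended~$\Bcal$. A unified approach would be to work abstractly: assume $\Bcal$ is indexed by $g$-vectors, is closed under cluster transport, and contains every cluster variable, and then derive the four properties from this structural input.

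First I would handle (1) and (3) together. For~(1), one uses that cluster monomials are determined by their $g$-vectors (via the principal-coefficients construction and separation of additions), and matches each cluster monomial with the unique basis element carrying the same $g$-vector, invoking an upper-triangularity argument in the dominance order on $g$-vectors. Property~(3) then reduces to the observation that coefficient variables have $g$-vectors supported on frozen directions, so multiplication by a coefficient variable shifts $g$-vectors by a fixed lattice element and thus simply permutes~$\Bcal$.

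The ``only if'' direction of~(2) is more delicate. The plan is to argue by contradiction: if $z,z'$ are incompatible cluster variables, they appear on opposite sides of some exchange relation $z\tilde z = M_1 + M_2$ in a seed that also contains~$z'$. Expanding $zz'$ as a Laurent polynomial in that seed produces $z'M_1/\tilde z + z'M_2/\tilde z$, and combined with (1), positivity of the structure constants of $\Bcal$ with respect to cluster monomials, and the observation that neither summand can cancel the other, this should force $zz'$ to decompose nontrivially in~$\Bcal$, contradicting the assumption that $zz'\in\Bcal$.

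The principal obstacle is~(4), the flag or clique property. Restricted to cluster monomials it reduces to the statement that pairwise compatible cluster variables lie in a common extended cluster---a celebrated property of the cluster complex, known in finite type and in the surface case but genuinely open in general. For the full basis~$\Bcal$ one must in addition exclude pairwise-indecomposable-but-jointly-decomposable configurations involving non-cluster elements. I would attack this via scattering-diagram and broken-line methods, representing each element of~$\Bcal$ by its theta-function expansion in a reference seed and arguing that the pairwise absence of cancellations among the relevant broken lines forces their global absence. I expect this step to require genuine new input rather than a routine combination of existing tools, and I regard it as the true bottleneck of the entire program.
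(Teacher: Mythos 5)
The statement you are attempting to prove is Conjecture~\ref{conj:A-and-B}, which the paper leaves open: there is no proof in the paper to compare against, and the authors offer it as a guiding heuristic rather than as a theorem. So your proposal can only be judged on its own terms. You are candid that it is a program rather than a proof, but several of the steps you treat as routine are either incorrect as stated or are themselves as open as the target statement.

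The most concrete error is in the ``only if'' direction of~(2). You claim that if $z$ and $z'$ are incompatible cluster variables, then $z$ sits on one side of an exchange relation $z\tilde z = M_1 + M_2$ associated with a seed that also contains~$z'$. That is false: the exchange relations out of a seed containing~$z'$ involve only the cluster variables of that seed and their single-mutation images, while incompatibility of~$z$ with~$z'$ only says $z$ lies in no cluster containing~$z'$ --- it does not put $z$ one mutation away from such a cluster. Even when such a relation does exist, rewriting $zz'$ as $z'M_1/\tilde z + z'M_2/\tilde z$ is an identity of Laurent polynomials and says nothing about the $\Bcal$-expansion of~$zz'$; deducing that $zz'$ splits as a sum of at least two basis elements with positive coefficients is exactly the content of~(2), not a consequence of it. Your argument for~(1) has a parallel gap: a $g$-vector parametrization of~$\Bcal$ together with upper-triangularity in the dominance order gives at best a unitriangular transition matrix between cluster monomials and basis elements, which does not force the off-diagonal entries to vanish and hence does not yield membership of cluster monomials in~$\Bcal$. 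Item~(3) is also not automatic: that the $g$-vector shifts by a frozen direction does not by itself show that the shifted $g$-vector indexes a bona fide basis element unless one has already established a closure property of~$\Bcal$ under the frozen torus action. Finally, you correctly identify~(4) as the genuine bottleneck; even restricted to cluster monomials it is equivalent to the flag property of the cluster complex, which is open in general, and your suggestion to attack it via scattering diagrams and broken lines is a reasonable direction but does not constitute a proof. In sum, the proposal names the right circle of ideas --- generic/semicanonical/theta/web bases, $g$-vectors, scattering diagrams --- but does not actually close any of the four items beyond what is already known.
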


It is already nontrivial to show the existence of a basis~$\Bcal$
satisfying a small subset of the conditions
in Conjecture~\ref{conj:A-and-B}.
For example, the existence of a basis $\Bcal$
satisfying~\eqref{enum:cluster-mon-in-B} is equivalent 
to linear independence of cluster monomials, cf.~\cite{cklp}. 


Perhaps even more important is the 
problem of determining a cluster structure in a given ring
which is in some sense ``compatible'' with a
particular distinguished basis~$\Bcal$. 
In the case of the web basis, we used the properties 
\eqref{enum:cluster-mon-in-B}--\eqref{enum:clique}
in Conjecture~\ref{conj:A-and-B} as the guiding principles in
designing the cluster structure in the rings of
invariants~$R_\sigma(V)$. 
One cannot help but wonder whether this kind of 
``reverse engineering'' process can be made algorithmic, or at least
axiomatic. 
A~couple of approaches to this problem are outlined below. 

At the heart of the matter lies this question: 
What distinguishes cluster variables among other indecomposable
elements of the basis?\footnote{Unfortunately, 
in the case of the web basis, one cannot use B.~Leclerc's ``reality check''
criterion \cite{hernandez-leclerc, leclerc} 
according to which the cluster monomials are identified as the \emph{real} 
basis elements, i.e., those whose square lies in the basis. 
There exist web invariants which are not cluster monomials,
yet their squares are web invariants; 
see Section~\ref{sec:thickening-non-cluster}.}
One conjectural answer involves an extension of the notion of 
compatibility of cluster variables. 
Let us call a set of distinct indecomposable web invariants 
a \emph{clique} if their product 
(variant: the product of any two of them, cf.\ Conjecture~\ref{conj:compat-web-invariants}) 
is again a web invariant. 
By Conjecture~\ref{conj:cluster-monomials-are-in-web-basis},
any extended cluster is a clique. 

\begin{conjecture}
\label{conj:largest-clique}
Among all cliques, extended clusters are precisely
the ones of the largest cardinality. 
Thus, an indecomposable web invariant~$z$ is a cluster variable if 
and only if the largest clique containing~$z$ 
has the cardinality of an extended cluster. 
\end{conjecture}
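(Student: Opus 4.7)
The plan is to decompose Conjecture~\ref{conj:largest-clique} into three claims: (i)~every extended cluster $\zz(T)$ is a clique of cardinality $n:=3(a+b)-8$; (ii)~no clique has cardinality exceeding~$n$; and (iii)~every clique of cardinality exactly~$n$ is an extended cluster. The ``if and only if'' for~$z$ in the conjecture follows formally from these: if $z$ is a cluster or coefficient variable, it lies in some extended cluster, which is a clique of size~$n$ by~(i); conversely, if a size-$n$ clique contains~$z$, then by~(iii) that clique is an extended cluster, and in particular $z$ is a cluster or coefficient variable.

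Claim~(i) follows from Conjecture~\ref{conj:cluster-monomials-are-in-web-basis} together with Theorem~\ref{th:z(T)}: the cardinality of $\zz(T)$ is $3(a+b)-8$, and the product of its elements is a cluster monomial, hence a web invariant by that conjecture. For Claim~(iii), I would combine Conjectures~\ref{conj:cluster-compatibility} and~\ref{conj:compat-web-invariants}: the latter (applied to subsets) upgrades to a flag-type statement saying any two elements of a clique form a two-element clique, hence by Conjecture~\ref{conj:cluster-compatibility} are pairwise compatible cluster or coefficient variables. A pairwise-compatible set of size~$n$ must then be a full extended cluster, by the purity of the cluster complex (the ``compatibility complex'' of a cluster algebra is pure of top dimension equal to the rank plus the number of frozen variables).

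The principal obstacle is Claim~(ii), the upper bound on clique size. My approach would be to show that the elements $w_1,\dots,w_k$ of any clique are algebraically independent over~$\CC$; since the Krull dimension of $R_\sigma(V)$ equals $3(a+b)-8$ (computed as $\dim((V^*)^a\times V^b) - \dim\SL(V)$), this yields $k\le n$. To establish independence, I would fix a reference extended cluster $\zz(T_0)$, expand each~$w_i$ as a Laurent polynomial in $\zz(T_0)$ via the Laurent phenomenon (Theorem~\ref{th:laurent}), and exploit the fact that $\prod w_i$ is a \emph{single} web invariant: this rigidifies the expansion so that its extremal (``tropical'') exponent vector is uniquely determined, allowing one to read off the individual leading exponent vectors of the $w_i$ and show they are linearly independent in~$\ZZ^n$.

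This independence bound is by a wide margin the delicate part of the program; a naive rank-of-Jacobian argument is not available, since web invariants need not correspond \emph{a priori} to algebraically independent differentials. A promising fallback is to reduce to the Grassmannian case via the cross-product embeddings of Theorem~\ref{th:fork}, where Scott's cluster structure on $R_{0,b}(V)$ and the rigid Pl\"ucker relations provide a much more constrained combinatorial framework; Corollary~\ref{cor:planar-tree} may also reduce portions of the bound to a purely combinatorial statement about how many pairwise-compatible planar tree webs can coexist in a given signature. It is here that a genuinely new ingredient---most likely a multiplicative structural constraint on the web basis beyond those stated in the paper---will be required.
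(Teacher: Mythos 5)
The statement you are addressing is Conjecture~\ref{conj:largest-clique}, which the paper presents without proof; there is no argument in the paper to compare your proposal against. What you have offered is a conditional reduction, and you are admirably candid about the weakest link. Two remarks on where the gaps actually lie.

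First, your step~(iii) quietly imports an assumption that is not among the conjectures the paper asks the reader to grant. You argue that a pairwise-compatible collection of $n=3(a+b)-8$ cluster and coefficient variables must be a full extended cluster ``by the purity of the cluster complex.'' But purity (equivalently, the flag/clique property: every maximal pairwise-compatible set of cluster variables is a cluster) is itself an open conjecture for general cluster algebras, and the rings $R_\sigma(V)$ are typically of infinite mutation type, which is exactly the regime where no such structural result is available. So~(iii) does not reduce cleanly to Conjectures~\ref{conj:cluster-compatibility} and~\ref{conj:compat-web-invariants}; it requires a further unproven hypothesis which you should state explicitly as such. (Also note that Conjecture~\ref{conj:compat-web-invariants} gives you that the product of all clique members is a web invariant, which is in the direction you want, but to conclude pairwise compatibility from pairwise two-element cliques you are already using Conjecture~\ref{conj:cluster-compatibility} together with the as-yet-unproven fact that every indecomposable web invariant appearing in a clique is a cluster or coefficient variable---this is a subtle circularity, since whether a clique member is a cluster variable is essentially the content of the statement being proved.)

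Second, your step~(ii) is, as you say, the genuine difficulty, and the tropical/Newton-polytope sketch you give does not yet amount to an argument: the claim that ``$\prod w_i$ being a single web invariant rigidifies the expansion so that its extremal exponent vector is uniquely determined'' needs a precise formulation (which valuation? why is the leading term of the product the product of leading terms? why do the resulting exponent vectors span?). Absent this, the upper bound on clique size is not established even conditionally. The fallback via Theorem~\ref{th:fork} is a sensible direction but does not currently close the gap either. In sum, the overall architecture of your reduction (split into existence, upper bound, and recognition) is reasonable and matches how one would naturally attack the problem, but as written it proves Conjecture~\ref{conj:largest-clique} only modulo Conjectures~\ref{conj:cluster-compatibility}, \ref{conj:cluster-monomials-are-in-web-basis}, \ref{conj:compat-web-invariants}, the purity of the cluster complex, and the unproven algebraic-independence bound; you should present it as such rather than as a proof.
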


While Conjecture~\ref{conj:largest-clique} may be aesthetically pleasing,
it is rather impractical as a tool for
determining whether a particular basis element is a cluster
variable. \linebreak[3] 
It~also is of little help in finding the exchange relations, or
equivalently the quivers accompanying extended clusters. 
For that, we need to move beyond purely multiplicative properties 
of web invariants 
(to put it differently, beyond \hbox{2-term} relations they satisfy)
into the realm of \emph{3-term relations}. 

\begin{conjecture}
\label{conj:3-term-cluster}
Suppose that indecomposable web invariants $z$ and $z'$ satisfy the 3-term relation 
\begin{equation}
\label{eq:3-term-general}
zz'=M_1+M_2
\end{equation} 
such that $M_1$, $M_2$, $zM_1$, $zM_2$, $z'M_1$, $z'M_2$, and $M_1M_2$ are web invariants.  
Then $z$ and $z'$ are cluster variables, and \eqref{eq:3-term-general} is an exchange relation. 
\end{conjecture}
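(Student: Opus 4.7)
The plan is to argue that the given 3-term relation has the combinatorial shape of an exchange relation inside the cluster algebra $\Acal = R_\sigma(V)$ (which, by Theorem~\ref{th:main}, is known to carry such a structure), and then identify $z, z'$ as the corresponding exchange pair. Since the conjecture is intertwined with Conjectures~\ref{conj:cluster-vars-are-in-web-basis}--\ref{conj:compat-web-invariants} and~\ref{conj:largest-clique}, I expect the natural path is to prove them as a package, but I will outline what the argument looks like assuming the compatibility picture they describe.

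First I would factor $M_1 = \prod u_i^{a_i}$ and $M_2 = \prod v_j^{b_j}$ into irreducibles; by Lemma~\ref{lem:properties-of-RabV} this factorization is unique, and the working hypothesis (supported by Conjecture~\ref{conj:cluster-vars-are-in-web-basis}) is that each $u_i,v_j$ is an indecomposable web invariant and in fact a cluster or coefficient variable. The assumption that $M_1 M_2$ is a web invariant, iterated via Conjecture~\ref{conj:compat-web-invariants}, implies that the multiset $\mathcal{F}=\{u_i\}\cup\{v_j\}$ is pairwise compatible. Next, the hypotheses that $zM_1, zM_2$ (resp.\ $z'M_1, z'M_2$) are web invariants, combined with Conjecture~\ref{conj:cluster-compatibility} applied factor-by-factor, show that $z$ is compatible with every element of $\mathcal{F}$, and so is $z'$.

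Now I would show that $z$ and $z'$ are \emph{not} compatible: if they were, then by Conjecture~\ref{conj:cluster-monomials-are-in-web-basis} the product $zz'$ would be a single web invariant, but the identity $zz' = M_1 + M_2$ expresses $zz'$ as a sum of two distinct nonzero web invariants, contradicting Kuperberg's linear independence result (Theorem~\ref{th:web-basis}). Consequently, $\mathcal{F}\cup\{z\}$ and $\mathcal{F}\cup\{z'\}$ are two cliques extending $\mathcal{F}$ by mutually incompatible elements. Invoking Conjecture~\ref{conj:largest-clique}, the maximal cliques are precisely extended clusters, so $\mathcal{F}$ must have size $\dim R_\sigma(V)-1$, and $\mathcal{F}\cup\{z\}$, $\mathcal{F}\cup\{z'\}$ are two adjacent extended clusters differing by mutation at a single mutable vertex. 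This identifies $z, z'$ as cluster variables forming an exchange pair.

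Finally, the genuine exchange relation at this mutation has the form $zz' = N_1 + N_2$, where $N_1, N_2$ are monomials in $\mathcal{F}$ read off from the edges of the quiver at $z$. Subtracting from $zz' = M_1 + M_2$ and using Theorem~\ref{th:web-basis} once more---there is no nonzero linear relation among distinct web invariants---forces $\{M_1, M_2\} = \{N_1, N_2\}$, so \eqref{eq:3-term-general} is the exchange relation. The main obstacle is clearly that Steps~1, 2, and~4 each rest on an open conjecture; I expect the honest route is a simultaneous induction on multidegree establishing Conjectures~\ref{conj:cluster-vars-are-in-web-basis}--\ref{conj:compat-web-invariants}, \ref{conj:largest-clique}, and~\ref{conj:3-term-cluster} together, using the arborization algorithm of Section~\ref{sec:arborization} and Corollary~\ref{cor:planar-tree} to seed the base case with tree-shaped webs, and using each newly verified exchange relation to enlarge the supply of cluster variables known to be web invariants.
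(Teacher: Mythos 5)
The statement you are trying to prove is labeled a \emph{Conjecture} in the paper (Conjecture~\ref{conj:3-term-cluster}), and the paper offers no proof of it; it is presented as an open problem, alongside Conjectures~\ref{conj:cluster-vars-are-in-web-basis}--\ref{conj:compat-web-invariants} and~\ref{conj:largest-clique}, as part of the authors' conjectural picture of how the web basis determines the cluster structure. So there is no ``paper's own proof'' to compare against, and your closing remark---that the honest route would be a simultaneous induction establishing the whole package of conjectures together---is exactly the situation the paper leaves open.

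Within your sketch there are also some genuine gaps even granting the other conjectures. First, you write that ``the working hypothesis (supported by Conjecture~\ref{conj:cluster-vars-are-in-web-basis}) is that each $u_i,v_j$ is \ldots\ in fact a cluster or coefficient variable.'' Conjecture~\ref{conj:cluster-vars-are-in-web-basis} asserts the inclusion \{cluster variables\} $\subset$ \{indecomposable web invariants\}, not its converse; the paper explicitly exhibits indecomposable web invariants that are \emph{not} cluster variables (Section~\ref{sec:zoo}, non-arborizable and imaginary webs), so the converse you would need is false in general. You would have to argue separately, from the specific 3-term data, that the irreducible factors of $M_1$ and $M_2$ are cluster or coefficient variables. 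Second, in invoking Conjecture~\ref{conj:largest-clique} you conclude that $\mathcal{F}$ has cardinality $\dim R_\sigma(V)-1$ and that $\mathcal{F}\cup\{z\}$, $\mathcal{F}\cup\{z'\}$ are extended clusters; but a priori $\mathcal{F}\cup\{z\}$ is merely \emph{a} clique, not a maximal one, and $\mathcal{F}$ itself is just the set of prime factors of the two monomials---there is no reason for it to already have the right cardinality. You would need an additional argument (e.g., enlarging $\mathcal{F}$ to a maximal clique containing both $\mathcal{F}\cup\{z\}$ and $\mathcal{F}\cup\{z'\}$ minus one element, and showing the resulting extended clusters are adjacent) before the ``adjacent seeds'' picture can be extracted. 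The concluding step---comparing $zz'=M_1+M_2$ with the genuine exchange relation via linear independence of web invariants---is sound once the earlier steps are in place.
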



Note that by Conjectures~\ref{conj:cluster-vars-are-in-web-basis}--\ref{conj:cluster-monomials-are-in-web-basis},
we expect each exchange relation in our cluster algebra to satisfy
the conditions in Conjecture~\ref{conj:3-term-cluster}. 

There is also a test that can (conjecturally) disqualify a web invariant
from being a cluster variable. 

\begin{conjecture}
\label{conj:3-term-non-cluster}
Suppose that an indecomposable web invariant $z$ and 
a cluster variable $z'$ satisfy 
a 3-term relation~{\rm\eqref{eq:3-term-general}}
such that $M_1$, $M_2$, $z'M_1$, and $z'M_2$ are web invariants 
whereas $zM_1$ and $zM_2$ are not.
Then $z$ is not a cluster variable. 
\end{conjecture}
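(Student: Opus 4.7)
The plan is to argue by contradiction, assuming that $z$ is a cluster variable and converting each hypothesis into a cluster-theoretic compatibility statement using the dictionary proposed by Conjectures~\ref{conj:cluster-vars-are-in-web-basis}, \ref{conj:cluster-compatibility}, \ref{conj:cluster-monomials-are-in-web-basis} and~\ref{conj:compat-web-invariants}. First I would observe that $z$ and $z'$ cannot be compatible: otherwise $zz'$ would be a cluster monomial, hence (by Conjecture~\ref{conj:cluster-monomials-are-in-web-basis}) a web invariant, and the identity $zz'=M_1+M_2$ would express one element of Kuperberg's basis as the sum of two others, contradicting its linear independence in Theorem~\ref{th:web-basis}.

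Next I would decompose $M_1$ and $M_2$ into their indecomposable web-invariant factors. Since $z'M_1$ is a web invariant, iterating Conjectures~\ref{conj:cluster-compatibility} and~\ref{conj:compat-web-invariants} shows that $z'$ is compatible with every indecomposable factor of $M_1$, and likewise for $M_2$. Hence the collection consisting of $z'$ together with all these factors forms a clique of pairwise compatible cluster variables, which by Conjecture~\ref{conj:cluster-monomials-are-in-web-basis} is contained in a common extended cluster $\zz(T)$ in which both $M_1$ and $M_2$ appear as cluster monomials. On the other hand, if $z$ were compatible with every factor of $M_1$, the same reasoning would place $z$ together with those factors in a single cluster, so $zM_1$ would be a cluster monomial and hence a web invariant, contradicting the hypothesis. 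Therefore $z$ must be incompatible with at least one factor of $M_1$ and at least one factor of $M_2$.

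The relation $zz'=M_1+M_2$ thus becomes an identity $z = M_1/z' + M_2/z'$ inside the ambient field, and by the Laurent phenomenon (Theorem~\ref{th:laurent}) this is the actual Laurent expansion of $z$ in the extended cluster $\zz(T)$ produced above. The main obstacle is to convert the incompatibilities of the previous step into a direct contradiction with this specific Laurent shape. The most appealing route is to prove a rigidity statement: any $3$-term identity $zz'=M_1+M_2$ between cluster variables, in which $M_1$ and $M_2$ are cluster monomials in an extended cluster containing $z'$, must in fact be the exchange relation associated with an exchange pair $(z,z')$. If one had such a rigidity, then $M_1$ and $M_2$ would be monomials in the common variables of the two adjacent clusters bordering the mutation $z \leftrightarrow z'$, every indecomposable factor of $M_1$ and $M_2$ would automatically be compatible with $z$, and we would contradict the previous paragraph. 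Establishing this rigidity appears to require fine combinatorial input from the theory of $g$-vectors and scattering diagrams (in the sense of Gross--Hacking--Keel--Kontsevich), and this is where I expect the core difficulty of the proof to lie.
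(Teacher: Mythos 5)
This statement is Conjecture~\ref{conj:3-term-non-cluster}; the paper offers no proof of it, so there is no argument to compare yours against. Even in principle your attempt would yield only a conditional result, since it invokes Conjectures~\ref{conj:cluster-vars-are-in-web-basis}--\ref{conj:compat-web-invariants}, which are themselves open. That said, deducing one conjecture from the others would be worthwhile, so let me assess whether the deduction closes.

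It does not, for two reasons. First, in your second paragraph you assert that $z'M_1$ being a web invariant forces $z'$ to be compatible with every indecomposable factor of $M_1$. Neither of the conjectures you cite provides this: Conjecture~\ref{conj:compat-web-invariants} passes from pairwise products lying in the basis to the full product lying in the basis, not the reverse, and Conjecture~\ref{conj:cluster-compatibility} is applicable only once you already know that the individual products $z'\cdot f_i$ are web invariants, which is precisely what is to be shown. Even granting this, you would still need to know that the factors $f_i$ are themselves cluster variables before Conjecture~\ref{conj:cluster-compatibility} applies, and then the flag property to assemble everything into a common extended cluster; none of this is supplied. Second, and as you yourself acknowledge, the rigidity claim in your final paragraph (every $3$-term identity $zz'=M_1+M_2$ with $M_1,M_2$ cluster monomials in a cluster containing $z'$ is an exchange relation) is where the real difficulty lies, and it is left entirely unproved. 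Until both of these gaps are filled the argument does not terminate.
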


In Conjecture~\ref{conj:3-term-non-cluster}, 
the requirement that $z'M_1$, and $z'M_2$ be web invariants 
cannot be dropped---see Section~\ref{sec:fake-exchange} for a relevant
counterexample. 

\begin{remark}
We conjecture that \emph{any} cluster structure can be uniquely recovered 
from a suitably chosen additive basis~$\Bcal$ 
using appropriate versions of the criteria in
Conjectures~\ref{conj:largest-clique}--\ref{conj:3-term-non-cluster}.
\end{remark}


{}From its very inception, cluster theory was motivated by the desire to
better understand the
(dual) ``canonical'' bases in the corresponding rings. 
Recall that for most of the rings~$R_\sigma(V)$ 
the dual canonical basis is \emph{different} from the web basis---although
the two bases share many important features,
see Remark~\ref{rem:khovanov-kuperberg}. 
The canonical  basis is expected to 
have strong positivity properties, such as those spelled out 
in the conjecture below, which has long been part of the cluster algebras folklore. 

\begin{conjecture}[\emph{Strong Positivity Conjecture}]
\label{conj:strong-positivity}
In any cluster algebra $\Acal$ of geometric type,
there is an additive basis $\Ccal$
which includes the cluster monomials
and has nonnegative structure constants. 
\end{conjecture}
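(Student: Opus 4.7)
The plan is to attack Conjecture~\ref{conj:strong-positivity} via the scattering diagram and theta basis machinery of Gross, Hacking, Keel, and Kontsevich, which is currently the most systematic framework for producing positive additive bases in a cluster algebra of geometric type. Given a seed $(Q,\zz)$ defining $\Acal$, I would attach a consistent scattering diagram $\mathfrak{D}$ in the cocharacter lattice of the initial torus and, for each lattice point $m$, define the theta function $\vartheta_m$ as a sum of Laurent monomials indexed by broken lines with asymptotic direction $m$ and a generic base point inside the cluster chamber of the initial seed. The candidate basis is $\Ccal=\{\vartheta_m\}$, and the structure constants $c_{m_1,m_2}^m$ in $\vartheta_{m_1}\vartheta_{m_2}=\sum_m c_{m_1,m_2}^m\vartheta_m$ are obtained by counting \emph{compatible pairs} of broken lines with combined asymptotic class $m_1+m_2$ arriving at a common point with asymptotics $m$. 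By construction each $c_{m_1,m_2}^m$ is a nonnegative integer; thus positivity of structure constants is built into the definition.

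The first step is to make each $\vartheta_m$ an unconditional element of $\Acal$. I would pass to the principal-coefficient enlargement $\Acal^{\mathrm{prin}}$ of $\Acal$, whose extended exchange matrix automatically has full rank, and apply the Gross-Hacking-Keel-Kontsevich results there: the broken-line sums are finite, the $\vartheta_m$ form an additive basis of the middle cluster algebra sandwiched between $\Acal^{\mathrm{prin}}$ and its upper cluster algebra $\overline{\Acal}^{\mathrm{prin}}$, and $\vartheta_g$ coincides with the cluster monomial of extended $g$-vector $g$ whenever $g$ lies in a cluster chamber of $\mathfrak{D}$. The positivity of $c_{m_1,m_2}^m$ in this enlarged setting is then immediate from the broken-line count. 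The second step is the descent: specialize the principal coefficients to the prescribed frozen variables of $\Acal$, and verify that the specialized $\vartheta_m$ remain honest elements of $\Acal$, stay linearly independent and spanning, and inherit the nonnegativity of structure constants. The final step is to check that every cluster monomial of $\Acal$ appears as some $\vartheta_m$; this follows from the mutation invariance of theta functions inside $\mathfrak{D}$, provided every cluster chamber of $\Acal$ can be reached by broken-line computation from the initial one.

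The main obstacle, I expect, is precisely this descent step together with the need to dispense with any residual full-rank hypothesis. In the generality of Definition~\ref{def:cluster-algebra} the quiver $Q$ may fail to induce a full-rank exchange matrix, and a theta function can then specialize either to an infinite formal sum (escaping $\Acal$) or to zero (destroying basis-hood and the positive count). Two complementary remedies suggest themselves: (a) replace $\Acal$ by the middle cluster algebra sitting between $\Acal$ and its upper cluster algebra $\overline\Acal$, prove strong positivity there, and then identify $\Acal$ with the middle object using the Laurent phenomenon (Theorem~\ref{th:laurent}) together with a coprimality argument in the spirit of Proposition~\ref{prop:cluster-criterion}; or (b) carry out the whole argument in a suitable formal completion and recover finiteness of broken-line contributions from polytopal compactness of the relevant Newton polytopes. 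A successful proof would have to navigate these two failure modes simultaneously, and at a few points cross-check the resulting basis against the companion requirements in Conjecture~\ref{conj:A-and-B} (in particular the clique/flag property), which, while not strictly part of the statement of Conjecture~\ref{conj:strong-positivity}, appears to be unavoidable in any serious attack.
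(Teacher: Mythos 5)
You are trying to prove a statement that the paper itself does not prove: Conjecture~\ref{conj:strong-positivity} is stated as folklore and left open, and the authors even show (Proposition~\ref{prop:negative-structure-const}) that their own distinguished basis, the web basis, fails the required positivity for some signatures. So there is no proof in the paper to compare against, and the real question is whether your proposal closes the conjecture. It does not: it is a program whose hard steps are exactly the ones you defer. The theta functions of the scattering-diagram formalism are, in general, a basis of the \emph{middle} cluster algebra, which sits between $\Acal$ and $\overline\Acal$ and need not coincide with either; when $\Acal\neq\overline\Acal$ (this happens already for rank-$3$ quivers such as the Markov quiver) there is no known way to extract from the theta functions an additive basis of $\Acal$ itself, let alone one with nonnegative structure constants. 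Moreover, the product of two theta functions is in general an \emph{infinite} nonnegative combination of theta functions; finiteness of the expansion (so that the structure constants make sense inside an algebra rather than a formal completion) is an additional hypothesis that is not known to hold for arbitrary geometric-type quivers. Your ``remedies'' (a) and (b) are precisely restatements of these open problems: identifying $\Acal$ with the middle object via the Laurent phenomenon plus coprimality is not available in general, and working in a formal completion abandons the claim that $\Ccal$ is a basis of $\Acal$.

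The parts of your sketch that are solid are the ones already established in the scattering-diagram literature: nonnegativity of the broken-line structure constants, and the fact that cluster monomials appear among the theta functions (with the expected $g$-vectors), at least after passing to principal coefficients. But the descent from principal coefficients to an arbitrary geometric coefficient pattern can destroy linear independence or push theta functions outside $\Acal$, and you give no argument controlling this. So the proposal should be understood as a plausible strategy whose unresolved steps coincide with the reasons the conjecture is still a conjecture; as written it contains genuine gaps at the basis-of-$\Acal$ step, the finiteness-of-products step, and the coefficient-specialization step, and it proves nothing beyond what is already known.
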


That is, any product of elements of the basis~$\Ccal$
should have nonnegative coefficients when expanded in the same basis. 
This condition suggests the existence of a monoidal categorification, 
wherein structure constants become tensor product multiplicities. 
See \cite{hernandez-leclerc, kimura-qin, nakajima}. 

\pagebreak[3]

The web basis does not always satisfy the conditions of
Conjecture~\ref{conj:strong-positivity}: 

\begin{proposition}
\label{prop:negative-structure-const}
For some choices of signature, 
some structure constants of the web basis are negative. 
\end{proposition}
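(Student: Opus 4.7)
The plan is to establish the proposition by exhibiting a single explicit witness: a product $[W_1]\cdot [W_2]$ of two non-elliptic web invariants whose expansion in the web basis contains a strictly negative coefficient. Because web invariants form a basis of $R_\sigma(V)$ (Theorem~\ref{th:web-basis}) and any linear combination of tensor diagrams reduces to a unique such expansion by the confluence statement in Remark~\ref{rem:confluence}, the task reduces to a finite bookkeeping exercise in skein relations, provided one picks the right $W_1,W_2$.

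First, I would choose a signature $\sigma$ for which the cluster algebra $R_\sigma(V)$ has infinite cluster type, so that indecomposable ``imaginary'' web invariants (in the sense of B.~Leclerc) can occur. Good candidates come from Figure~\ref{fig:boundary-signatures-5678}: affine types like $D_5^{(1)}$ or $E_7^{(1)}$, or the slightly larger signature $[\bullet\bullet\circ\circ\bullet\bullet\circ\circ\bullet\bullet\circ\,\circ]$ singled out in Remark~\ref{rem:khovanov-kuperberg} as the first place where the web basis provably differs from the dual canonical basis. Second, using the Chebyshev/canonical-basis dictionary mentioned in the introduction (the Leclerc--Lampe identification of the dual canonical basis element in the imaginary direction with $U_2(w)$, the dual semicanonical element with $w^2$, and the Sherman--Zelevinsky atomic element with $T_2(w)$), I would locate a product of cluster (or near-cluster) web invariants whose \emph{dual canonical} expansion is positive by general principles, but whose change-of-basis to the web basis necessarily introduces a minus sign coming from the difference $w^2-U_2(w)=w^2-(w^2-\text{web correction})$. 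This comparison pinpoints a candidate product that must have a negative web-basis coefficient.

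Third, I would verify the claim directly: superimpose $W_1$ and $W_2$ and reduce the resulting (non-planar) tensor diagram to a linear combination of non-elliptic webs by iteratively applying the skein relations of Figures~\ref{fig:skein}--\ref{fig:local-boundary}. The only source of negativity in those relations is the bigon identity \ref{fig:skein}(c), which contributes a factor of $-2$; so the analysis amounts to tracking how many times this bigon is invoked along the reduction and proving that the signed contributions \emph{do not} cancel in the target coefficient. The main obstacle is exactly this non-cancellation: for products of cluster monomials all apparent negative contributions collapse (consistently with Conjecture~\ref{conj:cluster-monomials-are-in-web-basis} and the expected positivity of Conjecture~\ref{conj:strong-positivity} under the atomic basis), so the example has to be engineered so that $[W_1]\cdot[W_2]$ lies outside the positive cone spanned by cluster monomials. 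The Chebyshev/imaginary input of the second step is what guarantees such engineering is possible; once a specific $W_1,W_2$ are in hand, the final verification is a finite, mechanical skein computation.
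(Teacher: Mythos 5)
Your proposal is a research plan, not a proof: it never identifies a concrete signature $\sigma$ and a concrete pair of webs $W_1,W_2$, and an existential statement like Proposition~\ref{prop:negative-structure-const} needs a witness. The paper's own justification is simply to exhibit one (Section~\ref{sec:negative-structure-const}, Figure~\ref{fig:webs69}): two explicit webs in a seven-vertex $R_\sigma(V)$ whose superposition reduces, via skein relations, to a linear combination of non-elliptic webs in which a specific basis web occurs with coefficient~$-1$. Until you actually pin down $W_1,W_2$ and carry out the reduction, you have not established the claim.

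Beyond the missing witness, two pieces of your heuristic reasoning would have steered you away from the paper's example. First, you set out to find imaginary web invariants (via the Chebyshev/Leclerc--Lampe discussion) on the grounds that products of cluster variables should not produce negative coefficients. But the paper's example uses \emph{two cluster variables}: the caption of Figure~\ref{fig:webs69} makes precisely the point that ``even the product of two cluster variables may expand in the web basis with coefficients some of which are negative.'' What matters is only that $z$ and $u$ are \emph{incompatible} (so $zu$ is not a cluster monomial); there is no need to go hunting for imaginary elements or to invoke the Khovanov--Kuperberg counterexample, which lives on a different (degree-$12$, twelve-vertex) signature and serves a different purpose in the paper. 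Second, your sign-bookkeeping picture is too crude: the relation of Figure~\ref{fig:skein}(c) is not the sole source of negativity once the reduction is running (cf.\ the $(-1)$'s in Figure~\ref{fig:yang-baxter}(d)--(e), which cascade through the computation), and ``counting bigons'' is not a reliable way to predict the final sign. The honest content of the proposition is the computation itself, and the proof is to present and check it.
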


An example justifying Proposition~\ref{prop:negative-structure-const}
is given in Section~\ref{sec:negative-structure-const}. 

The \emph{Laurent positivity conjecture}~\cite{ca1} predicts
that the Laurent polynomial expressing any cluster variable in terms
of any given seed has positive coefficients.
This conjecture has been proved in many 
special cases, see in particular \cite{caldero-reineke,
  musiker-schiffler-williams, nakajima}.



It is well known, and easy to see, that
Conjecture~\ref{conj:strong-positivity}  
is \emph{stronger} than Laurent positivity.
To deduce the latter from the former, multiply a cluster
variable by the denominator of its Laurent expansion. 
The result is a linear combination of cluster monomials.
By Conjecture~\ref{conj:strong-positivity},
they all belong to the basis~$\Ccal$; 
moreover the coefficients in this linear combination 
must be positive, and we are~done. 

\pagebreak[3]

\section{Arborization}
\label{sec:arborization}

By definition, a web invariant can be represented by a single planar
tensor diagram (a~web). 
We conjecture that the cluster variables in~$R_\sigma(V)$
are distinguished from other web invariants by the property of
possessing a particular kind of alternative presentation;
informally speaking, they can be defined by tensor diagrams which
are \emph{trees}. 
Let us explain.

The \emph{unclasping} of a tensor diagram~$D$ is the graph obtained
from~$D$ by replacing each boundary vertex~$p$, say of
degree~$k$, by $k$ distinct vertices serving as endpoints of the edges
formerly incident to~$p$. 
By a harmless abuse of terminology,
we shall call a tensor diagram~$D$
whose unclasping has no cycles a \emph{forest diagram}; 
if moreover the unclasping is connected, we call $D$ a \emph{tree diagram}. 
We emphasize that such a diagram~$D$ does \emph{not} have to be planar. 

\begin{conjecture}
\label{conj:cluster-variables-are-trees}
A web invariant $z$ is a cluster or coefficient variable
if and only if $z$ is indecomposable and 
$z=[D]$ for some tree diagram~$D$. 

\end{conjecture}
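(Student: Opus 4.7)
The plan is to prove both implications by combining an inductive argument on mutation distance from a special seed with a careful analysis of the arborization algorithm of Section~\ref{sec:arborization}. The two directions interact: correctness of the arborization algorithm informs the inductive construction of tree representations for cluster variables, and control over those tree representations feeds back into termination of arborization. Throughout, one works inside the web basis guaranteed by Theorem~\ref{th:web-basis}, using Kuperberg's linear independence to identify invariants expressed in different diagrammatic ways.

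\textbf{Forward direction (cluster monomials admit forest presentations).} First reduce to cluster and coefficient variables: if each such $v_i$ has a tree representation $v_i = [T_i]$, then any cluster monomial $z = \prod_i v_i^{k_i}$ is represented by the superposition of $k_i$ copies of each $T_i$ glued along shared boundary vertices; the unclasping of this diagram is the disjoint union of the unclaspings of the $T_i$, hence a forest. It then suffices to show, by induction on mutation distance from a fixed special seed $(Q(T), \zz(T))$, that every cluster and coefficient variable admits a tree representation. The base case follows from Definition~\ref{def:special-inv}: each element of $\zz(T)$ is constructed by joining caterpillar trees $\Lambda_p$, $\Lambda^p$ via connector edges, yielding a tree diagram. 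For the inductive step, given an exchange relation $y y' = M_1 + M_2$ with $y$ representable by a tree $D_y$ and with $M_1, M_2$ representable as forests, one must exhibit a tree diagram for $y'$. Since our exchange relations are distilled forms of the 3-term skein identities \eqref{eq:3-term-1}--\eqref{eq:3-term-5}, they admit direct diagrammatic interpretations; the candidate tree $D_{y'}$ is obtained by ``inverse arborization'' applied to the web obtained by substituting into the skein identity.

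\textbf{Reverse direction (forest invariants are cluster monomials).} Given a forest diagram $D$, let $D_1, \ldots, D_k$ be the connected components of its unclasping. Then $[D] = [D_1] \cdots [D_k]$, so it suffices to show that each tree-diagram invariant $[D_i]$ is a cluster or coefficient variable. The strategy is to apply the skein relations of Figure~\ref{fig:skein} and Figure~\ref{fig:local-boundary} to reduce $D_i$ (possibly non-planar) to a \emph{planar} tree, and then invoke Corollary~\ref{cor:planar-tree}. The essential input is that skein moves applied to a non-planar tree can always be chosen to reduce the number of crossings without introducing cycles or branching into several webs; this requires a termination argument based on a complexity measure (e.g., crossing number weighted by depth), analogous to the confluence statement of Remark~\ref{rem:confluence} but restricted to the class of tree unclaspings.

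\textbf{Main obstacles.} The hardest step is the inductive argument in the forward direction: exhibiting $D_{y'}$ from $y y' = M_1 + M_2$ is not a matter of dividing tensor diagrams by $[D_y]$, since the skein calculus offers no such division. One must instead guess a tree $D_{y'}$ and verify the identity $[D_{y'}] \cdot [D_y] = [D_{M_1}] + [D_{M_2}]$ directly via skein reductions, and then control how these trees grow under iterated mutation (otherwise the induction cannot be carried through in infinite mutation type). A secondary obstacle is ruling out spurious tree presentations for non-cluster-monomial web invariants—this amounts to proving that the arborization algorithm provably fails on examples such as those in Section~\ref{sec:zoo} (including imaginary web invariants in the sense of Leclerc), which in turn requires an effective obstruction theory for arborizability. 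Handling this obstruction is likely where the genuinely new ideas will be needed, since all purely local skein-based attempts can be derailed by the non-confluence phenomena that cause web bases to differ from dual canonical bases (Remark~\ref{rem:khovanov-kuperberg}).
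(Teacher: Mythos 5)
The statement you are addressing is Conjecture~\ref{conj:cluster-variables-are-trees}, which the paper explicitly leaves open (it is called the authors' ``favorite conjecture''); there is no proof in the paper to compare against, only partial supporting results: Corollary~\ref{cor:planar-tree} (planar-tree invariants are cluster or coefficient variables), Theorem~\ref{th:arborization-confluent} (confluence of arborization), and Theorems~\ref{th:powers-arborizable}--\ref{th:thickening} (powers of arborizable webs are web invariants). So the real question is whether your argument closes the gap, and it does not.

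Your reverse direction rests on the claim that a non-planar tree diagram can be reduced by skein moves to a \emph{planar} tree ``without branching into several webs,'' after which Corollary~\ref{cor:planar-tree} applies. This is false, and the paper contains an explicit counterexample. The cluster variable presented by the non-planar tree in Figure~\ref{fig:G38-nonplanar-tree} has as its unique web representative the third web in Figure~\ref{fig:webs50}, which contains an internal hexagonal cycle; since web presentations are unique (Theorem~\ref{th:web-basis}), no planar-tree presentation can exist, so any crossing-eliminating reduction must terminate at a non-tree web for this input. More basically, the only crossing-removal rule in the skein calculus, Figure~\ref{fig:skein}(a), outputs a \emph{sum} of two diagrams; Figure~\ref{fig:yang-baxter} only straightens crossings in special local configurations. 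There is no single-output crossing-removal step, so the ``complexity measure'' termination argument cannot even get started in the form you describe. For the forward direction, you acknowledge the gap yourself: ``inverse arborization'' is not a defined operation, and you offer no mechanism to produce a tree $D_{y'}$ from an exchange relation or to control growth over arbitrarily long mutation sequences. The existence of non-arborizable indecomposable webs (Figure~\ref{fig:webs34}) shows that one cannot appeal to arborization terminating in a tree in general. In sum, your proposal restates the conjecture as a program and flags the difficulties; it is not a proof.
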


To illustrate, consider the webs for non-Pl\"ucker cluster variables
in $R_{0,8}(V)$ shown in Figure~\ref{fig:webs50}.
The first one is a tree. 
The second one has a $6$-cycle but it goes through a boundary vertex.
The third web does have an internal cycle---but  
the alternative presentation of this invariant shown in 
Figure~\ref{fig:G38-nonplanar-tree} unclasps to a (non-planar) tree. 

\begin{figure}[ht]
\begin{center}
\setlength{\unitlength}{1.25pt}
\begin{picture}(70,75)(0,0)
\thicklines
\put(35,5){\circle*{4}}
\put(35,5){\line(0,1){18}}

\put(0,40){\circle*{4}}
\put(0,40){\line(3,-2){13}}

\put(10,60){\circle*{4}}
\put(10,60){\line(3,-2){13}}
\put(10,60){\line(3,1){13}}

\put(10,15){\circle*{4}}
\put(10,15){\line(1,3){4.2}}

\put(25,40){\circle*{4}}
\put(25,40){\line(2,-3){8.7}}
\put(25,40){\line(-1,-1){8.5}}
\put(25,40){\line(0,1){8}}

\put(35,55){\circle*{4}}
\put(35,55){\line(1,0){8}}
\put(35,55){\line(-2,-1){8.2}}
\put(35,55){\line(-1,1){8.4}}

\put(35,75){\circle*{4}}
\put(35,75){\line(-1,-1){8.4}}

\put(45,40){\circle*{4}}
\put(45,40){\line(-2,-3){8.7}}
\put(45,40){\line(1,-1){8.5}}
\put(45,40){\line(0,1){13}}

\put(60,15){\circle*{4}}
\put(60,15){\line(-1,3){4.2}}

\put(60,60){\circle*{4}}
\put(60,60){\line(-3,-1){13}}

\put(70,40){\circle*{4}}
\put(70,40){\line(-3,-2){13}}

\put(35,25){\circle{4}}
\put(15,30){\circle{4}}
\put(55,30){\circle{4}}
\put(45,55){\circle{4}}
\put(25,50){\circle{4}}
\put(25,65){\circle{4}}
\end{picture}
\qquad\qquad
\begin{picture}(70,75)(0,0)
\thicklines
\put(35,5){\circle*{4}}
\put(35,5){\line(0,1){18}}

\put(0,40){\circle*{4}}
\put(0,40){\line(3,-2){13}}

\put(10,60){\circle*{4}}
\put(10,60){\line(6,-1){32.8}}
\put(10,60){\line(4,1){12.7}}

\put(10,15){\circle*{4}}
\put(10,15){\line(1,3){4.2}}

\put(25,40){\circle*{4}}
\put(25,40){\line(2,-3){8.7}}
\put(25,40){\line(-1,-1){8.5}}
\put(25,40){\line(0,1){21.5}}


\put(35,75){\circle*{4}}
\put(35,75){\line(-5,-6){8.4}}

\put(45,40){\circle*{4}}
\put(45,40){\line(-2,-3){8.7}}
\put(45,40){\line(1,-1){8.5}}
\put(45,40){\line(0,1){12}}

\put(60,15){\circle*{4}}
\put(60,15){\line(-1,3){4.2}}

\put(60,60){\circle*{4}}
\put(60,60){\line(-5,-2){13}}

\put(70,40){\circle*{4}}
\put(70,40){\line(-3,-2){13}}

\put(35,25){\circle{4}}
\put(15,30){\circle{4}}
\put(55,30){\circle{4}}
\put(45,54.2){\circle{4}}
\put(25,63.7){\circle{4}}

\end{picture}
\end{center}
\vspace{-.1in}
\caption{Two representations of a non-Pl\"ucker cluster variable in $R_{0,8}(V)$.}
\label{fig:G38-nonplanar-tree}
\end{figure}
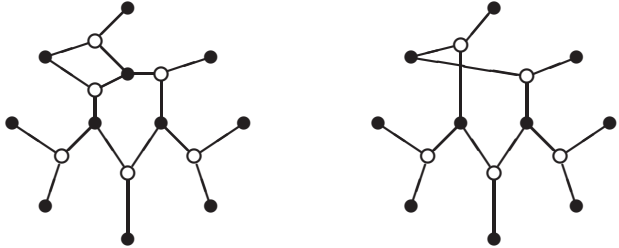

\pagebreak[3]

A much more complicated example is shown in Figure~\ref{fig:webs71}. 

\begin{figure}[h]
    \begin{center}
    \input{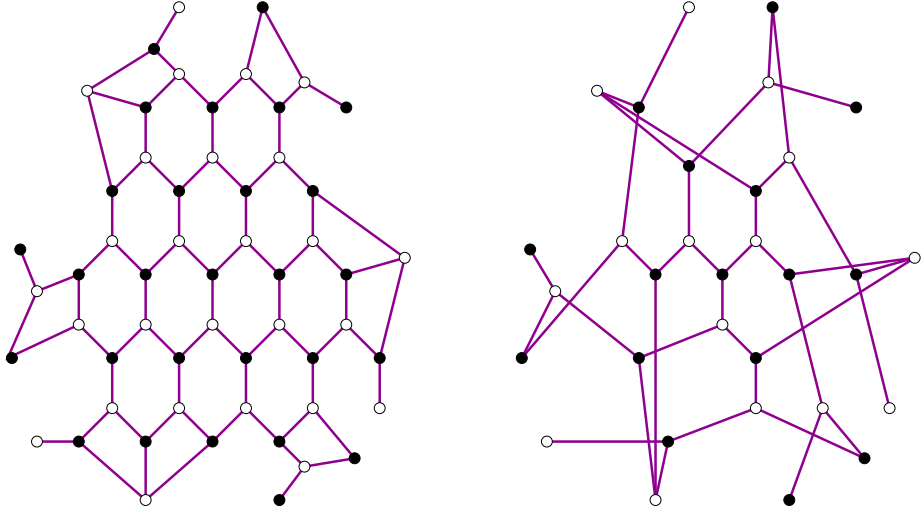}
    \end{center}
    \caption{A cluster variable represented by a web and by a tree diagram.}
    \label{fig:webs71}
\end{figure}

We note that Corollary~\ref{cor:planar-tree} establishes a very
special case of Conjecture~\ref{conj:cluster-variables-are-trees},
namely the case when the same tensor diagram is both a web and a
tree. 


An example given in Figure \ref{fig:websnew1} shows that the condition of indecomposability
in Conjecture~\ref{conj:cluster-variables-are-trees} cannot be
dropped: 
a decomposable web invariant representable by a tree diagram does not 
have to be a cluster monomial, let alone a cluster variable. 


\begin{figure}[ht]
    \begin{center}
\scalebox{0.75}{ \input{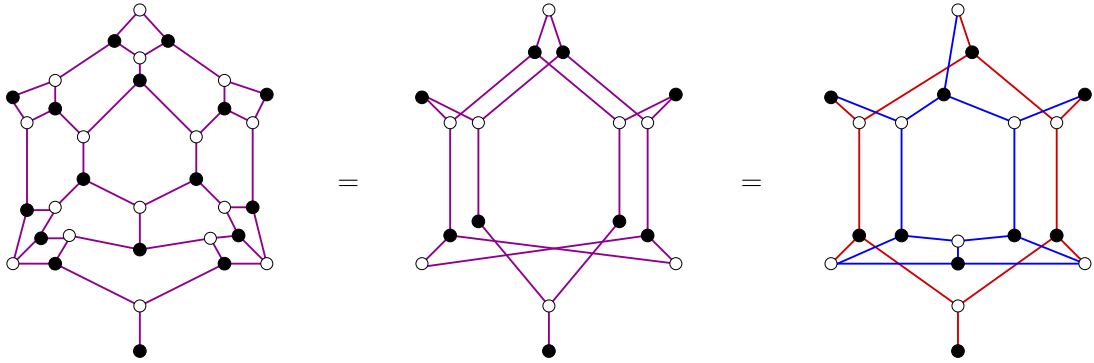}}
    \end{center}
\caption{Three representations of the same invariant:
(1) as a web invariant; (2) by a tree diagram; (3) as a product of two web invariants.
It can be shown that this invariant is not a cluster monomial.}
    \label{fig:websnew1}
\end{figure}

We next describe an algorithm that 
takes a web invariant~$z$ as input 
and conjecturally constructs a forest diagram defining~$z$,
or else concludes that none exists. 
This will require some preparation. 

\pagebreak[3]

\begin{definition}
Let $D$ be a tensor diagram, $s_1$ and~$s_2$ 
two of its internal vertices, and $e_1$ and $e_2$ two edges 
incident to $s_1$ and~$s_2$, respectively. 
We call vertices $s_1$ and~$s_2$ \emph{siblings} of each other 
(more precisely, ``siblings away from $e_1$ and~$e_2$'')
if the following happens. 
For $i\in\{1,2\}$, let $B_i$ denote the subgraph of~$D$ whose edge set
consists of those edges which can be reached from~$s_i$ without going
along~$e_i$ or connecting through a boundary vertex. 
(In particular, the edge $e_i$ is not in~$B_i$.)
We then want $B_1$ and~$B_2$ to be isomorphic binary trees
having the same multisets of leaves on the boundary of the disk. 
Thus, $s_1$ and~$s_2$ are siblings if they are
obtained from the same multiset of boundary vertices by 
the same sequence of taking pairwise joins. 
\end{definition}

\pagebreak[3]

\begin{definition}
\label{def:arborizing-step}
Suppose a tensor diagram~$D$ contains a fragment which is: 
\begin{itemize}
 \item a quadrilateral with one vertex on the boundary, or 
 \item a four-edge path whose endpoints are siblings of each other, 
looking away from the edges of the path.
\end{itemize}
An \emph{arborizing step} is the transformation of such a diagram~$D$ shown
in Figure~\ref{fig:webs27}. 
\end{definition}

\begin{figure}[ht]
    \begin{center}
\vspace{-.2in}
\input{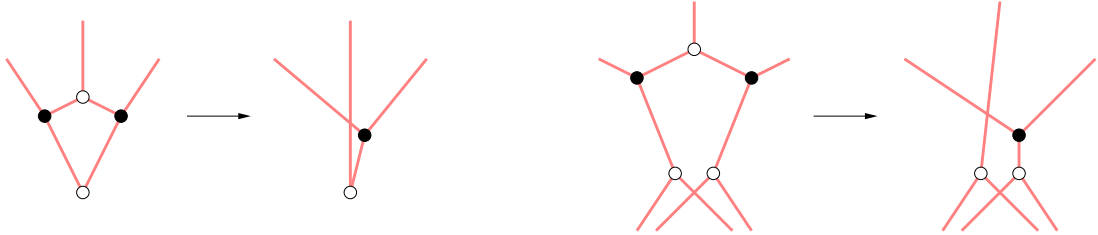}
\vspace{-.1in}
    \end{center}
    \caption{Arborizing steps. 
At left, the bottom vertex is on the boundary. 
At right, the boundary is further down (not shown).}
    \label{fig:webs27}
\end{figure}

\begin{lemma}
\label{lem:arborizing-step}
An arborizing step does not change the value of the
invariant defined by a tensor diagram.
\end{lemma}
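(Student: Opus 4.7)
The plan is to reduce both cases of the arborizing step to applications of the basic square skein relation (Figure~\ref{fig:skein}(b)), which expands an internal 4-cycle into the sum of two ``parallel-strand'' resolutions. In both configurations on the left-hand side of Figure~\ref{fig:webs27}, the relevant fragment contains exactly one such internal square; applying the skein relation yields two diagrams, and the lemma will follow by showing that in each case one of these two diagrams vanishes and the other is precisely the right-hand side of the arborizing step.

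For the case of a quadrilateral with one vertex on the boundary, I would first apply the skein relation of Figure~\ref{fig:skein}(b) to the internal 4-cycle. One of the two resulting resolutions has the feature that two of the edges that previously entered the square at the boundary vertex~$B$ now form a short bigon attached to~$B$, i.e., a configuration of the type shown in Figure~\ref{fig:local-boundary}(a) (or~(b), depending on the coloring pattern), and thus evaluates to zero. The other resolution is exactly the tree-like right-hand side in Figure~\ref{fig:webs27}, which settles this case.

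For the siblings case, the same skein relation applied to the unique square embedded in the four-edge path produces two diagrams. In one of them, the two sibling subtrees $B_1$ and $B_2$ -- which by assumption are isomorphic binary trees with the same multiset of boundary leaves -- get joined through a pair of identity-tensor strands in such a way that the resulting tensor contracts~$[B_1]$ against~$[B_2]$ through conjugate arguments. I would argue that this term vanishes by an antisymmetry mechanism: since every internal vertex of $B_i$ is trivalent and contributes a signed cyclic ordering, the tensor~$[B_i]$ is totally antisymmetric in an appropriate sense, and the paired configuration coincides with its own negative. The surviving resolution is the arborized diagram on the right.

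The main obstacle is making the antisymmetry argument in the sibling case precise. I would carry this out by induction on the common depth of $B_1$ and $B_2$. The base case is when both subtrees are single edges running to the boundary, where the vanishing reduces directly to Figure~\ref{fig:local-boundary}. For the inductive step, I would peel off one level of branching from each of $B_1$ and $B_2$ simultaneously, using Figure~\ref{fig:skein}(b) on the innermost square formed by the corresponding pair of trivalent vertices; exactly one of the two resulting terms survives by induction, and it is then routine to match it with the configuration at the next level. Tracking which resolution survives at each stage, and verifying that the cumulative signs agree with the asserted identity in Figure~\ref{fig:webs27}, is the only delicate bookkeeping required.
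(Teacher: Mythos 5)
Your high-level strategy is the right one — apply a skein relation, obtain the arborized diagram plus one extra term, and show the extra term is zero — and for the boundary-quadrilateral case your plan matches the paper's. But your treatment of the sibling case has two concrete problems, and you end up working much harder than necessary.

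First, the skein relation is not being applied to a square embedded in the four-edge path. A path contains no $4$-cycle, so "the unique square embedded in the four-edge path" is not a well-defined object; the identity in Figure~\ref{fig:webs27b} comes from applying the crossing/square skein relation (Figures~\ref{fig:skein}(a)--(b)) to the configuration in a slightly different way, not to a cycle inside the path. Second, and more importantly, you struggle to pin down the vanishing argument and retreat to an induction on the depth of $B_1\cong B_2$. This is both harder than needed and rests on a mischaracterization: $[B_i]$ is not "totally antisymmetric" in any useful sense — once the boundary leaves are plugged in, $[B_i]$ is just a vector or covector (one dangling argument), so total antisymmetry is not a meaningful property of it. The correct (and much shorter) argument, which is what the paper uses, is a single observation about the \emph{second term} as a whole: because $s_1$ and $s_2$ are siblings, i.e., $B_1$ and $B_2$ are isomorphic trees with identical multisets of boundary leaves, the second term is \emph{symmetric} under exchanging the two (co)vector slots that face $e_1$ and $e_2$; but by the structure of the skein resolution it is also \emph{antisymmetric} in those same slots. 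A tensor both symmetric and antisymmetric in the same pair of arguments is zero. This disposes of all depths at once; no induction, and no tracking of cumulative signs. Your induction might be forced through, but it does not match the paper's argument and, as written, the base and inductive steps are not fully specified.
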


The \emph{arborization algorithm} takes a tensor diagram (not
necessarily planar) as input, and applies arborizing steps
until unable to do so. 
See Figure~\ref{fig:webs26}. 

\begin{figure}[ht]
    \begin{center}
\scalebox{0.9}{
\input{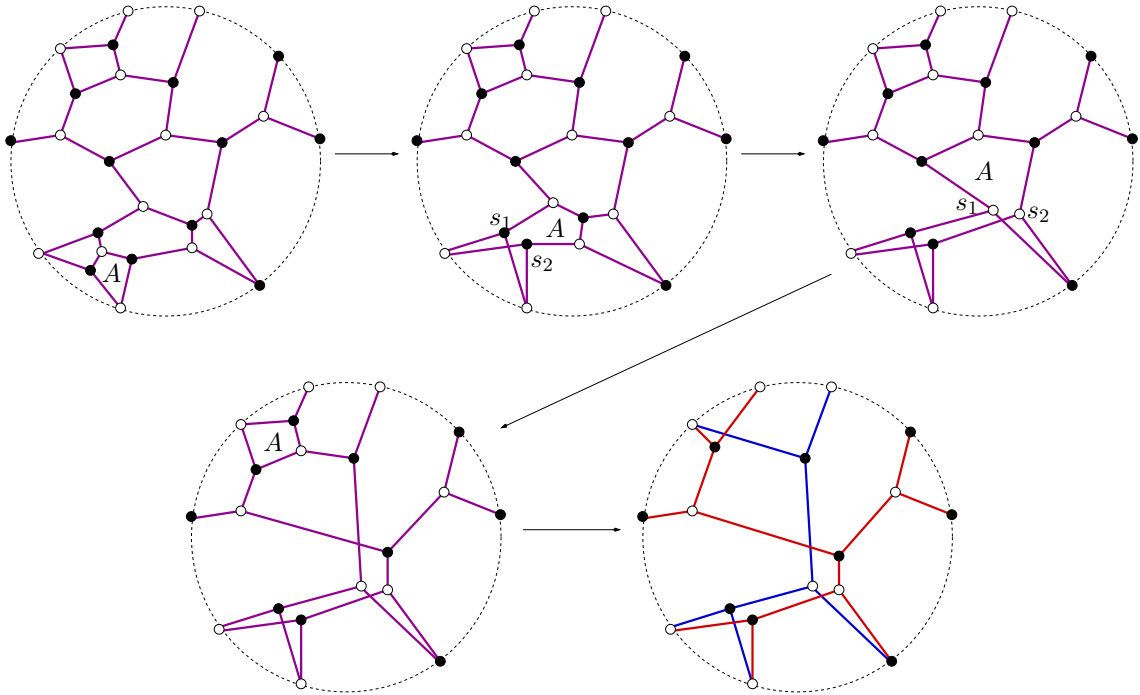} 
}
    \end{center} 
\caption{Arborization algorithm.
The output is a forest diagram defining a
cluster monomial, a product of two compatible special invariants.   
We indicate the location of each arborizing step 
and the two sibling vertices, if applicable.
}
    \label{fig:webs26}
\end{figure}


\begin{theorem}
\label{th:arborization-confluent}
The arborization algorithm is confluent.
That is, its output does not depend on the
choice of an arborizing step made at each stage. 
\end{theorem}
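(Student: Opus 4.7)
The plan is to establish confluence by verifying the two hypotheses of Newman's Lemma: termination and local confluence.

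First I would prove termination by exhibiting a strictly decreasing complexity statistic for arborizing steps. The natural choice is the number of internal vertices of the tensor diagram. Each arborizing step, whether of the boundary-quadrilateral type or of the sibling type depicted in Figure~\ref{fig:webs27}, strictly reduces this number: the quadrilateral move eliminates an internal vertex of the quadrilateral, and the sibling identification collapses two isomorphic binary subtrees into one, again reducing the count of internal vertices. Since the statistic takes values in the nonnegative integers, any chain of arborizing steps is finite, and the rewriting system is strongly normalizing.

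For local confluence, suppose that $D \to D_1$ and $D \to D_2$ by two distinct arborizing steps, based on fragments $F_1$ and $F_2$ of~$D$. If $F_1$ and $F_2$ are vertex-disjoint, the two moves act on independent parts of $D$ and commute trivially, producing the common resolution in one further step on each side. When $F_1$ and $F_2$ share vertices or edges, one must verify that a short sequence of further arborizing steps on each side can bring $D_1$ and $D_2$ to a common diagram. I would organize this verification into three families of critical pairs: quadrilateral--quadrilateral overlaps, quadrilateral--sibling overlaps, and sibling--sibling overlaps. The first two involve only a bounded local configuration and reduce to inspection of a finite list of cases.

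The main obstacle is the sibling--sibling case, since the sibling relation is defined non-locally in terms of the isomorphism type of the binary subtree attached to each candidate vertex. Two sibling pairs may involve overlapping or nested subtrees, so it is not obvious a priori that performing one identification preserves the applicability of the other, or leads to the same final diagram. I would overcome this by proving a stability lemma for the sibling relation: if $(s_1, s_2)$ and $(t_1, t_2)$ are two sibling pairs in~$D$, then after the identification at $(s_1, s_2)$ the pair $(t_1, t_2)$ either becomes collapsed into a single vertex of the new diagram (in which case the second move is already subsumed) or survives as a valid sibling pair, and performing it produces the same diagram one obtains by completing the two moves in the opposite order. The key point is that the multiset of boundary leaves spanned by a binary subtree is unaffected by a sibling identification taking place either inside it or above it, so the matching conditions needed to invoke the second move are preserved. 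Combined with the termination argument, this local confluence yields global confluence by Newman's Lemma.
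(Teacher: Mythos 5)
Your overall strategy---termination plus local confluence via Newman's Lemma---is the same one the paper invokes, which it phrases as ``a straightforward application of the diamond lemma'' and then settles by exhibiting the five nontrivial critical pairs (Figure~\ref{fig:webs28}) and resolving each one pictorially. You add an explicit termination argument, which the paper leaves implicit, and you propose a ``stability lemma'' for the sibling relation that the paper handles only implicitly inside its case-by-case resolution of the diamonds. So the two arguments are morally the same, with yours giving slightly more scaffolding and the paper giving slightly more concrete case-checking.

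Two points of caution. First, your description of the termination measure contains an inaccuracy: the sibling-type arborizing step does \emph{not} ``collapse two isomorphic binary subtrees into one.'' The subtrees $B_1$ and $B_2$ hanging off the sibling vertices survive intact in the output; what is removed are internal vertices on the four-edge path joining the siblings (the skein move replaces that middle segment with a crossing, and the other term in the skein identity vanishes by the symmetry/antisymmetry argument of Lemma~\ref{lem:arborizing-step}). Since the multidegree of the diagram is preserved by all skein relations, the leaves certainly cannot disappear, so the subtrees cannot be merged. That said, your intended conclusion---internal vertices strictly decrease---is correct for the right reason, so termination holds; just be careful about the mechanism. Second, your stability lemma as stated is the genuinely nontrivial part of the argument, and you should recognize that it requires proof, not just the observation about boundary-leaf multisets being preserved: you also need the binary-tree \emph{structure} of $B_i$ to persist (or to change in a controlled way) after an arborizing step that acts inside or adjacent to it. The paper sidesteps this by checking the finitely many overlap patterns directly; if you prefer the lemma route, this is exactly where the work goes, and it is not automatic.
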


We are tempted to suggest the following enhancement of 
Conjecture~\ref{conj:cluster-variables-are-trees}. 

\begin{conjecture} 
\label{conj:lift2}
An indecomposable web invariant $z$ is a cluster or coefficient variable 
if and only if the arborization algorithm applied to the web
defining~$z$ outputs a tree diagram. 
\end{conjecture}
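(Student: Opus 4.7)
The plan is to prove Conjecture~\ref{conj:lift2} by reducing it to the two more primitive conjectures that precede it. Granting Conjecture~\ref{conj:cluster-variables-are-trees} (cluster monomials = forest-representable web invariants) and Conjecture~\ref{conj:lift1} (the arborization algorithm detects all forest representations), the equivalence follows at once: if $z$ is a cluster monomial, Conjecture~\ref{conj:cluster-variables-are-trees} furnishes a forest diagram $F$ with $[F]=z$ that cannot be arborized further; by Conjecture~\ref{conj:lift1}, $F$ is precisely the output of the algorithm applied to the web for~$z$. Conversely, if the algorithm applied to the web $W$ representing $z$ terminates at a forest~$F$, Lemma~\ref{lem:arborizing-step} gives $[F]=[W]=z$, and the ``if'' direction of Conjecture~\ref{conj:cluster-variables-are-trees} places $z$ among cluster monomials. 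The refined statement (\emph{tree} vs.\ forest, and cluster/coefficient variable vs.\ monomial) is handled identically, the connected components of the forest corresponding to the factors of the monomial.

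For the ``if'' half of Conjecture~\ref{conj:cluster-variables-are-trees}, my plan is to upgrade Corollary~\ref{cor:planar-tree} from planar trees to arbitrary tree diagrams. First I would reduce to the connected case by factoring over components of the unclasping. For a connected (possibly non-planar) tree diagram~$T$, I would planarize it by pushing crossings outward using the Yang-Baxter-type moves of Figure~\ref{fig:yang-baxter}; since $T$ is a tree, every crossing involves two disjoint branches, and the moves can be arranged so that crossings are either eliminated or absorbed near the boundary, where Theorem~\ref{th:fork} realizes them as cross-product embeddings into a smaller signature. The result is a planar tree to which Corollary~\ref{cor:planar-tree} applies, identifying $[T]$ as a cluster or coefficient variable. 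Compatibility of the factors produced by distinct components would then be read off directly from the fact that their superposition is, by hypothesis, the single web representing~$z$; this is exactly the hypothesis of Conjecture~\ref{conj:cluster-compatibility}, so I would try to verify its conclusion by hand in the forest-to-web case (rather than invoking the full conjecture).

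For the ``only if'' half of Conjecture~\ref{conj:cluster-variables-are-trees}, I would proceed by induction on mutation distance from a distinguished seed. In the initial seed $(Q(T),\zz(T))$ built in Section~\ref{sec:special-seeds}, every extended-cluster element is by construction a special invariant, hence represented by a tree diagram (Definition~\ref{def:special-inv}). The $3$-term skein relations of Proposition~\ref{prop:3-term-skein-special}, together with the exchange relations produced by Definition~\ref{def:exch-rel-T}, express each newly mutated cluster variable as a rational combination whose numerator and denominator are monomials in tree-representable invariants. The inductive step amounts to showing that the mutated variable itself is again tree-representable; this should follow by combining a monomial of trees with one $3$-term relation at a time and verifying, via the local skein calculus, that the cancellations that \emph{must} happen (by the Laurent phenomenon, Theorem~\ref{th:laurent}) can be realized by arborizing moves applied in reverse.

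The hard part, and where I expect the serious work to lie, is Conjecture~\ref{conj:lift1}: proving that any fully-arborized forest representation of a given invariant is actually reachable from the web by arborizing moves. Theorem~\ref{th:arborization-confluent} supplies local confluence, so by Newman's lemma it suffices to establish that arborization is terminating (clear, since a suitable complexity invariant decreases) and that between any two fully-arborized forests representing the same invariant there is a zig-zag of arborizing moves. The natural strategy is a normal-form theorem: show that the fully-arborized forest representing an invariant is essentially unique. I would attempt this by interleaving arborizing moves with Kuperberg's reduction to non-elliptic webs, exhibiting a common refinement of any two arborizations inside an enlarged category of tensor diagrams modulo skein relations, and then appealing to the confluence of Kuperberg's reduction (Remark~\ref{rem:confluence}) to collapse the refinements onto a shared normal form. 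This normal-form statement is the real content of the conjecture, and everything else in the proof of Conjecture~\ref{conj:lift2} is scaffolding around~it.
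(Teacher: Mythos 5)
The purely logical reduction in your first paragraph is correct and is exactly what the paper intends: granting Conjectures~\ref{conj:cluster-variables-are-trees} and~\ref{conj:lift1}, the statement follows formally, with Lemma~\ref{lem:arborizing-step} supplying $[W]=[F]$ and the tree/forest dichotomy corresponding to variable/monomial. But the paper states \ref{conj:lift2} as an open conjecture, and your attempt to actually prove the two ingredients does not hold up.

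The most concrete flaw is in your treatment of the ``if'' half of Conjecture~\ref{conj:cluster-variables-are-trees}. You propose to turn an arbitrary (non-planar) tree diagram into a \emph{planar} tree by ``pushing crossings outward using the Yang-Baxter-type moves of Figure~\ref{fig:yang-baxter}.'' The Yang-Baxter relations only slide crossings past one another and past trivalent vertices; they never remove a crossing. The only skein moves that eliminate a crossing (Figure~\ref{fig:skein}(a,b)) replace a single diagram by a \emph{sum} of two, so planarizing a tree diagram with genuine crossings does not return a planar tree: it returns a linear combination of planar diagrams, which after Kuperberg's reduction is exactly the web representation, generally containing internal cycles. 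The paper's own Figures~\ref{fig:G38-nonplanar-tree} and~\ref{fig:webs71} exhibit cluster variables that are representable by non-planar trees and whose non-elliptic webs have interior cycles; if your planarization argument were valid, those invariants would also admit planar-tree presentations, which they do not. The invocation of Theorem~\ref{th:fork} here is also misplaced---that theorem concerns splitting a boundary vertex into a fork of opposite color, not absorbing interior crossings.

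The remaining steps do not substitute genuine arguments for the hard content. In the ``only if'' direction, the assertion that mutation preserves tree-representability because ``the cancellations that must happen can be realized by arborizing moves applied in reverse'' is precisely what needs to be established and is not a consequence of the Laurent phenomenon (which only bounds denominators). For Conjecture~\ref{conj:lift1}, Theorem~\ref{th:arborization-confluent} gives confluence of arborizing moves, but the conjecture asserts the stronger fact that \emph{any} fully-arborized forest presentation, produced by whatever means, is connected to the web by arborizing moves at all; your ``normal form via interleaving with Kuperberg reduction'' does not engage with the possibility of a forest presentation that arises from a completely different derivation and is never visited by the arborization rewriting system. In short, the reduction in your opening paragraph is correct and essentially echoes the paper's own remark, but everything after it remains conjectural, and the one place where you make a definite claim---planarizing a non-planar tree into a planar one---is contradicted by the paper's examples.
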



If a web invariant arborizes to a tree diagram, then we expect it 
to be a cluster variable 
(cf.\ Conjecture~\ref{conj:lift2}),
so its powers---which are cluster monomials---should be web invariants
as well (cf.\ Conjecture~\ref{conj:cluster-monomials-are-in-web-basis}). 
Theorem~\ref{th:powers-arborizable} below confirms this expectation,
thereby providing indirect support for the aforementioned
conjectures. 

\begin{theorem}
\label{th:powers-arborizable}
Let $z$ be a web invariant defined by a web which arborizes to a tree
diagram via the arborization algorithm. 
Then any power of $z$ is a web invariant. 
\end{theorem}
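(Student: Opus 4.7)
The plan is to set up an induction on $n$, exploiting the tree structure of the arborized diagram. Let $W$ be the web defining~$z$ and let $T$ be the tree diagram output by the arborization algorithm, so that by Lemma~\ref{lem:arborizing-step} we have $[T]=[W]=z$. For each $n\ge 1$, consider the tensor diagram $T^{(n)}$ obtained by placing $n$ disjoint copies of $T$ in the disk, identifying only their boundary vertices. Since the copies are connected only at boundary vertices, $[T^{(n)}]=[T]^n=z^n$. The goal is to show that $T^{(n)}$ can be reduced, by a sequence of skein transformations, to a single non-elliptic web $W_n$, which will exhibit $z^n$ as a web invariant.

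First I would observe that the tree property of $T$ gives $T^{(n)}$ very mild singularities: after unclasping, $T^{(n)}$ is a forest, so it has no internal cycles and no multiple edges incident to any internal vertex. The only reasons $T^{(n)}$ may fail to be a non-elliptic planar web are (i) possible non-planarity (crossings inherited from $T$, or forced upon us when $n$ copies are superimposed) and (ii) possible multiple edges at boundary vertices. The crossings are resolved using the square skein relation in Figure~\ref{fig:skein}(a), which a priori produces two terms per crossing. The main technical content of the proof is to show that, in the present setting, exactly one of those two terms survives at each stage.

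I would induct on the number $k$ of arborizing steps used to transform $W$ into $T$. For $k=0$, one has $T=W$ already planar and non-elliptic, and then $T^{(n)}$ can be drawn as $n$ pairwise disjoint copies of~$W$ inside the disk; this is directly non-elliptic, because distinct internal vertices of different copies cannot create a 4-cycle or a multi-edge (any such cycle or repeated edge in $T^{(n)}$ would force a corresponding structure in $T$ itself, contradicting the tree property). For $k>0$, consider the last arborizing step transforming $W_{k-1}$ into $W_k=T$: it is either the collapse of a quadrilateral with a boundary vertex or the collapse of a 4-path between sibling internal vertices. In both situations the step is a shortcut through a local skein relation in which one of the two terms vanishes, either by the boundary relation in Figure~\ref{fig:local-boundary} or by sign cancellation between the two sibling branches. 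I would show that reversing this move in $n$ parallel copies, turning $T^{(n)}$ into $W_{k-1}^{(n)}$, produces a single term for exactly the same reason: the vanishing mechanism is purely local and depends only on the colors of the boundary vertex or on the isomorphism of the two sibling subtrees, and these configurations are unchanged when the move is performed simultaneously in all $n$ layers. One then applies the induction hypothesis to $W_{k-1}^{(n)}$.

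The main obstacle is making the last paragraph rigorous: one must set up the ``$n$-fold thickening'' of an arborizing step in a canonical way so that the diagrams $W_{k-1}^{(n)}, W_{k-2}^{(n)},\dots,W_0^{(n)}=W^{(n)}$ are well-defined tensor diagrams, verify that each thickened inverse move still corresponds to a genuine skein relation with one term annihilated, and check that the final web $W_n$ inherits non-ellipticity from $W$ together with the separation of copies. The trickiest case is the sibling-path step, where one has to track how the $n$ copies of the two sibling subtrees pair up and show that the pairing can be chosen consistently across all levels, without ever forcing two resulting 4-cycles to share three edges (which would produce a multi-edge) or to share two opposite edges (which would produce an internal 4-cycle). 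I expect that this bookkeeping, done by local analysis around each former sibling pair, is the chief technical hurdle; everything else is a direct unpacking of the arborization algorithm together with Lemma~\ref{lem:arborizing-step} and Theorem~\ref{th:arborization-confluent}.
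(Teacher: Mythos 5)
Your overall plan matches the paper's: take $n$ superimposed copies of the arborized tree diagram, then reverse the arborization in all copies simultaneously, arguing at each skein step that exactly one term survives (cf.\ the proof of Theorem~\ref{th:thickening}). However, your base case contains a genuine gap, and it is precisely the gap that the rest of the argument must close. You assert that when the arborization is trivial ($W=T$ a planar tree), the superposition $T^{(n)}$ of $n$ copies can be drawn in the disk as $n$ pairwise disjoint copies and is directly a non-elliptic web. This is false as soon as $T$ has an internal vertex. Take $T$ to be a tripod: one white internal vertex $v$ joined to black boundary vertices $1,2,3$. Two copies $v_1,v_2$ of $v$ cannot both be drawn inside the disk with edges to $1,2,3$ and no crossings: once $v_1$ is placed, its three edges cut the disk into three regions, and $v_2$, lying in one of them, can reach only the two boundary vertices bordering that region. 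So even the base case already requires resolving crossings via skein relations, and the object one obtains is the $n$-thickening of $W$ in the sense of Definition~\ref{def:thickening}, with each cluster of $n$ vertex-copies replaced by a honeycomb~$H_n$. The reason only one term survives at each step (the discarded terms are simultaneously symmetric and antisymmetric in a pair of arguments) is exactly what you postpone in your final paragraph; without it even the base case does not close.

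Two secondary issues. First, the intermediate diagrams $W_{k-1},\dots,W_1$ in an arborization sequence are tensor diagrams but generally not webs, so your induction hypothesis (which is about webs that arborize in fewer steps) cannot be invoked on them verbatim; the paper avoids this by running the argument directly along the planarization sequence starting from $D$, rather than by induction on $k$. Second, you say "replace $z^n$ by a single non-elliptic web $W_n$" without identifying what $W_n$ is; the paper's stronger Theorem~\ref{th:thickening} pins it down as the $n$-thickening of $W$, and carrying that description along is what makes the inductive bookkeeping tractable.
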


We actually do a bit more:
under the assumptions of Theorem~\ref{th:powers-arborizable},
we explicitly describe the web that
defines the power~$z^k$ of an arborizable web invariant~$z$.

\begin{definition}[\emph{Thickening of a web}]
\label{def:thickening}

Let $k$ be a positive integer, and $W$ a web.
The \emph{$k$-thickening} of~$W$ 
is obtained as follows: 
\begin{itemize}
\item
replace each internal vertex of~$W$ by a ``honeycomb'' fragment $H_k$
of the kind shown in Figure~\ref{fig:webs29} (boundary vertices stay put);
\item
replace each edge of~$W$ by a $k$-tuple of edges connecting the
corresponding honeycombs and/or boundary vertices. 
\end{itemize}
An example is shown in Figure~\ref{fig:webs30}.
\end{definition}


\begin{figure}[ht]
    \begin{center}
\vspace{-.1in}
    \scalebox{0.9}{\input{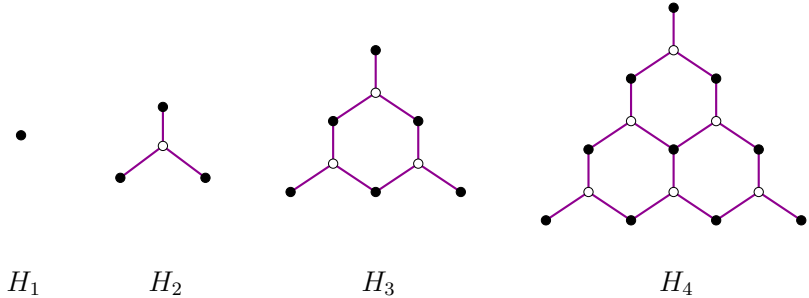}}
    \end{center} 
    \caption{Honeycomb web fragments~$H_k$
used to replace a black vertex. 
When replacing a white vertex, reverse the colors.}
    \label{fig:webs29}
\end{figure}

\begin{figure}[ht]
    \begin{center}
\input{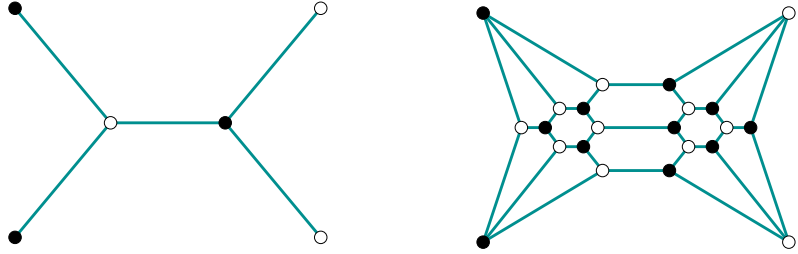}
    \end{center} 
    \caption{A quadripod web and its 
$3$-thickening.}
    \label{fig:webs30}
\end{figure}

\begin{theorem}
\label{th:thickening}
Let $z$ be a web invariant defined by a web~$W$ that arborizes to a tree
diagram via the arborization algorithm. 
Then each power $z^k$ is a web invariant defined by the $k$-thickening
of~$W$. 
\end{theorem}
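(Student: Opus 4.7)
The plan is to prove the two claims: (a) the $k$-thickening $W^{(k)}$ is a non-elliptic web, and (b) $[W^{(k)}] = [W]^k = z^k$. Since Theorem~\ref{th:powers-arborizable} guarantees that $z^k$ is a web invariant, Kuperberg's basis theorem (Theorem~\ref{th:web-basis}) makes its defining non-elliptic web unique, so (a) and (b) together identify it as $W^{(k)}$. To establish (b), I propose to show that $W^{(k)}$ arborizes, via the arborization algorithm (extended in the obvious way to tensor diagrams with honeycomb fragments), to a forest diagram $F$ consisting of $k$ copies of the tree~$T$ to which $W$ arborizes, sharing their boundary vertices. Granted this, Lemma~\ref{lem:arborizing-step} gives $[W^{(k)}] = [F]$, and the tensor contraction of a forest factors as a product over its connected components, yielding $[F] = [T]^k = [W]^k$.

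For the base case, take $W$ to be a single tripod, so that $W^{(k)} = H_k$. One shows by induction on $k$ that $H_k$ arborizes to $k$ parallel copies of the tripod: the outermost corner hexagon of $H_k$ together with one of its external edges to a boundary vertex forms a $4$-cycle-with-boundary-vertex fragment, so an arborizing step of Definition~\ref{def:arborizing-step} peels off a single tripod, and standard skein reductions (Figure~\ref{fig:skein}) identify the remaining configuration with $H_{k-1}$, completing the induction. For a general tree~$T$, the thickening $T^{(k)}$ contains vertex-disjoint honeycombs, each of which can be arborized independently to yield $k$ copies of~$T$.

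For the inductive step, suppose $W$ arborizes to $T$ via a sequence of arborizing moves beginning with $W \to W'$. The key lemma asserts that $W^{(k)}$ can be arborized into $W'^{(k)}$. The local fragment of~$W$ that undergoes the arborizing move becomes, after thickening, a stack of $k$ parallel copies of the same fragment; one applies the basic arborizing move (Lemma~\ref{lem:arborizing-step}) successively to each layer. The sibling version of the arborizing step lifts because the thickened binary trees $B_1^{(k)}$ and $B_2^{(k)}$ remain isomorphic, so the sibling relation is preserved. Combined with the base case, this gives the required arborization of $W^{(k)}$ into $k$ copies of $T$.

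It remains to verify non-ellipticity of $W^{(k)}$. There are no multi-edges, since each edge of $W$ becomes $k$ edges between distinct pairs of honeycomb vertices. The absence of internal $4$-cycles follows from the hexagonal structure of each~$H_k$ and the fact that parallel edges between adjacent honeycombs attach to non-adjacent vertices within each honeycomb. The main obstacle in this plan is the lifting lemma for arborizing steps --- particularly for the sibling configuration, which is non-local --- where one must verify that the intermediate diagrams encountered during the layer-by-layer arborization admit the necessary arborizing moves and do not develop elliptic fragments along the way. I expect that a careful induction on~$k$, reducing the $k$-layer move to $k$ applications of the underlying $1$-layer Lemma~\ref{lem:arborizing-step}, will resolve this.
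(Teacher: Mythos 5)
Your plan reverses the direction of the paper's argument, and the point where you flag an obstacle is exactly where the two approaches part ways. The paper starts from the tree diagram $D$ and works \emph{forward}: it bundles $k$ parallel copies of $D$, then planarizes step by step, tracking the reverse of the arborization sequence that took $W$ to $D$, and planarizing all $k$ layers simultaneously at each step. The work is then to show that the result of this construction is precisely $W^{(k)}$. The reason this direction is tractable is that when a skein relation (including the square move) is applied across the bundle of $k$ identical layers, the ``garbage'' terms that would generically arise are both symmetric and antisymmetric with respect to a pair of arguments --- because all $k$ layers are identical --- and hence vanish. This is the same mechanism that underlies Lemma~\ref{lem:arborizing-step} for a single diagram, but applied to the bundled configuration. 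It is this symmetry-vanishing argument that replaces your unproved ``lifting lemma.''

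Your backward approach --- arborizing $W^{(k)}$ to a forest of $k$ copies of $D$ --- has the gap you name explicitly, and the gap is real. After peeling off one layer, the intermediate diagram is no longer a $k$-thickening of anything, so you lose the symmetric structure that would justify the next arborizing move; the hypotheses of Definition~\ref{def:arborizing-step} (a $4$-cycle through a boundary vertex, or the sibling condition) do not obviously hold for the partially-peeled configuration. A layer-by-layer induction of the form you propose breaks this symmetry, which is exactly why the paper instead keeps the $k$-fold symmetry intact by applying skein relations across the whole bundle at once. Separately, invoking Theorem~\ref{th:powers-arborizable} to obtain uniqueness of the defining web is both unnecessary (once you have that $W^{(k)}$ is a non-elliptic web with $[W^{(k)}]=z^k$, it \emph{is} the web defining $z^k$) and circular, since the paper derives Theorem~\ref{th:powers-arborizable} from the very argument you are trying to give.
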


\pagebreak[3]

\section{Web gallery}
\label{sec:zoo}

In this section, we present examples of web invariants 
possessing various notable properties. 

\subsection{Non-arborizable indecomposable webs}

By Conjecture~\ref{conj:cluster-vars-are-in-web-basis}, 
the set of indecomposable web invariants includes all cluster
variables. 
Which other web invariants does it include? 
Conjecture~\ref{conj:lift2} suggests an approach to constructing such
invariants:  if a web does not
arborize to a tree diagram, then the corresponding invariant is not a
cluster variable.
A couple of examples are shown in Figure~\ref{fig:webs34}. 


\begin{figure}[ht]
    \begin{center}
    \input{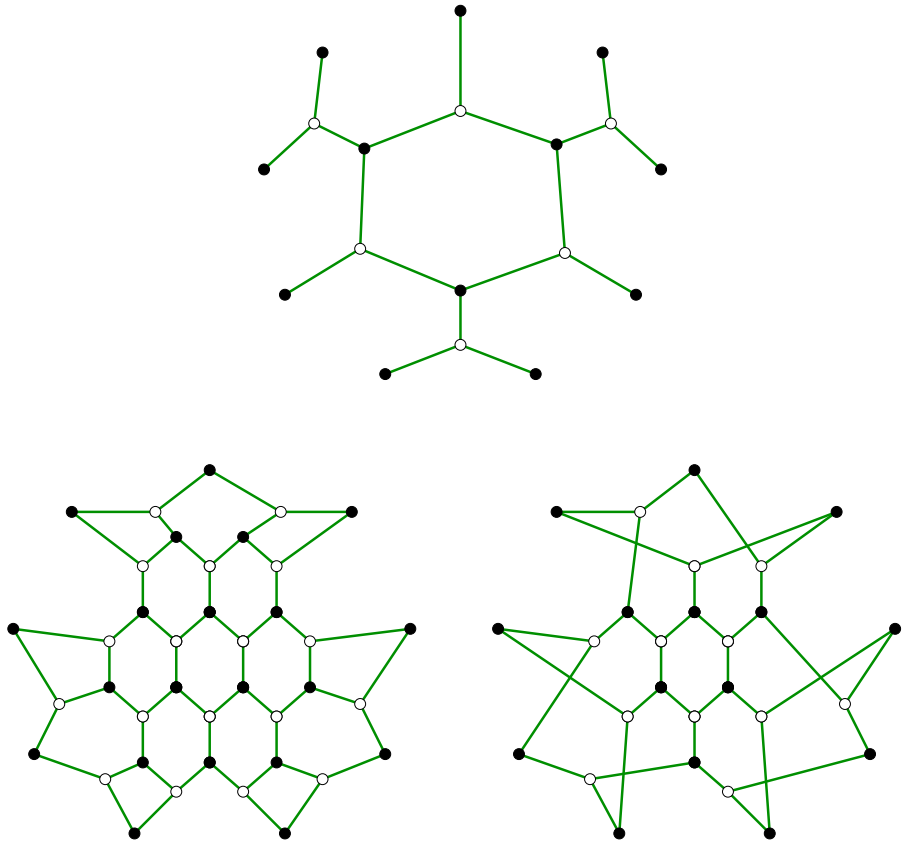} 
    \end{center} 
    \caption{Indecomposable web invariants in $R_{0,9}(V)$ which
do not arborize to trees. 
(We conjecture that there are infinitely many such invariants in~$R_{0,9}(V)$.)
The web at the top is its own arborization.
At the bottom, we show a web together with its arborized form.
}
    \label{fig:webs34}
\end{figure}

\subsection{Non-arborizable web whose powers are obtained by
  thickening} 
\label{sec:thickening-non-cluster}

The converse to Theorem~\ref{th:thickening} (or
Theorem~\ref{th:powers-arborizable}) is false: 
there are lots of web invariants which cannot be represented by a tree
diagram, yet all their powers are web invariants themselves;
moreover they can be obtained by the thickening procedure. 
The simplest example is shown in Figure~\ref{fig:webs32}. 

\begin{figure}[ht]
    \begin{center}
\input{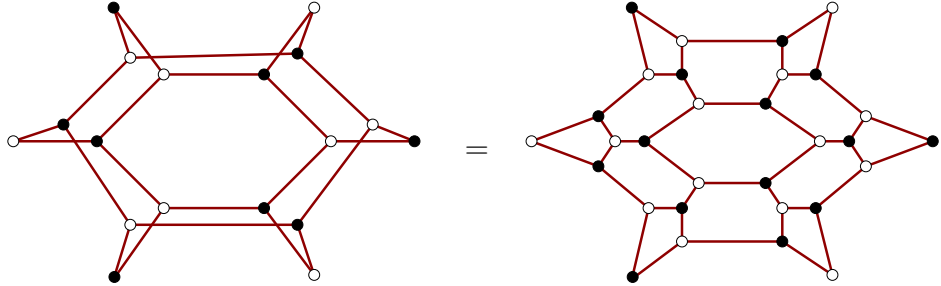} 
    \end{center} 
    \caption{Any power of a single-cycle web is equal to its
thickening. Add~a~dummy boundary vertex to make the signature non-alternating.}
    \label{fig:webs32}
\end{figure}

The unclasping of the second web in Figure~\ref{fig:webs32} yields the
minimal counterexample of M.~Khovanov and G.~Kuperberg
\cite[Theorem~4]{khovanov-kuperberg}. 

\subsection{Imaginary elements}
\label{sec:imaginary-elts}

The square of the web invariant does not have to be  a web invariant, 
see Figure~\ref{fig:webs33}. 
This phenomenon parallels the existence of ``imaginary'' elements in
dual canonical bases, first discovered by B.~Leclerc~\cite{leclerc}.



\begin{figure}[ht]
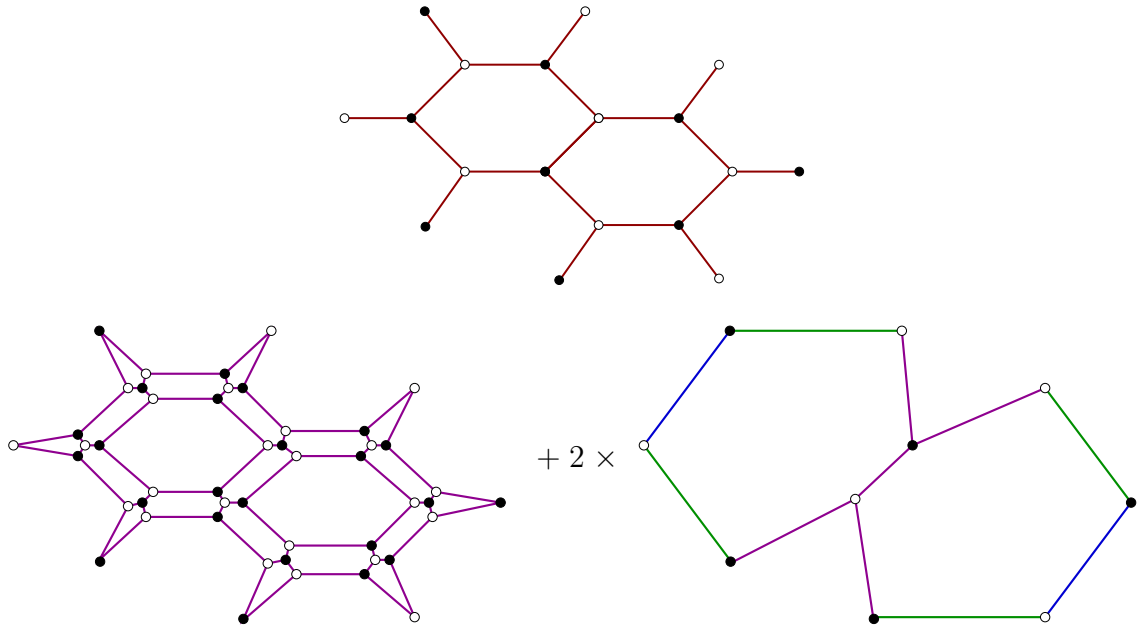

    \begin{center}
    \scalebox{0.8}{\input{webs33.pstex_t}}
\\[.2in]
\input{webs70.pstex_t}
    \end{center}
    \caption{The square of the web invariant shown above is a linear
      combination of web invariants shown below.
}
\label{fig:webs33}
\end{figure}

\pagebreak[3]

\subsection{Fake exchange relations}
\label{sec:fake-exchange}

The identity presented in Figure~\ref{fig:webs68} 
shows that in Conjecture~\ref{conj:3-term-non-cluster}, 
one cannot drop the requirement for $z'M_1$, and $z'M_2$ 
to be web invariants.

\begin{figure}[ht]
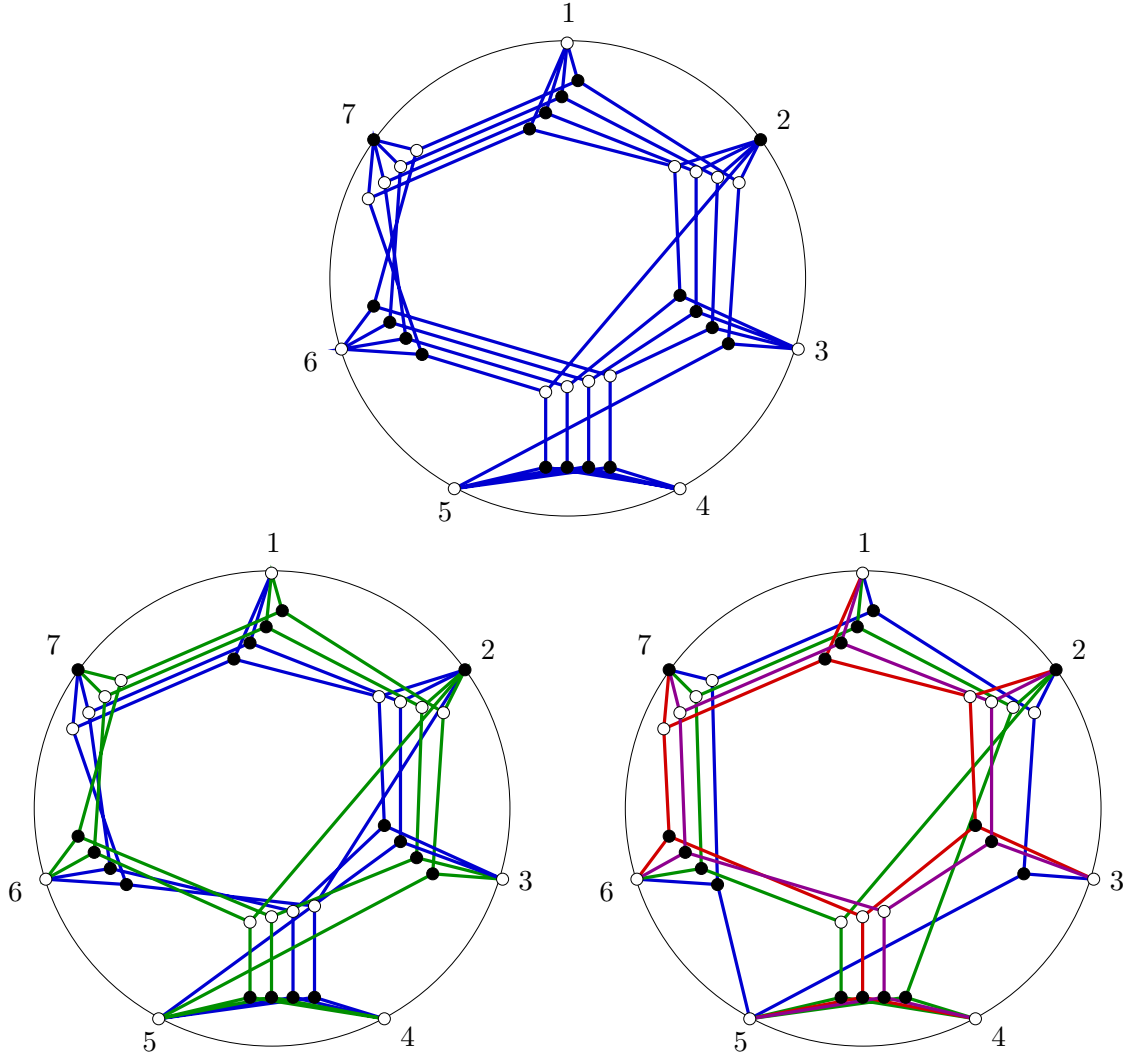

    \begin{center}
    \input{webs68a.pstex_t}
    \input{webs68b.pstex_t}
    \end{center}
    \caption{Let $z$ be the cluster variable shown at the top.  
Let \hbox{$z'=J_2^5$.} These cluster variables 
satisfy the 3-term relation 
$zz'=M_1+M_2$
where $M_1$ and $M_2$ are the web invariants shown at the bottom
(in arborized form).  
None of $zM_1$, $zM_2$, $z'M_1$, $z'M_2$ are web invariants.}
    \label{fig:webs68}
\end{figure}

\subsection{Negative structure constants}
\label{sec:negative-structure-const}

Figure~\ref{fig:webs69} presents an instance where a particular structure 
constant for the web basis is negative.
This example shows that even the product of two \emph{cluster variables} 
may expand in the web basis with coefficients some of which are negative. 

\begin{figure}[ht]
    \begin{center}
    \input{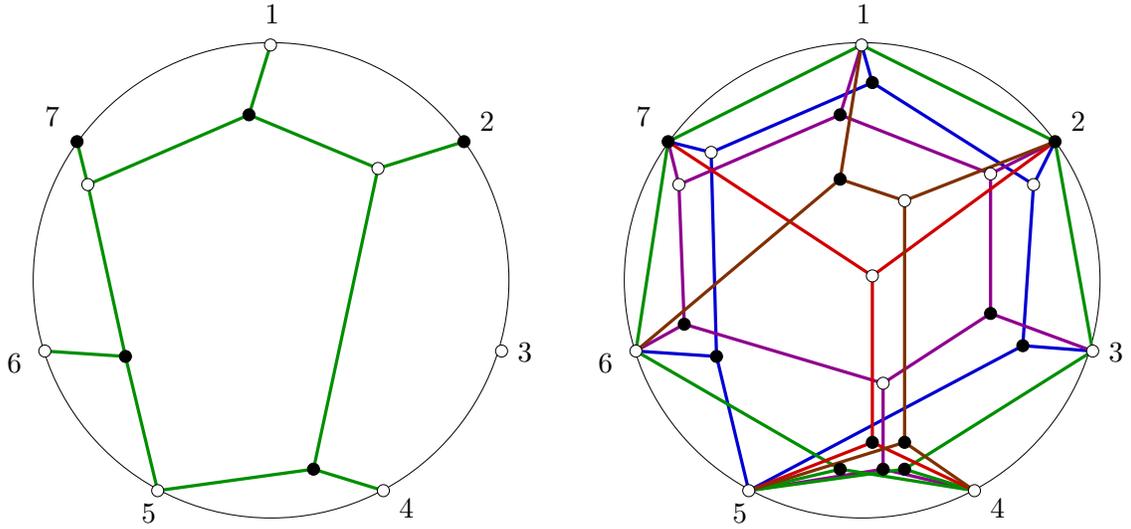}
    \end{center}
    \caption{Let $z$ be the cluster variable from Figure~\ref{fig:webs68},
and~let~$u$~be the cluster variable shown on the left.
The expansion of $zu$ in the web basis 
contains the web invariant shown on the right with coefficient~$-1$.}
    \label{fig:webs69}
\end{figure}

\newpage

\usection{Proofs}

\section{Properties of special invariants}
\label{sec:special-proofs}

\begin{proof}[Proof of Proposition~\ref{prop:specweb}] 
It is routine to examine the cases and check that 
any tensor diagram representing 
a special invariant ``planarizes'' 
(via repeated application of skein relations) into a single
non-elliptic web. 
The key relation used in these verifications is shown in 
Figure~\ref{fig:basic-step}. 
An example is given in Figure~\ref{fig:special-planarize}. 
\end{proof}

\vspace{-.05in}

\begin{figure}[ht]
\begin{center}
\scalebox{0.7}{
\input{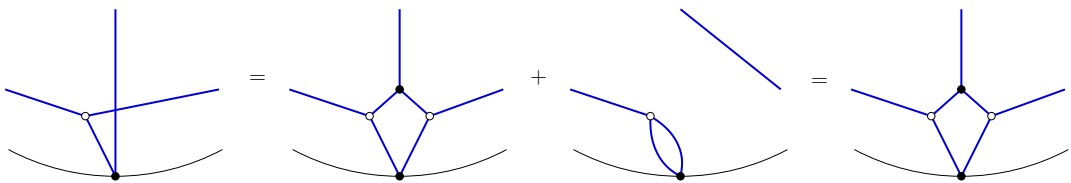}
}
\end{center} 
    \caption{Basic planarizing step. Cf.\ Figure~\ref{fig:webs27} on
    the left.}
    \label{fig:basic-step}
\end{figure}

\vspace{-.05in}

\begin{figure}[ht]
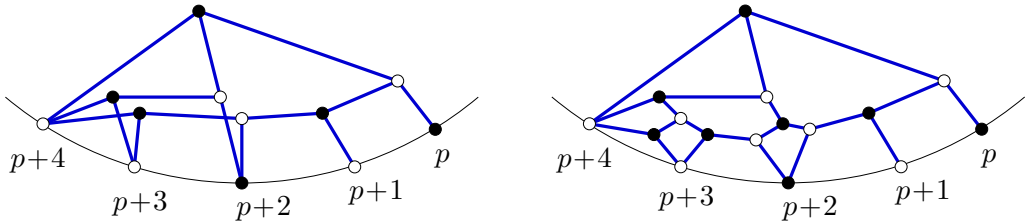

\begin{center}
\input{webs53a.pstex_t}
\qquad
\input{webs53b.pstex_t}
\end{center} 
    \caption{The special invariant $J^{p,p+2,p+4}$ is a web invariant.}
    \label{fig:special-planarize}
\end{figure}

\pagebreak[3]

The proof of Proposition~\ref{prop:special-are-irreducible}
will require some preparations. 

\begin{lemma} 
\label{lem:antis}
 Assume that an invariant $X\in R_\sigma(V)$
is antisymmetric and linear with respect to its arguments
$v_p$ and $v_{p+1}$ which have the same variance 
(i.e., are both contravariant or both covariant). 
 Then $X$ can be expressed as a linear combination of web invariants 
defined by webs which have a fork between vertices $p$ and~$p+1$. 
\end{lemma}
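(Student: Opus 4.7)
The plan is to reduce the lemma to Kuperberg's basis theorem (Theorem~\ref{th:web-basis}) applied to a modified signature in which the two adjacent same-variance arguments are collapsed into a single argument of the opposite variance, with the bridge provided by the cross product.

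Suppose for concreteness that both $v_p$ and $v_{p+1}$ are vectors; the covector case is entirely symmetric. Since $\dim V=3$ and $X$ is multilinear and antisymmetric in $(v_p,v_{p+1})$, the natural identification $\wedge^2 V\cong V^*$ given by the cross product $(v,w)\mapsto v\times w$ (relative to the fixed volume form) produces a unique function $Y$, linear in a new covector argument $u^*$ taking the place of $v_p$ and $v_{p+1}$, such~that
\[
X(\dots,v_p,v_{p+1},\dots)=Y(\dots,v_p\times v_{p+1},\dots).
\]
Concretely, if $X=\sum_{i<j}a_{ij}(\text{other args})(v_{p,i}v_{p+1,j}-v_{p,j}v_{p+1,i})$, one sets $Y=\sum_k b_k(\text{other args})\,u^*_k$ with $b_k=\sum_{i<j}\varepsilon_{kij}a_{ij}$. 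Since the cross product is $\SL(V)$-equivariant and $X$ is $\SL(V)$-invariant, $Y$ is $\SL(V)$-invariant as well; hence $Y\in R_{\sigma'}(V)$, where $\sigma'$ is obtained from $\sigma$ by merging the two consecutive $\bullet\bullet$ entries at positions $p,p+1$ into a single~$\circ$.

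Applying Theorem~\ref{th:web-basis}, expand $Y=\sum_i c_i\,[D_i']$ as a finite linear combination of web invariants of signature~$\sigma'$. Because $Y$ is linear in its new covector argument, the boundary white vertex of each $D_i'$ corresponding to $u^*$ is univalent. Lift each $D_i'$ to a tensor diagram $D_i$ of signature~$\sigma$ by replacing this univalent boundary vertex with the fork of Figure~\ref{fig:webs17}: two new black boundary vertices at positions $p$ and $p+1$, joined via a new internal white vertex to the unique internal vertex that was originally adjacent to the removed covector boundary vertex. By the cross-product interpretation of the embedding of Theorem~\ref{th:fork}, we have $[D_i](\dots,v_p,v_{p+1},\dots)=[D_i'](\dots,v_p\times v_{p+1},\dots)$, whence $X=\sum_i c_i\,[D_i]$ with each $D_i$ containing the required fork between $p$ and $p+1$.

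It remains to verify that each $D_i$ is itself non-elliptic and therefore a genuine web. The only new internal vertex introduced by the fork has degree three with exactly one internal neighbor, so forking a univalent boundary vertex of a non-elliptic web creates neither multiple edges nor unoriented $4$-cycles all of whose vertices are internal. I do not foresee a serious obstacle here: the substantive content is the cross-product identification of the first step, after which Theorem~\ref{th:web-basis} combined with the fork embedding of Theorem~\ref{th:fork} finishes the argument.
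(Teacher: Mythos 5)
Your proof is correct and takes a genuinely different route from the paper's. The paper expands $X$ directly in the web basis for signature~$\sigma$ (so that each web has a single edge at $p$ and a single edge at $p+1$), introduces a crossing between those two edges, notes that this corresponds to transposing $v_p$ and $v_{p+1}$ and hence to negation, and then resolves the crossing by the skein relation of Figure~\ref{fig:skein}(a) to extract the fork term with an extra coefficient $-\tfrac12$. You instead cleanly separate the linear algebra from the web theory: you use the $\SL(V)$-equivariant identification $\wedge^2 V\cong V^*$ to pass from $X$ to an invariant $Y$ of the collapsed signature $\sigma'$, apply Kuperberg's basis theorem in $R_{\sigma'}(V)$, and lift back by forking. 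Your argument has the modest advantage that forking a univalent boundary vertex of a non-elliptic web manifestly yields a non-elliptic web (the new interior vertex has a unique interior neighbor, so it cannot create a multiple edge or contribute to an internal $4$-cycle), whereas in the paper's argument the fork term produced by the skein relation is a priori only a tensor diagram that may need further reduction. Two small points of hygiene: the vertex to which $u^*$ was attached in $D_i'$ need not be internal (it could be another boundary vertex), though this does not affect the argument; and what you invoke from Theorem~\ref{th:fork} is only the elementary tensor-level fact that forking realizes the cross product — not the theorem's conclusion about cluster subalgebras — which matters because Lemma~\ref{lem:antis} feeds into the proof of Theorem~\ref{th:main}, which in turn precedes Theorem~\ref{th:fork}, so citing the latter's full statement here would be circular.
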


\begin{proof}
We can write $X$ as a linear
combination  of web invariants $X=\sum_i c_i Y_i$ 
where the web representing each $Y_i$ has a single edge incident to~$p$
(resp., to~$p+1$). 
Let us attach a crossing to this web at vertices $p$
and~$p+1$ to get an invariant~$\overline Y_i$.
(In other words, we redirect the edge incident to $p$ so that it
connects to~$p+1$, and vice versa.) 
Since $\overline X=\sum_i c_i \overline Y_i$ is obtained
from $X$ by interchanging $v_p$ and~$v_{p+1}$,
and since $X$ is antisymmetric in these arguments, 
it follows that $\overline X=-X$.
On the other hand, applying the skein relation at the crossing 
gives $\overline Y_i = Y_i + Z_i$, where $Z_i$ has
a fork between $p$ and~$p+1$. 
Taking linear combinations, we get $-X = X + \sum_i
c_i Z_i$, implying 
$X = -\frac{1}{2} \sum_i c_i Z_i$,
an expansion of the desired kind. 
\end{proof}

\begin{lemma} 
\label{lem:plantreeirr}
 A web invariant defined by a planar tree is irreducible.
\end{lemma}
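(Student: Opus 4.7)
The plan is to induct on the number $I$ of internal vertices of $D$. The base cases $I=0$ (a single edge, with $[D]$ a bilinear pairing $Q_{ij}$) and $I=1$ (a tripod, with $[D]$ a Pl\"ucker coordinate) are immediate, as both produce polynomials which are classically irreducible. For the inductive step, the key will be to locate a \emph{fork}: an internal vertex $v$ two of whose neighbors, $p$ and $q$, are cyclically consecutive boundary leaves of $D$ of the same color (opposite to $v$'s own color). The subtree $T'$ of the unclasping $\tilde D$ induced on the internal vertices is itself a tree (removing leaves from a tree preserves connectivity), and any leaf of $T'$ supplies such a $v$; non-ellipticity prevents its two leaf-neighbors from coinciding at a single boundary vertex, while planarity forces them to be cyclically consecutive on the disk boundary.

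Suppose now that $[D]=fg$ is a non-trivial factorization in $R_\sigma(V)$. Since $p$ and $q$ are degree-one leaves, $[D]$ is bilinear in $(v_p, v_q)$, and since the volume tensor at $v$ is antisymmetric, $[D]$ is in fact bilinear \emph{antisymmetric} in these two arguments. Analyzing how the multidegrees in $v_p$ and $v_q$ split between $f$ and $g$, the first case to rule out is that $v_p$ belongs to $f$ and $v_q$ to $g$ (or vice versa): writing $f = \langle c,v_p\rangle$ and $g = \langle d,v_q\rangle$ for covector-valued functions $c,d$ of the remaining arguments, the antisymmetry condition reads $c_i d_j + c_j d_i \equiv 0$, which has no nonzero solution. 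In the remaining case, $g$ is independent of both $v_p, v_q$ and $f$ itself is bilinear antisymmetric, so there is an $\SL(V)$-equivariant vector-valued polynomial $u_f$ in the remaining arguments with $f = \vol(v_p, v_q, u_f)$.

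To finish, I would replace the fork $\{v, p, q\}$ by a single new boundary vertex $r$ (of the color opposite to $p, q$) attached by one edge to $v$'s third neighbor $v'$, which is necessarily internal when $I \ge 2$. This produces a smaller strict non-elliptic planar tree $D'$ with $I-1$ internal vertices in a new invariant ring $R_{\sigma'}(V)$; contracting the volume tensor at $v$ yields the identity $[D](v_p, v_q, \text{rest}) = [D'](v_p \times v_q, \text{rest})$. Because $r$ is a leaf of $D'$, one may write $[D'](w, \text{rest}) = \langle w, U(\text{rest})\rangle$, and comparison with $fg = \langle v_p \times v_q, u_f\rangle \cdot g$ gives $U = g\, u_f$ and hence the factorization $[D'] = g \cdot \langle w, u_f\rangle$ in $R_{\sigma'}(V)$. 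Neither factor is a unit: $g$ by assumption, and $\langle w, u_f\rangle$ because it is linear in $w$ with $u_f \neq 0$ (otherwise $[D] = 0$, contradicting Theorem~\ref{th:web-basis}). This contradicts the inductive hypothesis applied to~$D'$.

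The main obstacle will lie in the fork-replacement step: verifying that the smaller diagram $D'$ is a bona fide strict non-elliptic planar tree, that $v'$ is internal (where the hypothesis $I \ge 2$ is essential), and that the vector field $u_f$ extracted from $f$ genuinely produces an element $\langle w, u_f\rangle$ of $R_{\sigma'}(V)$ when paired with the new covector input at $r$. These checks are delicate because one must track edge orientations, the correct assignment of color to $r$, and the tautological embedding $R_{\sigma''}(V) \hookrightarrow R_{\sigma'}(V)$ of the subring that ``forgets'' the boundary arguments at $p$ and $q$, but they are structural rather than computational.
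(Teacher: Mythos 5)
Your argument is correct and mirrors the paper's proof essentially step for step: induct on tree size, locate a fork, split by where the multidegrees of $v_p, v_q$ land, kill the split case by antisymmetry over a domain, and reduce to a smaller planar tree via the cross-product. The only cosmetic deviation is that you extract the equivariant vector $u_f$ by direct restitution (using $\Lambda^2 V \cong V^*$), whereas the paper invokes its Lemma~\ref{lem:antis} (a skein-relation argument) to reach the same conclusion that $v_p, v_q$ enter only through $v_p \times v_q$.
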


Note that by Corollary~\ref{cor:planar-tree}, 
such an invariant is a cluster or
coefficient variable in~$R_{\sigma}(V)$. 
By \cite[Theorem~1.3]{gls-factorial},
every cluster variable in \emph{any} cluster algebra is
irreducible. 
At this point, we cannot of course rely on these statements, 
as we are still in the process of
proving that $R_{\sigma}(V)$ is a cluster algebra. 

\begin{proof} 

We proceed by induction on the size of the tree. 
In the base cases of Weyl generators,
irreducibility is well known (and easy to prove). 

Suppose a web invariant $X$ defined by a planar tree~$T$ 
has a nontrivial factorization $X=X_1X_2$. 
It is easy to see that $T$ must have a fork, 
say between vertices $p$ and $q$ (of the same color)
associated with (co)vectors $v_p$ and~$v_q$, respectively. 
Without loss of generality, we may assume that $q=(p+1)\bmod(a+b)$. 

As $X$ is multi-homogeneous, so must be $X_1$ and~$X_2$.  
Moreover $X$ is multilinear, implying the dichotomy: 
either one of the factors $X_i$ depends (linearly) on both 
$v_p$ and $v_q$ while the other depends on neither, 
or else one of the factors depends on $v_p$ but not $v_q$ while another 
depends on $v_q$ but not $v_p$. 

The latter option is ruled out by the fact that $X$ is antisymmetric 
in $v_p$ and $v_q$ (because of the fork), 
so it must vanish if we substitute $v_p = v_q$. 
On the other hand, neither factor $X_i$ 
vanishes under this substitution, and $R_\sigma$ is a domain. 

So $v_p$ and $v_q$ appear in the same factor, say~$X_1$. 
Then $X_2$ does not depend on $v_p$ and~$v_q$.  
Hence $X_1$ is antisymmetric in $v_p$ and~$v_q$.
Applying Lemma~\ref{lem:antis}, 
we express $X_1$ as a linear combination of web invariants 
whose webs have a fork between $p$ and $q=p+1$. 
Thus, $v_p$ and $v_q$ enter the identity $X=X_1X_2$ 
exclusively through their cross product $v_p\times v_q$.
This yields a nontrivial factorization of an invariant 
defined by a smaller planar tree,  
contradicting the induction assumption.
\end{proof}

\begin{proof}[Proof of Proposition~\ref{prop:special-are-irreducible}]

Let $X$ be a nonzero special invariant that does not have one of the
forms listed in Proposition~\ref{prop:special-reducible}.
We need to show that $X$ is irreducible. 
We describe the general idea of the proof, omitting some details. 
The proof is by induction on the number of internal vertices in the
tree diagram defining~$X$. 

Let $A\subset\{1,\dots,a+b\}$ denote the set of lower and upper
indices appearing in the 
original notation for~$X$ (cf.\ Definition~\ref{def:special-inv}). 
Thus the cardinality of $A$ is $2$, $3$, or~$4$. 

If $X$ is a planar tree, the statement reduces to
Lemma~\ref{lem:plantreeirr}.
Otherwise the ``tail'' involved in building the proxy for some of the
vertices in~$A$ (cf.\ Figure~\ref{fig:caterpillars}) is long 
enough to involve another such vertex.
It is not hard to see that we can find two
consecutive vertices $p,q\in A$ (going clockwise) 
such that the boundary segment between them is covered 
by only one such tail. Furthermore, for $X$ to be indecomposable,
 this segment has to be sufficiently long, by definition.
Assume without loss of generality that vertex $p$ is white;
then $p+1$ is black, $p+2$ is white, etc. 
We then define invariants $X'$ and $X''$ as follows.
(Consult Figure~\ref{fig:webs72}.)
For~$X'$, attach an outside fork to $p+1$ and identify 
one of its endpoints with~$p$.
(This changes the color of the entry $p+1$ of the signature.) 
For $X''$, attach an outside fork to $p$ and identify 
one of its endpoints with~$p+1$.
(This changes the color of the $p$'th entry.) 
Thus the invariants 
$X'=X'(\dots,v_p^*,v_{p+1}^*,v_{p+2}^*,\dots)$ and
$X''=X''(\dots,v_p,v_{p+1},v_{p+2}^*,\dots)$ 
are obtained from $X(\dots,v_p^*,v_{p+1},v_{p+2}^*,\dots)$ via the
substitutions
\begin{align}
\label{eq:X'-sub}
X'(\dots,v_p^*,v_{p+1}^*,\dots)
&=X(\dots,v_p^*,v_p^*\times v_{p+1}^*,\dots),\\
\label{eq:X''-sub}
X''(\dots,v_p,v_{p+1},\dots)
&=X(\dots,v_p\times v_{p+1},v_{p+1},\dots).
\end{align}


 \begin{figure}[ht]
    \begin{center}
    \input{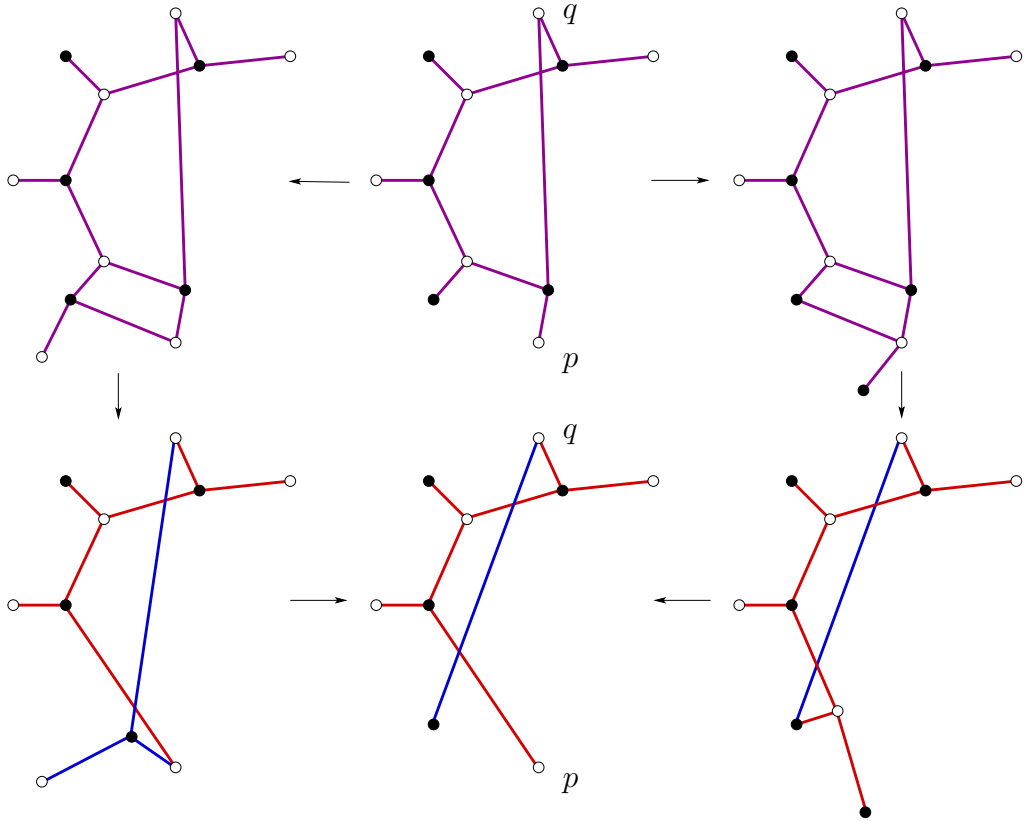}
    \end{center}
    \caption{Top row: invariants $X'$, $X$, $X''$. }
    \label{fig:webs72}
\end{figure}

Suppose $X$ has a nontrivial factorization $X=YZ$;
it then specializes into factorizations $X'=Y'Z'$ and~$X''=Y''Z''$. 
On the other hand, $X'$ and $X''$ factor as shown in
Figure~\ref{fig:webs72} (bottom row), one of the factors (shown in red)
being given by a planar tree 
and another (blue) factor being a special invariant of the same kind as~$X$. 
The red factor is irreducible by Lemma~\ref{lem:plantreeirr}
while the blue one is irreducible by the induction assumption: 
it is a special invariant of a kind not listed in
Proposition~\ref{prop:special-reducible}, and it has fewer
internal vertices. 
Since our rings of invariants are unique factorization domains,
we conclude that factorizations $X'=Y'Z'$ and~$X''=Y''Z''$
must coincide with the ones shown in Figure~\ref{fig:webs72}. 

Since $X$ is linear in~$v_{p+1}$, either $Y$ or~$Z$ (say~$Y$) 
does not depend on~$v_{p+1}$. Thus $Y$ is unaffected by the
substitution~\eqref{eq:X'-sub}, and $Y'=Y$ does not depend
on~$v_{p+1}^*$. Consequently, $Y$ must be the invariant 
shown in red in the lower-left corner 
of Figure~\ref{fig:webs72}. 
Similarly, one of the factors $Y$ and $Z$ does not depend on~$v_p$,
is unaffected by the substitution~\eqref{eq:X''-sub},
and appears in the factorization of $X''$ 
shown in Figure~\ref{fig:webs72} in the lower-right corner;
this must be the invariant shown in blue (and it must be~$Z$). 
In conclusion, the factors $Y$ and $Z$ must match 
the ones shown in the center of the bottom row. 
But the product of these two special invariants yields a sum of two
nonzero terms, one of them being~$X$. 
Thus $YZ\neq X$, a contradiction. 
\end{proof}

\begin{proof}[Proof of Proposition~\ref{prop:special-inv-factoring}]

In view of Proposition~\ref{prop:special-are-irreducible}, 
a nonzero special invariant can be factored into irreducible ones 
by repeated application of the rules
\eqref{enum:fact-step-1a}--\eqref{enum:fact-step-5}
in Proposition~\ref{prop:special-reducible}. 
An easy check shows that this process terminates. 
Uniqueness~of factorization 
follows from the fact that our ring is a UFD.
See Figure~\ref{fig:webs53} for an example. 
\end{proof}

\begin{figure}[ht]
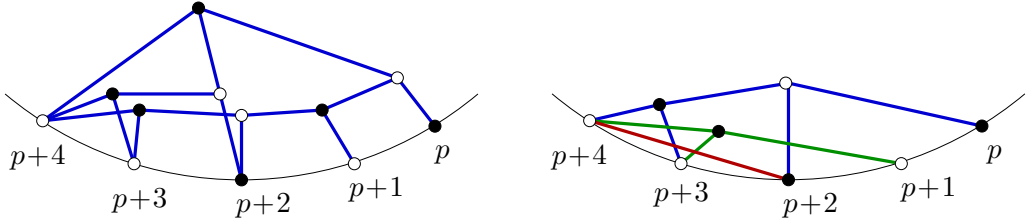

\begin{center}
\input{webs53a.pstex_t}
\qquad
\input{webs53c.pstex_t}
\end{center} 
    \caption{Factorization of 
$J^{p(p+2)(p+4)}$. 
The rules 
\eqref{enum:fact-step-1a}--\eqref{enum:fact-step-5}
can be applied in two different ways:
$J^{p(p+2)(p+4)} = J_{p+2}^{p+4} J_{p+3}^p = J_{p+2}^{p+4}
J_{p+3}^{p+1} J_p^{p+2}$
or
$J^{p(p+2)(p+4)} =
J_p^{p+2} J_{p+1}^{p+4} = J_p^{p+2} J_{p+3}^{p+1} J_{p+2}^{p+4}$,
yielding identical results.}
    \label{fig:webs53}
\end{figure}


\begin{proof}[Proof of Proposition~\ref{prop:coeff-special}]
First, we need to prove 
 that a non\-zero special invariant not on our list is not compatible with
 some other special invariant. This is checked on a case by case
 basis. For an invariant of the form~$J_p^q$, one finds an 
 incompatible special invariant $J_r^s$ such that the
 diagonals $pq$ and $rs$ cross each other. Similarly, if
 our invariant is $J_{pqr}$ or $J^{pqr}$, one can find
 an incompatible special invariant $J_s^t$ such that the
 diagonal $st$  crosses the triangle~$pqr$. 
(These are special cases of failure of \emph{weak separation}, 
cf.~\cite{leclerc-zelevinsky}.)  
Special invariants $J_{pq}^{rs}$ are handled analogously. 

Another claim to check is that a nonzero invariant of the form
$J_p^{p \pm 1}$ is compatible with any web invariant. 
Coefficient invariants come in two flavors: short and long, 
see Figure \ref{fig:webs54}. The
short ones are obviously compatible with any web invariant. 
The relevant calculation for a long invariant 
is shown in Figure~\ref{fig:webs55}. Applying it
around every vertex establishes the claim. 
\end{proof}

\begin{figure}[ht]
\scalebox{0.9}{\input{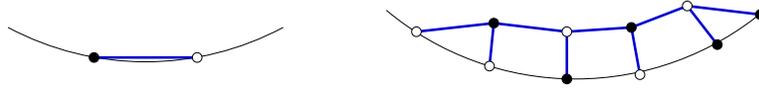}}
    \caption{Two kinds of coefficient invariants: short and long.}
    \label{fig:webs54}
\end{figure}

\begin{figure}[ht]
\scalebox{0.7}{\input{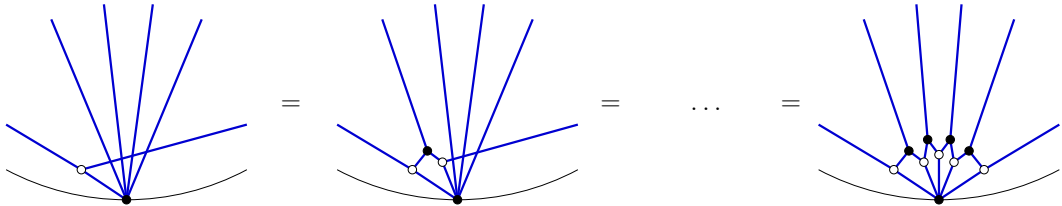}}
    \caption{Compatibility of 
    coefficient invariants with web invariants is verified by 
iterating the basic relation shown in Figure~\ref{fig:basic-step}.} 
    \label{fig:webs55}
\end{figure}


\begin{proof}[Proof of Proposition~\ref{prop:3-term-skein-special}
]
Each identity 
can be obtained by repeated 
application of skein relations. 
A~key role is played by the relation shown in
Figure~\ref{fig:basic-step}. An example of a computation verifying the 
first relation is given in Figure \ref{fig:webs67aa}.
\begin{figure}[ht]
\scalebox{0.8}{\input{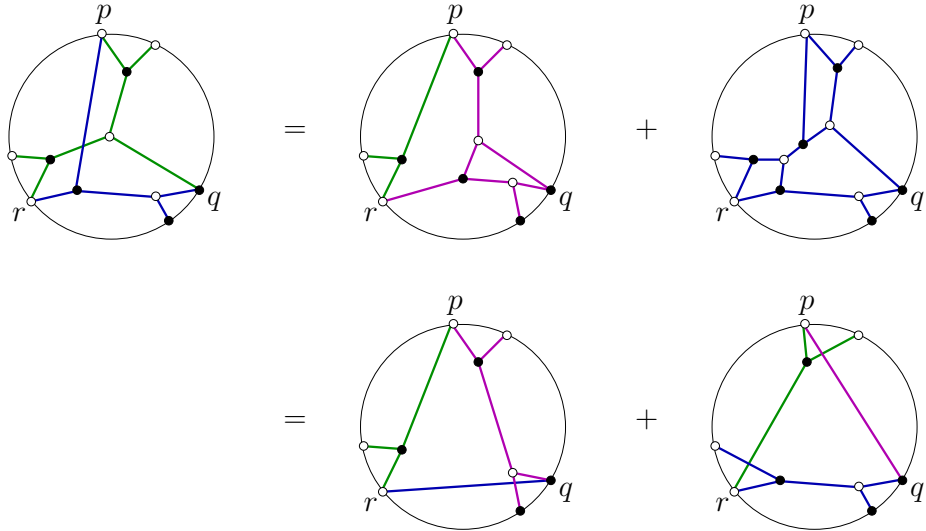}}
    \caption{The verification of the relation $J_{pqr} J^{pqr} = J_r^p J_q^r J_p^q + J_r^q J_p^r J_q^p$.} 
    \label{fig:webs67aa}
\end{figure}
\end{proof}

\begin{proof}[Proof of Proposition~\ref{pr:thin-triangles}]

Without loss of generality, we can assume that side $qr$ is exposed,
so that $r=(q+1)\bmod(a+b)$. 
One then checks that $J^{pq,q+1}=J_{q}^p$ if $q$ is white,
and $J^{pq,q+1}=J_{q+1}^p J_{q}^{q+1}$ if $q$ is black. 
\end{proof}

\begin{proof}[Proof of Proposition~\ref{pr:thin-quads}]

If $p$ is white and $p+1$ is black, then
$J^{sp}_{p+1,p+2}=J_p^s$. 
Relation~\eqref{eq:3-term-3} then gives
\[
J_{p+2}^p J_{p+1}^s 
= J_{p+1}^p J_{p+2}^s + J_{p+1,p+2}^{sp}
= J_{p+1}^p J_{p+2}^s + J_p^s\,.
\]
If $p$ is black and $p+1$ is white, then
$J_{sp}^{p+1,p+2}=J_s^p$, 
and \eqref{eq:3-term-3} gives
\[
J_p^{p+2} J_s^{p+1}
= J_s^{p+2} J_p^{p+1} + J_{sp}^{p+1,p+2}
=J_s^{p+2} J_p^{p+1} + J_s^p\,. 
\qedhere
\]
\end{proof}

\section{Properties of special seeds}
\label{sec:properties-of-special-seeds}

\begin{proof}[Proof of Theorem~\ref{th:z(T)}]

Denote $N=a+b$. 
Since the theorem can be verified by direct calculation
for $N\le 6$, we assume that $N\ge 7$ from now on.

By Proposition~\ref{prop:coeff-special}, 
each side of the $N$-gon $P_\sigma$ produces 
one nonzero special invariant 
which by construction belongs to~$\zz(T)$;
those are exactly the coefficient invariants.

Define the \emph{length} of a diagonal $pq$ in~$P_\sigma$
as the number $\min(|p-q|,|N-p-q|)$;
this is the length of the shortest path from $p$ to~$q$ along the
perimeter of~$P_\sigma$. 

\begin{lemma} 
\label{lem:decomp}
Assume that $N\ge 7$. 
Let $P'$ denote the interior of $P_\sigma$
with all diagonals of length~$2$ in~$T$ removed. 
The set $P'$ uniquely decomposes into a disjoint union of 
``fundamental regions'' of types $A, B, C, D, E, F$ 
shown in Figure~\ref{fig:webs73}.
\end{lemma}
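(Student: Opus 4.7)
The plan is to view the lemma as a straightforward case analysis, indexed by the triangles of $T$ itself. The triangulation $T$ already gives a decomposition of the interior of $P_\sigma$ into (open) triangles, and the length-$2$ diagonals form a subset of the edges of this decomposition. Removing them produces a coarser decomposition, obtained by merging adjacent triangles across each removed diagonal. My task is to identify the shapes that can arise as the resulting connected components and show that they are exactly types $A$--$F$.

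The key combinatorial observation is that a length-$2$ diagonal $p(p{+}2)$ in $T$ is always incident, on one side, to a unique ``ear'' triangle $p(p{+}1)(p{+}2)$ (with two exposed sides), and, on the other side, to a non-ear triangle. Conversely, every ear in $T$ has precisely one length-$2$ side, namely the diagonal opposite to its unique interior vertex, which necessarily belongs to $T$. So, after removing length-$2$ diagonals, each ear gets glued to exactly one non-ear triangle, and no two ears can ever be directly merged with each other. This gives a canonical bijection between the fundamental regions of $P'$ and the non-ear triangles of~$T$: for each non-ear triangle $t$, its region $F(t)$ consists of $t$ together with all ears attached to $t$ via length-$2$ sides.

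The enumeration then proceeds by classifying each non-ear triangle $t$ according to (i) whether $t$ has $0$ or $1$ exposed sides (it cannot have $2$, since then it would be an ear), and (ii) how many of its non-exposed sides are length-$2$ diagonals of $T$ (for a triangle with $0$ exposed sides this gives four subcases $0,1,2,3$; for a triangle with $1$ exposed side it gives three subcases $0,1,2$). These seven combinatorial possibilities produce the following shapes: a bare triangle in the two ``no length-$2$'' cases; a quadrilateral when exactly one side is length-$2$; a pentagon when two sides are; and a hexagon when all three sides of an internal triangle are length-$2$. Under the hypothesis $N\ge 7$ one verifies that one of the seven possibilities (most likely a triangle with one exposed side and two length-$2$ diagonals) cannot occur or else duplicates another, leaving exactly six region types, which I would match one-by-one with the figure to confirm they are $A,B,C,D,E,F$.

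Uniqueness and disjointness follow immediately from the bijection in the previous paragraph: every point of $P'$ lies in a unique triangle of $T$, each ear is assigned unambiguously to its unique non-ear neighbor, and distinct non-ear triangles give disjoint regions because no edge that was \emph{not} removed can lie in the interior of a fundamental region. The main obstacle I anticipate is purely bookkeeping: ensuring the case analysis in step~(ii) is exhaustive and that the hypothesis $N\ge 7$ is used correctly to rule out small-polygon degeneracies (where, for instance, the ``other side'' of a length-$2$ diagonal is itself an ear, something that can happen only for $N\le 6$). Once this is checked, matching each of the six remaining combinatorial types with the corresponding picture in Figure~\ref{fig:webs73} completes the proof.
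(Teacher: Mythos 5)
The paper states this lemma without proof (it is followed only by an example and a remark about the $N=6$ failure), so you are filling a genuine gap. Your central idea — that every length-$2$ diagonal of $T$ has a unique ear $p(p{+}1)(p{+}2)$ on its short side and a non-ear triangle on the other side (for $N\ge 5$), and hence fundamental regions are in canonical bijection with the non-ear triangles of $T$, each region being a non-ear triangle with its attached ears — is correct, and is surely the right skeleton of a proof.

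However, your case count is off, and in a way that matters. Your parametrization by $(\#\text{exposed sides},\ \#\text{length-$2$ diagonals})$ gives seven a priori cases, but \emph{two} of them are impossible, not one, leaving five shapes rather than six. Concretely: a triangle with one exposed side $p(p{+}1)$ and two length-$2$ diagonal sides forces a vertex $r$ with $r\in\{p{+}2,p{-}2\}\cap\{p{+}3,p{-}1\}\pmod N$, which is empty for all $N\ge 6$; so this case is already ruled out \emph{before} the hypothesis $N\ge 7$ enters. The case the hypothesis $N\ge 7$ actually excludes is the other one — the internal triangle with all three sides of length $2$, which exists only for $N=6$. You guessed the wrong exclusion. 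With both cases removed you have five shapes (internal triangle; internal triangle $+$ one ear; internal triangle $+$ two ears; boundary triangle; boundary triangle $+$ one ear), and matching "one-by-one with the figure to confirm they are $A,\dots,F$" cannot work as stated, since $6\ne 5$.

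The missing sixth type does not come from your case table at all: inspecting Figure~\ref{fig:webs73} and its caption ("All dashed diagonals are of length at least~$3$. Type~$D$ includes diagonals of length at least~$4$.") shows that two of the types (roughly, $C$ and $D$) have the \emph{same shape} but are distinguished by the lengths of the bounding diagonals — i.e., by which kind of region sits on the other side of a dashed edge. Your enumeration by shape alone cannot see this distinction, and this is also the source of the paper's cautionary remark about $N=6$: when two regions of the same type abut a common length-$3$ diagonal and both include it, the decomposition fails to be disjoint. So before "matching with the figure," you would need to refine the classification to record the lengths (or equivalently the type of neighbor) across each non-removed diagonal, and then check that for $N\ge 7$ the resulting type assignments are consistent and exhaustive. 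The bijection part of your argument is sound; the classification part needs this extra layer.
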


An example is shown in Figure~\ref{fig:webs74}.
We note that the lemma fails for $N=6$ 
if $T$ contains a diagonal of length~$3$. 
(The two regions of type~$E$ are not
disjoint as they share this diagonal.)

\begin{figure}[ht]
    \begin{center}
\scalebox{0.8}{
   \input{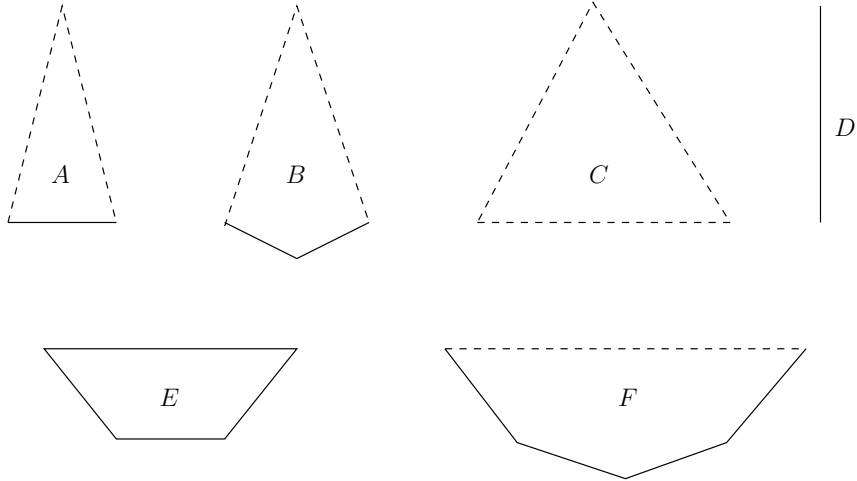}
}
   \end{center}
\caption{Fundamental regions. A~dashed line means that the region
does not include this diagonal. 
All dashed diagonals are of length at least~$3$. 
Type~$D$ includes diagonals of length at least~$4$. 
The short solid segments in $A$,
$B$, $E$ and $F$ are sides of the polygon~$P_\sigma$.}
   \label{fig:webs73}
\end{figure}

\begin{figure}[ht]
    \begin{center}
   \input{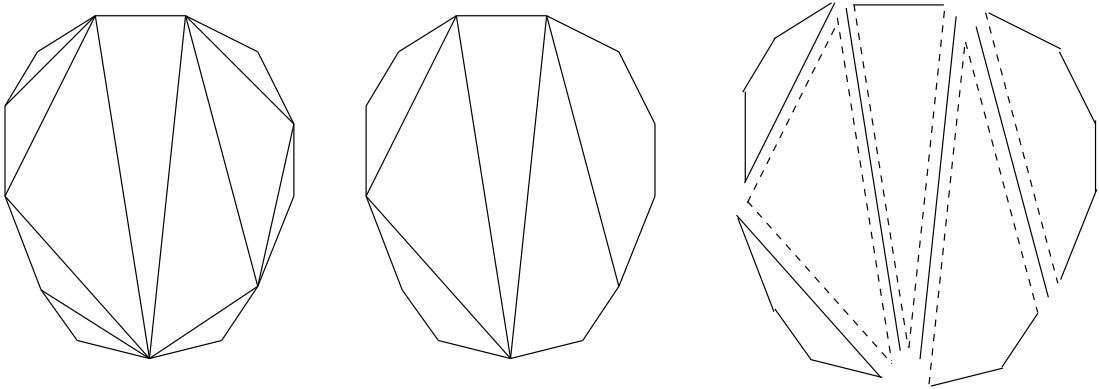}
   \end{center}
\vspace{-.2in}
   \caption{Decomposition of $P_\sigma$ into fundamental regions.}
   \label{fig:webs74}
\end{figure}

\begin{lemma}
Special invariants associated to each fundamental region
contribute the following number of non-coefficient
irreducible factors to the cluster~$\xx(T)$: 
\[
\begin{array}{|c|l|l|l|l|l|l|}
\hline
\text{\rm type of a region} & A & B & C & D & E & F\\
\hline
\text{\rm contribution to $\zz(T)$} & 0 & 2 & 1 & 2 & 3 & 3\\
\hline
\end{array}
\]
The contributions of different fundamental regions are disjoint. 
To clarify, if a region $R$ does not include a (dashed) diagonal~$pq$
lying on its boundary, then the contributions of $R$ 
exclude all irreducible factors appearing in $J_p^q$ and~$J_q^p$.
\end{lemma}

\begin{proof}
Case by case consideration depending on the signature.
One typical example involving a region of type~$E$
(with two possible colorings) is shown in Figure~\ref{fig:webs75}. 
\end{proof}

\begin{figure}[ht]
    \begin{center}
   \input{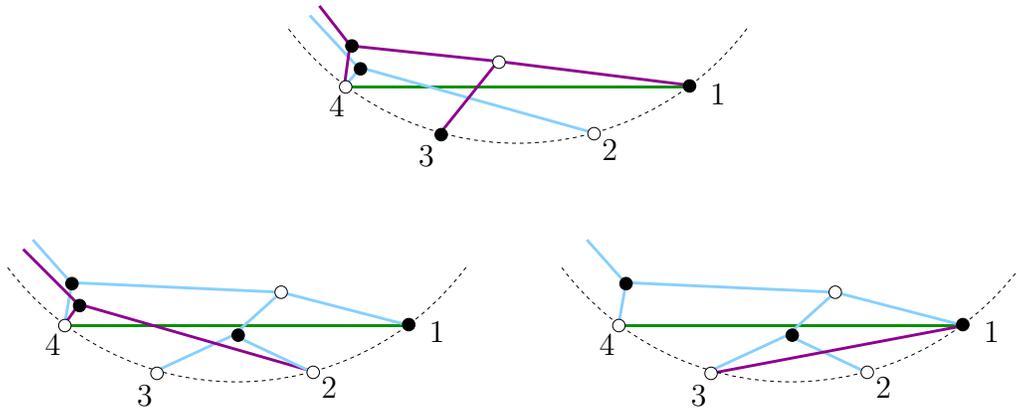}
   \end{center}
   \caption{
Two possible colorings for a region of type~$E$. 
In the first case, regardless of which of the two short diagonals 
($13$ or~$24$) is chosen, we
obtain the same three variables $J_1^4$, $J_1^3$ and $J_4^2$. In the
second case, depending on which short diagonal is used, we get either
$\{J_1^4, J_4^1, J_4^2\}$ or $\{J_1^4, J_4^1, J_1^3\}$. 
Each case yields exactly three non-coefficient variables. 
The fact that they cannot come from other
fundamental regions can be seen by performing a similar analysis 
for those regions.
}
   \label{fig:webs75}
\end{figure}

We continue with the proof of Theorem~\ref{th:z(T)}. 
It is a simple counting exercise to check that the total contribution
from all fundamental regions is exactly $2N-8$ non-coefficient
variables, as claimed.
For example, in Figure \ref{fig:webs74} we get 
\[
3+3+1+0+2+3+2+2+2=18=2\cdot 13-8.
\]

It remains to check that the special invariants in $\zz(T)$ are pairwise
compatible. 
For invariants coming from the same fundamental region,
this is done by direct case by case inspection. 
For invariants from different regions, 
the argument goes as follows. 
The ``tails'' of the special invariants 
(cf.\ Figure~\ref{fig:caterpillars})
do not create any obstructions to compatibility:  
just iterate the basic
planarizing steps of Figure~\ref{fig:basic-step}. 
The remaining pieces lie in different regions 
and thus do not intersect.
\end{proof}


\begin{proof}[Proof of Proposition~\ref{pr:x(T)=x(T')}]

The proof reduces to a direct verification comparing 
triangulations $T$ and~$T'$ which differ by a flip 
that switches the diagonals $(q,q+2)$ and $(q+1,q+3)$. 
A~typical example is shown in Figure~\ref{fig:webs15}. 
\end{proof} 

\begin{proof}[Proof of Proposition~\ref{prop:special-seeds-exchanges}]

The proof is a case by case verification, organized as follows.
For each region~$R$, 
the variables (i.e., irreducible factors) 
contributed by $R$ are exchanged using only
the variables defined within~$R$ 
and/or several adjacent regions.
Specifically:
\begin{itemize}
\item 
regions of type $A$ do not contribute any variables, so there is
nothing to check; 
\item 
for regions of type $B$ (resp.,~$C$,~$F$), 
also consider two (resp., three, one) adjacent region(s) of types
$D$ or~$E$; 
  \item 
for regions of type $D$ (resp.,~$E$),
also consider two (resp., one)
adjacent region(s) of types $A$, $B$, $C$ or~$F$, as well as the regions
 of types $D$ or $E$ which are adjacent to those.
\end{itemize}
In each of these cases, the proof consists of a local verification 
(sometimes tedious but always straightforward) 
ranging over a finite 
list of possible patterns. 
\end{proof}


\section{Building a quiver 
}
\label{sec:building-a-quiver}

In this section, we provide a blueprint for building the quiver~$Q(T)$
associated with an arbitrary triangulation~$T$ of the polygon~$P_\sigma$.
As mentioned earlier, a detailed description for 
a particular choice of~$T$ can be found in Section~\ref{sec:T-fan}.

The simplest part of the recipe
concerns the portions of $Q(T)$ coming from triangles $pqr$ in~$T$ 
which have no exposed sides. 
We draw the vertex~of~$Q(T)$ representing
the special invariant $J_{pqr}$ inside the triangle~$pqr$. 
The vertices representing cluster variables coming from 
$J_p^q$ and $J_q^p$ are placed on
the diagonal~$pq$ (and similarly for $qr$ and~$pr$),
with the former closer to~$q$, and the latter closer to~$p$. 
%
See Figure~\ref{fig:webs43}. 

\begin{figure}[ht]
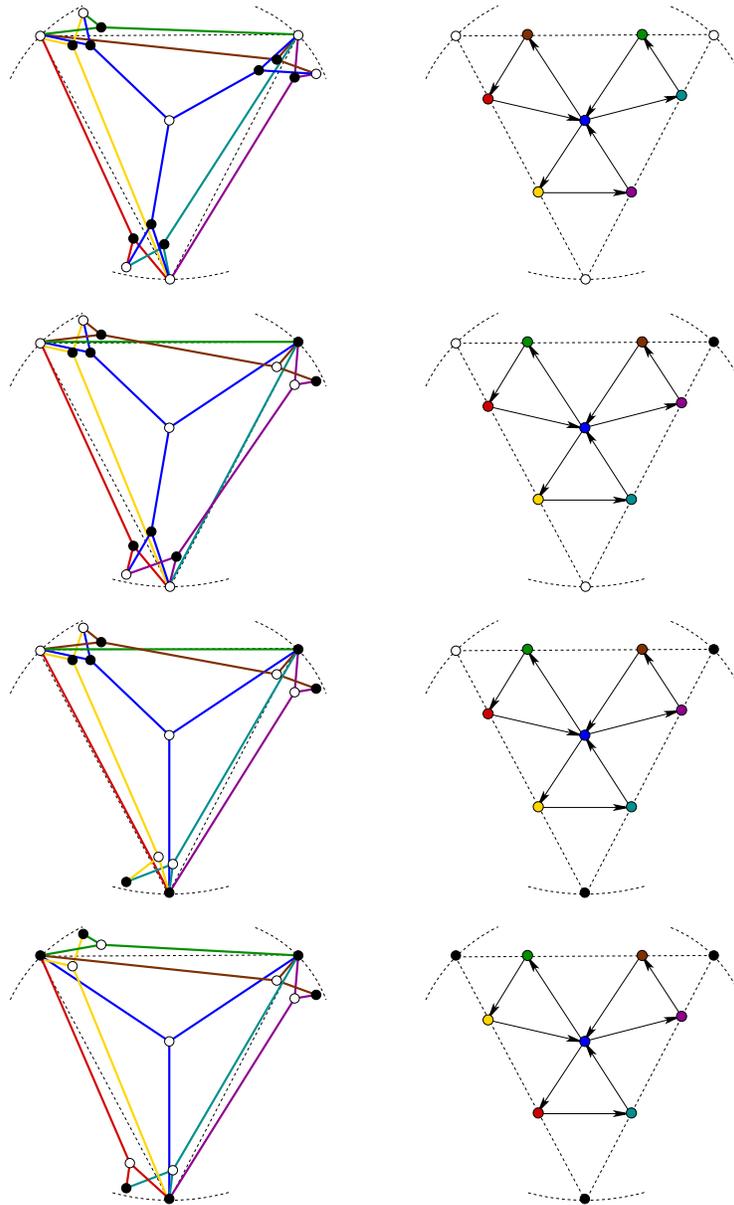


    \begin{center}
\input{webs4.pstex_t} 

\bigskip

    \input{webs41.pstex_t} 

\bigskip

    \input{webs42.pstex_t} 

\bigskip

    \input{webs43.pstex_t} 
    \end{center} 
    \caption{Special invariants around a triangle in a
      triangulation~$T$, 
and the corresponding portion of the quiver~$Q(T)$.
}
    \label{fig:webs43}
\end{figure}

The above recipe may require adjustments if some of the sides of the
triangle $pqr$ are too short, so that the corresponding special
invariants factor; cf., e.g., Figure~\ref{fig:webs15}. 



For triangles with one or two sides on the boundary of~$P_\sigma$,
the basic principles remain the same, but the recipe changes somewhat.  
Figures~\ref{fig:webs11} and~\ref{fig:webs12} treat ``generic'' cases of one
and two exposed sides, respectively. 

\begin{figure}[ht]
    \begin{center}
\input{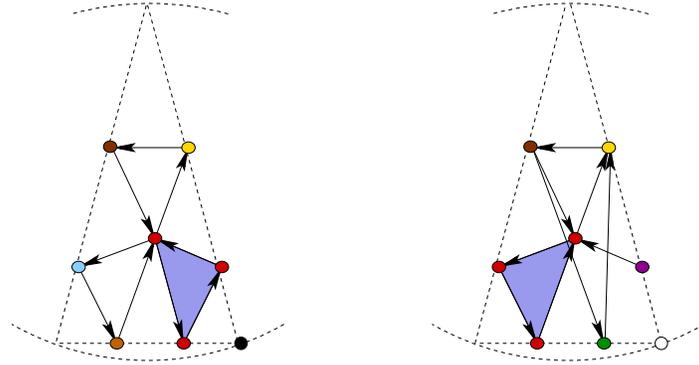} 
    \end{center} 
    \caption{Portion of the quiver $Q(T)$ 
in a triangle $pq(q+1)$ of~$T$.
There are two cases, depending on the color of~$q$. 
In each case, three vertices 
collapse into a single vertex of~$Q(T)$, represented by the solid triangle. 
All edges that used to go to/from
those three vertices are still going to/from this new vertex.}
    \label{fig:webs11}
\end{figure}

\begin{figure}[ht]
    \begin{center}
\input{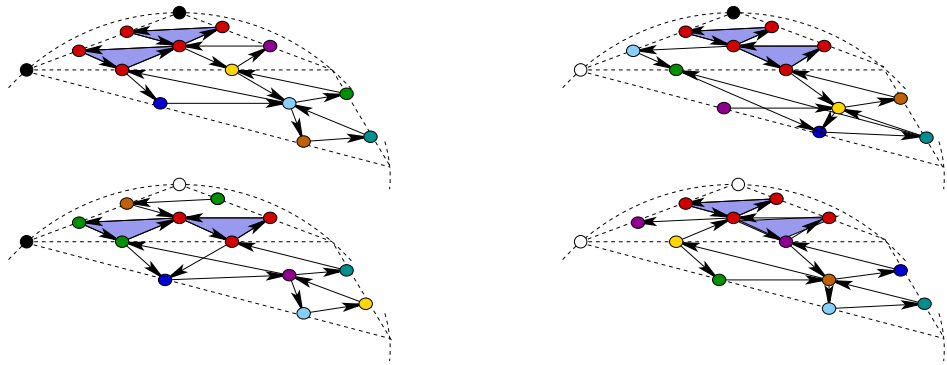}
    \end{center} 
    \caption{Portion of the quiver in a triangle with two exposed
      sides. The pattern only depends on
the coloring of two of the vertices.
}
    \label{fig:webs12}
\end{figure}


In a few exceptional cases, additional adjustments have to be made to
the construction of the quiver $Q(T)$ and/or to the rules of assigning
special invariants to its vertices. 
These adjustments are caused by nontrivial factoring of the special
invariants involved. 
One such case is illustrated in Figure~\ref{fig:webs14},
which is in turn a special case of the pattern shown in 
Figure~\ref{fig:webs12} on the upper right. 
\begin{figure}[ht]
    \begin{center}
\vspace{.2in}
\scalebox{0.9}{
    \input{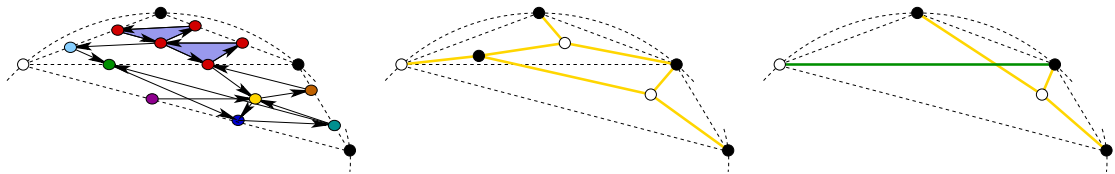} 
}
    \end{center} 
\caption{The special invariant 
corresponding to the yellow vertex~$Y$ of the quiver shown on the left
is given by the yellow web in the middle; it factors 
into a product of two special invariants shown on the right.
The green factor is already associated to the corresponding diagonal. 
We let the yellow factor be the cluster variable 
associated with~$Y$. 
}
    \label{fig:webs14}
\end{figure}
Another example is shown in Figure~\ref{fig:webs15}. 
\begin{figure}[ht]
    \begin{center}
    \input{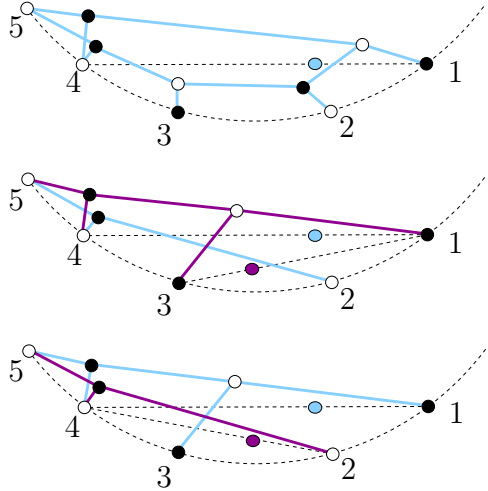} 
    \end{center} 
\caption{The special invariant~$J_4^1$ factors:
$J_4^1=J_4^2 J_1^3\,$. 
Which of the two factors should be 
associated to the diagonal~$14$ 
depends on whether triangulation~$T$ contains 
diagonal $13$ or diagonal~$24$. 
The clusters $\xx(T)$ and $\xx(T')$
associated to triangulations $T$ and~$T'$ which differ by a 
flip replacing $13$ by~$24$ are the same.
Cf.\ Proposition~\ref{pr:x(T)=x(T')}.}
    \label{fig:webs15}
\end{figure}
There are only a finite number of such exceptional situations, 
all of them arising when some sides of the relevant 
triangle(s) are short enough to make certain special
invariants factor nontrivially. 
We refrain from exhaustively describing all these exceptional cases and
the corresponding quiver-building instructions. 

\pagebreak[3]

\section{Proof of the main theorem 
}
\label{sec:proof-main}

\begin{lemma}
\label{lem:mut-flips}
The seeds $(Q(T),\zz(T))$ are mutation equivalent. 
\end{lemma}

Strictly speaking, we should not use the terms ``seed'' and
``mutation'' since we have not established yet that the elements of
$\zz(T)$ are algebraically independent.
What we mean in Lemma~\ref{lem:mut-flips} is that any two quivers
$Q(T)$ and $Q(T')$ are related by a sequence of mutations,
and applying the corresponding exchange transformations to the
collection $\zz(T)$ produces the collection~$\zz(T')$. 

\begin{proof}
Since any two triangulations of the polygon~$P_\sigma$ can be
connected by a sequence of \emph{flips} 
(each flip replacing a single diagonal by another one),
it suffices to show that for two triangulations that differ by a flip, 
the corresponding seeds are related by mutations.
Typically, the required number of mutations is four,
although in some cases it can be three, two, one, and even zero,
see Proposition~\ref{pr:x(T)=x(T')}.

The verification of the claim is done on a case by case basis,
depending on the color pattern of the vertices involved;
as before, there are several exceptional cases where proximity of 
vertices to each other 
results in nontrivial factorization of the corresponding special invariant. 
Once again, we do not list all cases exhaustively,
presenting instead a couple of 
``generic'' cases which illustrate the checks
one needs to perform; this is done in the next paragraph. 
But first, let us explain how one would exhaustively examine
all possible ``non-generic'' cases. 
For each side of a quadrilateral, there are four 
options to consider: 
the side may be of length~$1$, $2$, $3$, or~$\ge 4$. 
Once each of these four options has been selected for each side, 
one considers possible colorings of the vertices along the sides. 
For the sides of length~$\ge 4$, the coloring of the 
intermediate vertices does not matter. 
In each case, it is possible to exhibit a sequence of $\le 4$
mutations which connects the two seeds under consideration. 
Details are omitted.
We note that in the case of Grassmannians (when all boundary vertices
are of the same color), an exhaustive examination of all possibilities
is given in Section~\ref{sec:other-proofs}. 

The general rule (barring aforementioned exceptions) is as follows. 
Suppose that triangulations $T$ and~$T'$ are related by a flip that
replaces diagonal~$pr$ by diagonal~$qs$;
thus $T$ has triangles $pqr$ and $rsp$ while $T'$ has triangles $qrs$
and~$pqs$.
Then the seed $(Q(T'),\zz(T'))$ is 
obtained from $(Q(T),\zz(T))$
by a sequence of four mutations:
\begin{itemize}
\item
replace $J_r^p$ by $J_{pqs}$, and 
replace $J_p^r$ by $J_{qrs}$ (these two mutations commute);
\item
replace $J_{prs}$ by $J_q^s$, and 
replace $J_{pqr}$ by $J_s^q$ (these two mutations commute).
\end{itemize}
The corresponding exchange relations are of the
form~\eqref{eq:3-term-2}. 
Figures~\ref{fig:webs10} and~\ref{fig:webs8}
illustrate these sequences of mutations for two different color
patterns. 
\end{proof}

In Section~\ref{sec:other-proofs}
(see the proof of Theorem~\ref{th:we=scott}), 
we examine exceptional instances of mutation sequences associated with
diagonal flips 
in the special case of a monochromatic signature 
(equivalently, the case of a Grassmannian $\operatorname{Gr}_{3,b}$). 




\begin{figure}[ht]
    \begin{center}
    \input{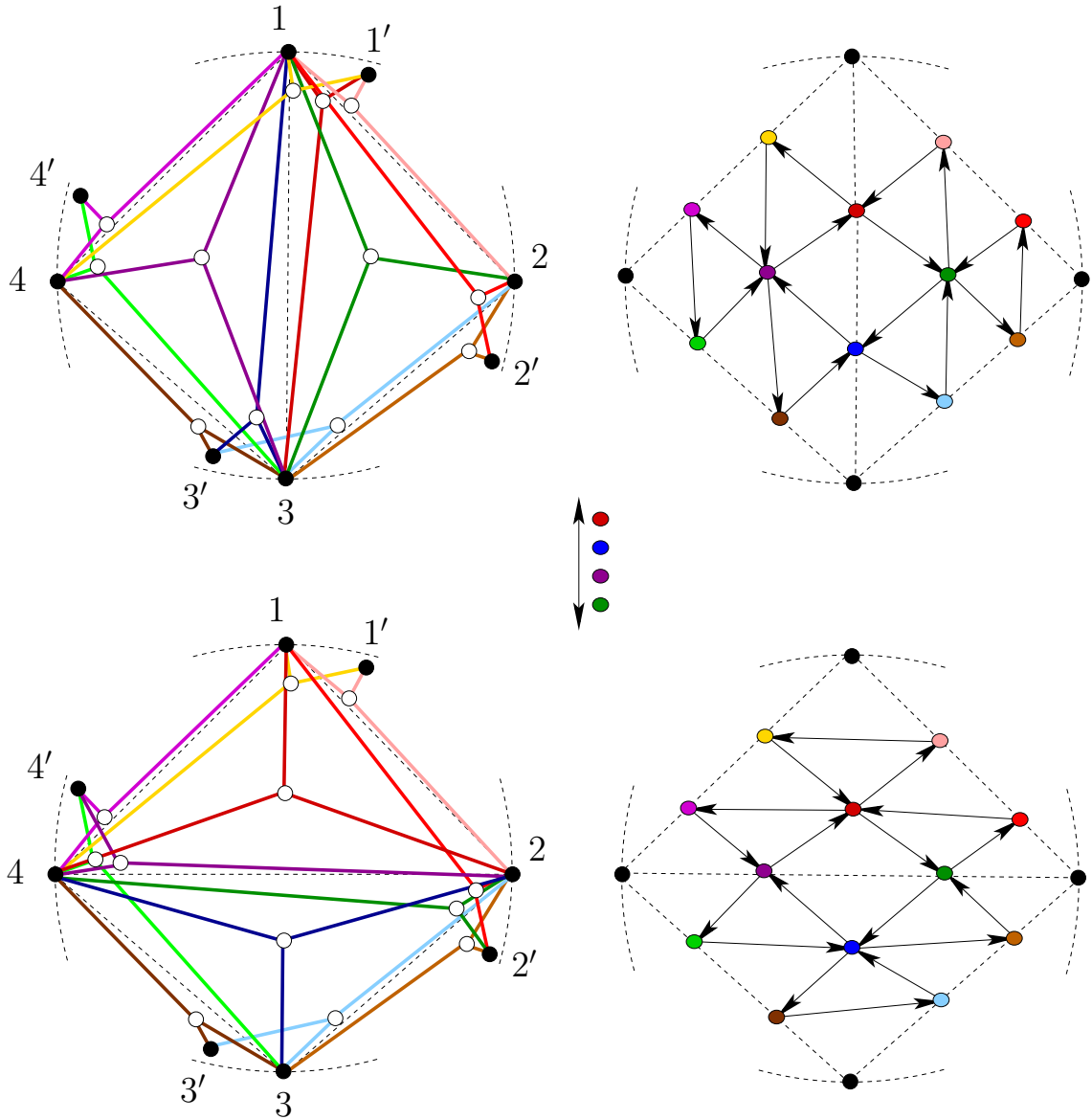} 
    \end{center} 
\caption{Diagonal flip in a quadrilateral with four black
vertices. 
These seeds are related by 
a sequence of four mutations: 
$J_3^1\to J_{124}$, 
$J_1^3\to J_{234}$, 
$J_{134}\to J_2^4$, 
$J_{123}\to J_4^2$. 
}
    \label{fig:webs10}
\end{figure}

\begin{figure}[ht]
    \begin{center}
    \input{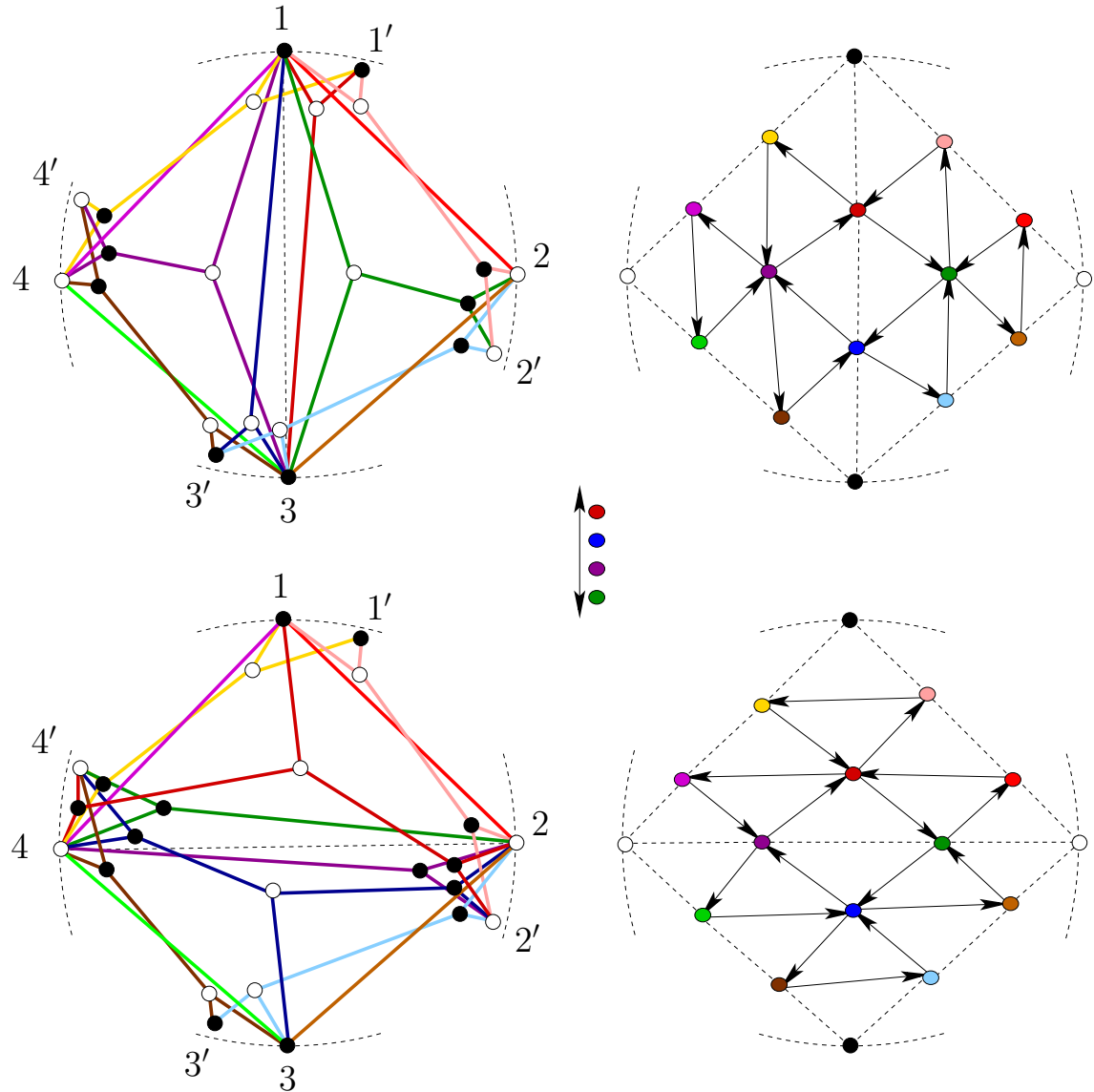} 
    \end{center} 
    \caption{Diagonal flip in a quadrilateral with two black and two
      white vertices, with opposite vertices of the same color.
Mutations to perform: 
$J_3^1\to J_{124}$, $J_1^3\to J_{234}$,
$J_{134}\to J_2^4$, $J_{123}\to J_4^2$. 
}
    \label{fig:webs8}
\end{figure}

\begin{lemma}
\label{lem:alg-indep}
The elements of each set $\zz(T)$ 
(cf.\ Definition~\ref{def:z(T)})
are algebraically independent. 
\end{lemma}

\begin{proof}
Note that $\operatorname{Spec}(R_{a,b}(V))$
has the same dimension as $\operatorname{Spec}(R_{0,a+b}(V))$,
the affine cone over the Grassmannian $\operatorname{Gr}_{3,a+b}\,$;
the latter dimension is equal to $3(a+b-3)+1=3a+3b-8$.
To prove the equality of dimensions, 
assume without loss of generality that $a\ge 3$, 
and use the $\SL(V)$-equivariant birational isomorphism 
that sends an $a$-tuple of covectors 
$(u_1^*,u_2^*\dots,u_a^*)$ to the $a$-tuple of vectors 
$(u_1^*\times u_2^*,u_2^*\times u_3^*,\dots,u_a^*\times u_1^*)$. 
In view of Lemma~\ref{lem:mut-flips}, the $(3a+3b-8)$-tuples $\zz(T)$ 
(cf.\ Theorem~\ref{th:z(T)}) are
birationally related to each other, and collectively
generate the field of fractions of~$R_\sigma$ 
(as they contain all Weyl generators).
It follows that each of these tuples is algebraically independent. 
\end{proof}

\begin{proof}[Proof of Theorem~\ref{th:main}]

Lemma~\ref{lem:alg-indep} means that $(Q(T),\zz(T))$ is a seed. 
Lemma~\ref{lem:mut-flips} means that~the cluster algebra
$\Acal(Q(T),\zz(T))$ does not depend on~$T$. 
Now we are going to use Corollary~\ref{cor:cluster-criterion}, 
which is applicable by Lemma~\ref{lem:properties-of-RabV}. 
The elements of~$\zz(T)$ are irreducible. 
By construction in 
Proposition \ref{prop:special-seeds-exchanges},
all exchange relations from the special seeds
$(Q(T),\zz(T))$ involve exclusively special invariants.
One can then verify using 
Lemma~\ref{prop:special-are-irreducible} that 
the cluster variables appearing in seeds adjacent to the special ones 
are irreducible as well. 
Then Corollary~\ref{cor:cluster-criterion} yields the inclusion 
$R_\sigma(V)\!\supset\!\Acal(Q(T),\zz(T))$. 

The proof of the reverse inclusion is based on the second half of
Corollary~\ref{cor:cluster-criterion}. 
Here we use Lemma~\ref{lem:mut-flips}, 
the First Fundamental Theorem of invariant theory,
and the fact that all Weyl generators show up in extended
clusters~$\zz(T)$.  
Hence $R_\sigma(V)\!=\!\Acal(Q(T),\zz(T))$, and the theorem is
proved. 
\end{proof}

\section{Other proofs}
\label{sec:other-proofs}

\begin{proof}[Proof of Theorem~\ref{th:we=scott}]

It is straightforward to verify that the construction described in~\cite{scott}
arises as a special case of our setup for the ``zig-zag''
triangulation that includes all diagonals of the form $(i,b-i)$ and 
$(i,B+1-i)$. 

For the sake of completeness, we include a case-by-case verification
that in the case of monochromatic signature, the seeds $Q(T),\zz(T)$ 
related by a single flip are indeed obtained from each other
by mutations. 
We stated this claim in Section~\ref{sec:proof-main} for an
arbitrary signature, but did not conduct an exhaustive examination of all
possible non-generic cases. 
Such an examination (for $a=0$) is presented in 
Figures~\ref{fig:webs49b}--\ref{fig:webs48b}.
\end{proof}

\begin{figure}[h!]
    \begin{center}
\scalebox{0.6}{ \input{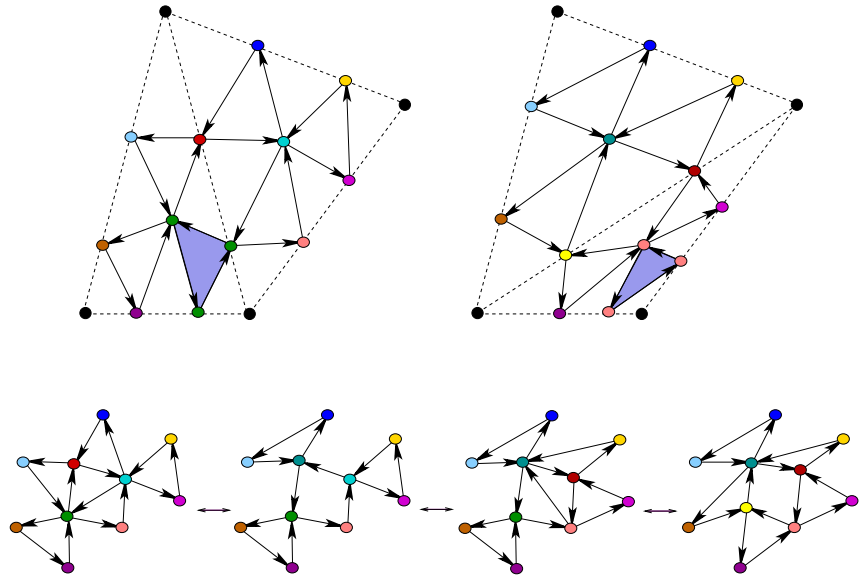}}
    \end{center} 
    \caption{A flip in $R_{0,b}(V)$: one exposed side, three mutations.}
    \label{fig:webs49b}
\end{figure}

\begin{figure}[h!]
    \begin{center}
\scalebox{0.6}{ \input{webs48a.pstex_t}}
    \end{center} 
    \caption{A flip in $R_{0,b}(V)$: two adjacent exposed
    sides, two mutations.}
    \label{fig:webs48a}
\end{figure}

\begin{figure}[h!]
    \begin{center}
\scalebox{0.6}{ \input{webs49a.pstex_t}}
    \end{center} 
    \caption{A flip in $R_{0,b}(V)$: two non-adjacent exposed
    sides, two mutations.} 
    \label{fig:webs49a}
\end{figure}

\begin{figure}[h!]
    \begin{center}
\scalebox{0.6}{ \input{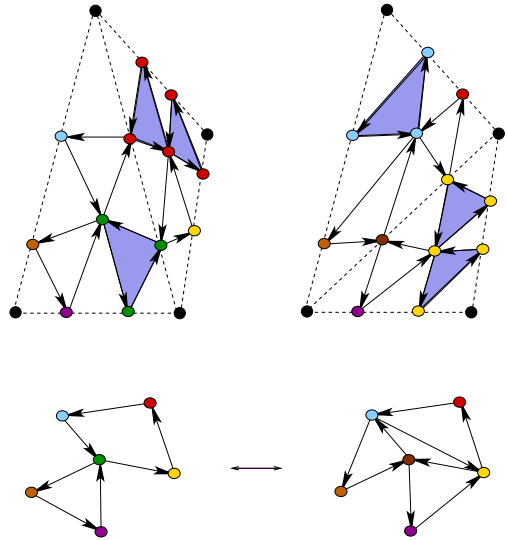}}
    \end{center} 
    \caption{A flip in $R_{0,b}(V)$: three exposed sides, one mutation.}
    \label{fig:webs48b}
\end{figure}

\newpage

\begin{proof}[Proof of Theorem~\ref{th:drop-vertex}]

Our goal is to identify the cluster structure after dropping a
boundary vertex with a part of the original cluster structure. 
There are several cases to consider. 
Figure~\ref{fig:webs20} explains how to handle the case where the
color of the dropped vertex differs from both of its neighbors.
Other cases are treated in a similar way. 
\end{proof}

\begin{figure}[ht]
    \begin{center}
    \input{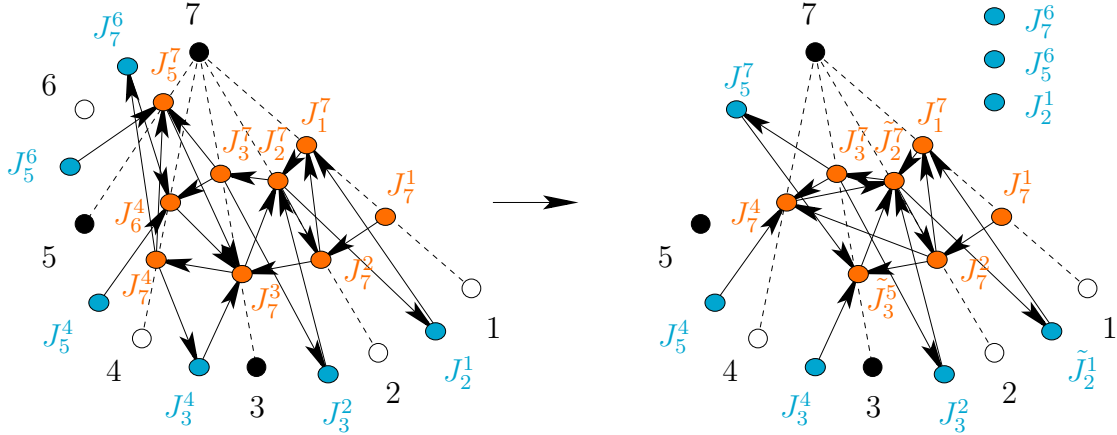}
    \end{center}
    \caption{Vertex $6$ is being dropped. 
Notation $\tilde J$ refers to 
special invariants after the removal.
The identification is achieved by exchanging
$J_6^4$ for $J_{357} = {\tilde {J_3^5}}$, 
then $J_7^3$ for ${\tilde{J_2^7}}$,
and finally $J_2^7$ for~${\tilde{J_2^1}}$;
then freezing $J_5^7$ and ${\tilde{J_2^1}}$
and subsequently throwing away $J_2^1$, $J_5^6$, and~$J_7^6$.  
%
%
The relevant exchange relations are:
$J_6^4 J_{357} = J_7^3 + J_3^7 J_7^6 J_5^4$,
$J_7^3 {\tilde{J_2^7}} = J_2^7 J_{357} J_7^4 + J_7^6 J_5^7 J_5^4 J_3^4 J_7^2
J_3^7$, and 
$J_2^7 {\tilde{J_2^1}} =
{\tilde{J_2^7}} J_2^1 + J_7^6 J_5^7 J_5^4 J_3^4 J_3^2 J_1^7$.}
    \label{fig:webs20}
\end{figure}

\begin{proof}[Proof of Theorem~\ref{th:fork}]

We follow the pattern of the proof of
Theorem~\ref{th:drop-vertex},
examining one characteristic example 
(see Figure~\ref{fig:webs19})
instead of conducting a formal case by case analysis. 
The size of this example is large enough to demonstrate 
what happens in general. 
\end{proof}

%

\begin{figure}[ht]
    \begin{center}
    \input{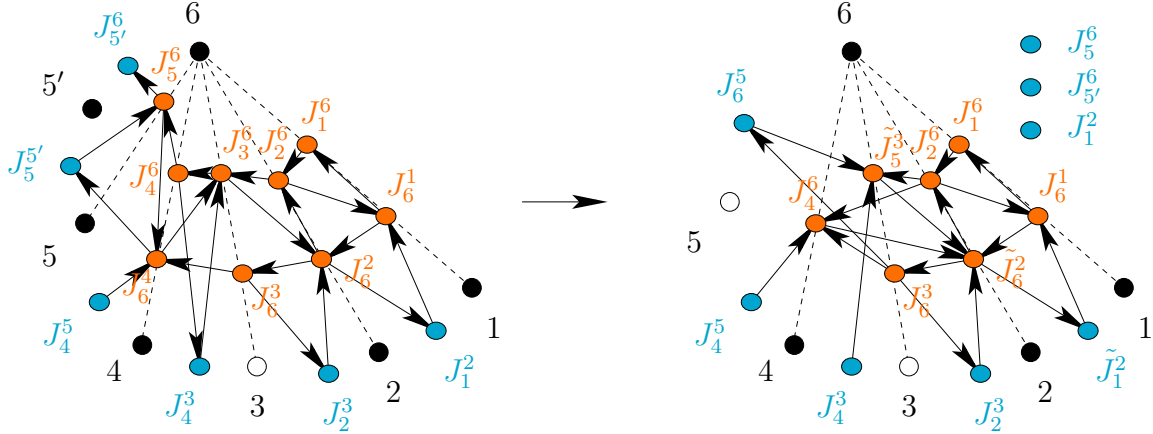}
    \end{center}
    \caption{Replacing black vertices $5$ and~$5'$ 
by the white vertex~$5$. 
Notation $\tilde J$ refers to
special invariants after the replacement.
%
We go counterclockwise along the boundary until the colors 
repeat (at vertices $1,2$). 
The~identification
is achieved by exchanging
$J_6^4$ for~${\tilde {J_5^3}}$, 
then $J_3^6$ for~${\tilde{J_6^2}}$,
and finally $J_6^2$ for~${\tilde{J_1^2}}$;
then freezing $J_5^6$ and~${\tilde{J_1^2}}$, and throwing out
$J_1^2$, $J_{5'}^6$, and~$J_5^6$. 
The exchange relations are:
$J_6^4 {\tilde {J_5^3}} = J_5^{5'} J_3^6 + J_4^5 J_6^3\,$, 
$J_3^6 {\tilde{J_6^2}} = J_4^6 {\tilde {J_5^3}} J_6^2 + J_5^6 J_6^3 J_2^6
J_4^3 J_4^5\,$, and
$J_6^2 {\tilde{J_1^2}} =
{\tilde{J_6^2}} J_1^2 + J_5^6 J_6^1 J_4^5 J_4^3 J_2^3\,.$
}
    \label{fig:webs19}
\end{figure}

\pagebreak[3]

\begin{proof}[Proof of Corollary~\ref{cor:planar-tree}]
Any planar tree can be grown by repeated application of the operations
of adding a fork and dropping a vertex. 
Make sure to never pass 
through an alternating signature; this can indeed be done. 
We omit the details. 
\end{proof}

\begin{proof}[Proof of Theorem~\ref{th:swap-colors}]

The construction of an extended cluster $\zz(T)$ can be carried out
verbatim with the black and white colors swapped.
Let us denote the resulting set by~$\zz'(T)$.  
Note that the definition of a proxy vertex remains the same. 
The only difference between the two constructions is that for each
triangle $pqr$ in~$T$, we use (the factors of) 
the special invariant $J_{pqr}$ to build~$\zz(T)$ 
whereas the construction of $\zz'(T)$ involves~$J^{pqr}$. 
In each instance where the latter choice yields an irreducible
factor that does not come from any special invariant
contributing to~$\zz(T)$, one can use the relation \eqref{eq:3-term-1} to
mutate between the two seeds. 
This can be verified on a case by case basis,
treating each fundamental region separately.
\end{proof}

\begin{proof}[Proof of Lemma~\ref{lem:arborizing-step}]
A skein relation yields the identity shown in Figure~\ref{fig:webs27b}. 
It remains to note that the second term vanishes 
as it is both symmetric and 
antisymmetric with respect to the (co)vectors corresponding to the two
sibling vertices. 
\end{proof}

\begin{figure}[ht]
    \begin{center}
    \input{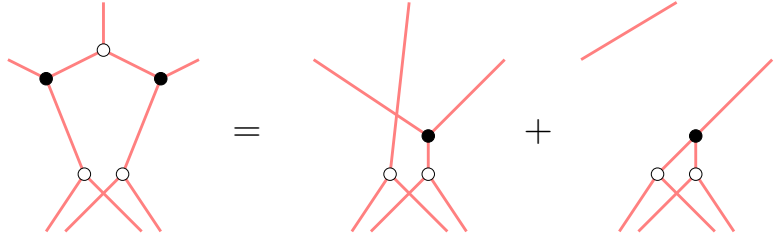} 
    \end{center} 
    \caption{Skein relation justifying an arborizing step.}
    \label{fig:webs27b}
\end{figure}

\begin{proof}[Proof of Theorem~\ref{th:arborization-confluent}]
The proof is a straightforward application of the Diamond Lemma (see,
e.g., \cite[Lemma~1.4.1]{pm-cohn}). 
The five possible nontrivial diamonds are shown in Figure~\ref{fig:webs28}.
\end{proof}

\begin{figure}[ht]
    \begin{center}
  \scalebox{0.85}{  
    \input{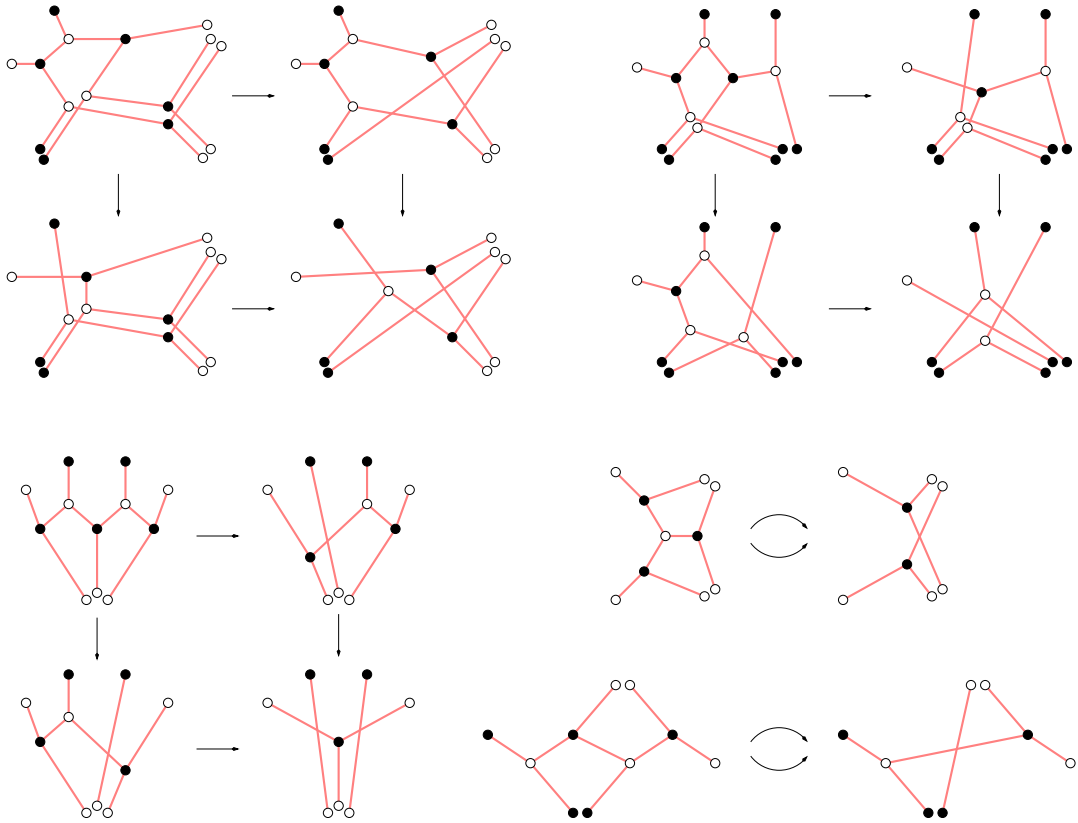} 
}
    \end{center} 
    \caption{Proving the confluence of the arborization algorithm.}
    \label{fig:webs28}
\end{figure}


\begin{proof}[Proof of Theorem~\ref{th:thickening}]


The idea of the proof is as follows. 
Assume that $W$ arborizes to a tree diagram~$D$. 
Let us reverse the arborization process: starting
with~$D$, we step by step ``planarize'' it (cf.\ Figure~\ref{fig:basic-step}), 
each time creating a four-edge fragment of one of the two kinds described in
Definition~\ref{def:arborizing-step}. 
Now make a tensor diagram for~$z^k$ by bundling together $k$ copies
of~$D$, and apply a planarization procedure following the same steps, 
each time planarizing all $k$ copies. 
At the end of this process, we are going to obtain the $k$-thickening of~$W$,
thereby proving the theorem. 

We illustrate how this works  using the example in
Figure~\ref{fig:webs31}. 
The left picture shows an intermediate stage of the planarization
process, with $k=3$. 
The six white vertices correspond to two sibling vertices in~$D$. 
Remember that we are transforming our tensor diagram from the inside
out, i.e., moving towards the boundary. 
The portions attached underneath those six vertices all look
identical: 
they are patterned after the same subtree of~$D$. 
We proceed in two stages as shown in the figure. 
At the first stage, we planarize 
around the two triples of black vertices, using skein relations
(including the square move). 
Each time, all terms but one vanish because they can be seen
to be both symmetric and antisymmetric with respect to a pair of 
arguments. 
At the second stage, 
we planarize the intersection of $k$-thickenings of two edges of~$D$
using a similar calculation. 

\end{proof}

\begin{figure}[ht]
    \begin{center}
  \scalebox{0.9}{  
\input{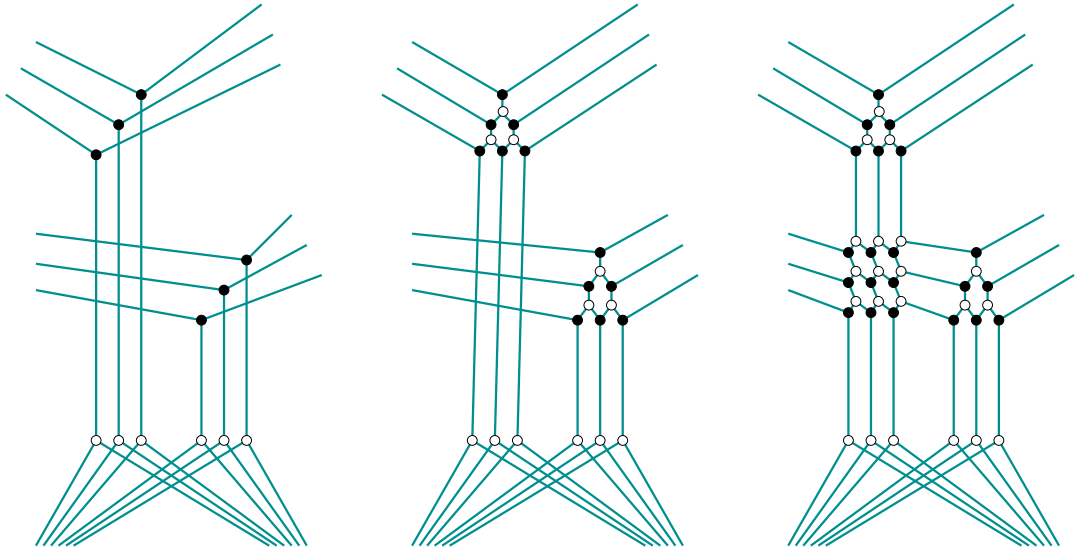} 
}
    \end{center} 
    \caption{ L'Ouvrier et la Kolkhozienne}
    \label{fig:webs31}
\end{figure}

\newpage

\usection{Appendices}

\section{Special choice of an initial seed}
\label{sec:T-fan}

The cluster structure in the ring of invariants~$R_\sigma(V)$
is in principle determined by a single initial seed $(Q(T),\zz(T))$. 
Consequently, one may be interested
in identifying a particular choice of a triangulation~$T$ for which 
the seed $(Q(T),\zz(T))$ has a simpler and more 
explicit description than the general case as presented in
Sections~\ref{sec:special-invariants}--\ref{sec:special-seeds}. 
One~such choice is discussed in this section. 

 Denote $N\!=\!a+b$. 
Since the signature $\sigma$ is non-alternating, we can assume without loss of
generality that the vertices $1$ and~$2$ are black. 
(If we can only find two adjacent white vertices, then 
use the same recipe with the colors swapped, cf.\ Theorem~\ref{th:swap-colors}.) 

The case $a=0$ has been covered in Figure~\ref{fig:webs47}. 
So let $\sigma$ be non-monochromatic.
Without loss of generality, we can assume that the
vertex~$N$ is white. 

Let $T_1$ be the triangulation of the $N$-gon~$P_\sigma$ 
obtained by drawing all diagonals with an endpoint at the vertex~$1$. 
The quiver $Q(T_1)$ is constructed as follows. 

\begin{enumerate}
 \item Place two (mutable) vertices of $Q(T_1)$ on each diagonal of~$T_1$.
Place one frozen vertex on each side of~$P_\sigma$. 
 \item Fill each triangle in $T_1$ with arrows as shown in
   Figure~\ref{fig:webs44}. 
The choice of a pattern is determined by the color of the right endpoint
   of the base side.
 \item Assemble a quiver from these pieces,
removing $2$-cycles if necessary. 
 \item On the diagonal $13$, freeze the vertex closer to~$1$,
and identify it with the frozen vertex~$12$.
\item
On the diagonal $1(N\!-\!1)$, freeze one vertex, namely the one closer
to~$1$ (resp.,~$N\!-\!1$) if $N\!-\!1$ is white (resp., black). 
Identify it with $(N\!-\!1)N$
if 
$N-1$ is white, 
and with $1N$ 
if it is black. 
 \item Add arrows connecting the mutable vertices on diagonals 
$13$ and $1(N\!-\!1)$ 
to the frozen vertices $12$, $23$, $(N\!-\!1)N$, and~$1N$ as shown
in Figure~\ref{fig:webs44} on the~right. 
Remove 2-cycles if necessary;
this will happen if vertex $3$ is black.
\item
\label{enum:except-Q}
If 
$N-1$ is black, 
replace the portions of the
resulting quiver inside the pentagon $1(N-3)(N-2)(N-1)N$ as shown in 
Figure~\ref{fig:webs45}.
\end{enumerate}

\begin{figure}[h!]
    \begin{center}
\input{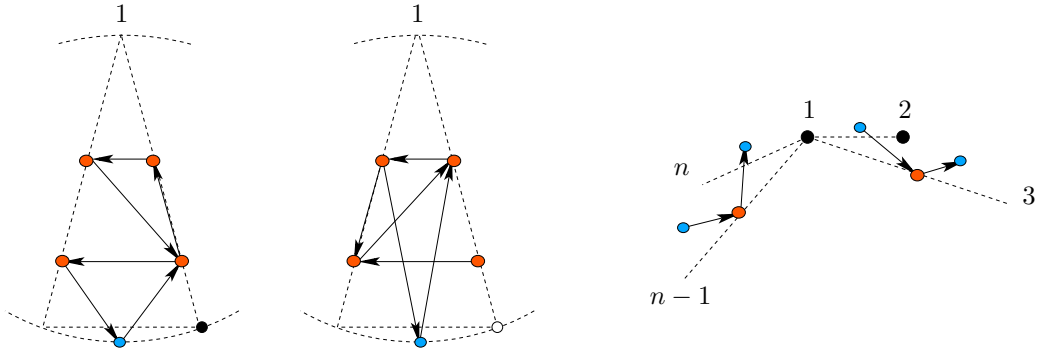}
\end{center}
\caption{Constructing the quiver $Q(T_1)$: 
Building blocks}
    \label{fig:webs44}
\end{figure}




\begin{figure}[h!]
    \begin{center}
\scalebox{1.1}{
    \input{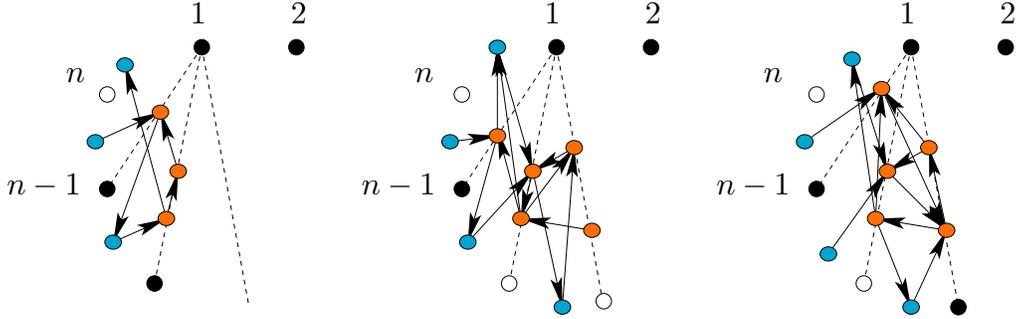}
}
\end{center}
\caption{Constructing the quiver $Q(T_1)$: 
Exceptional cases}
    \label{fig:webs45}
\end{figure}

Figure~\ref{fig:webs46} shows an example of applying this
construction.

\begin{figure}[h!]
    \begin{center}
    \input{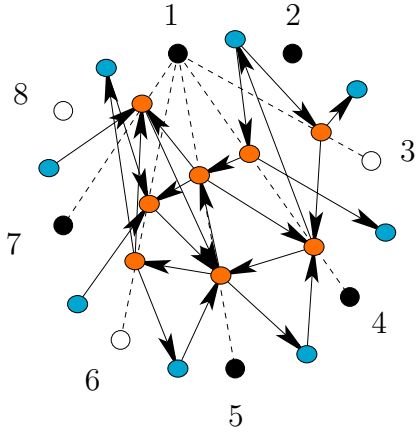} 
    \end{center}
    \caption{Constructing the quiver $Q(T_1)$. 
In this example, one of the exceptional cases
in step~\eqref{enum:except-Q} applies.}
    \label{fig:webs46}
\end{figure}

When the vertex $N-1$ is white,
the above construction of the quiver $Q(T_1)$ simplifies substantially
as we do not have to make the adjustments described in 
step~\eqref{enum:except-Q}.
If one is only interested in the cluster type of~$R_\sigma(V)$,
the description simplifies further:

\begin{proposition}
Let $\sigma=[\sigma_1\cdots\sigma_N]$, $N\ge 6$, be a signature 
such that $\sigma_1=\sigma_2=\bullet$ and
$\sigma_{N-1}=\sigma_N=\circ$. 
Let $T_1$ be the triangulation of $P_\sigma$ by the diagonals incident
to the vertex~$1$. 
Then the mutable part of the quiver $Q(T_1)$ is obtained by 
taking the vertices $s_3,s_4,\dots,s_{N-1}$ and
$t_4,t_5,\dots,t_{N-2}$
together with the following edges: 
\begin{itemize}
\item
for $3\le i\le N-2$, draw an edge $s_i\to s_{i+1}\,$;
\item
for $4\le i\le N-3$, draw an edge $t_i\to t_{i+1}\,$;
\item
for $3\le i\le N-3$, if $\sigma_i=\bullet$, then draw an edge
$t_{i+1}\to s_i\,$; 
\item
for $3\le i\le N-3$, if $\sigma_i=\circ$, then draw an edge
$t_{i+1}\to s_{i+1}\,$; 
\item
for $4\le i\le N-2$, if $\sigma_i=\bullet$, then draw an edge
$s_i\to t_i\,$; 
\item
for $4\le i\le N-2$, if $\sigma_i=\circ$, then draw an edge
$s_{i+1}\to t_i\,$.
\end{itemize} 
\end{proposition}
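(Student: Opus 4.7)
My plan is to derive the proposition directly from the recipe for constructing $Q(T_1)$ given earlier in Section~\ref{sec:T-fan}, by carefully bookkeeping what that recipe produces for the fan triangulation.

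First I would set up the vertex labels. The triangulation $T_1$ consists of the $N-3$ diagonals $1i$ for $3\le i\le N-1$, cutting $P_\sigma$ into the $N-2$ triangles $\Delta_i=(1,i,i+1)$ for $2\le i\le N-1$. Per step~(1) of the construction, each diagonal carries two mutable vertices of $Q(T_1)$; per steps~(4)--(5), on the extreme diagonals $13$ and $1(N\!-\!1)$ one of these two vertices is frozen and identified with a coefficient variable. I would then declare $s_i$ to be the vertex on diagonal $1i$ closer to~$i$ (for $3\le i\le N-1$), and $t_i$ to be the vertex on diagonal $1i$ closer to $1$ (for $4\le i\le N-2$). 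Because $\sigma_{N-1}=\circ$, on diagonal $1(N\!-\!1)$ it is the vertex closer to $N-1$ that is frozen; the corresponding vertex in the list is the one I would have called $t_{N-1}$, explaining why $t_{N-1}$ does not appear. The analogous collapse on diagonal $13$ explains the absence of $t_3$. The total count $(N-3)+(N-5)=2N-8$ matches the rank.

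Next I would read off the arrows. For each triangle $\Delta_i=(1,i,i+1)$ with $3\le i\le N-2$ (the triangles with the diagonal base), step~(2) inserts the template of Figure~\ref{fig:webs44} determined by the color of the right-hand base endpoint, here the vertex $i+1$. Superposing over $i$, the ``vertical'' arrows of these templates between consecutive diagonals produce exactly the edges $s_i\to s_{i+1}$ for $3\le i\le N-2$ and $t_i\to t_{i+1}$ for $4\le i\le N-3$, while the ``horizontal'' arrow inside $\Delta_i$ yields $t_{i+1}\to s_i$ when $\sigma_i=\bullet$ and $t_{i+1}\to s_{i+1}$ when $\sigma_i=\circ$, matching the first two $s$--$t$ rules in the proposition. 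The boundary triangles $\Delta_2$ and $\Delta_{N-1}$ are handled by step~(6), which adds arrows linking $s_3$ and $s_{N-1}$ (together with the adjacent $t$'s) to the appropriate frozen vertices; the remaining two cases in the proposition, giving $s_i\to t_i$ or $s_{i+1}\to t_i$ depending on $\sigma_i$, are what survives in the middle range after combining the templates of $\Delta_{i-1}$ and $\Delta_i$ along the shared diagonal $1i$ and then removing any $2$-cycles that may appear (step~(3)).

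The main obstacle, and the place where I expect the argument to require the most care, is the combining of contributions across a shared diagonal, because the superposition of the two relevant templates from Figure~\ref{fig:webs44} produces different patterns in each of the four color combinations of $(\sigma_i,\sigma_{i+1})$, and in some of these combinations a genuine $2$-cycle has to be cancelled before the final arrow orientation can be read off. I would verify the four cases separately and confirm that the surviving arrows reduce to the rules stated in the proposition. Finally, because $\sigma_{N-1}=\circ$ (so $N-1$ is \emph{white}, not black), the exceptional adjustment of step~(7) does not apply, and I would simply note this to conclude that no additional modifications near the pentagon $1(N\!-\!3)(N\!-\!2)(N\!-\!1)N$ are needed.
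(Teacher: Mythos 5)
The paper gives no explicit proof of this proposition; it is presented as a direct specialization of the seven-step recipe for $Q(T_1)$ spelled out earlier in Section~\ref{sec:T-fan}, and your proposal follows exactly that recipe, so the approach is essentially the same. One minor slip worth flagging: you write that for $\sigma_{N-1}=\circ$ the vertex on the diagonal $1(N\!-\!1)$ \emph{closer to $N-1$} is frozen, but step~(5) of the recipe freezes the one \emph{closer to~$1$} when $N-1$ is white; your own labeling convention ($t_i$ = the vertex on diagonal $1i$ closer to~$1$) makes this $t_{N-1}$, so your conclusion that $t_{N-1}$ is excluded is correct, and the internal inconsistency in the prose does not propagate.
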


\pagebreak

\section{Additional examples of seeds}
\label{sec:examples-of-seeds}

\begin{figure}[ht]
    \begin{center}
    \input{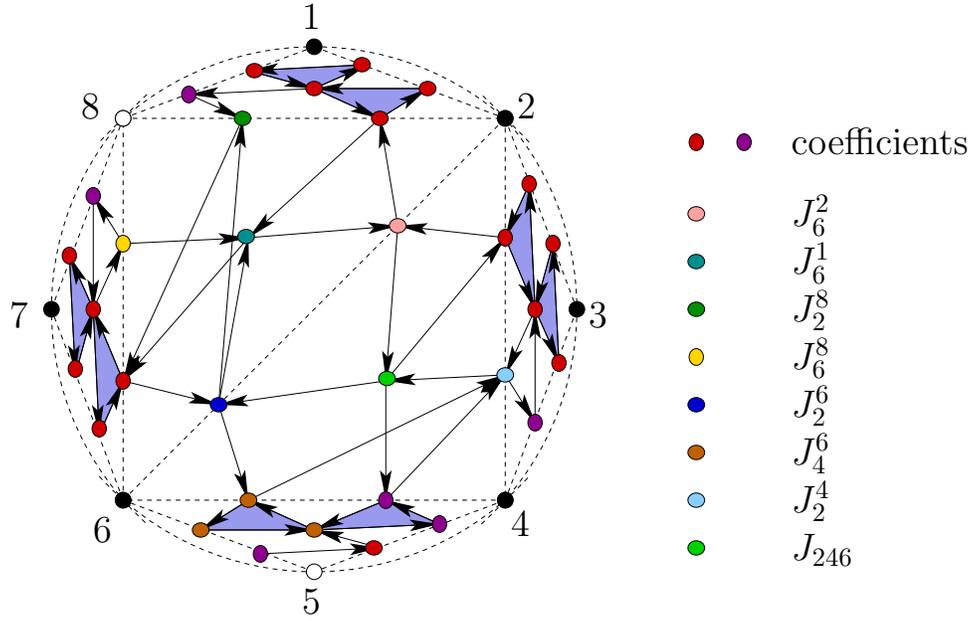} 
    \end{center} 
    \caption{An example of a seed $(Q(T),\zz(T))$ for 
$\sigma=[\bullet\bullet\bullet\bullet\circ\bullet\bullet\,\circ]$. 
}
    \label{fig:webs13}
\end{figure}

\begin{figure}[ht]
    \begin{center}
    \input{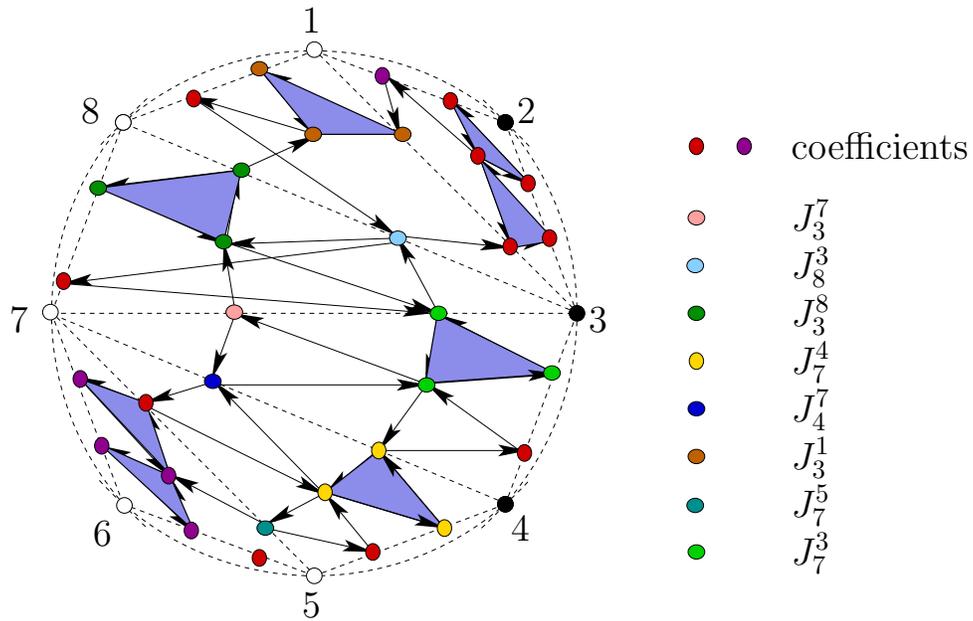} 
    \end{center} 
    \caption{An example of a seed $(Q(T),\zz(T))$ for 
$\sigma=[\circ\circ\circ\circ\circ\bullet\bullet\,\bullet]$.
}
    \label{fig:webs16}
\end{figure}

\newpage

\end{document}